\newtheorem{theorem}{Theorem}[section]
\newtheorem{lemma}[theorem]{Lemma}
\theoremstyle{definition}
\newtheorem{example}[theorem]{Example}
\newtheorem{openproblem}[theorem]{Open problem}
\newtheorem{problem}[theorem]{Problem}
\newtheorem{corollary}[theorem]{Corollary}
\newtheorem{proposition}[theorem]{Proposition}
\theoremstyle{remark}
\newtheorem{remark}[theorem]{Remark}
\numberwithin{equation}{section}
\def\J#1#2#3{ \left\{ #1,#2,#3 \right\} }
\def\RR{{\mathbb{R}}}
\def\11{\textbf{$1$}}
\def\CC{{\mathbb{C}}}
\def\CC{{\mathbb{C}}}
\def\HH{{\mathbb{H}}}
\newcommand{\ran}{\mathrm{ran} \,}
\begin{document}

\title[Local and 2-local derivations]{A survey on local and 2-local derivations on C$^*$- and von Neuman algebras}


\author[Ayupov]{Shavkat Ayupov}
\address{Institute of Mathematics, National University of Uzbekistan, 100125 Tashkent, Uzbekistan}
\email{sh\_ayupov@mail.ru}

\author[Kudaybergenov]{Karimbergen Kudaybergenov}
\address{Ch. Abdirov 1, Department of Mathematics, Karakalpak State University, Nukus 230113, Uzbekistan}
\email{karim2006@mail.ru}


\author[Peralta]{Antonio M. Peralta}
\address{Departamento de An{\'a}lisis Matem{\'a}tico, Facultad de
Ciencias, Universidad de Granada, 18071 Granada, Spain.}
\curraddr{Visiting Professor at Department of Mathematics, College of Science, King Saud University, P.O.Box 2455-5, Riyadh-11451, Kingdom of Saudi Arabia.}
\email{aperalta@ugr.es}
\thanks{Third author partially supported by the Spanish Ministry of Science and Innovation,
D.G.I. project no. MTM2011-23843, Junta de Andaluc\'{\i}a grant FQM375, and the Deanship of Scientific Research at King Saud University (Saudi Arabia) research group no. RG-1435-020.}

\subjclass[2000]{Primary 46L57; 46L05; 	46L10; 46L40}

\date{November, 3rd, 2014}

\begin{abstract}We survey the results on local and 2-local derivations on C$^*$-algebras, von Neumann algebras and JB$^*$-triples.
\end{abstract}

\maketitle

\section{Introduction}\label{sec:intro}

The last twenty years witnessed a fruitful growth of the theory of local and 2-local derivations on von Neumann algebras, C$^*$-algebras, and JB$^*$-triples. The studies on local derivations were formally started by Kadison \cite{Kad90} and Larson and Sourour \cite{LarSou} in 1990. The fast growing of the theory during the last 25 years invites us to write a first survey on local and 2-local derivations and homomorphisms to revisit the main contributions and the main open problems. We combine new proofs of previously established results with new results and connections between the different techniques developed in recent years.\smallskip

The studies about derivations on Banach algebras go back to the origins of the theory of Banach algebras and C$^*$-algebras. In 1958, Kaplansky conjectured that that any derivation of a C$^*$-algebra would be automatically continuous (see \cite{Kap}). This conjecture challenged to the international community, and in particular to Sakai, who gave a proof of this fact in 1960 (cf. \cite{Sak60}).\smallskip

We recall that a {\it derivation} of a Banach algebra $A$ into a Banach $A$-module $X$ is a linear map $D: A\to X$ satisfying $D(a b) = D(a) b +a  D(b),$ ($a\in A$). Ringrose extended Sakai's theorem on automatic continuity of derivations on C$^*$-algebras by proving that every derivation from a C$^*$-algebra $A$ to a Banach $A$-bimodule is continuous (cf. \cite{Ringrose72}).\smallskip

A typical example of a derivation from a Banach algebra $A$ into a Banach $A$-bimodule would be: let $x$ be an element in $X$, the mapping $\hbox{adj}_{x} :A \to X$, $a\mapsto \hbox{adj}_{x} (a) := x a -  a x$, is a derivation. A derivation $D: A\to X$ is said to be an \emph{inner derivation} when it can be written in the form $D = \hbox{adj}_{x}$ for some $x\in X$. Another fundamental contribution, due to Sakai, states that every derivation on a von Neumann algebra is inner (cf. \cite[Theorem 4.1.6]{Sak} or \cite{Sak66,Kad66}).\smallskip

A {\it Jordan derivation} from $A$ into $X$
is a linear map $D$ satisfying $D(a^2) = a D(a) + D(a)
a,$ ($a\in A$), or equivalently, $D(a\circ b)=a\circ D(b)+ D(a)\circ b$ ($a,b\in A$), where $a\circ b = \frac{a b+b a}{2},$ whenever $a,b\in A$, or one from $\{a,b\}$ is in $A$ and the other is in $X$. Clearly, every derivation from $A$ into $X$ is a Jordan derivation. The other implication need not be true for general Banach algebras, however Johnson establishes in 1996 that every bounded Jordan derivation from a
C$^*$-algebra $A$ to a Banach $A$-bimodule is a derivation (cf. \cite{John96}). In a recent paper, due to Russo and the third author of this note, it is proved that every Jordan derivation
from a C$^*$-algebra $A$ to a Banach $A$-bimodule (or to a Jordan Banach $A$-module)
is continuous (cf. \cite[Corollary 17]{PeRu}).\smallskip

In 1990, Kadison \cite{Kad90} and Larson and Sourour \cite{LarSou} introduce the concept of local derivation in the following sense: let $X$ be a Banach $A$-bimodule over a Banach algebra $A$, a linear mapping $T:A\to X$ is said to be a \emph{local derivation} if for every $a$ in $A$ there exists a derivation $D_a :A\to X$, depending on $a$, satisfying $T(a) = D_a (a)$. Kadison proves in \cite[Theorem A]{Kad90} that each continuous local derivation of a von Neumann algebra $M$ into a dual Banach $M$-bimodule is a derivation. This theorem gave way to a cascade of results and studies on derivations on C$^*$-algebras, culminating with a definitive contribution due to Johnson, which asserts that every continuous local derivation of a C$^*$-algebra $A$ into a Banach $A$-bimodule is a derivation \cite[Theorem 5.3]{John01}. In the just quoted paper, Johnson also gives an automatic continuity result by proving that local derivations of a C$^*$-algebra $A$ into a Banach $A$-bimodule $X$ are continuous even if not assumed a priori to be so (cf. \cite[Theorem 7.5]{John01}).\smallskip

Section \ref{sec:local der} is devoted to survey the main results on local triple derivations on von Neumann algebras and on C$^*$-algebras. We revisit the main contributions due to Kadison, Bre\v{s}ar, Shul'man and Johnson. We present here new and simplified proofs obtained with new and recent methods. We also revisit the connections appearing between local derivations and generalized derivations, in the sense introduced and studied by Alaminos, Bresar, Extremera, and Villena \cite{AlBreExVill09} and Li and Pan \cite{LiPan}.\smallskip

In 1990, Kadison also affirmed that the study on local derivations should be also extended to ``local higher cohomology (for example, local 2-cocycles)'' (cf. \cite{Kad90}). Let $X$ be a Banach $A$-bimodule over a Banach algebra $A$. An $n$-multilinear operator $T: A\times\ldots\times A\to X$ is said to be an $n$-cocycle when the identity $$ a_1 T (a_2, \ldots, a_n,a_{n+1})+ \sum_{j=1}^{n} (-1)^{j} T (a_1, \ldots, a_{j-1},a_j a_{j+1}, \ldots, a_{n+1} )$$ $$ + (-1)^{n+1} T (a_1, \ldots, a_n) a_{n+1}=0, $$ holds for every $a_1,\ldots, a_n,a_{n+1}$ in $A$. The reader should recall that 1-cocycles from $A$ into $X$ are precisely the derivations of $A$ into $X$. The corresponding definition of local $n$-cocycles reads as follows: a multilinear mapping $T: A\times \ldots \times A \to  X$ is said to be a \emph{local $n$-cocycle} if for every $a_1,\ldots, a_n$ in $A$, there exists an $n$-cocycle $\Phi_{_{a_1,\ldots, a_n}}$ (depending on $a_1,\ldots, a_n$) such that $T(a_1,\ldots, a_n) = \Phi_{_{a_1,\ldots, a_n}} (a_1,\ldots, a_n)$. The question posed by Kadison in his comments can be materialized as follows:

\begin{problem}\label{problem local n-cocycles}
Is every continuous local $n$-cocycle of a C$^*$-algebra $A$ into a Banach $A$-bimodule an $n$-cocycle?
\end{problem}

In section \ref{ss: cohomology}, we survey the positive answer to Problem \ref{problem local n-cocycles} provided by Samei in \cite[Theorem 5.4]{Samei08}. We shall see that part of the technical results established to study continuous local derivations from a C$^*$-algebra $A$ into a Banach $A$-bimodule can be applied to show that  every continuous local $n$-cocycle of $A$ into $X$ is an $n$-cocycle (Theorem \ref{t local n-cocycles}).\smallskip

Every C$^*$-algebra belongs to a strictly wider class of complex Banach spaces equipped with a triple product satisfying certain algebraic and analytic hypothesis (see section \ref{ss: local triple der} for a detailed definition). A \emph{triple derivation} on a JB$^*$-triple $E$ is a linear mapping $\delta: E\to E$ satisfying that $$\delta \J abc = \J {\delta(a)}bc + \J a{\delta(b)}c + \J ab{\delta(c)},$$ for every $a,b,c\in E$. Barton and Friedman prove in \cite{BarFri} that every triple derivation on a JB$^*$-triple is automatically continuous. In \cite{Mack}, Mackey introduces local derivations on JB$^*$-triples.  A \emph{local triple derivation} on $E$ is a linear map $T : E\to E$ such that for each $a$ in $E$ there exists a triple derivation $\delta_{a}$ on $E$ satisfying $T(a) = \delta_a (a).$ Mackey also establishes a triple version of previously mentioned Kadison's theorem, proving that every local triple derivation on a JBW$^*$-triple (i.e., a JB$^*$-triple which is a dual Banach space) is a triple derivation. The problem whether every local triple derivation on a JB$^*$-triple is a triple derivation was left as an open problem by Mackey in the same paper. Very recently, Burgos, Fernández-Polo and the third author of this survey gave a complete positive answer to this problem in \cite{BurFerPe2013}. Section \ref{ss: local triple der} contains a detailed presentation of the major achievements on local triple derivations on real and complex JB$^*$-triples and C$^*$-algebras.\smallskip

Section \ref{sec:automatic cont} gathers a complete collection of results on automatic continuity of derivations, local derivations, triple derivations, and local triple derivations. \smallskip

Inspired by the Gleason-Kahane-\.{Z}elazko and the Kowalski-S{\l}odkowski theorems, \v{S}emrl introduced in \cite{Semrl97} the notions of 2-local homomorphisms and derivations. For our purposes, we recall that a (not necessarily linear nor continuous) mapping $T$ from a Banach algebra $A$ into a Banach $A$-bimodule $X$ is said to be a \emph{2-local derivation} if for every $a,b\in A$ there exists a (bounded linear) derivation $D_{a,b}: A\to X$, depending on $a$ and $b$, such that $D_{a,b} (a) = T(a)$ and $D_{a,b}(b) = T(b)$. It is established by \v{S}emrl that for every infinite-dimensional separable Hilbert space $H$, every 2-local
derivation $T : B(H) \to B(H)$ (no linearity or continuity of $T$ is assumed) is a derivation \cite[Theorem 2]{Semrl97}.\smallskip

In \cite[Remark]{Semrl97},  \v{S}emrl states that the conclusion of the above theorem also holds when $H$ is finite-dimensional. In such a case, however, he was only able to get a long proof involving tedious computations, and so, he decided not include these results. In \cite{KimKim04} Kim and Kim gave a short proof of the fact that every 2-local derivation on a finite-dimensional complex matrix algebra is a derivation. New techniques introduced by the first and second authors of this note in \cite{AyuKuday2012} have been applied to prove that every $2$-local derivation on $B(H)$, with $H$ an arbitrary Hilbert space (no separability is assumed), is a derivation. A similar result for $2$-local derivations on finite von Neumann algebras was obtained by Nurjanov, Alauatdinov and the first two authors of this survey in \cite{AKNA}. In \cite{AA2} the authors extend all the above results and give a short proof of this result for arbitrary semi-finite von Neumann algebras. When considering 2-local derivations on general von Neumann algebras, the most conclusive result asserts that every 2-local derivation on an arbitrary von Neumann algebra is a derivation (see \cite{AyuKuday2014}). In the first part of section \ref{s:2-local der} we survey the main results about 2-local derivations on von Neumann algebras. In subsection \ref{ss: local triple der} we also present another recent result, due to Oikhberg, Russo and the second and third authors of this note, which estates that every 2-local triple derivation on a von Neumann algebra is a triple derivation (cf. \cite{KOPR2014}). In the final subsection we survey some results on 2-local derivations on Arens algebras established by Nurjanov, Alauatdinov, and the frist two authors of this note in \cite{AKNA}.

\section{Local derivations on C$^*$-algebras}\label{sec:local der}

Let $X$ be a Banach $A$-bimodule over a Banach algebra $A$. We recall that a linear mapping $T:A\to X$ is said to be a {local derivation} if for every $a$ in $A$ there exists a derivation $D_a :A\to X$, depending on $a$, satisfying $T(a) = D_a (a)$. The main result established by Kadison in \cite[Theorem A]{Kad90} states that each continuous local derivation of a von Neumann algebra $M$ into a dual Banach $M$-bimodule is a derivation. Actually, Bre\v{s}ar shows in \cite{Bre92} that similar techniques and ideas can be applied to prove a slightly stronger statement, which is included here with a simplified proof.

\begin{theorem}\label{thm Kadison real rank zero}{\rm[Kadison \cite{Kad90}], [Bre\v{s}ar \cite[Theorem 2]{Bre92}]} Let $A$ be a unital C$^*$-algebra satisfying that every self-adjoint element in $A$ can be approximated in norm by a finite linear combinations of projections in $A$. Then every  continuous local derivation from $A$ into a Banach $A$-bimodule is a derivation.
\end{theorem}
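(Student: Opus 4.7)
The plan is to reduce the theorem to the Jordan-derivation case, which is already covered by Johnson's theorem quoted in the introduction. Concretely, I would show that every continuous local derivation $T : A \to X$ is automatically a Jordan derivation, and then invoke \cite{John96} to promote this to an associative derivation. The density hypothesis on $A$ enters precisely at the step of extending a Jordan-type identity from projections to all self-adjoint elements by continuity.

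The first step is immediate. For every projection $p \in A$, choose a derivation $D_p$ with $T(p) = D_p(p)$ and apply the Leibniz rule at $p = p^2$:
\begin{equation*}
T(p) \;=\; D_p(p) \;=\; D_p(p^2) \;=\; D_p(p)p + p D_p(p) \;=\; T(p)p + pT(p).
\end{equation*}

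The main technical step is to verify the Jordan identity
\begin{equation*}
T(pq + qp) \;=\; T(p)q + p T(q) + T(q)p + q T(p)
\end{equation*}
for every pair of projections $p, q \in A$. The orthogonal case ($pq = qp = 0$) follows quickly from the previous step applied to the projection $p + q$ combined with the linearity of $T$: expanding $T(p+q) = (p+q)T(p+q) + T(p+q)(p+q)$ and subtracting the analogous identities for $p$ and $q$ leaves exactly $pT(q) + qT(p) + T(p)q + T(q)p = 0$, which is the required Jordan identity because $pq + qp = 0$. For a general pair of projections, the element $p + q$ need not be a projection, and applying the local derivation property at $p+q$ only produces information about $D_{p+q}(pq + qp)$ rather than $T(pq + qp)$; the technical heart of the argument is to exploit the Peirce decomposition with respect to $p$ (or equivalently a two-projection structure theorem) to write $q$ as a sum of pieces that can be recombined into further projections, apply the first step repeatedly to these new projections, and recover the desired identity for $(p,q)$. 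Once this is in place, linearity extends the Jordan identity to all finite linear combinations of projections, and the density hypothesis together with the continuity of $T$ extends it to the self-adjoint part of $A$; writing a general $a\in A$ as $a=\mathrm{Re}(a)+i\,\mathrm{Im}(a)$ and invoking complex linearity then yields that $T$ is a (continuous) Jordan derivation on the whole of $A$.

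With $T$ now known to be a bounded Jordan derivation from $A$ into the Banach $A$-bimodule $X$, Johnson's theorem \cite{John96}, recalled in the introduction, implies that $T$ is a derivation, completing the proof. The main obstacle is the non-orthogonal case of the Jordan-identity step: without orthogonality the element $p + q$ is not itself a projection, so one must replace it with more delicate algebraic constructions inside the C$^*$-subalgebra generated by $p$ and $q$ in order to match $T$-values with $D_a$-values on the mixed product $pq + qp$; everything else in the argument is an application of linearity, continuity, and the hypothesis that projections span a norm-dense subset of the self-adjoint part.
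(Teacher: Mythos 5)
Your opening step (the identity $T(p)=pT(p)+T(p)p$ for a projection $p$), your treatment of orthogonal pairs, and your endgame (norm density, continuity, polarization, and Johnson's theorem from \cite{John96}) all coincide with the paper's proof. The problem is the step you yourself single out as the ``technical heart'': the Jordan identity $T(pq+qp)=T(p)q+pT(q)+T(q)p+qT(p)$ for an arbitrary, non-orthogonal pair of projections. You do not prove it, and the sketch you offer does not work as stated: the Peirce corners of $q$ relative to $p$, namely $pqp$, $(1-p)q(1-p)$ and $pq(1-p)+(1-p)qp$, are not projections, so there is nothing to which ``the first step'' can be applied. As written, the proposal has a genuine gap at its central step.

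The gap is, however, entirely avoidable, and this is exactly how the paper proceeds. The density hypothesis gives (in its standard real-rank-zero reading) norm density in $A_{sa}$ of the algebraic elements $a=\sum_{j=1}^{n}\lambda_j p_j$ with $p_1,\dots,p_n$ \emph{mutually orthogonal} projections and $\lambda_j\in\mathbb{R}$. For such an element, $a^2=\sum_j\lambda_j^2 p_j$ is a real linear combination of the \emph{same} orthogonal projections, so $T(a^2)=\sum_j\lambda_j^2 T(p_j)$ follows from linearity alone; no value of $T$ on a mixed product $p_jp_k+p_kp_j$ of non-orthogonal projections is ever needed. Expanding $T(a)a+aT(a)=\sum_{j,k}\lambda_j\lambda_k\bigl(T(p_j)p_k+p_kT(p_j)\bigr)$, the diagonal terms give $\sum_j\lambda_j^2T(p_j)$ by the single-projection identity, and the off-diagonal terms cancel in antisymmetric pairs using the refined orthogonal relation $qT(p)+T(q)p=0$, which the paper extracts from your orthogonal-pair identity by multiplying by $q$ and exploiting $pq=qp=0$. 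This yields $T(a^2)=T(a)a+aT(a)$ on a dense subset of $A_{sa}$, hence on all of $A_{sa}$ by continuity, and the polarization argument and Johnson's theorem then finish exactly as you describe. If you restrict your dense set to orthogonal combinations from the outset, your proof closes with no extra work.
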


\begin{proof} Let $T:A \to X$ be a local derivation from $A$ into a Banach $A$-bimodule. Let us consider a projection $p\in A$. In this case, \begin{equation}\label{eq local der on a proj} T(p)= D_p (p p ) = p D_p (p) + D_p (p) p = p T(p) + T(p) p.
\end{equation}
We consider now two orthogonal projections $p,q$ in $A$. By \eqref{eq local der on a proj}, $$T(p+q) = (p+q) T(p+q) + T(p+q) (p+q),$$ which combined with $T(p) = p T(p) + T(p) p$ and $T(q) = q T(q) + T(q)q,$ assures that \begin{equation}\label{eq local der on a proj 2} p T(q) + q T(p) + T(p) q + T(q) p =0.
\end{equation} Now, having in mind that $p\perp q$, we deduce that $$q T(p) = q (p T(p) + T(p) p) = q T(p) p,$$ and similarly $T(p) q = p T(p) q$, $T(q) p = q T(q) p,$ $p T(q) = p T(q) q$, $p T(q) p =0$, and $q T(p) q =0$. Combining these identities with \eqref{eq local der on a proj 2} we get $$0 = q (p T(q) + q T(p) + T(p) q + T(q) p) = q T(p) + q T(q) p = q T(p) +  T(q) p ,$$ which proves that \begin{equation}\label{eq local der on orthog proj}  q T(p) +  T(q) p =0,
\end{equation} for every couple of projections $p,q\in A$ with $p\perp q.$\smallskip

Suppose now that $p_1,\ldots, p_n$ are mutually orthogonal projections in $A$, $\lambda_1,\ldots,\lambda_n$ are real numbers and $\displaystyle a = \sum_{j=1}^{n} \lambda_j p_j$ is an algebraic self-adjoint element in $A$. By linearity $\displaystyle T(a^2 ) = \sum_{j=1}^{n} \lambda_j^2 T(p_j).$ On the other hand, $$T(a) a + a T(a) = T\left( \sum_{j=1}^{n} \lambda_j p_j \right) \left(\sum_{k=1}^{n} \lambda_k p_k \right) +  \left(\sum_{k=1}^{n} \lambda_k p_k \right) T\left( \sum_{j=1}^{n} \lambda_j p_j \right)$$
$$= \sum_{j,k=1}^{n} \lambda_j \lambda_k \left(T(p_j) p_k + p_k T(p_j)\right) = \hbox{(by \eqref{eq local der on orthog proj})} = \sum_{j=1}^{n} \lambda_j^2 T(p_j) = T(a^2).$$

By hypothesis, every self-adjoint element in $A$ can be approximated in norm by a finite linear combinations of projections in $A$, thus, the continuity of $T$ implies that \begin{equation}\label{eq on a single hermitian} T(a^2 ) = T(a) a + a T(a),
\end{equation} for every $a=a^*$ in $A.$ A simple polarization argument shows that, for each $c= a+ i b\in A$, with $a=a^*$ and $b=b^*$, we have $$T(c^2 ) = T(a^2 - b^2 + 2i (a\circ b)) = a T(a) + T(a) a - b T(b) - T(b) b $$ $$+ 2 i ( T(a) \circ b + a \circ T(b)) =2 T(a+i b) \circ (a+ ib) = 2 T(c) \circ c,$$
where $\circ$ denotes the canonical Jordan product $x\circ c = \frac12 ( x c + cx).$ We have therefore shown that $T$ is a Jordan derivation. We deduce from Johnson's theorem \cite{John96} that $T$ is a derivation.
\end{proof}

The following corollaries are direct consequences of the above theorem.

\begin{corollary}\label{c Kadison 90}{\rm[Kadison \cite[Theorem A]{Kad90}]} Every continuous local derivation of a von Neumann algebra $M$ into a dual Banach $M$-bimodule is a derivation.$\hfill\Box$
\end{corollary}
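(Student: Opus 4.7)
The plan is to derive this corollary as an immediate application of Theorem \ref{thm Kadison real rank zero}, since the hypothesis on dual Banach bimodules in Kadison's original formulation is subsumed by Bre\v{s}ar's more general statement. What one has to verify is that every von Neumann algebra $M$ falls within the scope of Theorem \ref{thm Kadison real rank zero}, i.e., that $M$ is unital and that every self-adjoint element of $M$ can be approximated in norm by finite linear combinations of projections in $M$.

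First I would note that a von Neumann algebra is unital by definition, so the first hypothesis is automatic. Next, I would invoke the Borel functional calculus available in $M$: given a self-adjoint element $a\in M$ with spectrum $\sigma(a)\subset [-\|a\|,\|a\|]$, one partitions $\sigma(a)$ into finitely many Borel pieces $E_1,\ldots,E_n$ of small diameter and considers the simple approximation $f_n = \sum_{j=1}^n \lambda_j \chi_{E_j}$ of the identity function on $\sigma(a)$, where $\lambda_j\in E_j$. The functional calculus then produces mutually orthogonal spectral projections $p_j = \chi_{E_j}(a)\in M$ (spectral projections of a self-adjoint element of a von Neumann algebra lie in the algebra itself), and $\bigl\|a - \sum_{j=1}^n \lambda_j p_j\bigr\| = \|f_{\mathrm{id}} - f_n\|_{\infty,\sigma(a)}$ can be made arbitrarily small.

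Finally, since every dual Banach $M$-bimodule is, in particular, a Banach $M$-bimodule, Theorem \ref{thm Kadison real rank zero} applies verbatim to any continuous local derivation $T: M \to X$ with $X$ a dual Banach $M$-bimodule, yielding that $T$ is a derivation. I do not anticipate any genuine obstacle here: the whole substance of the argument has already been absorbed into Theorem \ref{thm Kadison real rank zero} (use of \eqref{eq local der on orthog proj}, reduction to Jordan derivations via the spectral approximation, and Johnson's theorem \cite{John96}); the corollary is essentially the observation that the spectral hypothesis is satisfied by every von Neumann algebra.
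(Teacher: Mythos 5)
Your proposal is correct and coincides with the paper's intended argument: the corollary is stated there as a direct consequence of Theorem \ref{thm Kadison real rank zero}, the only point to check being precisely the norm-density of finite real linear combinations of projections in $M_{sa}$, which you verify via the spectral (Borel) functional calculus. Your observation that the dual-bimodule hypothesis is superfluous once Bre\v{s}ar's formulation is available is also in line with the paper.
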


\begin{corollary}\label{c Kadison compact} Let $A=K(H)$ or $A=B(H),$ where $H$ is a complex Hilbert space.  Then every continuous local derivation of $A$ into a Banach $A$-bimodule is a derivation.$\hfill\Box$
\end{corollary}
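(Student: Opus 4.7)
The plan is to reduce the statement directly to Theorem \ref{thm Kadison real rank zero}, so the task is to verify in each of the two cases $A = B(H)$ and $A = K(H)$ that every self-adjoint element is a norm limit of finite real-linear combinations of projections of $A$.

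For $A = B(H)$, I would invoke the Borel functional calculus. Given $a = a^{*} \in B(H)$ with spectral measure $E$ concentrated on $[-\|a\|, \|a\|]$ and given $\varepsilon > 0$, partition this interval into finitely many disjoint Borel sets $I_{1}, \ldots, I_{n}$ of diameter less than $\varepsilon$, pick $\lambda_{j} \in I_{j}$, and set $a_{\varepsilon} = \sum_{j=1}^{n} \lambda_{j} E(I_{j})$. Each $E(I_{j})$ is a projection in $B(H)$, and a standard estimate gives $\|a - a_{\varepsilon}\| < \varepsilon$. For $A = K(H)$, I would instead use the classical spectral theorem for compact self-adjoint operators: any such $a$ may be written as a norm-convergent series $a = \sum_{n \geq 1} \lambda_{n} p_{n}$, where the $\lambda_{n}$ are real eigenvalues tending to $0$ and the $p_{n}$ are pairwise orthogonal finite-rank projections lying in $K(H)$; the partial sums yield the desired approximation.

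The only subtlety is that $K(H)$ fails to be unital when $\dim H = \infty$, whereas Theorem \ref{thm Kadison real rank zero} is stated for unital C$^{*}$-algebras. I would handle this by observing that the proof of that theorem never actually uses a unit: the algebraic manipulations producing the identities $T(p) = p T(p) + T(p) p$ and $q T(p) + T(q) p = 0$ for orthogonal projections, the density argument yielding $T(a^{2}) = a T(a) + T(a) a$ for self-adjoint $a$, the polarization to obtain $T(c^{2}) = 2 T(c) \circ c$, and the final appeal to Johnson's theorem \cite{John96} that bounded Jordan derivations from a C$^{*}$-algebra to a Banach bimodule are derivations, all go through verbatim in the non-unital setting. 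Alternatively, if one prefers to stay strictly within the stated hypothesis, one may extend $T$ to a continuous local derivation on the minimal unitization $K(H)^{\sim}$ acting on the appropriate unitization of the bimodule and apply Theorem \ref{thm Kadison real rank zero} there.

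There is essentially no hard step: the entire content is the spectral-approximation property for $B(H)$ and $K(H)$, together with a quick check that non-unitality does not obstruct the argument. The closest thing to an obstacle is this last bookkeeping issue regarding $K(H)$, but it is dispatched either by inspecting the proof of Theorem \ref{thm Kadison real rank zero} or by the unitization trick just described.
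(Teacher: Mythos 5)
Your proposal is correct and matches the paper's intent: the paper states Corollary \ref{c Kadison compact} as a direct consequence of Theorem \ref{thm Kadison real rank zero}, the whole content being exactly the spectral approximation of self-adjoint elements by finite real combinations of projections that you carry out for $B(H)$ and $K(H)$. Your treatment of the non-unitality of $K(H)$ for infinite-dimensional $H$ is a welcome extra, since Theorem \ref{thm Kadison real rank zero} as stated assumes a unit, and either of your fixes --- observing that its proof never uses the unit, or passing to the unitization exactly as the paper itself does later in the proof of Theorem \ref{thm Johnson local 2001} --- correctly closes that small gap.
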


For information only, we mention that, accordingly to the data basis of MathSciNet, Kadison's theorem \cite[Theorem A]{Kad90} has received over 95 citations. It would be completely impossible to mention all of them in this paper. We shall highlight some of the results which can help the reader to understand the development of the theory. We have already mentioned that every derivation on a von Neumann algebra is inner \cite[Theorem 4.1.6]{Sak}, consequently, if $T$ is a bounded linear operator of a von Neumann algebra $M$ into itself, and if for each $a$ in $M$ there is a $z_a$ in $M$ satisfying $T(a)=[a,z_a]$, then there exists $z$ in $M$ such that $T(a)=[a,z]$, for all $a$ in $M$.

The question whether every local derivation from a C$^*$-algebra $A$ into a Banach $A$-bimodule is continuous and a derivation remained open for eleven years. A partial answer was provided by Shul'man in 1994 (cf. \cite{Shu}), by showing that every continuous local derivation from a C$^*$-algebra into itself is a derivation. The complete solution to the questions originated by Kadison's paper appear in 2001. In a celebrated result Johnson shows that every continuous local derivation of a C$^*$-algebra $A$ into a Banach $A$-bimodule is a derivation \cite[Theorem 5.3]{John01}. In the same paper, Johnson also gives an automatic continuity result by proving that local derivations of a C$^*$-algebra $A$ into a Banach $A$-bimodule $X$ are continuous even if not assumed a priori to be so (cf. \cite[Theorem 7.5]{John01}). The arguments in Johnson's paper rely on the fact, proved by the same author, that every Jordan derivation from $A$ into $X$ is a derivation, and thus, it is enough to show the result when $A$ is a C$^*$-algebra generated by a single self-adjoint element (a fact we have explicitly applied in the proof of Theorem \ref{thm Kadison real rank zero} above). Johnson considers and studies ``\emph{local multipliers}'' (defined by replacing ``derivation'' by ``multiplier'' in Kadison's definition). The main tool in his technical proof is the fact that the diagonal in $\mathbb{R}^2$ is a set of synthesis for $C_0(\mathbb{R}) \widehat{\otimes}
C_0(\mathbb{R})$.

\begin{theorem}\label{thm Johnson local 2001}{\rm[Johnson, \cite[Theorem 5.3]{John01}]} Every continuous local derivation of a C$^*$-algebra $A$ into a Banach $A$-bimodule is a derivation.
\end{theorem}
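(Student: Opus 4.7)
The plan is to mimic the strategy of the proof of Theorem \ref{thm Kadison real rank zero} while replacing the approximation by linear combinations of projections (no longer available in general) by an analytic ingredient from commutative harmonic analysis. First, by Johnson's theorem \cite{John96} it suffices to show that $T$ is a bounded Jordan derivation, and the polarization argument already used in the proof of Theorem \ref{thm Kadison real rank zero} reduces this further to verifying $T(a^2) = a T(a) + T(a) a$ for each self-adjoint $a \in A$. Fix such an $a$ and let $B = C^*(a) \subseteq A$; continuous functional calculus identifies $B$ with $C_0(K)$, where $K = \sigma(a) \setminus \{0\}$. Because the restriction to $B$ of any derivation $A \to X$ is again a derivation of $B$ into the Banach $B$-bimodule $X$, the map $S := T|_B$ is a continuous local derivation on $C_0(K)$, and it is enough to prove that any such $S$ satisfies $S(f^2) = 2 f \cdot S(f)$ for every $f \in C_0(K)$.

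Next, I would introduce the bounded bilinear form
$$\Psi : C_0(K) \times C_0(K) \to X, \qquad \Psi(f,g) := S(fg) - f \cdot S(g) - S(f) \cdot g,$$
measuring the failure of $S$ to be a derivation, and extend it to a bounded linear operator $\widetilde{\Psi} : C_0(K) \widehat{\otimes} C_0(K) \to X$ on the projective tensor product. Using that this projective tensor algebra embeds naturally into $C_0(K \times K)$, I would invoke a Varopoulos-type theorem asserting that the diagonal $\Delta = \{(t,t) : t \in K\}$ is a set of spectral synthesis for it, so that every element of $C_0(K) \widehat{\otimes} C_0(K)$ vanishing on $\Delta$ is a projective-norm limit of finite sums $\sum_i f_i \otimes g_i$ with $f_i \, g_i \equiv 0$. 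Once the key identity $\Psi(f,g) = 0$ whenever $fg = 0$ is established (see the next paragraph), synthesis forces $\widetilde{\Psi}$ to vanish on the kernel of the multiplication map $R : f \otimes g \mapsto fg$, so that $\widetilde{\Psi}$ factors through $R$ as $\Psi(f,g) = \overline{\Psi}(fg)$ for a unique continuous linear $\overline{\Psi} : C_0(K) \to X$. A further application of the local derivation hypothesis, using that every derivation annihilates the unit of the unitization of $B$ together with an approximate-identity argument in the non-unital case, then yields $\overline{\Psi} \equiv 0$, and specialising to $f = g$ gives $0 = \Psi(f,f) = S(f^2) - 2 f \cdot S(f)$, the desired Jordan identity.

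The hard part is the zero-product vanishing mentioned above: the local derivation hypothesis only provides, one element at a time, a derivation agreeing with $S$ there, so to obtain $\Psi(f,g) = 0$ when $fg = 0$ one must simultaneously exploit the identities
$$\alpha T(f(a)) + \beta T(g(a)) = T(\alpha f(a) + \beta g(a)) = D_{\alpha f(a) + \beta g(a)} \bigl(\alpha f(a) + \beta g(a)\bigr)$$
as the scalars $\alpha, \beta$ vary, together with the product relation $(f + g)^2 = f^2 + g^2$ that holds precisely because $fg = 0$, and then pass to limits using continuity of $T$. This step is the continuous analogue of the orthogonal-projection identity \eqref{eq local der on orthog proj} in the proof of Theorem \ref{thm Kadison real rank zero}, and it is exactly here that Johnson's harmonic analysis on $C_0(\mathbb{R}) \widehat{\otimes} C_0(\mathbb{R})$, and the role of the diagonal as a set of synthesis, becomes indispensable in the absence of any abundance of projections.
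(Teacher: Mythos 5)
Your architecture is a reconstruction of Johnson's original synthesis-based proof, which is not the route this survey takes (the survey instead proves the three-variable identity $aT(b)c=0$ for $ab=bc=0$, upgrades it to ``generalized derivation'' via Goldstein's theorem on orthogonal forms and the multiplier Lemma \ref{l left mulipliers non unital}, and then kills the multiplier part using $T(1)=D_1(1)=0$ in the unitization). That is legitimate in principle, but your proof has a genuine gap exactly where you flag ``the hard part'': the two-variable identity $f\,S(g)+S(f)\,g=0$ whenever $fg=0$ is never established, and the sketch you offer for it does not work. Varying $\alpha,\beta$ in $\alpha T(f)+\beta T(g)=D_{\alpha f+\beta g}(\alpha f+\beta g)$ extracts nothing, because the witnessing derivation $D_{\alpha f+\beta g}$ changes with $(\alpha,\beta)$ in an uncontrolled way --- this is precisely the obstruction that makes local (as opposed to $2$-local) problems delicate. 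Likewise $(f+g)^2=f^2+g^2$ combined with linearity of $T$ is a tautology unless you already know $T(h^2)=hT(h)+T(h)h$, which is what you are trying to prove. In the projection case the analogous step, identity \eqref{eq local der on orthog proj}, rests on $T(p)=pT(p)+T(p)p$, which uses $p^2=p$; there is no substitute for a general element of $C_0(K)$.

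Worse, the two-variable identity is essentially equivalent to the theorem itself: if $T$ satisfies $T(ab)=T(a)b+aT(b)-a\xi b$, then $fg=0$ gives $fT(g)+T(f)g=f\xi g$, so demanding that this vanish already forces the multiplier part to die on zero-product pairs --- it is not something one can harvest from locality ``upstream'' of the main argument. What locality gives cheaply is only the three-variable version $aT(b)c=0$ for $ab=bc=0$ (Lemma \ref{l Kad sublemma 5}), and both Johnson's original proof and the survey's proof do their real analytic work (synthesis of the diagonal, respectively Goldstein's theorem) precisely in passing from that weaker hypothesis to the conclusion. You have also misplaced the role of the synthesis theorem: in your second paragraph it is used after the zero-product vanishing, to factor $\widetilde{\Psi}$ through the multiplication map, yet in your final paragraph you say it is what establishes the vanishing itself; it cannot do both jobs, and as written it does neither, since its required input is the unproven lemma. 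The remaining steps (reduction to $C^*(a)\cong C_0(K)$, polarization, killing $\overline{\Psi}$ via $T(1)=0$ or an approximate identity) are sound.
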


Instead of presenting here the technical proof given by Johnson in \cite{John01}, we shall revisit and adapt more renewed techniques developed in recent years in the setting of JB$^*$-triples (cf. \cite[\S 3]{BurFerPe2013}, \cite{BurFerGarPe2014}, \cite{Pe2014} and section \ref{ss: local triple der}) to provide to the reader a simpler proof. In \cite[Sublemma 5]{Kad90}, Kadison isolated an interesting property satisfied by every local derivations of a von Neumann algebra into a Banach bimodule, which remains valid for local derivations of a C$^*$-algebra into a Banach bimodule.

\begin{lemma}\label{l Kad sublemma 5} Let $T: A\to X$ be a local derivation of a C$^*$-algebra into a Banach $A$-bimodule. Then $a T(b) c=0$, whenever $ab=bc=0$ in $A.$
\end{lemma}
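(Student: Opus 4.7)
The plan is to exploit the single derivation $D_{b}$ that witnesses $T(b) = D_{b}(b)$ at the middle argument $b$, and to extract the desired vanishing from the two annihilation hypotheses $ab=0$ and $bc=0$ by the usual Leibniz rule. Concretely, I would pick any derivation $D_{b}: A \to X$ with $T(b) = D_{b}(b)$ (such a $D_{b}$ exists by the definition of local derivation applied to the element $b$) and then compute $a\,T(b)\,c$ by substituting $D_{b}(b)$ for $T(b)$.

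First I would apply $D_{b}$ to the relation $ab = 0$. The Leibniz identity for derivations into a bimodule gives
\[
0 = D_{b}(ab) = D_{b}(a)\,b + a\,D_{b}(b),
\]
so $a\,D_{b}(b) = -\,D_{b}(a)\,b$ in $X$. Multiplying on the right by $c$ (using the right $A$-module structure of $X$) and then invoking $bc = 0$, one obtains
\[
a\,T(b)\,c \;=\; a\,D_{b}(b)\,c \;=\; -\,D_{b}(a)\,(bc) \;=\; 0,
\]
which is the desired conclusion.

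There is essentially no obstacle here: the whole point is that a single derivation $D_{b}$, chosen to agree with $T$ on $b$, already knows enough to annihilate the expression $a\,T(b)\,c$ whenever $b$ is sandwiched between two elements that kill it on the appropriate side. Continuity of $T$ is not needed, nor is linearity beyond what is already encoded in the definition of a local derivation; the argument is purely algebraic once one has selected the derivation associated to $b$. This is the observation isolated by Kadison in \cite[Sublemma 5]{Kad90}, and it will serve as one of the basic computational tools in the later triple-product style arguments for local (triple) derivations.
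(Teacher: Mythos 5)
Your proof is correct and follows essentially the same route as the paper: both select the single derivation $D_b$ witnessing $T(b)=D_b(b)$, rewrite $a\,D_b(b)$ via the Leibniz rule applied to $ab=0$, and then kill the remaining term using $bc=0$ and the associativity of the bimodule actions. The only cosmetic difference is that the paper compresses the computation into one chain of equalities, $a T(b) c = (D_b(ab)-D_b(a)b)c = -D_b(a)bc = 0$, whereas you isolate the intermediate identity $a\,D_b(b)=-D_b(a)\,b$ first.
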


\begin{proof}Let us consider $a,b,c\in A$ with  $ab=bc=0$. The identity $$a T(b) c= a D_b(b) c = \left(D_b (a b) - D_b (a) b\right) c = - D_b (a) bc = 0,$$ proves the desired statement.
\end{proof}

We recollect at this point some well known facts on the Arens bitransposes of a bounded bilinear map (cf. \cite{Arens51}). Let $m : X\times Y \to Z$ be a bounded bilinear mapping between Banach spaces. We can define a bounded bilinear mapping $m^*: Z^*\times X \to Y^*$ by setting $m^* (z^\prime,x) (y) := z^\prime (m(x,y))$ $(x\in X, y\in Y, z^\prime\in Z^*)$. Iterating the process, we define a mapping $m^{***} = [(m^*)^*]^*: X^{**}\times Y^{**} \to Z^{**}.$ The definition given above satisfies that the mapping $x^{\prime\prime}\mapsto  m^{***}(x^{\prime\prime} , y^{\prime\prime})$ is weak$^*$ to weak$^*$ continuous whenever we fix  $y^{\prime\prime} \in  Y^{**}$, and similarly, the mapping $y^{\prime\prime}\mapsto  m^{***}(x, y^{\prime\prime})$ is weak$^*$ to weak$^*$ continuous for every $x\in  X$. If we consider the transposed mapping $m^{t} : Y\times X\to Z,$ $m^{t} (y,x) = m(x,y)$ and the extended mapping $m^{t***t}: X^{**}\times Y^{**} \to Z^{**},$ we get a new bounded bilinear extension satisfying that the mapping $x^{\prime\prime}\mapsto  m^{t***t}(x^{\prime\prime} , y)$ is weak$^*$ to weak$^*$ continuous whenever we fix  $y \in  Y$, and the mapping $y^{\prime\prime}\mapsto  m^{t***t}(x^{\prime\prime}, y^{\prime\prime})$ is weak$^*$ to weak$^*$ continuous for every $x^{\prime\prime}\in  X^{**}$.\smallskip

In general, the mappings $m^{t***t}$ and $m^{***}$ do not coincide (cf. \cite{Arens51}). The mapping $m$ is said to be \emph{Arens regular} if $m^{t***t}=m^{***}.$ It is well known that the product of every C$^*$-algebra is Arens regular and the unique Arens extension of the product of $A$ to $A^{**}\times A^{**}$ coincides with the product of its enveloping von Neumann algebra (cf. \cite[Corollary 3.2.37]{Dales00}).\smallskip

Let $X$ be a Banach $A$-bimodule over a C$^*$-algebra $A.$ Let us denote by $\pi_1: A\times X \to X$ and $\pi_2: X\times A \to X$ the corresponding module operations given by $\pi_1(a,x) = a x$ and $\pi_2(x,a) = x a$, respectively. By an abuse of notation, given $a\in A^{**}$ and $z\in X^{**},$ we shall frequently  write $a z = \pi_1^{***} (a,z)$ and $z a = \pi_2^{***} (z,a)$. It is known that $X^{**}$ is a Banach $A^{**}$-bimodule for the just defined operations (\cite[Theorem 2.6.15$(iii)$]{Dales00}). By definition, for each $b\in A$, $b''\in A^{**}$, $z\in X$ and $z''\in X^{**}$ the mappings \begin{equation}\label{eq weakstar contin module operations}
 A^{**} \to X^{**}, a\mapsto a z'', \ a\mapsto z a,
 \end{equation}$$X^{**} \to X^{**}, x\mapsto b x, \ x\mapsto x b''$$ are weak$^*$-continuous.  It is also known that whenever $(a_\lambda)$ and $(x_\mu)$ are nets in $A$ and $X$, respectively, such that $a_\lambda \to a\in A^{**}$ in the weak$^*$ topology of $A^{**}$ and $x_\mu\to x\in X^{**}$ in the weak$^*$ topology of $X^{**}$, then \begin{equation}\label{eq product bidual module} a x = \pi_1^{***}(a,x) = \lim_{\lambda} \lim_{\mu} a_\lambda x_\mu \hbox{ and } x a= \pi_2^{***} (x,a) =  \lim_{\mu} \lim_{\lambda}  x_\mu a_\lambda
\end{equation} in the weak$^*$ topology of $X^{**}$ (cf. \cite[2.6.26]{Dales00}).\smallskip

Many different notions have been motivated by the property isolated in Lemma \ref{l Kad sublemma 5}. For example, Alaminos, Bresar, Extremera, and Villena \cite[\S 4]{AlBreExVill09} and Li and Pan \cite{LiPan} introduced the following definition: a linear operator $G$ from a Banach algebra $A$ into a Banach $A$-bimodule $X$ is said to be a \emph{generalized derivation} if there exists $\xi\in  X^{**}$ satisfying $$G(ab) = G(a)  b + a  G(b) - a \xi b \hbox{ ($a, b \in A$).}$$ A linear mapping $T : A \to X$ is called a \emph{local generalized derivation} if for each $a\in A$, there exists a generalized derivation $D_a : A \to X$ satisfying $T(a) = D_a (a)$.\smallskip

For each $x\in X$, the mapping $G_x: A\to X$, $a\mapsto a \circ x$ is a generalized derivation. Every (local) derivation of $A$ into $X$ is a (local) generalized derivation. When $A$ and $X$ are unital, every generalized derivation $G: A\to X$ is a derivation if and only if $G(1) = 0.$ It is also easy to see that when $A$ and $X$ are unital and $G_1,G_2 : A\to X$ are two generalized derivations with $G_1 (1) = G_2 (1)$, then the mapping $D= G_1 -G_2$ is a derivation.

\begin{remark}\label{remark generalized and derivations bitransposed} Let $D: A\to X$ be a continuous derivation of a C$^*$-algebra into a Banach $A$-bimodule. Given $c,d\in A^{**}$ we can find two (bounded) nets $(a_{\lambda})$ and $(b_\mu)$ in $A$ converging to $c$ and $d$ in the weak$^*$-topology of $A^{**}$, respectively. By the weak$^*$-continuity of $D^{**}$ and \eqref{eq weakstar contin module operations} and \eqref{eq product bidual module} we get $$D^{**}( cd) = D^{**} (c) d + c D(d),$$ which shows that $D^{**}$ is a derivation (this is a standard argument cf. \cite[Lemma 3]{Kad66}).\smallskip

Suppose $G: A\to X$ is a continuous generalized derivation. Let $\xi$ be an element in $X^{**}$ satisfying   $G(ab) = G(a)  b + a  G(b) - a \xi b \hbox{ ($a, b \in A$).}$ We observe that $a \xi b =  G(a)  b + a  G(b) - G(ab) \in X$, for every $a,b\in A.$ By the weak$^*$-density of $A$ in $A^{**}$, the weak$^*$-continuity of $G^{**}$ and the weak$^*$-continuity properties of the module operations of $X^{**}$ we have: \begin{equation}\label{eq remark semi-generalized der} G^{**}(ab) = G^{**}(a)  b + a  G(b) - a \xi b,
 \end{equation} for every $a\in A^{**}$ and $b\in A$ (just observe that the mapping $A^{**}\to X^{**},$ $a\mapsto a \xi$ is weak$^*$-continuous). However, the mapping $A^{**}\to X^{**},$ $a\mapsto \xi a$ need not continuous, so, it is not clear how we can assure that $G^{**}$ is a generalized derivation, that is, identity \eqref{eq remark semi-generalized der} holds for every $a,b\in A^{**}$. When $A$ and $X$ are unital,  $G(1) = \xi\in X$, thus, the weak$^*$-continuity of the mapping $A^{**}\to X^{**},$ $a\mapsto \xi a$ implies that $$G^{**}(ab) = G^{**}(a)  b + a  G^{**}(b) - a \xi b,$$ for every $a,b\in A^{**}$, and hence $G^{**}: A^{**}\to X^{**}$ is a generalized derivation. This discussion will be completed after Theorem \ref{t multipliers 1}.
\end{remark}

Surprisingly, the proof of the fact that every bounded local generalized derivation of a C$^*$-algebra $A$ into a Banach $A$-bimodule is a generalized derivation, requires a less technical argument than the one presented by Johnson \cite{John01} in the case of continuous local derivations. It is very easy to check that every local generalized derivation from a C$^*$-algebra $A$ into a Banach $A$-bimodule also satisfies the conclusion of Lemma \ref{l Kad sublemma 5}.

\begin{lemma}\label{l Kad sublemma 5 local generalized derivations} Let $T: A\to X$ be a local generalized derivation of a C$^*$-algebra into a Banach $A$-bimodule. Then $a T(b) c=0$, whenever $ab=bc=0$ in $A.$ $\hfill\Box$
\end{lemma}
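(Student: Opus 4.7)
The plan is to adapt the one-line argument used for Lemma \ref{l Kad sublemma 5}, keeping track of the extra term $-x\xi y$ that appears in the definition of a generalized derivation. Fix $a,b,c\in A$ with $ab=bc=0$. By hypothesis there is a generalized derivation $D_b:A\to X$ and an element $\xi\in X^{**}$ (both depending on $b$) such that $T(b)=D_b(b)$ and
$$D_b(xy) \;=\; D_b(x)\,y \,+\, x\,D_b(y) \,-\, x\,\xi\,y \qquad (x,y\in A).$$

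First I would apply this identity with $x=a$ and $y=b$. Since $ab=0$ the left-hand side vanishes, giving $a\,D_b(b) = a\,\xi\,b - D_b(a)\,b$. Right-multiplying by $c$ and using $bc=0$ yields
$$a\,T(b)\,c \;=\; a\,D_b(b)\,c \;=\; (a\,\xi\,b)\,c \,-\, D_b(a)\,(bc) \;=\; (a\,\xi\,b)\,c.$$
To conclude, I would invoke the $A^{**}$-bimodule structure on $X^{**}$ recalled just before the lemma, which gives $(a\,\xi\,b)\,c = a\,\xi\,(bc) = 0$.

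The only mildly subtle point, and the main (if modest) obstacle, is that $\xi$ lies a priori in $X^{**}\setminus X$, so the intermediate products $a\xi$, $\xi b$ and $a\xi b$ have to be interpreted via the Arens-type extensions of the module operations. Rearranging the generalized-derivation identity shows $a\,\xi\,b = D_b(a)\,b + a\,D_b(b) - D_b(ab) \in X$, so the product $(a\xi b)c$ makes sense as an element of $X$, and the associativity $(a\xi b)c = a\xi(bc)$ is built into the $A^{**}$-bimodule axioms for $X^{**}$. Once these points are granted, the computation above is a direct analogue of the proof of Lemma \ref{l Kad sublemma 5}, with no further obstacle.
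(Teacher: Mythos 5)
Your argument is correct and is exactly the verification the paper has in mind: the paper omits the proof, remarking only that "it is very easy to check," the intended check being precisely this adaptation of the proof of Lemma \ref{l Kad sublemma 5} with the extra term $a\xi b$ handled via the $A^{**}$-bimodule structure of $X^{**}$. Your observation that $a\xi b = D_b(a)b + aD_b(b) - D_b(ab)$ lies in $X$ and that $(a\xi b)c = a\xi(bc) = 0$ by the bimodule axioms recalled before the lemma settles the only point of possible concern.
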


In what follows, we denote by $A_{sa}$ the hermitian elements of a C$^*$-algebra $A.$\smallskip

The arguments we gave in the proof of Theorem \ref{thm Kadison real rank zero} rely on the hypothesis of norm density of algebraic elements (i.e. finite linear combinations of mutually orthogonal projections). If we try to apply a similar technique in the case of a general C$^*$-algebra $B$, we immediately find the obstruction arising from the scarcity of projections.

\begin{proposition}\label{prop zero products implies gener der} Let $X$ be a unital Banach $A$-bimodule over a unital C$^*$-algebra. Suppose $T: A \to X$ is a continuous linear operator. The following statements are equivalent: \begin{enumerate}[$(a)$]\item $aT(b)c = 0$, for every $a,b,c \in A$, with $ab = bc = 0$;
\item $aT(b)c = 0$, for every $a,b,c \in A_{sa}$, with $ab = bc = 0$;
\item $T$ is a generalized derivation.
\end{enumerate}
\end{proposition}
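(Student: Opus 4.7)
I plan to prove the chain $(c) \Rightarrow (a) \Rightarrow (b) \Rightarrow (c)$, with the first two routine and the third being the substantive direction. For $(c) \Rightarrow (a)$: if $T$ is a generalized derivation with associated $\xi \in X^{**}$ and $ab = bc = 0$, then $0 = T(ab) = T(a) b + a T(b) - a \xi b$ yields $a T(b) = a \xi b - T(a) b$, and right-multiplication by $c$ together with $bc = 0$ gives $a T(b) c = 0$. The implication $(a) \Rightarrow (b)$ is immediate by restriction to self-adjoint triples.

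For $(b) \Rightarrow (c)$, I would first set $\xi := T(1) \in X$ and define $D: A \to X$ by $D(a) := T(a) - \xi a$. Then $D$ is continuous and linear, $D(1) = 0$, and $D$ also satisfies $(b)$ since $a D(b) c = a T(b) c - a \xi b c = 0 - 0 = 0$ whenever $a, b, c \in A_{sa}$ with $ab = bc = 0$. Proving that $D$ is a derivation will suffice: unraveling $D(ab) = D(a) b + a D(b)$ in terms of $T$ recovers $T(ab) = T(a) b + a T(b) - a \xi b$, exhibiting $T$ as a generalized derivation with multiplier $\xi = T(1) \in X$.

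To show $D$ is a derivation, I would restrict to each commutative unital C$^*$-subalgebra $B := C^*(a, 1) \cong C(\sigma(a))$ generated by a self-adjoint $a \in A$, viewed as a Banach $B$-bimodule by restriction of the $A$-bimodule action on $X$. The crucial step is to upgrade $(b)$ to the full zero-product condition on $B$: given $f, g, h \in B$ with $fg = gh = 0$, decomposing $f = f_1 + i f_2$ etc.\ into self-adjoint parts, a pointwise inspection in $C(\sigma(a))$ shows that $fg = 0$ forces $f_j g_k = 0$ for all $j, k$ (at each $t \in \sigma(a)$ either $f(t) = 0$, and then $f_1(t) = f_2(t) = 0$, or $g(t) = 0$, and then $g_1(t) = g_2(t) = 0$), and analogously $g_k h_l = 0$. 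Applying $(b)$ to each self-adjoint triple $(f_j, g_k, h_l) \in A_{sa}^3$ and summing with appropriate powers of $i$ yields $f D(g) h = 0$. The theorem of Alaminos, Bre\v{s}ar, Extremera, and Villena \cite{AlBreExVill09} (compare also \cite{LiPan}) then guarantees that $D|_B$ is a generalized derivation on $B$; since $D|_B(1) = 0$, the multiplier vanishes and $D|_B$ is a genuine derivation on $B$. In particular, $D(a^2) = D(a) a + a D(a)$ for every $a \in A_{sa}$.

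Linearity and the standard complexification/polarization argument (as in the proof of Theorem \ref{thm Kadison real rank zero}) propagate the Jordan identity to all of $A$, so $D$ is a continuous Jordan derivation, and Johnson's theorem \cite{John96} then yields that $D$ is a derivation. The main obstacle I anticipate is the passage from $(b)$ to the full zero-product property on the commutative subalgebras $C^*(a, 1)$: this relies crucially on the pointwise structure of $C(\sigma(a))$, since the analogous splitting into self-adjoint parts of a general non-commutative product $fg = 0$ fails. That is precisely why the detour through $C^*(a, 1)$ is needed before the Alaminos-Bre\v{s}ar-Extremera-Villena theorem can be invoked.
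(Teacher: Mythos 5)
Your reduction scheme is sound in outline --- normalizing by $\xi = T(1)$, upgrading the self-adjoint hypothesis $(b)$ to the full zero-product condition inside each singly generated commutative subalgebra $B = C^*(a,1) \cong C(\sigma(a))$, and then propagating the Jordan identity by polarization and Johnson's theorem \cite{John96} --- and the pointwise splitting argument showing that $fg=0$ in $C(\sigma(a))$ forces $f_jg_k=0$ for all real and imaginary parts is correct, as are the implications $(c)\Rightarrow(a)\Rightarrow(b)$. However, there is a genuine gap at the decisive step: you conclude that $D|_B$ is a generalized derivation by citing \cite{AlBreExVill09}, but that citation is (the commutative case of) precisely the statement being proved. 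Theorem \ref{t multipliers 1} lists \cite[Theorem 4.5]{AlBreExVill09} as the result for which Proposition \ref{prop zero products implies gener der} is meant to supply a self-contained proof, so the analytic core --- why a zero-product condition forces the multiplier identity, even on $C(K)$ --- is assumed rather than established; nothing in your argument explains it, and it is not an elementary fact.

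The paper supplies exactly this missing content via Goldstein's theorem on orthogonal forms \cite{Gold}, packaged as Lemma \ref{l left mulipliers non unital}: for $a,b\in A_{sa}$ with $ab=0$ one checks that $L(x)=aT(bx)$ is left-annihilator-preserving, whence $aT(bx)=aT(b)x$ for all $x$; then $R(z)=T(zx)-T(z)x$ is right-annihilator-preserving, whence $T(zx)-T(z)x=zT(x)-zT(1)x$, which is already the generalized derivation identity --- no commutative reduction, no Jordan step, no appeal to Johnson. Your approach can be repaired without abandoning its architecture: instead of citing \cite{AlBreExVill09}, run that two-step multiplier argument (Lemma \ref{l left mulipliers non unital}) inside $B$ and conclude $D|_B$ is a derivation from $D|_B(1)=0$. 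This is in fact slightly cleaner inside $B$, since in a commutative algebra products of self-adjoint elements remain self-adjoint, so the middle entries $gc$ arising in $L(c)d=fD(gc)d$ stay in $A_{sa}$ and hypothesis $(b)$ applies verbatim. As written, though, the proposal delegates the one nontrivial point to the theorem it is meant to prove.
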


The proof of Proposition \ref{prop zero products implies gener der} will follow from a series of lemmas. We recall first some results and definitions. Let $A$ be a C$^*$-algebra. A continuous bilinear form $V : A\times A \to \mathbb{C}$ is said to be \emph{orthogonal} when $V (a,b) = 0$ for every $a,b\in A_{sa}$ with $a\perp b$ (see \cite[Definition 1.1]{Gold}).  Goldstein established in \cite{Gold} a beautiful result which determines the exact expression of every continuous bilinear orthogonal form on a C$^*$-algebra.

\begin{theorem}\label{thm Goldstein}\cite{Gold} Let $V : A\times A \to \CC$ be a continuous orthogonal form on a C$^*$-algebra.
Then there exist functionals $\phi,\psi\in A^*$ satisfying that
$$ V (a,b) = V_{\varphi,\psi} (a,b)= \varphi (a \circ b) + \psi ([a,b]),$$ for
all $a,b\in A$, where $a\circ b:= \frac12 (ab +ba)$, and $[a,b]:=
\frac12 (ab -ba)$. $\hfill\Box$
\end{theorem}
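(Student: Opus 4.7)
The plan is to split $V$ into its symmetric part $V_s(a,b):=\tfrac12(V(a,b)+V(b,a))$ and antisymmetric part $V_a(a,b):=\tfrac12(V(a,b)-V(b,a))$; both inherit continuity and orthogonality, since for self-adjoint $a,b$ the relation $ab=0$ forces $ba=(ab)^*=0$, so $V(b,a)=0$ too. It then suffices to exhibit $\varphi,\psi\in A^*$ with $V_s(a,b)=\varphi(a\circ b)$ and $V_a(a,b)=\psi([a,b])$. In order to use the spectral theorem freely, I would first extend $V$ to a separately weak$^*$-continuous orthogonal form on the bidual $A^{**}$ via the Arens construction (unambiguous by Arens regularity of the C$^*$-product, as noted in the excerpt), and work in the unital von Neumann algebra $A^{**}$, restricting $\varphi,\psi$ back to $A$ at the end.

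For the symmetric part, the core step is to upgrade orthogonality to \emph{Jordan orthogonality}: to prove that $V_s(a,b)=0$ whenever $a,b\in(A^{**})_{sa}$ satisfy $a\circ b=0$, i.e.\ $ab+ba=0$. Granted this upgrade, the map $a\circ b\mapsto V_s(a,b)$ is well defined on the $\mathbb{R}$-linear span of Jordan products, which equals $(A^{**})_{sa}$ (every self-adjoint element is a difference of squares $c_+ - c_-$ with $c_\pm \geq 0$), and the bound $|\varphi(c)|\le 2\|V_s\|\,\|c\|$ on $(A^{**})_{sa}$ follows from $\varphi(c_\pm)=V_s(\sqrt{c_\pm},\sqrt{c_\pm})$ and $\|\sqrt{c_\pm}\|^2=\|c_\pm\|\le\|c\|$. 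The upgrade itself is a spectral calculation: $ab+ba=0$ gives $a^nb=(-1)^nba^n$ by induction, continuous functional calculus then yields $|a|b=b|a|$ together with the anticommutation $\mathrm{sgn}(a)\,b+b\,\mathrm{sgn}(a)=0$, and the resulting identities $p_+b=bp_-$ and $p_-b=bp_+$ between the spectral projections $p_\pm$ of $a$ allow one to split $a=a_+-a_-$ and to decompose $b$ into pieces against $\{p_+,p_-,\mathbf{1}-p_+-p_-\}$; the orthogonality hypothesis applied to the resulting genuinely disjoint self-adjoint pairs then kills each contribution.

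For the antisymmetric part the previous approach breaks down: orthogonality applied to $p\perp\mathbf{1}-p$ gives $V(p,\mathbf{1})=V(p,p)=V(\mathbf{1},p)$, which extends by continuity and density to $V(x,\mathbf{1})=V(\mathbf{1},x)$ for all $x$, forcing $V_a(\cdot,\mathbf{1})\equiv 0$, so no evaluation-at-the-identity definition of $\psi$ is available. I would instead define $\psi$ on the linear span $[A^{**},A^{**}]$ of commutators by $\psi([a,b]):=V_a(a,b)$ and extend to a bounded functional on $A^{**}$ by Hahn--Banach. The hardest step of the whole proof is the well-definedness and boundedness of $\psi$: one must show that $\sum_i[a_i,b_i]=0$ forces $\sum_iV_a(a_i,b_i)=0$, and that $|\psi(c)|$ is controlled by $\|c\|$ rather than by the individual factor norms $\|a_i\|\|b_i\|$. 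I would reduce well-definedness by joint continuity and density to the case where each $a_i,b_i$ is a finite $\mathbb{R}$-linear combination of mutually orthogonal projections in $A^{**}$, in which setting the commutator identities become explicit combinatorial relations among matrix-unit-like pieces, and the orthogonality of $V_a$ applied block by block eliminates all non-tautological contributions. Assembling the two functionals gives $V(a,b)=\varphi(a\circ b)+\psi([a,b])$ on $A^{**}$, and restricting $\varphi,\psi$ to $A$ yields the statement.
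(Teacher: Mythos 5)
The paper offers no proof of this statement: it is quoted verbatim from Goldstein's article \cite{Gold} and closed with a tombstone, so your attempt has to be judged on its own. The symmetric/antisymmetric splitting, the remark that orthogonality passes to both parts, and the observation that $V_a(\cdot,\mathbf{1})\equiv 0$ are all correct, but the two steps you yourself single out as the core are exactly where the theorem lives, and neither sketch closes. For the symmetric part, your spectral analysis of $ab+ba=0$ is right as far as it goes ($a_+b=ba_-$, $a_-b=ba_+$, $p_0b=bp_0$), and it does kill the piece $p_0bp_0$, since $a(p_0bp_0)=(p_0bp_0)a=0$. But the surviving piece of $b$ is the off-diagonal part $c=p_+bp_-+p_-bp_+$, and $a$ is \emph{not} orthogonal to $c$ in the sense of the hypothesis: $ac\neq 0$ in general, one only has $ac=-ca$. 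So there are no ``genuinely disjoint self-adjoint pairs'' left to feed into the hypothesis; the anticommuting diagonal/off-diagonal configuration (already present in $M_2(\CC)$ with $a=e_{11}-e_{22}$, $b=e_{12}+e_{21}$) is the irreducible core and requires a separate argument — Goldstein handles it through the structure of the von Neumann algebra generated by two projections. Moreover, even granting Jordan orthogonality of $V_s$, your functional is defined by $\varphi(c_{\pm})=V_s(\sqrt{c_{\pm}},\sqrt{c_{\pm}})$, and its additivity on positive elements is precisely the representation problem for orthogonally additive quadratic forms, not a formal consequence of vanishing on Jordan-orthogonal pairs.

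The antisymmetric part is where the proposal genuinely fails. Defining $\psi$ on the linear span of commutators by $\psi\left(\sum_i[a_i,b_i]\right)=\sum_iV_a(a_i,b_i)$ requires simultaneously well-definedness and a bound $|\psi(x)|\leq K\|x\|$; in a von Neumann algebra the span of commutators is typically the whole algebra (or everything of central trace zero), and there is no control of $\sum_i\|a_i\|\,\|b_i\|$ in terms of $\left\|\sum_i[a_i,b_i]\right\|$, so Hahn--Banach has nothing to act on until boundedness is proved — and boundedness here is essentially equivalent to the theorem itself. The proposed reduction is also circular: replacing the $a_i,b_i$ by combinations of orthogonal projections destroys the exact relation $\sum_i[a_i,b_i]=0$, and to conclude that $\sum_iV_a(a_i,b_i)$ changes little one needs exactly the norm-control being established. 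Finally, the preliminary step of extending $V$ to a separately weak$^*$-continuous orthogonal form on $A^{**}$ is legitimate but not free: Arens regularity of an arbitrary bounded bilinear form on $A\times A$ does not follow from Arens regularity of the product (it requires weak compactness of bounded operators from a C$^*$-algebra into a dual space, i.e.\ the non-commutative Grothendieck inequality circle of results), and orthogonality of the extension needs an approximation of orthogonal pairs in $A^{**}_{sa}$ by orthogonal pairs in $A_{sa}$ of the kind carried out in the proof of Theorem \ref{t multipliers 1}.
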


Let $X$ and $Y$ be Banach $A$-bimodules over a Banach algebra $A$. We recall that a mapping $f: X\to Y$ is said to be a \emph{left-annihilator-preserving} (respectively, \emph{right-annihilator-preserving}) if $f(x) a=0$, whenever $x a=0$ (respectively, $a f(x)=0$, whenever $ a x=0$) with $a\in A$, $x\in X$. A linear map $T: A\to X$ is called a \emph{left (respectively, right) multiplier} if $T (ab) = T (a)b$ (respectively, $T (ab) = a T (b)$), for every $a,b\in A$. Lin and Pan proved in \cite[Theorem 2.8]{LiPan} that every bounded and linear left-annihilator-preserving (respectively, every bounded and linear right-annihilator-preserving) map from a unital C$^*$-algebra $A$ into a unital Banach $A$-bimodule is a  left multiplier (respectively, a right multiplier). Our next lemma explores the case in which $X$ is not necessarily unital.

\begin{lemma}\label{l left mulipliers non unital} Let $T$ be a bounded linear operator from a unital C$^*$-algebra into a Banach space $X$. The following statements hold:\begin{enumerate}[$(a)$] \item Suppose $X$ is a Banach right $A$-module. Then the following statements are equivalent:\begin{enumerate}[$(1)$]
\item $T$ is left-annihilator-preserving;
\item $T(b) a=0$, whenever $b a=0$ with $a,b\in A_{sa}$;
\item $T(a) 1= T(1) a$, for every $a\in A$.
\end{enumerate}
In particular, when $X$ is a essential right $A$-module, every left-annihilator-preserving is a left multiplier.
\item Suppose $X$ is a Banach left $A$-module. Then the following statements are equivalent:\begin{enumerate}[$(1)$]
\item $T$ is right-annihilator-preserving;
\item $a T(b) =0$, whenever $a b=0$ with $a,b\in A_{sa}$;
\item $1T(a) = a T(1)$, for every $a\in A$.
\end{enumerate}
In particular, when $X$ is a essential left $A$-module, every right-annihilator-preserving is a right multiplier.
\end{enumerate}

\end{lemma}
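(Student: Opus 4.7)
The plan is to prove part (a); part (b) will follow from an entirely symmetric argument, replacing the bilinear form $V_\varphi(a,b):=\varphi(T(a)\cdot b)$ with $V_\varphi(a,b):=\varphi(a\cdot T(b))$ and invoking the left module axiom in place of the right one. The implication $(1)\Rightarrow(2)$ is immediate, since (2) is (1) restricted to self-adjoint pairs. The substantive step is $(2)\Rightarrow(3)$, and here the tool is Goldstein's classification of orthogonal bilinear forms.

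For $(2)\Rightarrow(3)$, fix $\varphi\in X^*$ and consider the continuous bilinear form $V_\varphi : A\times A\to \CC$ defined by $V_\varphi(a,b):=\varphi(T(a)\cdot b)$. If $a,b\in A_{sa}$ are orthogonal, so that $ab=0$ (and therefore $ba=(ab)^*=0$), then hypothesis (2) forces $T(a)\cdot b=0$, hence $V_\varphi(a,b)=0$. Thus $V_\varphi$ is orthogonal in the sense of Goldstein, and Theorem \ref{thm Goldstein} supplies $\phi,\psi\in A^*$ with
\[
V_\varphi(a,b)=\phi(a\circ b)+\psi([a,b]),\qquad a,b\in A.
\]
Evaluating at $b=\11$ gives $V_\varphi(a,\11)=\phi(a)$ since $[a,\11]=0$; symmetrically $V_\varphi(\11,a)=\phi(a)$. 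Consequently $\varphi(T(a)\cdot \11)=\varphi(T(\11)\cdot a)$ for every $\varphi\in X^*$, and the Hahn--Banach theorem yields $T(a)\cdot \11=T(\11)\cdot a$, which is (3).

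For $(3)\Rightarrow(1)$, assume $ba=0$ in $A$ with $a,b\in A$. Using the right module axiom and the fact that $\11\cdot a=a$ in $A$,
\[
T(b)\cdot a=T(b)\cdot(\11\cdot a)=(T(b)\cdot \11)\cdot a=(T(\11)\cdot b)\cdot a=T(\11)\cdot(ba)=0,
\]
which establishes (1). Finally, if $X$ is essential then $x\cdot \11=x$ for every $x\in X$, so (3) yields $T(a)=T(a)\cdot \11=T(\11)\cdot a$, whence $T(ab)=T(\11)\cdot(ab)=(T(\11)\cdot a)\cdot b=T(a)\cdot b$; thus $T$ is a left multiplier.

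The only genuinely delicate step is $(2)\Rightarrow(3)$: one must pass from a purely local annihilator condition on self-adjoint pairs to a global identification of $T(\cdot)\cdot\11$ with $T(\11)\cdot(\cdot)$. Goldstein's theorem is precisely what effects this transition, and once $V_\varphi$ has been recognised as orthogonal the unit of $A$ automatically kills the commutator term, leaving only the symmetric Jordan part. The remaining implications are routine manipulations of the module axioms, and the adaptation to part (b) is purely notational.
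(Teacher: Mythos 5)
Your proof is correct and follows essentially the same route as the paper: you apply Goldstein's theorem to the orthogonal form $V_{\varphi}(a,b)=\varphi(T(a)b)$, evaluate at the unit to kill the commutator term, and conclude via Hahn--Banach. The only difference is that you explicitly write out the routine implications $(1)\Rightarrow(2)$, $(3)\Rightarrow(1)$ and the ``left multiplier'' consequence, which the paper leaves implicit.
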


\begin{proof} We shall only prove statement $(a)$. We shall assume the weaker hypothesis that $T(b) a=0$, whenever $b a=0$ with $a,b\in A_{sa}$. Fix an arbitrary $\phi\in X^{*}$, and define a bounded bilinear form $V_{\phi} : A\times A\to \mathbb{C}$, given by $V_{\phi} (a,b) = \phi (T(a) b)$. Given $a,b$ in $A_{sa}$ with $ab=0$ we have $T(a) b=0$, and hence $V_{\phi} (a,b)=0$. This means that $T$ is an orthogonal form. By Goldstein's theorem there exist functionals $\varphi,\psi\in A^*$ satisfying that
$$ V_{\phi} (a,b) = \varphi (a \circ b) + \psi ([a,b]),$$ for
all $a,b\in A$. In particular, $$\phi (T(a) 1) = V_{\phi} (a,1) = \varphi (a 1) = \varphi (1 a) =  V_{\phi} (1,a) = \phi (T(1) a),$$ for every $a\in A$. Since $\phi$ was arbitrarily chosen in $X^{*}$, we deduce, via Hahn-Banach theorem, that $T(a) 1 = T(1) a$, for every $a\in A$.
\end{proof}

\begin{proof}[Proof of Proposition \ref{prop zero products implies gener der}]  (compare with \cite[Proposition 1.1]{LiPan}) The implication $(a)\Rightarrow (b)$ is clear, and $(c)\Rightarrow (a)$ follows from Lemma \ref{l Kad sublemma 5 local generalized derivations}.\smallskip

We prove now $(b)\Rightarrow (c)$. Let us fix $a,b\in A_{sa}$ with $ab=0$. We define $L: A\to X$, by $L(x) = a T(bx)$. Given $c,d\in A_{sa}$ with $cd=0$, we have $L(c) d = a T(bc) d =0$, which implies that $L$ is left-annihilator-preserving. Lemma \ref{l left mulipliers non unital} assures that $a T(bx) = L(x) = L(1) x = aT(b) x$, for every $x\in A$. Fix an arbitrary $x\in A$. We have seen before that $$a (T(bx) - T(b) x)=0,$$ for every $ab=0$ in $A_{sa}$, thus, the operator $R: A \to X$, $R(z)= T(zx) - T(z) x$ is  right-annihilator-preserving on $A_{sa}$. By Lemma \ref{l left mulipliers non unital}, $T(zx) - T(z) x = R(z) = z R(1) = z T(x) - z T(1) x$, for every $z\in A$. This proves that   $T(zx) =  T(z) x +z T(x) - z T(1) x$, for every $x,z\in A$.
\end{proof}

We recall that for each C$^*$-algebra, $A$, the
\emph{multiplier algebra} of $A$, $M(A)$, is the set of all
elements $x\in A^{**}$ such that $x A , A x\subseteq A$. It is known that $M(A)$ is a C$^*$-algebra
and contains the unit element of $A^{**}$. Furthermore, $A= M(A)$ whenever $A$ is unital.\smallskip

The following theorem, which was originally obtained by gathering results in \cite[Theorem 4.5]{AlBreExVill09}, \cite[\S 3]{AlBreExVill}, \cite[Corollary 2.9]{LiPan}, \cite{BurFerGarPe2014} and \cite[\S 3]{BurFerPe2013}, states that the property isolated in Lemmas \ref{l Kad sublemma 5} and \ref{l Kad sublemma 5 local generalized derivations} actually characterizes bounded (local) generalized derivations. We present here a simplified and self-contained proof.\smallskip

\begin{theorem}\label{t multipliers 1}{\rm(}\cite[Theorem 4.5]{AlBreExVill09}, \cite[Proposition 4.3]{BurFerPe2013}{\rm)} Let $X$ be an essential Banach $A$-bimodule over a C$^*$-algebra $A$ and let $T: A \to X$ be a bounded linear operator. The following are equivalent:\begin{enumerate}[$(a)$]
\item $T^{**} : A^{**} \to X^{**}$ is a generalized derivation;
\item $T^{**} = d + G_{T^{**} (1)}$, where the operators $d, G_{T^{**}(1)}: A^{**} \to X^{**}$ satisfy that $d$ is a derivation and $G_{T^{**}(1)}$ is a generalized derivation defined by  $G_{T^{**}(1)} (a) = T^{**} (1) \circ a = \frac12 ( a T^{**} (1) + T^{**} (1) a)$;
\item $T^{**} : A^{**} \to X^{**}$ is a local generalized derivation;
\item $T^{**}|_{M(A)} : M(A) \to X^{**}$ is a generalized derivation;
\item $T^{**}|_{M(A)} : M(A) \to X^{**}$ is a local generalized derivation;
\item $a T^{**}(b) c = 0$, whenever $ab=bc=0$ in $M(A)$;
\item[$(f')$] $a T^{**}(b) c = 0$, whenever $ab=bc=0$ in $M(A)_{sa}$;
\item $T$ is a generalized derivation;
\item $T$ is a local generalized derivation;
\item $a T(b) c = 0$, whenever $ab=bc=0$ in $A$;
\item[$(i')$] $a T(b) c = 0$, whenever $ab=bc=0$ in $A_{sa}$.

\end{enumerate}
\end{theorem}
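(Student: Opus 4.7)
The plan is to use Proposition \ref{prop zero products implies gener der} as the main engine, applied to the unital C$^*$-algebra $M(A)$ acting on the unital Banach $M(A)$-bimodule $X^{**}$. Because $X$ is essential, $X^{**}$ is a unital Banach $A^{**}$-bimodule, hence also unital over $M(A)\subset A^{**}$; Proposition \ref{prop zero products implies gener der} applied to $T^{**}|_{M(A)}$ then immediately yields (d)$\Leftrightarrow$(f)$\Leftrightarrow$(f'). Most of the remaining implications are either routine or follow from weak$^*$-extension arguments.

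I would first dispose of the trivial implications. Directly from the definitions and Lemma \ref{l Kad sublemma 5 local generalized derivations}, we have (a)$\Rightarrow$(c), (a)$\Rightarrow$(d), (c)$\Rightarrow$(e), (d)$\Rightarrow$(e), (e)$\Rightarrow$(f), (f)$\Rightarrow$(f'), and (g)$\Rightarrow$(h)$\Rightarrow$(i)$\Rightarrow$(i'). For (d)$\Rightarrow$(g): if $G:M(A)\to X^{**}$ is a generalized derivation with element $\xi\in X^{**}$, then for $a,b\in A$ one has $a\xi b=G(a)b+aG(b)-G(ab)\in X$, so $G|_A$ lands in $X$ and is a generalized derivation of $A$ into $X$. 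The equivalence (a)$\Leftrightarrow$(b) follows from the observation preceding Remark \ref{remark generalized and derivations bitransposed}: $G_{T^{**}(1)}$ is a generalized derivation with $G_{T^{**}(1)}(1)=T^{**}(1)$, so if $T^{**}$ is also a generalized derivation their difference vanishes at $1$ and is thus a derivation. The passage (d)$\Rightarrow$(a) is achieved by a one-slot-at-a-time weak$^*$-extension: for $\xi=T^{**}(1)\in X^{**}$ and $b\in M(A)$ fixed, the map $a\mapsto T^{**}(a)b+aT^{**}(b)-T^{**}(ab)-a\xi b$ is weak$^*$-continuous on $A^{**}$ by \eqref{eq weakstar contin module operations}--\eqref{eq product bidual module}, vanishes on $M(A)$, and therefore extends to $A^{**}$; a symmetric argument in the second slot, with $a\in A^{**}$ fixed and $\xi b\in X^{**}$ frozen, completes the extension.

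The crucial implication is (i')$\Rightarrow$(f'), which upgrades the annihilator condition from $A_{sa}$ to $M(A)_{sa}$. Given $a,b,c\in M(A)_{sa}$ with $ab=bc=0$, the approach is functional calculus combined with approximation. For any continuous $f$ with $f(0)=0$, write $f(t)=tg(t)$ for continuous $g$; then $af(b)=abg(b)=0$ and $f(b)c=0$. Choosing a bounded sequence $(f_n)\subset C_0(\mathbb{R})$ with $f_n(0)=0$ and $f_n\to\mathrm{id}$ uniformly on the spectrum of $b$ produces approximants $b_n=f_n(b)$ that lie in the hereditary C$^*$-subalgebra generated by $b$, still satisfy $ab_n=b_nc=0$, and converge to $b$ in norm. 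When $b\in A$, a further bounded weak$^*$-approximation of $a$ and $c$ by self-adjoint elements of $A$ via approximate identities, combined with (i') and the weak$^*$-continuity of $T^{**}$ and of the module actions in the relevant slots, gives $aT^{**}(b)c=0$. The general case $b\in M(A)_{sa}$ then follows from norm convergence $b_n\to b$ and continuity of $T^{**}$.

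The main obstacle is the management of weak$^*$-limits in the presence of the zero-product constraints, together with the documented fact (Remark \ref{remark generalized and derivations bitransposed}) that $a\mapsto\xi a$ is not weak$^*$-continuous for general $\xi\in X^{**}$. The resolution is to extend identities one slot at a time and to freeze intermediate products in the form $\xi b$ with $b\in A$, so that the weak$^*$-continuity statements \eqref{eq weakstar contin module operations}--\eqref{eq product bidual module} remain applicable; this rules out naive two-variable weak$^*$-density arguments in favour of iterated, carefully ordered extensions. With (i')$\Rightarrow$(f') in hand, the loop closes: (f')$\Rightarrow$(d)$\Rightarrow$(a)$\Leftrightarrow$(b) and (d)$\Rightarrow$(g)$\Rightarrow$(h)$\Rightarrow$(i)$\Rightarrow$(i').
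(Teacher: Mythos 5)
Your web of implications is the right shape, and the easy parts \big((a)$\Leftrightarrow$(b), the trivial arrows, (f)$\Leftrightarrow$(f')$\Rightarrow$(d) via Proposition \ref{prop zero products implies gener der}\big) match the paper. But the two hard implications both have genuine gaps.

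For (i')$\Rightarrow$(f'), your reduction of the middle element $b$ from $M(A)_{sa}$ to $A_{sa}$ does not work: functional calculus applied to $b\in M(A)$ produces elements $f_n(b)$ that still lie in $C^*(b)\subseteq M(A)$, not in $A$ (take $A=C_0(\mathbb{R})$, $b=1\in M(A)$: no nonzero $f(b)$ is in $A$). So "the general case then follows from $b_n\to b$ in norm" never gets off the ground, and the same difficulty afflicts $a$ and $c$: a generic weak$^*$ approximation of $a\in M(A)$ by elements of $A$ destroys the relation $ab=0$. The missing idea is the paper's sandwich construction: write $d=u_d|d|$ and set $d^{[\frac13]}=u_d|d|^{\frac13}$, check that $a^{[\frac13]}b^{[\frac13]}=b^{[\frac13]}c^{[\frac13]}=0$, approximate $a^{[\frac13]},b^{[\frac13]},c^{[\frac13]}$ weak$^*$ by nets $(x_\lambda),(y_\mu),(z_\nu)$ in $A$, and use the elements $a^{[\frac13]}x_\lambda^* a^{[\frac13]}$, $b^{[\frac13]}y_\mu^* b^{[\frac13]}$, $c^{[\frac13]}z_\nu^* c^{[\frac13]}$, which lie in $A$ because $M(A)\,A\,M(A)\subseteq A$, automatically inherit the zero products from $a^{[\frac13]}b^{[\frac13]}=b^{[\frac13]}c^{[\frac13]}=0$ regardless of the approximants, and converge weak$^*$ to $a,b,c$; iterated limits taken in the order $\nu$, then $\mu$, then $\lambda$ respect the one-sided continuity in \eqref{eq weakstar contin module operations}.

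For (d)$\Rightarrow$(a), your one-slot-at-a-time extension works in the first slot but fails in the second. After fixing $b\in M(A)$ and extending over $a\in A^{**}$, you must let $b$ run over $A^{**}$ with $a\in A^{**}$ fixed, and then the terms $b\mapsto T^{**}(a)\,b$ and $b\mapsto (a\xi)\,b$ require weak$^*$-continuity of $b\mapsto z\,b$ for $z\in X^{**}$, which \eqref{eq weakstar contin module operations} only guarantees for $z\in X$; this is exactly the obstruction recorded in Remark \ref{remark generalized and derivations bitransposed}, and "freezing $\xi b$ with $b\in A$" cannot help when $b$ is the variable being extended beyond $A$. The paper's way out is to go up one more level: form $S^{**}=(T^{**}|_{M(A)})^{**}:M(A)^{**}\to X^{****}$, where $M(A)$ is unital, $X^{**}$ is a unital $M(A)$-bimodule, and $\xi=T^{**}(1)$ now lives in the \emph{base} module $X^{**}$ rather than in its bidual, so the unital case of Remark \ref{remark generalized and derivations bitransposed} applies; restricting $S^{**}$ to the copy of $A^{**}$ inside $M(A)^{**}$ then yields (a). Without one of these two devices your argument does not close.
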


\begin{proof} Let $\xi = T^{**} (1)\in X^{**}$. Suppose that $T^{**}$ is a generalized derivation. Since $G_{\xi}: A^{**} \to X^{**}$, $a\mapsto G_\xi (a) = a\circ \xi$ is a generalized derivation and $T^{**}(1)  = G_{\xi} (1)$, the mapping $d= T^{**} - G_{\xi}$ is a derivation and $T^{**} = d + G_{\xi}$. This shows that statements $(a)$ and $(b)$ are equivalent. The implications  $(a)\Rightarrow (c)\Rightarrow (e)$, $(a)\Rightarrow (d)\Rightarrow (e)$, $(g)\Rightarrow (h)$, $(d)\Rightarrow (g)$ and $(f)\Rightarrow (i)$ are clear. Lemma \ref{l Kad sublemma 5 local generalized derivations} shows that $(e) \Rightarrow (f)$ and $(h)\Rightarrow (i)$. The implication $(f)  \Rightarrow (d)$ and the equivalence $(f)  \Leftrightarrow (f')$ follow from Proposition \ref{prop zero products implies gener der}.\smallskip

$(i)\Rightarrow (f)$ We suppose that $a T(b) c =0$ for every $a,b,c\in A$ with $a b= bc=0$. Let $a, b, c$ be elements in $M(A)$ with $a b= bc =0$. We may assume that $a,b$ and $c$ lie in the closed unit ball of $M(A)$. For each element $d$ in $M(A)$, we consider its polar decomposition $d = u_d |d|$ in $A^{**}$, where $u$ is a (unique) partial isometry in $A^{**}$, $|d|= (d^* d)^{\frac12},$ and $u_d^* u_d$ coincides with the range projection of $|d|$ in $A^{**}$ (cf. \cite[Theorem 1.12.1]{Sak}). The symbol $d^{[\frac13]}$ will denote the element $u_d |d|^{\frac13}\in M(A)$. It is easy to see that  $$d^{[\frac13]} (d^{[\frac13]})^* d^{[\frac13]} = u_d |d|^{\frac13} |d|^{\frac13} u_d^* u_d |d|^{\frac13} = u_d |d| = d.$$ The condition $a b= 0$ implies that $(a^* a)^m (b b^*)^k =0$, for every $m,k\in \mathbb{N}$. Considering the von Neumann subalgebras of $A^{**}$ generated by $a^* a $ and $b b^*$, and having in mind the separate weak$^*$-continuity of the product of $A^{**}$, we deduce that $a  u_b u_b^*= |a| u_b u_b^* = |a|^{\frac13} u_b u_b^*=0$. This implies that $$a^{[\frac13]} b^{[\frac13]} = u_a |a|^{\frac13} u_b |b|^{\frac13} =u_a |a|^{\frac13} u_b u_b^* u_b |b|^{\frac13} = 0
$$ and similarly $ b^{[\frac13]} c^{[\frac13]} =0$.\smallskip

Since $M(A)$ is a C$^*$-subalgebra of $A^{**}$, by weak$^*$-density of $A$ in $A^{**}$, we can take nets $(x_{\lambda})$, $(y_{\mu})$ and $(z_{\nu})$ in the closed unit ball of $A$, converging in the weak$^*$ topology of $A^{**}$ to $a^{[\frac13]}$, $b^{[\frac13]}$ and $c^{[\frac13]}$, respectively. The nets $\left(a^{[\frac13]} x_{\lambda}^* a^{[\frac13]}\right)$, $ \left( b^{[\frac13]} y_{\mu}^* b^{[\frac13]}\right) $, and $\left(c^{[\frac13]} z_{\nu}^* c^{[\frac13]}\right)$ lie in $A$ because $a,b,c\in M(A)$. The identities $ b^{[\frac13]} c^{[\frac13]} =  a^{[\frac13]} b^{[\frac13]} =0$ assure that  $$\left(a^{[\frac13]} x_{\lambda}^* a^{[\frac13]}\right) \left( b^{[\frac13]} y_{\mu}^* b^{[\frac13]}\right) =0 = \left( b^{[\frac13]} y_{\mu}^* b^{[\frac13]}\right) \left(c^{[\frac13]} z_{\nu}^* c^{[\frac13]}\right),$$ for every $\lambda, \mu$ and $\nu$. By assumption
$$\left(a^{[\frac13]} x_{\lambda}^* a^{[\frac13]}\right) T\left( b^{[\frac13]} y_{\mu}^* b^{[\frac13]}\right) \left(c^{[\frac13]} z_{\nu}^* c^{[\frac13]}\right)=0,$$ for every $\lambda, \mu$ and $\nu$. Taking weak$^*$ limit in $\nu$, it follows from the properties of $\pi_2^{***}$ (the second module operation in $X^{**}$) that $$\left(a^{[\frac13]} x_{\lambda}^* a^{[\frac13]}\right) T\left( b^{[\frac13]} y_{\mu}^* b^{[\frac13]}\right) c=0,$$ for every $\lambda,$ and $\mu$. Finally, taking weak$^*$ limits first in $\mu$ and later in $\lambda$, we have $ a T^{**} (b) c=0.$ This proves that $(i)\Rightarrow (f)$. The implication $(i')\Rightarrow (f')$ follows similarly.\smallskip

We shall finally prove that $(d)\Rightarrow (a)$. Suppose that  $S=T^{**}|_{M(A)} : M(A) \to X^{**}$ is a generalized derivation. Since $M(A)$ is a unital C$^*$-algebra and $X^{**}$ is a unital $M(A)$-bimodule, we can argue as in the final part of Remark \ref{remark generalized and derivations bitransposed} to deduce that $S^{**}= (T^{**}|_{M(A)})^{**} : M(A)^{**} \to X^{****}$ is a generalized derivation with $$S^{**} (a b) = S^{**} (a) b + a S^{**} (b) - a T^{**} (1) b,$$ for every $a,b\in M(A)^{**}$. Since the bidual of $A$ regarded as a norm closed subspace of $M(A)^{**}$ identifies with the weak$^*$-closure of $A$ in $M(A)^{**}$ and $S^{**}|_{A^{**}} = T^{**}$, we have  $$T^{**} (a b) = T^{**} (a) b + a T^{**} (b) - a T^{**} (1) b,$$ for every $a,b\in A^{**}$.
\end{proof}

Our next goal is to obtain a proof of Johnson's Theorem \ref{thm Johnson local 2001} from the above results. The proof in the unital case is a straightforward consequence of the previous Theorem \ref{t multipliers 1}. Indeed, suppose $T: A \to X$ is a continuous local derivation of a unital C$^*$-algebra into a unital Banach $A$-bimodule. Lemma \ref{l Kad sublemma 5} shows that $aT(b)c=0$, whenever $ab=bc=0$ in $A$. Theorem \ref{t multipliers 1} $(i)\Rightarrow (g)$ implies that $T$ is a generalized derivation, that is, there exists $\xi\in X^{**}$ satisfying $$T(ab) = T(a) b + a T(b) + a \xi b,$$ for every $a,b\in A$. Since $A$ and $X$ are unital, we have $\xi=T(1)\in X.$ The hypothesis of $T$ being a local derivation, implies the existence of a derivation $D_1 : A\to X$ such that $\xi=T(1) = D_1 (1) =0,$ which assures that $T$ is a derivation.\smallskip

\begin{proof}[Proof of Theorem \ref{thm Johnson local 2001}]
Suppose now that $A$ is a general C$^*$-algebra and $X$ is a Banach $A$-bimodule (not assumed to be essential). Let $T:A\to X$ be a bounded local derivation. Let $A_1= A\oplus 1 \mathbb{C}$ denote the unitization of $A$. The Banach space $X$ becomes a unital $A_1$-bimodule if we put $(a+\lambda 1) x = a x + \lambda x$ and $x(a+\lambda 1) = x a+ \lambda x$. The mapping $\widehat{T}: A_1\to X$, $\widehat{T} (a+\lambda 1) = T(a)$ is a continuous local derivation from a unital C$^*$-algebra into a unital Banach $A_1$-bimodule. The previous arguments show that $\widehat{T}$ is a derivation. In particular, $$T(a b) = \widehat{T} (a b)  = \widehat{T} (a) b + a \widehat{T} (b) = T(a) b + a T(b),$$ for every $a,b\in A,$ which proves that $T$ is a derivation.
\end{proof}

We can also deal now with the question we left open in Remark \ref{remark generalized and derivations bitransposed}. Let $G: A\to X$ be a continuous generalized derivation from a C$^*$-algebra into an essential Banach $A$-bimodule. Theorem \ref{t multipliers 1} implies that $G^{**}$ also is a generalized derivation.

\begin{example}\label{remark counterexample local derivations are not derivations} C$^*$-algebras constitute an idyllic setting where local derivations and derviations coincide. However, this good behavior is no longer true out from this special class of algebras. Kadison presented in \cite[\S 3]{Kad90} an example, based on ideas of Jensen, of a local derivation on the infinite dimensional commutative algebra, $\mathbb{C}(x)$, of all the rational functions in the variable $x$ over $\mathbb{C}$, which is not a derivation. It is shown in this example, that every derivation $\delta$ on $\mathbb{C}(x)$ is of the form $\delta(f) = \delta(x) f^\prime,$ while local derivation on $C(x)$ are precisely the linear mappings that annihilate the constant functions. The projection $\pi$ of $\mathbb{C}(x)$ onto the complement of the subspace generated by $1$ and $x$ vanishes on constant functions, and thus $\pi$ is a local derivation. However, $\pi$ cannot be a derivation, because in such a case $\pi (f)= \pi (x) f^\prime =0,$ for every $f\in \mathbb{C}(x)$, which contradicts $\pi\neq 0.$\smallskip

Let $\mathbb{C}[x]$ be the algebra of polynomial functions in the variable $x$. In a Note added in proof, Kadison aggregates that Kaplansky found a local derivations of $\mathbb{C}[x]/[x^3]$, a 3-dimensional algebra over $\mathbb{C}$, which is not derivation. We do not know if Kaplansky's example has been published or not.  For completeness reasons, the following  example has been borrowed from \cite[\S 5]{Bre07}. Let $T: \mathbb{C}[x]/[x^3] \to \mathbb{C}[x]/[x^3]$ be the linear mapping given by $T[\lambda_0 + \lambda_1 x+ \lambda_2 x^2] = [\lambda_1 x]$. Since $T[x^2] =0$ and $T[x] [x] = [x^2]$, we deduce that $T$ is not a derivation. Fix a point $a=[\lambda_0 + \lambda_1 x+ \lambda_2 x^2]$. We define a derivation $D_a : \mathbb{C}[x]/[x^3] \to \mathbb{C}[x]/[x^3],$ $$D_a [\alpha_0 + \alpha_1 x+ \alpha_2 x^2] = [\alpha_1 x + 2(\alpha_2 -\lambda_1^{-1} \lambda_2 \alpha_1) x^2], $$ if $\lambda_1\neq 0$, and $D_a =0 $ if $\lambda_1=0$. Clearly, $T(a) = D_a [a].$ Thus, $T$ is a local derivation.
\end{example}

\section{Hochschild cohomology of C$^*$-algebras and local n-cocycles}\label{ss: cohomology}

In the paper that originated the study of local derivations, Kadison (cf. \cite{Kad90}) also states that the study should be also extended to ``local higher cohomology (for example, local 2-cocycles)''.\smallskip

Let us recall some basic concepts. Let $X$ be a complex Banach $A$-bimodule over a C$^*$-algebra. Following standard notation, for each natural number $n$, the symbol $B(^n A, X)$ will denote the complex Banach space of all continuous $n$-multilinear mappings (also called $n$-cochains) from $A\times \ldots \times A$ into $X$. By convention, we set $B(^0 A, X)= X.$ Given $n\geq 1$, the \emph{$n$th-connecting map}, $\partial^{n}$, is defined as follows: $$\partial^{n} : B(^n A, X) \to B(^{n+1} A, X)$$ $$\partial^{n}T (a_1, \ldots, a_n,a_{n+1}):= a_1 T (a_2, \ldots, a_n,a_{n+1}) $$ $$+ \sum_{j=1}^{n} (-1)^{j} T (a_1, \ldots, a_{j-1},a_j a_{j+1}, \ldots, a_{n+1}) + (-1)^{n+1} T (a_1, \ldots, a_n) a_{n+1}$$ and $\partial^{0} : X \to B(A, X)$, $\partial^{0} (x) (a) := a x - x a$. It is known that $\delta^{n}\circ \delta^{n-1} =0$, for every $n\geq 1$.\smallskip

A continuous multilinear operator $\Phi\in B(^n A, X)$ is said to be an \emph{$n$-cocycle} when $\partial^{n}  \Phi= 0$. For example, 1-cocycles from $A$ into $X$ are precisely the derivations of $A$ into $X$. A bilinear mapping $\Phi: A \times A \to X$ is a 2-cocycle when the identity \begin{equation}\label{def 2-cocycles} a \Phi(b,c) - \Phi(ab,c) + \Phi(a,bc) - \Phi(a,b) c =0,\end{equation} holds for every $a,b,c\in A$.\smallskip

That is, for $n\geq 1$, the the kernel of the \emph{$n$th-connecting map} $\partial^{n}$, denoted by $Z^n(A,X),$ is the space of all $n$-cocycles of $A$ into $X$. The image of $\delta^{n-1}$ in $B(^n A, X)$ is the space of all co-boundaries of $A$ into $X$, and it is denoted by $C^n(A,X)$. The \emph{bounded $n$th Hochschild cohomology group of $A$ with coefficients in $X$} is the quotient vector space  $H^n (A, X): = Z^n(A,X) / C^n(A,X)$. By convention, $H^0(A, X) = \{x\in X : a x = x a, \forall a\in A\}$.\smallskip

A multilinear mapping $T: A\times \ldots \times A \to  X$ is said to be a \emph{local $n$-cocycle} if for every $a_1,\ldots, a_n$ in $A$, there exists an $n$-cocycle $\Phi_{_{a_1,\ldots, a_n}}$ (depending on $a_1,\ldots, a_n$) such that $T(a_1,\ldots, a_n) = \Phi_{_{a_1,\ldots, a_n}} (a_1,\ldots, a_n)$. The question posed by Kadison in his comments can be materialized as follows:

\begin{problem}\label{problem local n-cocycles b}
Is every continuous local $n$-cocycle of a C$^*$-algebra $A$ into a Banach $A$-bimodule an $n$-cocycle?
\end{problem}

We have already seen that Kadison solves this problem in the case in which $n=1$, $A$ is a von Neumann algebra and $X$ is a dual Banach $A$-bimodule. A complete positive solution for the case $n = 1$ was obtained by Johnson in \cite{John01}. In 2002, Zhang \cite{Zhang02} proves that each local 2-cocycle of a von Neumann algebra $M$ into a unital dual $M$-bimodule is a 2-cocycle. In 2007, Hou and Fu show that every local 3-cocycle of a von Neumann algebra $M$ into a unital dual $M$-bimodule is a 3-cocycle (cf. \cite{HouFu}). The definitive solution to Problem \ref{problem local n-cocycles b} was found by Samei, who proves that, for every $n \in \mathbb{N}$, bounded local $n$-cocycles of a C$^*$-algebra $A$ into a Banach $A$-bimodule $X$ are $n$-cocycles. We shall review the last result.\smallskip

The multilinear version of Lemma \ref{l Kad sublemma 5} for local $n$-cocycles reads as follows:

\begin{lemma}\label{l sublemma 5 cocycles} Let $T: A\times \ldots \times A \to  X$ be a multilinear mapping, where $A$ is a C$^*$-algebra and $X$ is a Banach $A$-bimodule. Suppose $T$ is a local $n$-cocycle. Then given $a_0, \ldots, a_{n+1}$ in $A$, with $a_j a_{j+1} = 0$ for every $j=0,1,\ldots, n$, we have $$a_0 T (a_1, \ldots , a_n) a_{n+1} = 0.$$
\end{lemma}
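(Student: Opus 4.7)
The plan is to exploit the locality of $T$ by replacing $T(a_1,\ldots,a_n)$ with $\Psi(a_1,\ldots,a_n)$ for a specific $n$-cocycle $\Psi$, and then feed the orthogonality relations $a_j a_{j+1}=0$ into the $n$-cocycle identity for $\Psi$. The combinatorial observation driving the proof is that the cocycle identity involves exactly $n$ interior terms of the form $\Psi(\ldots,a_j a_{j+1},\ldots)$ plus two boundary terms carrying a left and a right bimodule action; the $n$ relations $a_j a_{j+1}=0$ ($j=1,\ldots,n$) kill all interior terms, and the remaining extremal relation $a_0 a_1=0$ is consumed after a final left multiplication by $a_0$.

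Concretely, fix $a_0,\ldots,a_{n+1}$ as in the statement and let $\Psi=\Phi_{a_1,\ldots,a_n}$ be the $n$-cocycle provided by the local $n$-cocycle hypothesis, so that $T(a_1,\ldots,a_n)=\Psi(a_1,\ldots,a_n)$. I would then apply the identity $\partial^{n}\Psi=0$ at the $(n+1)$-tuple $(a_1,a_2,\ldots,a_{n+1})$, which reads
\begin{equation*}
a_1\Psi(a_2,\ldots,a_{n+1})+\sum_{j=1}^{n}(-1)^{j}\Psi(a_1,\ldots,a_{j-1},\,a_j a_{j+1},\,a_{j+2},\ldots,a_{n+1})+(-1)^{n+1}\Psi(a_1,\ldots,a_n)\,a_{n+1}=0.
\end{equation*}
By multilinearity of $\Psi$, every term of the sum vanishes, and substituting $\Psi(a_1,\ldots,a_n)=T(a_1,\ldots,a_n)$ leaves the compact identity
\begin{equation*}
a_1\Psi(a_2,\ldots,a_{n+1})+(-1)^{n+1}T(a_1,\ldots,a_n)\,a_{n+1}=0.
\end{equation*}

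To conclude I would multiply this equation on the left by $a_0\in A$ through the Banach $A$-bimodule structure of $X$: the first summand becomes $(a_0 a_1)\Psi(a_2,\ldots,a_{n+1})=0$ because $a_0 a_1=0$, leaving $(-1)^{n+1}a_0 T(a_1,\ldots,a_n)a_{n+1}=0$, which is the claim. There is no genuine obstacle; the only subtle point is the choice to evaluate the cocycle identity at $(a_1,\ldots,a_{n+1})$ rather than at $(a_0,\ldots,a_n)$, so that the $n$ interior orthogonalities $a_1 a_2,\ldots,a_n a_{n+1}$ are exhausted inside the sum and the extremal relation $a_0 a_1=0$ remains available for the final bimodule multiplication. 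This is the direct multilinear analogue of the one-line argument used in Lemma~\ref{l Kad sublemma 5}.
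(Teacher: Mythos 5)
Your proof is correct and is essentially the paper's argument in mirror image: the paper evaluates the cocycle identity of $\Phi_{a_1,\ldots,a_n}$ at the tuple $(a_0,a_1,\ldots,a_n)$ and multiplies on the right by $a_{n+1}$ at the end, whereas you evaluate it at $(a_1,\ldots,a_{n+1})$ and multiply on the left by $a_0$; in both cases the $n$ interior contractions are killed by the orthogonality relations and the last relation is consumed by the bimodule action. The substance — choosing the witnessing cocycle, exploiting multilinearity to annihilate the contracted terms, and isolating the boundary term containing $T(a_1,\ldots,a_n)$ — is identical.
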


\begin{proof} The proof follows from the fact that for every \emph{$n$-cocycle} $\Phi\in B(^n A, X)$ and  $a_0, \ldots, a_{n+1}$ in the above hypothesis, $$a_0 \Phi (a_1, \ldots , a_n) a_{n+1} = \sum_{j=1}^{n-1} (-1)^{j+1} \Phi (a_0, \ldots, a_{j-1},a_j a_{j+1}, \ldots, a_{n}) a_{n+1}$$ $$ + (-1)^{n} \Phi (a_0, \ldots, a_{n-1}) a_{n} a_{n+1} =0.$$
\end{proof}

A multilinear mapping $T: A\times \ldots \times A \to  X$ satisfying the conclusion of Lemma \ref{l sublemma 5 cocycles} is termed \emph{$n$-hyperlocal} in \cite{Samei08}.\smallskip

Let $X$ be a Banach $A$-bimodule over a C$^*$-algebra $A$. We recall that
for each natural $m$, the Banach space $B(^m A, X)$ is a Banach $A$-bimodule with respect to the products defined by
$$(a \star T ) (a_1, \ldots , a_m) = a T (a_1, \ldots, a_m);$$
$$(T \star a) (a_1, \ldots, a_m) = T (a a_1, \ldots , a_m) +
\sum^m_{j=1}  (-1)^{j} T (a, a_1, \ldots , a_j a_{j+1}, \ldots , a_m) $$ $$+ (-1)^{m+1} T (a, a_1, \ldots , a_{m-1}) a_m,$$ compare \cite[Section 1.9]{Dales00}. When $m=1$ the module operations in $B(A,X)$ are given by $$(a\star T) (b) = a T(b), \hbox{ and } (T\star a) (b) = T(ab) - T(a) b.$$ It is further known that the mapping $$\Lambda_m : B(^{m+1}A, X)\to  B(^m A, (B(A, X),\star))$$ $$(\Lambda_m (T) (a_1, \ldots , a_m)) (a_{m+1}) = T (a_1, \ldots, a_{m+1})$$ is an $A$-bimodule isometric isomorphism \cite[Proposition 1.9.10]{Dales00}, and if $\Delta^{m}: B(^m A, (B(A, X), \star))\to B(^{m+1} A, (B(A, X), \star))$ denotes the corresponding $m$th-connecting map, then it is shown in \cite[\S 1.9]{Dales00} that the following diagram is commutative:
$$\begin{CD}
B(^{m+1}A, X) @>\ \ \ \ \Lambda_m \ \ \ \ >> B(^m A, (B(A, X),\star))\\
@VV\delta^{m+1} V @VV\Delta^{m}V\\
B(^{m+2}A, X) @>\ \ \ \ \Lambda_{m+1} \ \ \ \ >> B(^{m+1} A, (B(A, X),\star)),
\end{CD}$$ that is $\Delta^{m} \Lambda_m = \Lambda_{m+1} \delta^{m+1}$ \cite[1.9.13]{Dales00}.\smallskip

Unfortunately, in the case of $A$ being unital, the bimodule $B(^m A, X)$ need not be unital. Clearly $1\star T = T$ for $T\in B(^m A, X)$, but $ T \star 1\neq T$. However, if we consider the closed subspace $B_0(A, X)$ of all continuous operators $T\in B(A,X)$ satisfying $T(1) = 0$, it is easy to see that $B_0(A, X)$ is a closed submodule of $(B(A, X),\star)$, and the Banach $A$-bimodule $(B_0(A, X),\star)$ is unital.

\begin{proposition}\label{p Samei prop 3.2}\cite[Proposition 3.2]{Samei08} Given a natural number $n$, a unital C$^*$-algebra $A$ with unit 1, and a unital Banach $A$-bimodule $X$,  every continuous $n$-hyperlocal operator $T \in B(^n A, X)$ such that $T (a_1, \ldots , a_n)$ vanishes whenever any of $a_1, \ldots , a_n$ coincides with $1$, is an $n$-cocycle.
\end{proposition}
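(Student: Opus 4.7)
The plan is to proceed by induction on $n$, mediated by the module isomorphism $\Lambda_{n-1}$ between $B(^nA,X)$ and $B(^{n-1}A,(B(A,X),\star))$ recalled in the excerpt, together with the commutative diagram $\Delta^{m}\Lambda_m=\Lambda_{m+1}\delta^{m+1}$.

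For the base case $n=1$, the hypothesis says that $T\colon A\to X$ is a continuous linear map with $aT(b)c=0$ whenever $ab=bc=0$ in $A$, and that $T(1)=0$. The implication $(i)\Rightarrow(g)$ of Theorem~\ref{t multipliers 1} guarantees that $T$ is then a generalized derivation, so $T(ab)=T(a)b+aT(b)-aT(1)b$ for all $a,b\in A$. Since $T(1)=0$, $T$ is an honest derivation, i.e.\ a $1$-cocycle.

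For the inductive step, assume the result at level $n-1$, and let $T\in B(^nA,X)$ be $n$-hyperlocal and vanish whenever any of its entries equals $1$. Set $\widetilde T:=\Lambda_{n-1}(T)\in B(^{n-1}A,B(A,X))$. Because $T(a_1,\ldots,a_{n-1},1)=0$, the operator $\widetilde T(a_1,\ldots,a_{n-1})$ in fact lies in the closed, unital submodule $B_0(A,X)$. Vanishing of $\widetilde T$ whenever any of its first $n-1$ arguments equals $1$ is immediate from the corresponding property of $T$. Once one additionally shows that $\widetilde T$ is $(n-1)$-hyperlocal as a map into the unital $A$-bimodule $(B_0(A,X),\star)$, the inductive hypothesis applies to $\widetilde T$ and produces $\Delta^{n-1}\widetilde T=0$; then the commutativity $\Delta^{n-1}\circ\Lambda_{n-1}=\Lambda_n\circ\delta^n$ and the injectivity of $\Lambda_n$ deliver $\delta^nT=0$, as required.

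The only non-trivial task is the $(n-1)$-hyperlocality of $\widetilde T$: for any $b_0,\ldots,b_n\in A$ with $b_jb_{j+1}=0$ for every $j=0,\ldots,n-1$, one must show that $b_0\star\widetilde T(b_1,\ldots,b_{n-1})\star b_n=0$ in $B_0(A,X)$. Unwinding the formulas for $\star$ and evaluating at an arbitrary $c\in A$ reduces this to the identity
\[
b_0\,T(b_1,\ldots,b_{n-1},b_nc)-b_0\,T(b_1,\ldots,b_{n-1},b_n)\,c=0.
\]
Each summand would vanish separately by the $n$-hyperlocality of $T$ if one had the extra orthogonality $b_nc=0$, but this is not in the hypotheses, so the two summands must be treated together. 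The strategy is to rewrite the expression in a form to which $n$-hyperlocality genuinely applies (using that $T$ vanishes at the identity in the last slot, so that $c$ may be replaced by $c-\lambda\cdot 1$ modulo controlled error terms) and, when $c$ does not enjoy the missing orthogonality, to approximate it by elements that do via a polar-decomposition/cube-root procedure in the spirit of the step $(i)\Rightarrow(f)$ in the proof of Theorem~\ref{t multipliers 1}. This combination of the normalization hypothesis at $1$ with the $n$-hyperlocality is the technical heart of the proof; the induction then runs smoothly.
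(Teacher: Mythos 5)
Your overall architecture is exactly the paper's: induction on $n$ via the isometric module isomorphism $\Lambda_{n-1}$, the observation that $\Lambda_{n-1}(T)$ lands in the unital submodule $B_0(A,X)$, the commutative diagram $\Delta^{n-1}\Lambda_{n-1}=\Lambda_{n}\delta^{n}$, and the base case via Theorem~\ref{t multipliers 1}. You also correctly isolate the one nontrivial point, namely that
\[
b_0\,T(b_1,\ldots,b_{n-1},b_nc)-b_0\,T(b_1,\ldots,b_{n-1},b_n)\,c
\]
must vanish for all $c$, and you correctly observe that termwise application of hyperlocality fails because $b_nc=0$ is not available.

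However, at precisely this point your proposal has a genuine gap: the strategy you sketch (replacing $c$ by $c-\lambda\cdot 1$ ``modulo controlled error terms'' and approximating $c$ by elements satisfying the missing orthogonality via a polar-decomposition/cube-root procedure) is not a workable argument. For a generic $c$ there is no reason $b_nc$ should be zero or approximable by elements with that property --- the set $\{c: b_nc=0\}$ is a proper right ideal, not dense --- and the cube-root device in the proof of $(i)\Rightarrow(f)$ of Theorem~\ref{t multipliers 1} serves a different purpose (passing from $A$ to $M(A)$ by weak$^*$ density), so it does not manufacture the orthogonality you need. The idea you are missing is \emph{not} to prove the displayed expression is zero directly, but to regard $S:=b_0\star\Lambda_{n-1}(T)(b_1,\ldots,b_{n-1})\star b_n$ as a single bounded linear operator $A\to X$ and show it is \emph{left-annihilator-preserving}: for $c,d$ with $cd=0$, the second summand of $S(c)d$ carries the factor $cd=0$ outright, and the first summand equals $b_0\,T(b_1,\ldots,b_{n-1},b_nc)\,d$, to which $n$-hyperlocality \emph{does} apply because the chain $b_0,\ldots,b_{n-1},b_nc,d$ has all consecutive products zero (in particular $(b_nc)d=b_n(cd)=0$). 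Lemma~\ref{l left mulipliers non unital}$(a)$ (the Goldstein orthogonal-form argument) then gives $S(c)=S(1)c$ for all $c$, and the normalization $T(b_1,\ldots,b_{n-1},b_n\cdot 1)=T(b_1,\ldots,b_{n-1},b_n)$ forces $S(1)=0$, hence $S\equiv 0$. Without this step your induction does not close.
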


\begin{proof} We proceed by induction on $n$. The case $n=1$ is a direct consequence of Theorem \ref{t multipliers 1}. Suppose the statement is true for $n\geq 1$, $A$, $X$ and $T$ as above. Let  $T \in B(^{n+1} A, X)$ be an $(n+1)$-hyperlocal operator such that $T (a_1, \ldots , a_{n+1})$ vanishes whenever any of $a_1, \ldots , a_{n+1}$ is 1.\smallskip

We claim that $\Lambda_{n} (T)\in B(^n A, (B(A, X),\star))$ is an $n$-hyperlocal operator. Let $a_0, \ldots, a_{n+1}$ in $A$, with $a_j a_{j+1} = 0$ for every $j=0,1,\ldots, n$. We put $S = a_0 \star \Lambda_{n} (T) (a_1,\ldots, a_n) \star a_{n+1}: A\to X.$ Given $c,d\in A$ with $cd=0,$ we compute $$S(c) d= (a_0 \star \Lambda_{n} (T) (a_1,\ldots, a_n) \star a_{n+1}) (c) d $$ $$= (a_0 \star \Lambda_{n} (T) (a_1,\ldots, a_n) (a_{n+1} c)) d - ((a_0 \star \Lambda_{n} (T) (a_1,\ldots, a_n))(a_{n+1})) c d$$ $$= (a_0 \Lambda_{n} (T) (a_1,\ldots, a_n) (a_{n+1} c)) d  = a_0 T (a_1,\ldots, a_n ,a_{n+1} c) d= 0,$$ where in the last equality we have applied Lemma \ref{l sublemma 5 cocycles}, $a_j a_{j+1} = 0$ for $j=0\ldots,n+1$, $a_{n+1} c d=0$, and the fact that $T$ is an  $(n+1)$-hyperlocal operator. Therefore, $S$ is a left-annihilator preserving,  and hence, Lemma \ref{l left mulipliers non unital}$(a)$ implies that $S$ is a left multiplier, which proves that $S(a) = S(1) a$, for every $a\in A$. Since $$S(1) = a_0 \star \Lambda_{n} (T) (a_1,\ldots, a_n) \star a_{n+1} (1) $$ $$= a_0 \star \Lambda_{n} (T) (a_1,\ldots, a_n) (a_{n+1} 1)- (a_0 \star \Lambda_{n} (T) (a_1,\ldots, a_n))(a_{n+1}) 1 $$ $$= a_0 T(a_1,\ldots, a_{n+1}) - a_0 T(a_1,\ldots, a_{n+1}) =0,$$ we deduce that $S=0$, which shows that $\Lambda_{n} (T)$ is an $n$-hyperlocal operator.\smallskip

Now, if any of $a_1, \ldots, a_n$ coincides with $1$, it follows from the assumptions that $\Lambda_{n} (T) (a_1,\ldots, a_n) (a) = T(a_1,\ldots, a_n, a ) =0$, for every $a\in A$, thus $\Lambda_{n} (T) (a_1,\ldots, a_n)=0$. We also observe that the same assumption also implies that $\Lambda_{n} (T) (b_1,\ldots, b_n) (1) = T(b_1,\ldots, b_n, 1) =0,$ for every $b_1,\ldots, b_n\in A.$ This shows that $\Lambda_{n} (T) (b_1,\ldots, b_n) \in B_0(A,X)$ for every $b_1,\ldots, b_n\in A.$\smallskip

Therefore, $\Lambda_{n} (T) : A\times \ldots\times A\to B_0(A,X)$ is a continuous $n$-hyperlocal multilinear operator of $A$ into a unital Banach $A$-bimodule satisfying that $\Lambda_{n} (T) (a_1,\ldots, a_n)=0$ whenever any of $a_1, \ldots, a_n$ coincides with $1$. We conclude by the induction hypothesis that $\Lambda_{n} (T)$ is an $n$-cocycle (i.e. $\Delta^{n} \Lambda_n (T) = 0$). Let $\Delta^{n+1}$ and $\delta^n$ denote the $(n+1)$th- and the $n$th-connecting mappings of $A$ into $X$ and into $B(A,X)$ respectively. Since $0=\Delta^{n} \Lambda_n (T) = \Lambda_{n+1} \delta^{n+1} (T),$ we can conclude that $$0=\Lambda_{n+1} (\delta^{n+1} (T)) (a_1,\ldots, a_{n+1}) (a_{n+2}) = \delta^{n+1} (T) (a_1,\ldots, a_{n+1}, a_{n+2}),$$ for every $a_1,\ldots, a_{n+1}, a_{n+2}$ in $A$, which shows that $\delta^{n+1} (T)=0$ as desired.
\end{proof}

\begin{theorem}\label{t local n-cocycles}\cite[Theorem 5.4]{Samei08} Let $A$ be a C$^*$-algebra, and let $X$ be a Banach $A$-bimodule. Then, for
every natural $n$, every bounded local $n$-cocycle $T:A\times \ldots \times A \to X$ is an $n$-cocycle.
\end{theorem}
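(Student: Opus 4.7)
The plan is to reduce Theorem \ref{t local n-cocycles} to Proposition \ref{p Samei prop 3.2} by a unitization argument. First I would pass to $A_1 = A \oplus \CC 1$ and turn $X$ into a unital Banach $A_1$-bimodule by declaring $(a+\lambda 1)\cdot x := ax + \lambda x$ and $x \cdot (a+\lambda 1) := xa + \lambda x$; a direct check shows this is associative and compatible with the original $A$-actions. Next, let $\pi : A_1 \to A$ denote the canonical projection $\pi(a+\lambda 1) = a$, and set
$$\widehat T : A_1^n \to X, \qquad \widehat T(b_1,\ldots,b_n) := T(\pi(b_1),\ldots,\pi(b_n)).$$
Then $\widehat T$ is a bounded multilinear operator which restricts to $T$ on $A^n$, and since $\pi(1) = 0$, $\widehat T$ vanishes on every tuple in which at least one argument equals the unit of $A_1$.

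The key step is to show $\widehat T$ is a local $n$-cocycle on $A_1$. Given $(b_1,\ldots,b_n) \in A_1^n$, write $b_j = a_j + \lambda_j 1$ and pick an $n$-cocycle $\Phi$ on $A$ with $\Phi(a_1,\ldots,a_n) = T(a_1,\ldots,a_n)$. Define $\widehat \Phi : A_1^n \to X$ by $\widehat \Phi(c_1,\ldots,c_n) := \Phi(\pi(c_1),\ldots,\pi(c_n))$. Clearly $\widehat \Phi(b_1,\ldots,b_n) = T(a_1,\ldots,a_n) = \widehat T(b_1,\ldots,b_n)$. What must be verified is that $\widehat \Phi$ is an $n$-cocycle on $A_1$. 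Expanding $c_j c_{j+1}$ in $A_1$ via $\pi(c_j c_{j+1}) = a_j a_{j+1} + \lambda_j a_{j+1} + \lambda_{j+1} a_j$ and substituting into $\partial^n \widehat \Phi(c_1,\ldots,c_{n+1})$, the cross-terms with coefficients $\lambda_j$ arising inside the alternating sum cancel against the scalar contributions $\lambda_1 \Phi(a_2,\ldots,a_{n+1})$ and $(-1)^{n+1}\lambda_{n+1} \Phi(a_1,\ldots,a_n)$ produced by the outer terms $c_1 \widehat \Phi(c_2,\ldots,c_{n+1})$ and $(-1)^{n+1}\widehat \Phi(c_1,\ldots,c_n) c_{n+1}$; what remains is exactly $\partial^n \Phi(a_1,\ldots,a_{n+1})$, which vanishes because $\Phi$ is a cocycle on $A$.

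With $\widehat T$ a continuous local $n$-cocycle on the unital C$^*$-algebra $A_1$ with values in a unital Banach $A_1$-bimodule $X$, Lemma \ref{l sublemma 5 cocycles} shows $\widehat T$ is $n$-hyperlocal, and by construction $\widehat T(b_1,\ldots,b_n) = 0$ whenever some $b_j = 1$. Proposition \ref{p Samei prop 3.2} now applies and yields $\partial^n \widehat T = 0$ on $A_1^{n+1}$. Restricting to tuples in $A^{n+1}$—noting that the coboundary formula only involves products $a_j a_{j+1} \in A$ and module actions of elements of $A$, all of which stay inside $A$ and $X$—we obtain $\partial^n T = 0$, so $T$ is an $n$-cocycle on $A$, as required.

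The main obstacle is the verification in the second paragraph that $\widehat \Phi$ is a cocycle on $A_1$. The combinatorial cancellation of the $\lambda_j$-cross terms from the interior of the alternating sum against the outer multiplication terms is what makes the $\pi$-pullback construction actually preserve cocycles, and this is the one step which is not purely formal; it must be carried out by writing $\partial^n \widehat \Phi$ out fully and tracking signs. Everything else—the unitization of $X$, the inheritance of the vanishing-on-$1$ condition, the transfer of $n$-hyperlocality, and the restriction back from $A_1$ to $A$—is essentially formal once this cocycle-extension fact is in place.
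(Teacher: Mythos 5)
Your proposal is correct and follows essentially the same route as the paper: unitize $A$, pull $T$ back through the projection $\pi(a+\lambda 1)=a$ to get $\widehat T$ vanishing whenever an argument is $1$, and invoke Proposition \ref{p Samei prop 3.2}. The only difference is that you explicitly verify the telescoping cancellation showing that pullbacks of cocycles along $\pi$ are cocycles on $A_1$ (so that $\widehat T$ is a local $n$-cocycle), a point the paper asserts without proof; your computation of that cancellation is sound.
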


\begin{proof} Arguing as in the proof of Theorem \ref{thm Johnson local 2001}, let $A_1= A\oplus 1 \mathbb{C}$ denote the unitization of $A$. The Banach space $X$ becomes a unital $A_1$-bimodule if we put $(a+\lambda 1) x = a x + \lambda x$ and $x(a+\lambda 1) = x a+ \lambda x$.
The mapping $\widehat{T}: A_1\times \ldots \times A_1 \to X$, $\widehat{T} (a_1+\lambda_1 1, \ldots, a_n +\lambda_n 1) = T(a_1,\ldots,a_n)$ is a continuous local $n$-cocycle from a unital C$^*$-algebra into a unital Banach $A_1$-bimodule. Actually $T$ is a (local) $n$-cocycle if and only if $\widehat{T}$ is a (local) $n$-cocycle.  Clearly, $\widehat{T} (a_1+\lambda_1 1, \ldots, a_n +\lambda_n 1) =0$ whenever there exists $j:1,\ldots, n$ with $a_j+\lambda_j 1= 1$. Proposition \ref{p Samei prop 3.2} implies that  $\widehat{T} $ (and hence $T$) is an $n$-cocycle.
\end{proof}

\section{Local triple derivations}\label{ss: local triple der}

Generalized derivations revisited in the previous section actually constitute the first connection with the ternary structure underlying a C$^*$-algebra. We recall that every C$^*$-algebra can be equipped with a ternary product of the form \begin{equation}\label{eq ternary product on C*-algebras}\{a,b,c\} = \frac12 (a b^* c + c b^* a).
 \end{equation}When $A$ is equipped with this product it becomes a JB$^*$-triple in the sense we shall see later. A linear mapping $\delta: A\to A$ is said to be a \emph{triple derivation} when it satisfies the (triple) Leibnitz rule: \begin{equation}\label{equ ternary Leibnitz rule} \delta\{a,b,c\} = \{\delta(a),b,c\} + \{a,\delta(b),c\}+ \{a,b,\delta(c)\}.
 \end{equation} Given $a,b\in A$ we define a linear mapping $L(a,b) : A\to A$ by the assignment $c\mapsto \{a,b,c\}$. It is easy to check that the linear operator $\delta(a,b) = L(a,b)-L(b,a)$ is a bounded triple derivation on $A$.\smallskip

Barton and Friedman establish in \cite{BarFri} that every triple derivation on a C$^*$-algebra and on a JB$^*$-triple is continuous.\smallskip

According to the definition introduced by Burgos, Fernández-Polo, Garcés and the third author of this note in \cite{BurFerGarPe2014} and \cite{BurFerPe2013}, a linear map $G$ from a C$^*$-algebra $A$ into a Banach $A$-bimodule $X$ is a \emph{generalized Jordan derivation} if there exists $\xi\in X^{**}$ such that the identity $$G (a\circ b) = G(a)\circ b + a\circ G(b) - U_{a,b} (\xi),$$ holds for every $a,b$ in $A$, where $U_{a,b} (z) := (a\circ z) \circ b + (b\circ z) \circ a- (a\circ b) \circ z$. If $A$ is unital, every generalized (Jordan) derivation $D : A\to X$  with $D(1) =0$ is a (Jordan) derivation.\smallskip

Suppose now that $A$ is a unital C$^*$-algebra and $\delta : A\to A $ is a ternary derivation. The identity $$\delta(1) =\delta\{1,1,1\}= 2   \{\delta(1),1,1\} + \{1,\delta(b),1\} = 2 \delta(1) + \delta(1)^{*}$$ implies that $\delta(1)^* = - \delta(1)$, and thus $$\delta (a\circ b) = \delta \{a,1,b\} = \{\delta(a),1,b\} + \{a,\delta(1),b\}+ \{a,1,\delta(b)\}$$ $$= \delta (a) \circ b + a \circ \delta(b)+ U_{a,b} (\delta(1)^* ) =  \delta (a) \circ b + a \circ \delta(b)- U_{a,b} (\delta(1) ),$$ which shows that $\delta$ is a generalized Jordan derivation (and automatically continuous by \cite{BarFri}).

\begin{lemma}\label{l triple derivations on unital C*-algebras are generalized derivations} Let $A$ be a unital C$^*$-algebra. Let $\delta: A\to A$ be a triple derivation. Then $\delta(1)= -\delta(1)^*$. Furthermore, every triple derivation on $A$ is a generalized Jordan derivation.$\hfill\Box$
\end{lemma}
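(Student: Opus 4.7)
The plan is to exploit the triple Leibniz rule at carefully chosen arguments, using the explicit formula $\{a,b,c\} = \frac{1}{2}(ab^*c + cb^*a)$ for the ternary product on a C$^*$-algebra together with the fact that $\{1,1,1\}=1$ and $a\circ b = \{a,1,b\}$.

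\smallskip

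First, I would establish the skew-Hermitian property of $\delta(1)$. Applying the triple Leibniz rule \eqref{equ ternary Leibnitz rule} (or rather, its unnumbered analogue in the excerpt) to $1=\{1,1,1\}$ gives
$$\delta(1) = \{\delta(1),1,1\} + \{1,\delta(1),1\} + \{1,1,\delta(1)\}.$$
Direct substitution into the formula for $\{\cdot,\cdot,\cdot\}$ shows that $\{\delta(1),1,1\} = \{1,1,\delta(1)\} = \delta(1)$ while $\{1,\delta(1),1\} = \delta(1)^*$. Hence $\delta(1) = 2\delta(1) + \delta(1)^*$, which rearranges to $\delta(1)^* = -\delta(1)$.

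\smallskip

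Next, for the generalized Jordan derivation identity, I would use $a\circ b = \{a,1,b\}$ and apply the Leibniz rule once more:
$$\delta(a\circ b) = \{\delta(a),1,b\} + \{a,\delta(1),b\} + \{a,1,\delta(b)\} = \delta(a)\circ b + a\circ\delta(b) + \{a,\delta(1),b\}.$$
The task then reduces to identifying the middle term with $-U_{a,b}(\delta(1))$. The key algebraic identity is
$$\{a,z,b\} = U_{a,b}(z^*) \qquad (a,b,z\in A),$$
which I would verify by a short direct expansion of both sides (both collapse to $\tfrac{1}{2}(azb+bza)$ when $z$ is replaced by $z^*$ on the left, and likewise both sides yield $\tfrac{1}{2}(azb+bza)$ on the right after cancellation of the $(a\circ b)\circ z$ correction). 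Combined with $\delta(1)^* = -\delta(1)$, this gives $\{a,\delta(1),b\} = U_{a,b}(\delta(1)^*) = -U_{a,b}(\delta(1))$, so
$$\delta(a\circ b) = \delta(a)\circ b + a\circ\delta(b) - U_{a,b}(\delta(1)),$$
which is precisely the generalized Jordan derivation identity with $\xi = \delta(1) \in A \subseteq A^{**}$.

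\smallskip

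The only mildly technical point is the polarization identity $\{a,z,b\} = U_{a,b}(z^*)$; everything else is a single use of the ternary Leibniz rule evaluated at $1$ or at a triple containing $1$ in the middle slot. Continuity of $\delta$ is free, since Barton and Friedman's theorem \cite{BarFri} guarantees automatic continuity of every triple derivation on a JB$^*$-triple, so no separate continuity argument is required.
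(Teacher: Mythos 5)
Your proof is correct and follows essentially the same route as the paper: the skew-adjointness of $\delta(1)$ comes from applying the Leibniz rule to $\{1,1,1\}=1$, and the generalized Jordan derivation identity comes from applying it to $a\circ b=\{a,1,b\}$ together with the identity $\{a,z,b\}=U_{a,b}(z^*)$, which the paper uses implicitly and you verify explicitly. No gaps.
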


The reciprocal statement of the above lemma is not always true; for example, let $a$ be an element in $A$,
the mapping $G_{a} :A \to A$, $x\mapsto G_{a} (x):= a \circ x$, is a generalized derivation on $A$. Since, $G_{a} (1) = 2 a,$ it follows that $G_{a}$ is not a ternary derivation whenever $a^* \neq -a$.\smallskip

Let $X$ be a Banach $A$-bimodule over a C$^*$-algebra. A linear mapping $T : A \to X$ is called a \emph{local generalized (Jordan) derivation} if for each $a\in A$, there exists a generalized (Jordan) derivation $G_a : A \to X$ satisfying $T(a) = D_a (a)$.\smallskip

It was noticed by Burgos, Fernández-Polo, Garcés and Peralta that generalized Jordan derivations and generalized derivations on a unital C$^*$-algebra define the same class of operators (cf. \cite[Remark 8]{BurFerGarPe2014}). The following result is a generalization of the above fact.

\begin{proposition}\label{p generalized Jordan der are gene der} Let $G: A\to X$ be a bounded linear map from a C$^*$-algebra into an essential Banach $A$-bimodule. Then $T$ is a generalized Jordan derivation if and only if it is a generalized derivation.
\end{proposition}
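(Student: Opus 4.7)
The plan is to handle the two implications separately, reducing the converse to a known theorem on Jordan derivations.

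\textbf{The forward direction} ($T$ generalized derivation $\Rightarrow$ $T$ generalized Jordan derivation) is a symmetrization argument. Assuming $T(ab) = T(a)b + aT(b) - a\xi b$ for all $a,b\in A$, with $\xi\in X^{**}$, one averages this identity with the one obtained by swapping $a$ and $b$ to obtain
$$T(a\circ b) = T(a)\circ b + a\circ T(b) - \tfrac{1}{2}(a\xi b + b\xi a).$$
It then suffices to verify the purely algebraic identity $U_{a,b}(\xi) = \tfrac{1}{2}(a\xi b + b\xi a)$, which is immediate from expanding $(a\circ \xi)\circ b + (b\circ \xi)\circ a - (a\circ b)\circ \xi$ in the associative product and cancelling the terms involving $\xi ab,\,\xi ba,\,ab\xi,\,ba\xi$.

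\textbf{The backward direction} is the substantive half. Suppose $T$ is a generalized Jordan derivation with element $\xi\in X^{**}$. The key auxiliary is the map $G_\xi\colon A\to X^{**}$, $a\mapsto a\circ \xi$. A direct computation using only the Banach $A^{**}$-bimodule structure on $X^{**}$ shows that $G_\xi$ is simultaneously a generalized derivation \emph{and} a generalized Jordan derivation with the same element $\xi$; in particular, $G_\xi(a\circ b) = G_\xi(a)\circ b + a\circ G_\xi(b) - U_{a,b}(\xi)$ is the identity already verified in the forward direction. Subtracting the generalized-Jordan-derivation identity for $G_\xi$ from that for $T$, the $U_{a,b}(\xi)$ terms cancel and we conclude that $D := T - G_\xi\colon A\to X^{**}$ is a bounded Jordan derivation. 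Johnson's theorem \cite{John96} — every bounded Jordan derivation from a $C^*$-algebra into a Banach bimodule is a derivation — applies to $X^{**}$ and yields that $D$ is a genuine derivation. Adding the derivation identity for $D$ to the generalized-derivation identity for $G_\xi$ gives
$$T(ab) = D(ab) + G_\xi(ab) = T(a)b + aT(b) - a\xi b \qquad (a,b\in A),$$
so $T$ is a generalized derivation with the same implementing element $\xi$.

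\textbf{The only delicate point} is keeping track of where things live. Because $\xi$ lies in $X^{**}$ rather than $X$, the auxiliary map $G_\xi$ naturally takes values in $X^{**}$, and $T - G_\xi$ must be viewed as a bounded Jordan derivation into the Banach $A$-bimodule $X^{**}$ (to which Johnson's theorem still applies). The final identity $T(ab) - T(a)b - aT(b) = -a\xi b$ is then an equality in $X^{**}$ whose left-hand side automatically lies in $X$; this is precisely the framework in which generalized derivations were defined in Section~\ref{sec:local der}, so no further adjustment is needed. No essentiality or appeal to Theorem~\ref{t multipliers 1} is required for this argument, although essentiality is kept in the hypothesis to remain consistent with the surrounding results.
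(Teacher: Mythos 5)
Your proof is correct, but it takes a genuinely different route from the paper's for the substantive implication. The paper proves that a generalized Jordan derivation $G$ satisfies $aG(b)c=0$ whenever $ab=bc=0$ in $A_{sa}$: it writes $b=d^{2}$ with $d=(b^{+})^{1/2}+i(b^{-})^{1/2}$, observes that $ad=dc=0$, and expands $G(d^{2})=2G(d)\circ d-U_{d,d}(\xi)$ to kill the product $aG(d^2)c$; it then invokes the characterization Theorem~\ref{t multipliers 1} (the $(i')\Rightarrow(g)$ implication), which is the heavy machinery of the section and is where the essentiality hypothesis enters. You instead peel off the explicit generalized derivation $G_{\xi}(a)=a\circ\xi$, observe that the $U_{a,b}(\xi)$ terms cancel so that $D=T-G_{\xi}$ is a bounded Jordan derivation into the Banach $A$-bimodule $X^{**}$, and conclude by Johnson's theorem that $D$ is a derivation; your computations ($U_{a,b}(\xi)=\tfrac12(a\xi b+b\xi a)$, and $G_{\xi}$ being simultaneously a generalized derivation and a generalized Jordan derivation with the same $\xi$) all check out. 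Your route is more self-contained: it bypasses Theorem~\ref{t multipliers 1} entirely, genuinely does not need $X$ to be essential, and it also supplies the symmetrization argument for the easy direction, which the paper's proof silently omits. What the paper's approach buys in exchange is uniformity: by funnelling everything through the zero-product property it places generalized Jordan derivations inside the single equivalence scheme of Theorem~\ref{t multipliers 1}, which is then reused to prove Corollary~\ref{c local Jordan der} for \emph{local} generalized Jordan derivations — a statement your decomposition does not directly reach, since a local generalized Jordan derivation does not come with a single $\xi$ to subtract.
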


\begin{proof} Suppose $G$ is a continuous generalized Jordan derivation, that is, there exists $\xi\in X^{**}$ satisfying $$G(a\circ b) = G(a)\circ b + a\circ G(b) - U_{a,b} (\xi),$$ for every $a,b\in A$.\smallskip

Pick $a,b,c\in A_{sa},$ $d\in A$ with $ab= bc=0$, $ad= dc=0$. Then \begin{equation}\label{eq a b2 c} a G(d^2) c = a (2 G(d)\circ d - d\xi d) c =a ( G(d) d + dG(d) - d\xi d ) c = 0.
 \end{equation}Let us write $b = b^+ -b^-$, where $b^+, b^-\geq 0$ in $A$ with $b^{+} b^- =0 = b^- b^+$. Since $a b= 0,$ we deduce that $a (b^+)^2 = a b b^+ =0$, which implies that $a (b^+)^{\frac12}= a b^+ =0.$ We similarly get $ a(b^-)^{\frac12} = (b^+)^{\frac12} c = (b^-)^{\frac12} c=0$.
If we put $d= (b^+)^{\frac12} +  i (b^-)^{\frac12}$, we have $a d= d c=0$, and by \eqref{eq a b2 c} $$a G(b) c = a G(d^2) c = 0.$$

We have shown that $a G(b) c= 0,$ for every $a,b,c\in A_{sa}$ with $ab= bc=0.$ Theorem \ref{t multipliers 1} implies that $G$ is a generalized derivation.
\end{proof}

The next Corollary is a straightforward consequence of the above Proposition \ref{p generalized Jordan der are gene der} and Theorem \ref{t multipliers 1}.

\begin{corollary}\label{c local Jordan der} Let $G: A\to X$ be a bounded linear map from a C$^*$-algebra into an essential Banach $A$-bimodule. Then $T$ is a local generalized Jordan derivation if, and only if, $T$ is a local generalized derivation if, and only if, $T$ is a generalized derivation.$\hfill\Box$
\end{corollary}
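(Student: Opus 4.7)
Most of the implications are free of charge. The chain \emph{generalized derivation} $\Rightarrow$ \emph{local generalized derivation} $\Rightarrow$ \emph{local generalized Jordan derivation} is immediate, since every derivation is a Jordan derivation and any map is a local version of itself. The reverse implication \emph{local generalized derivation} $\Rightarrow$ \emph{generalized derivation} is already inside Theorem~\ref{t multipliers 1} (the chain $(h)\Rightarrow(i)\Rightarrow(g)$). Thus the only genuinely new content is to pass from being a bounded \emph{local generalized Jordan derivation} to being a \emph{generalized derivation}, and I would reduce this entirely to the implication $(i')\Rightarrow (g)$ of Theorem~\ref{t multipliers 1}.

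To accomplish the reduction I would verify condition $(i')$ directly: namely, that $a T(b) c = 0$ whenever $a,b,c\in A_{sa}$ satisfy $ab = bc = 0$. Fix such a triple. By hypothesis there exists a generalized Jordan derivation $G_b : A \to X$ with associated element $\xi_b\in X^{**}$ satisfying $T(b)=G_b(b)$. Decompose $b=b^+ - b^-$ via its positive and negative parts, and set $d := (b^+)^{1/2} + i\,(b^-)^{1/2}$. The orthogonality $b^+ b^- = 0$ together with $ab = bc = 0$ forces, via continuous functional calculus on the abelian C$^*$-algebras generated by $a^*a$ and $b^{\pm}$, the identities $a (b^{\pm})^{1/2} = (b^{\pm})^{1/2} c = 0$; hence $ad = dc = 0$ and $d^2 = b$. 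Applying the Jordan identity for $G_b$ at the element $d$,
\[
T(b) \;=\; G_b(d^{2}) \;=\; 2\,G_b(d)\circ d \,-\, U_{d,d}(\xi_b) \;=\; G_b(d)\,d + d\,G_b(d) - d\,\xi_b\, d,
\]
and multiplying this identity on the left by $a$ and on the right by $c$ annihilates each summand because $ad = 0 = dc$. Therefore $a T(b) c = 0$, and Theorem~\ref{t multipliers 1} $(i')\Rightarrow (g)$ delivers the conclusion.

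\textbf{Expected main obstacle.} The delicate point is that the auxiliary generalized Jordan derivations $G_b$ produced by the definition of ``local'' are not assumed to be bounded, so I cannot simply cite Proposition~\ref{p generalized Jordan der are gene der} applied to each $G_b$ to reach the conclusion. The way out, which is precisely what the argument above exploits, is that the self-adjoint zero-product property is \emph{purely algebraic}: it depends only on the Jordan identity $G_b(d^2) = 2 G_b(d)\circ d - U_{d,d}(\xi_b)$ and the algebraic vanishings $ad = dc = 0$, never on any limiting or continuity hypothesis on $G_b$. Once this zero-product property on $A_{sa}$ is in place, the heavy lifting through $A^{**}$ and $M(A)$ is already packaged inside Theorem~\ref{t multipliers 1}, so no further technical work is required.
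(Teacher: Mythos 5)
Your proof is correct, and it follows essentially the route the paper intends: everything is funneled into the zero-product condition $(i')$ of Theorem \ref{t multipliers 1}, using the same substitution $d=(b^+)^{1/2}+i\,(b^-)^{1/2}$ with $d^2=b$ and $ad=dc=0$ that drives the proof of Proposition \ref{p generalized Jordan der are gene der}. Where you add genuine value is in the ``expected obstacle'' paragraph: the paper dismisses the corollary as a straightforward consequence of Proposition \ref{p generalized Jordan der are gene der} and Theorem \ref{t multipliers 1}, but read naively that would mean applying the proposition to each local witness $G_a$, which is illegitimate since those witnesses are not assumed bounded. Running the proposition's computation pointwise at the single element $b$ --- where the identity $aG_b(d^2)c=0$ is purely algebraic, with $U_{d,d}(\xi_b)=d\xi_b d$ and no continuity of $G_b$ needed --- repairs this; boundedness of $T$ itself is what licenses the final appeal to Theorem \ref{t multipliers 1}. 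One cosmetic point: in your ``free'' chain you should invoke the fact that every generalized derivation is a generalized \emph{Jordan} derivation with the same $\xi$ (since $U_{a,b}(\xi)=\frac12(a\xi b+b\xi a)$), not merely that derivations are Jordan derivations, but this is the easy algebraic half implicit in Proposition \ref{p generalized Jordan der are gene der}.
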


Let $A$ be a C$^*$-algebra. A linear mapping $T : A \to A$ is said to be a \emph{local triple derivation} if for each $a\in A$ there exists a triple derivation $\delta_a : A \to A$ satisfying $T(a) = \delta_a (a)$. Suppose that $A$ is unital. Lemma \ref{l triple derivations on unital C*-algebras are generalized derivations} implies that $T(1) = \delta_1 (1) = -\delta(1)^* = -T(1)^*$. Furthermore, Lemma \ref{l triple derivations on unital C*-algebras are generalized derivations} and Proposition \ref{p generalized Jordan der are gene der} assure that every bounded triple derivation on $A$ is a generalized derivation. Corollary \ref{c local Jordan der} (see also Theorem \ref{t multipliers 1}) implies that every bounded local triple derivation on $A$ is a generalized derivation. Moreover, the mapping $\frac12 \delta(T(1),1): A\to A$ is a triple derivation with $\frac12 \delta(T(1),1) (1) = \frac12 (T(1)-T(1)^*) = T(1)$. Thus  $\widetilde{T}=T- \frac12 \delta(T(1),1)$ is a local triple derivation satisfying $\widetilde{T} (1) =0$. It follows from the above that $\widetilde{T}$ is a generalized derivation with $\widetilde{T} (1) =0$, which implies that $\widetilde{T}$ is a derivation.

\begin{corollary}\label{c local bd triple der on unital} Every continuous local triple derivation on a unital C$^*$-algebra $A$ is a generalized derivation. Furthermore, let $\delta: A\to A$ be a bounded local triple derivation, then the mapping $\widetilde{T}=T- \frac12 \delta(T(1),1) = T - \delta\left(\frac12 T(1),1\right)$ is a generalized derivation with $\widetilde{T} (1) =0$, and consequently $\widetilde{T}$ is a derivation. $\hfill\Box$
\end{corollary}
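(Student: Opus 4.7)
The plan is to assemble three previously established facts, namely Lemma~\ref{l triple derivations on unital C*-algebras are generalized derivations}, Proposition~\ref{p generalized Jordan der are gene der}, and Corollary~\ref{c local Jordan der}, together with one short ternary computation that I will carry out with the formula~\eqref{eq ternary product on C*-algebras}.

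For the first assertion, given $a\in A$ I would pick a triple derivation $\delta_a\colon A\to A$ witnessing $T(a)=\delta_a(a)$. Lemma~\ref{l triple derivations on unital C*-algebras are generalized derivations} identifies $\delta_a$ as a generalized Jordan derivation, and Proposition~\ref{p generalized Jordan der are gene der}, applied with $X=A$ (which is essential over itself as $A$ is unital), promotes $\delta_a$ to a generalized derivation. Hence $T$ is a continuous local generalized derivation, and Corollary~\ref{c local Jordan der} then yields that $T$ itself is a generalized derivation.

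For the second assertion, the idea is to subtract from $T$ an honest triple derivation that already reproduces the value $T(1)$ at the unit. Applying Lemma~\ref{l triple derivations on unital C*-algebras are generalized derivations} to the local witness $\delta_1$ at the point $1$ produces the skew-adjointness $T(1)^*=-T(1)$. I would then set $\sigma:=\tfrac12\delta(T(1),1)=\tfrac12\bigl(L(T(1),1)-L(1,T(1))\bigr)$, which is a bounded triple derivation on $A$. A one-line expansion using~\eqref{eq ternary product on C*-algebras} gives
\[
\sigma(1)=\tfrac12\bigl(\{T(1),1,1\}-\{1,T(1),1\}\bigr)=\tfrac12\bigl(T(1)-T(1)^*\bigr)=T(1).
\]
Therefore $\widetilde{T}:=T-\sigma$ is again a continuous local triple derivation (being the difference of one with a genuine triple derivation) and satisfies $\widetilde{T}(1)=0$.

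To finish, I would apply the first assertion to $\widetilde{T}$, obtaining $\xi\in A^{**}$ with $\widetilde{T}(ab)=\widetilde{T}(a)b+a\widetilde{T}(b)-a\xi b$ for every $a,b\in A$; evaluating at $a=b=1$ forces $\xi=\widetilde{T}(1)=0$, so $\widetilde{T}$ is a derivation. The whole argument is essentially bookkeeping, and the only genuinely creative step is the recognition that $\tfrac12\delta(T(1),1)$ is the right triple derivation to subtract; this choice works exactly because of the skew-adjointness $T(1)^*=-T(1)$ supplied by Lemma~\ref{l triple derivations on unital C*-algebras are generalized derivations}, so there is no substantive obstacle beyond spotting this cancellation.
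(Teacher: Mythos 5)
Your proposal is correct and follows essentially the same route as the paper: establish $T(1)^*=-T(1)$ from the local witness at $1$, pass from triple derivation to generalized Jordan derivation to generalized derivation via Lemma~\ref{l triple derivations on unital C*-algebras are generalized derivations} and Proposition~\ref{p generalized Jordan der are gene der}, invoke Corollary~\ref{c local Jordan der} to upgrade the local version, and subtract the inner triple derivation $\frac12\delta(T(1),1)$ to reduce to the case $\widetilde{T}(1)=0$. The only cosmetic difference is that you verify $\xi=\widetilde{T}(1)$ by evaluating the generalized-derivation identity at $a=b=1$, whereas the paper cites the previously stated fact that a generalized derivation vanishing at the unit is a derivation; these are the same observation.
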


In order to complete our study on bounded local triple derivations, we shall require the following result borrowed from \cite{BurFerGarPe2014}.

\begin{lemma}\label{l local triple derivation is symmetric}\cite[Lemma 9]{BurFerGarPe2014} Let $A$ be a unital C$^*$-algebra,
and let $T : A\to A$ be a bounded local triple derivation with $T(1) = 0$.
Then $T$ is a symmetric operator, that is, $T(a^*) = T(a)^*$, for every $a\in B$.
\end{lemma}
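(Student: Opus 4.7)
The approach is to exploit simultaneously two structural properties available on $T$: since $T(1)=0$, Corollary \ref{c local bd triple der on unital} guarantees that $T$ is a derivation, while by hypothesis $T$ remains a local triple derivation. I first reduce the claim to proving $T(u^*)=T(u)^*$ for every unitary $u\in A$. This reduction suffices because every element of a unital C$^*$-algebra admits a finite decomposition $a=\sum_j \lambda_j u_j$ into a linear combination of unitaries, so the symmetry extends at once by linearity of $T$ and of the involution.

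Fix a unitary $u\in A$ and, using the local triple derivation hypothesis, pick a triple derivation $\delta_u:A\to A$ with $\delta_u(u)=T(u)$. The key observation is that $\{u,u,u\}=\tfrac12(uu^*u+uu^*u)=u$, so applying $\delta_u$ and the ternary Leibniz rule yields
\[
\delta_u(u) \;=\; \{\delta_u(u),u,u\} + \{u,\delta_u(u),u\} + \{u,u,\delta_u(u)\}.
\]
Using $uu^*=u^*u=1$, the two outer triple products each collapse to $\delta_u(u)$ while the middle one equals $u\,\delta_u(u)^*\, u$. Rearranging $2\delta_u(u)+u\,\delta_u(u)^* u=\delta_u(u)$ gives
\[
T(u)^* \;=\; \delta_u(u)^* \;=\; -u^*\,T(u)\,u^*.
\]

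Independently, since $T$ is a derivation with $T(1)=0$, applying the Leibniz rule to $uu^*=1$ produces $T(u)u^* + u\,T(u^*)=0$, whence $T(u^*)=-u^*T(u)u^*$. Comparing the two displayed formulas gives $T(u^*)=T(u)^*$ for every unitary $u\in A$, and the reduction above delivers the conclusion on all of $A$.

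I do not foresee a serious technical obstacle: both key computations are short and the extension step is purely algebraic. The real content is the conceptual point that the derivation identity (coming from $T(1)=0$) and the triple identity (coming from $u=\{u,u,u\}$) independently yield the same quantity $-u^*T(u)u^*$, and that this coincidence is precisely what forces $T$ to be symmetric. The only place where care is needed is the simplification of the three triple products when $u$ is unitary, which depends solely on $uu^*=u^*u=1$.
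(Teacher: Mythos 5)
Your proof is correct and follows essentially the same route as the paper: both derive $T(u^*)=-u^*T(u)u^*$ from the derivation identity applied to $uu^*=1$ and $T(u)^*=-u^*T(u)u^*$ from the triple Leibniz rule applied to $u=\{u,u,u\}$, then compare the two expressions. The only cosmetic difference is the final extension from unitaries to all of $A$: you use that every element of a unital C$^*$-algebra is a finite linear combination of unitaries, while the paper invokes the Russo--Dye theorem together with the continuity of $T$; both are valid.
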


\begin{proof} Corollary \ref{c local bd triple der on unital} assures that $T$ is a derivation. Let $u$ be a unitary element in $A$. Since $T$ is a derivation, we have $0=T(1)=T(u u^*)=u T(u^{*})+T(u) u^{*},$ and hence \begin{equation}\label{eq first part unitary} T(u)=-u T(u^{*}) u.
\end{equation}

By hypothesis, $T$ is a local triple derivation. Therefore there exists a triple derivation $\delta_u$ such that $T(u)=\delta_u (u)$. This implies that $T(u)=\delta_u (u) = \delta_{u} (u u^{*} u)= \delta_{u} \{ u,u,u \}=2 \{u,u, T (u) \}+ \{u,T(u),u \}= 2 T(u)+ u T(u)^* u,$
which proves $$T(u)= - u  T(u)^* u.$$ Combining the above identity with \eqref{eq first part unitary} we get $T(u^*)=T(u)^*$.\smallskip

Finally, from the Russo-Dye theorem and the continuity of $T$ we obtain that $T(b^*) = T(b)^*$, for every $b$ in $A$, which concludes the proof.
\end{proof}

We can state now the main result of \cite{BurFerGarPe2014}, which can be regarded as the first generalization in the triple setting of the results proved by Kadison and Johnson for local derivations on C$^*$-algebras.

\begin{theorem}\label{th Kadison thm for triple derivations on unital C*-algebras}\cite[Theorem 10]{BurFerGarPe2014} Every bounded local triple derivation on a unital C$^*$-algebra is a triple derivation.
\end{theorem}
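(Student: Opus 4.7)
The plan is to reduce the problem to the two results already established in the excerpt, namely Corollary \ref{c local bd triple der on unital} (which turns a bounded local triple derivation with $T(1)=0$ into an associative derivation) and Lemma \ref{l local triple derivation is symmetric} (which provides $T(a^*)=T(a)^*$ in that same situation), and then to verify by direct computation that any symmetric associative derivation of a C$^*$-algebra automatically obeys the triple Leibniz rule for the ternary product \eqref{eq ternary product on C*-algebras}.

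First I would peel off the obstruction at the unit. Given a bounded local triple derivation $T:A\to A$, set
\[
\widetilde{T} := T - \tfrac12\,\delta(T(1),1).
\]
Because $\tfrac12\,\delta(T(1),1)$ is itself a (bounded) triple derivation, the difference $\widetilde{T}$ is again a local triple derivation: for each $a\in A$, if $\delta_a$ is a triple derivation with $T(a)=\delta_a(a)$, then $\delta_a - \tfrac12\,\delta(T(1),1)$ is a triple derivation realising $\widetilde{T}(a)$. As noted after Corollary \ref{c local bd triple der on unital}, we also have $\widetilde{T}(1)=0$, and that corollary then asserts that $\widetilde{T}$ is an (associative) derivation on $A$. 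Lemma \ref{l local triple derivation is symmetric} applied to $\widetilde{T}$ further gives $\widetilde{T}(a^*)=\widetilde{T}(a)^*$ for every $a\in A$.

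The remaining step is to show that any bounded associative derivation $D:A\to A$ satisfying $D(a^*)=D(a)^*$ is automatically a triple derivation for the product $\{a,b,c\}=\tfrac12(ab^*c+cb^*a)$. This is a routine polarisation: from the associative Leibniz rule applied to the two summands of $\{a,b,c\}$ and the symmetry identity $D(b^*)=D(b)^*$ one obtains
\[
D\{a,b,c\} = \tfrac12\bigl(D(a)b^*c + aD(b)^*c + ab^*D(c) + D(c)b^*a + cD(b)^*a + cb^*D(a)\bigr),
\]
and regrouping exactly reproduces $\{D(a),b,c\}+\{a,D(b),c\}+\{a,b,D(c)\}$. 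Hence $\widetilde{T}$ is a triple derivation, and therefore
\[
T = \widetilde{T} + \tfrac12\,\delta(T(1),1)
\]
is a sum of two triple derivations, which is again a triple derivation, completing the proof.

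The genuine obstacles have in fact already been overcome in the machinery built up in the earlier sections: the reduction to the case $T(1)=0$ via generalized derivations and Theorem \ref{t multipliers 1}, and the symmetry statement of Lemma \ref{l local triple derivation is symmetric}, which in turn relies on the Russo--Dye theorem together with a clever unitary computation. Against that background the proof of the theorem itself is essentially bookkeeping, and the only point that deserves a careful verification is the final polarisation showing that symmetry upgrades an associative derivation to a triple derivation.
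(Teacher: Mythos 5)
Your proof is correct and follows essentially the same route as the paper: subtract the inner triple derivation $\frac12\delta(T(1),1)$, invoke Corollary \ref{c local bd triple der on unital} and Lemma \ref{l local triple derivation is symmetric} to obtain a symmetric associative derivation, and check directly that such a derivation satisfies the triple Leibniz rule. The only difference is that you write out the final polarisation computation explicitly where the paper dismisses it as ``easy to check,'' and your computation is accurate.
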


\begin{proof}
Let $T: A\to A$ be a bounded local triple derivation on a unital C$^*$-algebra. Corollary \ref{c local bd triple der on unital} assures that the mapping $\widetilde{T}=T- \frac12 \delta(T(1),1) = T - \delta\left(\frac12 T(1),1\right)$ is a bounded generalized derivation with $\widetilde{T} (1) =0$, and consequently $\widetilde{T}$ is a derivation. Since $\widetilde{T}$ is a bounded local triple derivation and $\widetilde{T} (1) =0$, Lemma \ref{l local triple derivation is symmetric} implies that  $\widetilde{T}$ is a $^*$-derivation on $A$ (i.e.  $\widetilde{T} (a)^* =  \widetilde{T} (a^*)$, for every $a\in A$). It is easy to check that, in these conditions, we have $$ \widetilde{T} \{a,b,c\} =  \{\widetilde{T}(a), b,c\} +  \{a, \widetilde{T}(b),c\} +  \{a,b,\widetilde{T}(c)\},$$ which shows that $\widetilde{T}$ is a triple derivation, and hence $T = \widetilde{T} + \delta\left(\frac12 T(1),1\right)$ is a triple derivation too. \end{proof}

It should be remarked here that a derivation $D$ on a unital C$^*$-algebra is a triple derivation if, and only if, it is symmetric (i.e. $D(a^*) =D(a)^*$, for every $a$). Indeed, the identity $D(a^*) = D\{1,a,1\} = 2 \{D(1), a, 1\} + \{1,D(a),1\} = 2 D(1)\circ a^* + D(a)^* = D(a)^*$ proves the desired statement, because, as we said in the proof of the previous theorem, the other implication can be easily checked. This fact was implicitly stated by Barton and Friedmann \cite{BarFri} and by Ho, Martínez, Russo and the third author of this note \cite{HoMarPeRu}.\smallskip

In the light of Theorem \ref{th Kadison thm for triple derivations on unital C*-algebras}, it seems natural to ask whether every continuous local triple derivation on a general C$^*$-algbera is a triple derivation. This question is a very particular case of a more ambitious problem, introduced in the setting of JB$^*$-triples by Mackey in \cite{Mack}.\smallskip

We have already mentioned that every C$^*$-algebra is a JB$^*$-triple with respect to the triple product defined by \eqref{eq ternary product on C*-algebras}. The general definition of JB$^*$-triples, introduced by Kaup in \cite{Ka83}, reads as follows:\smallskip

A JB$^*$-triple is a complex Banach space $E$ together with a
continuous triple product $\J ... : E\times E\times E \to E,$
which is conjugate linear in the middle variable and symmetric
bilinear in the outer variables satisfying the following axioms:
\begin{enumerate}[{\rm $(a)$}] \item (Jordan Identity) \begin{equation}\label{Jordan identity} \J ab{\J xyz} = \J {\J abx}yz - \J x{\J bay}z + \J xy{\J abz},
\end{equation} for all $a,b,x,y,z$ in $E$;
\item If $L(a,b)$ denotes the operator on $E$ given by $L(a,b) x = \J abx,$ the mapping $L(a,a)$ is an hermitian operator with non-negative
spectrum; \item $\|\J aaa\| = \|a\|^3$, for every $a\in E$.\end{enumerate} Given $a,b\in E$, the symbol $Q(a,b)$ will denote the conjugate linear operator defined by $Q(a,b) (x) = \J axb$. We shall write $Q(a)$ instead of $Q(a,a)$.\smallskip

Every C$^*$-algebra is a JB$^*$-triple via the triple product given in \eqref{eq ternary product on C*-algebras} and every JB$^*$-algebra (i.e. a complex Jordan Banach $^*$-algebra satisfying  $\|U_a ({a^*}) \|= \|a\|^3$, for every element $a$, where $U_a (x) :=2 (a\circ x) \circ a - a^2 \circ x$, cf. \cite[\S 3.8]{HancheStor}) is a JB$^*$-triple under the triple product \begin{equation}\label{eq JB*-alg-triple product} \J xyz = (x\circ y^*) \circ
z + (z\circ y^*)\circ x - (x\circ z)\circ y^*.
\end{equation} The space $B(H,K)$ of all bounded linear operators between complex Hilbert spaces, although rarely is a C$^*$-algebra, is a JB$^*$-triple with the product defined in \eqref{eq ternary product on C*-algebras}. In particular, every complex Hilbert space is a JB$^*$-triple.\smallskip

Additional illustrative examples of JB$^*$-triples are given by the so-called \emph{Cartan factors}. A Cartan factor of type 1 (also denoted by $I^{\mathbb{C}}$) is a JB$^*$-triple which coincides with the Banach space $B(H, K)$ of bounded linear operators between two complex Hilbert spaces, $H$ and $K$, where the triple product is defined by $\J xyz= 2^{-1}(xy^*z+zy^*x)$. Cartan factors of types 2 and 3 are the subtriples of $B(H)$ defined by $II^{\mathbb{C}} = \{ x\in B(H) : x=- j x^* j\} $ and $III^{\mathbb{C}} = \{ x\in B(H) : x= j x^* j\}$, respectively, where $j$ is a conjugation on $H$. A Cartan factor of type 4 or $IV$ is a complex spin factor, that is, a complex Hilbert space provided with
a conjugation $x \mapsto \overline{x}$, triple product $$\J x y z
= \left< x / y \right> z + \left< z / y \right> x - \left< x /
\bar z \right> \bar y,$$ and norm given by $\| x\|^2=\left< x / x
\right>+\sqrt {\left< x / x \right>^2-|\left< x / \overline x
\right>|^2}$. The Cartan factors of types 5 and 6 consist of matrices over the eight
dimensional complex Cayley division algebra $\mathbb{O}$; the type $VI$ is the space of all hermitian $3$x$3$ matrices over $\mathbb{O}$, while the type $V$ is the subtriple of $1$x$2$ matrices with entries in $\mathbb{O}$ (compare \cite{Loos77}, \cite{FriRu86}, \cite{DanFri87} and \cite{Ka97}).\smallskip

A \emph{triple derivation} on a JB$^*$-triple $E$ is a linear mapping $\delta: E\to E$ satisfying that $$\delta \J abc = \J {\delta(a)}bc + \J a{\delta(b)}c + \J ab{\delta(c)},$$ for every $a,b,c\in E$. A \emph{local triple derivation} on $E$ is a linear map $T : E\to E$ such that for each $a$ in $E$ there exists a triple derivation $\delta_{a}$ on $E$ satisfying $T(a) = \delta_a (a).$\smallskip

During the International Conference on Jordan Theory, Analysis and Related Topics, held in Hong Kong in 2012, Mackey posed the following problems:

\begin{problem}\label{problem local triple derivations}
Is every continuous local triple derivation on a JB$^*$-triple $E$ a triple derivation?
\end{problem}

\begin{problem}\label{problem local triple derivations continuous}
Is every local triple derivation on a JB$^*$-triple $E$ continuous?
\end{problem}

\begin{problem}\label{problem non bounded local triple derivations}
Is every local triple derivation on a JB$^*$-triple $E$ a triple derivation?
\end{problem}

A JBW$^*$-triple is a JB$^*$-triple which is also a dual Banach space. JBW$^*$-triples occupy in the category of JB-triples a similar space of that inhabited by von Neumann algebras in the class of C$^*$-algebras. Every JBW$^*$-triple admits a unique isometric predual, and a triple version of a theorem proved by Sakai in the setting of von Neumann algebras, asserts that the triple product of every JBW$^*$-triple
is separately weak$^*$ continuous (cf. \cite{BarTi}). The second dual, $E^{**}$, of a JB$^*$-triple $E$ is a JBW$^*$-triple \cite{Di86b}.\smallskip

A positive answer to the above Problem \eqref{problem local triple derivations} was given by Mackey under the additional hypothesis of $E$ being a JBW$^*$-triple (cf. \cite[Theorem 5.11]{Mack}). This result can be considered a Jordan-triple version of Kadison's original result for von Neumann algebras. Like Kadison's proof was based on the abundance of projections in any von Neumann algebra, Mackey's arguments make a strong use of the abundance of tripotents elements in JBW$^*$-triples.\smallskip

Every element $e$ in a JB$^*$-triple $E$ satisfying $\J eee =e$ is called tripotent.
When a C$^*$-algebra, $A$, is regarded as a JB$^*$-triple, the set of tripotents of $A$ is precisely the set of all
partial isometries in $A$.\smallskip

Associated with each tripotent $e$ in a JB$^*$-triple $E$, there is a
decomposition of $E$ (called \emph{Peirce decomposition}) in the
form: $$E=E_0(e)\oplus E_1(e)\oplus E_2(e),$$ where
$E_k(e)=\{x\in E:L(e,e)x=\frac k2 x\}$ for $k=0,1,2$.
The \emph{Peirce rules} are that $$\J {E_{i}(e)}{E_{j} (e)}{E_{k} (e)}\subseteq E_{i-j+k} (e)$$ if $i-j+k \in \{ 0,1,2\},$ and $\J {E_{i}(e)}{E_{j} (e)}{E_{k} (e)}=\{0\}$ otherwise. Moreover, $\J {E_{2} (e)}{E_{0}(e)}{E} = \J {E_{0} (e)}{E_{2}(e)}{E} =\{0\}.$\smallskip

The Peirce space $E_2 (e)$ is a unital JB$^*$-algebra with unit $e$,
product $x\circ_e y := \J xey$ and involution $x^{*_e} := \J
exe$, respectively. The corresponding \emph{Peirce projections}, $P_{i} (e) : E\to
E_{i} (e)$, $(i=0,1,2)$ are given by \begin{equation}\label{eq Peirce projections} P_2 (e) = Q(e)^2, \ P_1 (e) = 2 L(e,e) - 2 Q (e)^2,
\end{equation} $$\ \hbox{ and } P_0 (e) = Id-2 L(e,e) + Q (e)^2,$$ where $Id$ denotes the
identity map on $E$.\smallskip

The separate weak$^*$ continuity of the triple product of every JBW$^*$-triple implies that Peirce projections associated with a tripotent $e$ in a JBW$^*$-triple are weak$^*$ continuous.\smallskip

Elements $a,b$ in a JB$^*$-triple $E$ are said to be
\emph{orthogonal} (written $a\perp b$) if $L(a,b) =0$. It is known that $a\perp b$ if, and only if, $\J aab =0$ if, and only if, $\{b,b,a\}=0$ if, and only if, $b\perp a$ (see, for example, \cite[Lemma 1]{BurFerGarMarPe}).\smallskip

The triple version of Lemmas \ref{l Kad sublemma 5} and \ref{l Kad sublemma 5 local generalized derivations} read as follows:

\begin{lemma}\label{l 4 BurFerGarPe}\cite[Lemma 4]{BurFerGarPe2014} Let $T: E \to E$ be a local triple derivation on a JB$^*$-triple. Then the products of the form $\J a{T(b)}c$ vanish for every $a,b,c$ in $E$ with $a, c\perp b$.
\end{lemma}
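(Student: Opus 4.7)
The plan is to use the defining property of local triple derivations together with the Jordan triple Leibniz rule, exploiting that orthogonality is defined precisely by the vanishing of the $L$-operator.

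Given $a, b, c \in E$ with $a \perp b$ and $c \perp b$, I would first invoke the hypothesis on $T$ to select a triple derivation $\delta_b : E \to E$ satisfying $T(b) = \delta_b(b)$. The strategy is then to differentiate the trivial identity $\{a, b, c\} = 0$ (which holds because $L(a,b) = 0$ by definition of $a \perp b$) via $\delta_b$. Applying the Leibniz rule \eqref{equ ternary Leibnitz rule} to $\delta_b \{a, b, c\}$ yields
\[
0 \;=\; \{\delta_b(a), b, c\} \;+\; \{a, \delta_b(b), c\} \;+\; \{a, b, \delta_b(c)\}.
\]
It remains to kill the first and third summands using the two orthogonality hypotheses.

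For the third summand, $\{a, b, \delta_b(c)\} = L(a,b)(\delta_b(c)) = 0$ directly from $a \perp b$. For the first summand, I would use the symmetry of the triple product in its outer arguments to rewrite $\{\delta_b(a), b, c\} = \{c, b, \delta_b(a)\} = L(c,b)(\delta_b(a))$, which vanishes because $c \perp b$. Consequently $\{a, \delta_b(b), c\} = 0$, and since $\delta_b(b) = T(b)$, this gives $\{a, T(b), c\} = 0$, as required.

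There is essentially no obstacle here: the argument is a direct application of the Leibniz rule combined with the definition of orthogonality, and the only subtle point is remembering to use outer symmetry to re-express the first term so that the operator $L(c,b)$ (rather than something involving $\delta_b(a)$ in the inner slot) appears and can be annihilated by $c \perp b$. The result is the exact triple analogue of Lemmas \ref{l Kad sublemma 5} and \ref{l Kad sublemma 5 local generalized derivations}, with the role of the zero-product condition $ab = bc = 0$ replaced by the symmetric orthogonality relations $a, c \perp b$.
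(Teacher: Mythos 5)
Your proposal is correct and is essentially the paper's own argument: both select $\delta_b$ with $T(b)=\delta_b(b)$, apply the Leibniz rule to $\delta_b\J abc$, and annihilate the remaining terms via $L(a,b)=0$ and (using outer symmetry) $L(c,b)=0$. Nothing further to add.
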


\begin{proof} Let $a,b,$ and $c$ in the hypothesis of the lemma. Consider a triple derivation $\delta_{b} : E\to E$ satisfying $\delta_b (b)= T(b)$. Then $$\{ a, T(b) , c\} = \{a,\delta_b (b), c\} = \delta\{a,b,c\} - \{\delta_b (a),b, c\} - \{a,b,\delta_b (c)\} =0.$$
\end{proof}

The natural partial order in the set of tripotents of a JB$^*$-triple $E$ is defined as follows: given two tripotents $e$ and $u$ in $E$ we say that $u \leq e$ if $e-u$ is a tripotent in $E$ and $e-u \perp u$. \smallskip

Horn establishes in \cite[Lemma 3.11]{Horn}, that the set of tripotents in a JBW$^*$-triple $W$ is norm total. More precisely, every $a\in W$ be be approximated in norm by a finite linear combination of mutually orthogonal tripotents.

Let $T: E\to E$ be a local triple derivation on a JB$^*$-triple and $e$ be a tripotent in $E$. Clearly \begin{equation}\label{eq local der on a single tripotent} T(e) = \delta_e (e) = 2\{\delta_e (e),e,e\} + \{e,\delta_e (e),e\}
 \end{equation} $$= 2 \{T(e),e,e\} +\{e,T(e),e\}.$$   In particular, \begin{equation}\label{eq local der on a single tripotent 2} P_2 (e) (T(e)) = - \{e,T(e),e\} = - \{e,P_2 (e) T(e),e\} =- (P_2 (e) T(e))^{*_e},
 \end{equation} $$\hbox{ and }  P_0(e) (T(e))=0.$$
It is known that tripotents $e,v$ in $E$ are orthogonal if, and only if, $e\pm v$ is a tripotent in $E$ (compare, for example, \cite[3.6 Lemma]{IsKaRo95}). Therefore $$T(e\pm v) = 2 \{T(e\pm v),e\pm v,e\pm v\} +\{e\pm v,T(e\pm v),e\pm v\},$$ $$T(e)\pm T(v) = 2 \{T(e\pm v),e,e\} + 2 \{T(e\pm v),v,v\} +\{e\pm v,T(e\pm v),e\pm v\},$$ for every $e,v$ tripotents in $E$ with $e\perp v.$ This shows that $$2\{T(e),v,v\} + 2 \{v,T(v),e\} + \{v,T(e),v\} =0,$$ for all tripotents $e,v$ with $e\perp v$. We deduce, via Peirce arithmetic and \eqref{eq local der on a single tripotent 2}, that $\{v,T(e),v\} =0,$ and hence \begin{equation}
 \label{eq tripotents orthogonal} \{T(e),v,v\} + \{v,T(v),e\} =0,
 \end{equation}for all tripotents $e,v$ with $e\perp v$. Actually, a similar reasoning to that given above shows that \begin{equation}\label{eq orthgonal tripotents in the outer variables} \{v,T(e),w\} =0,
\end{equation} whenever $e,v$ and $w$ are tripotents in $E$ with $e\perp v,w.$\smallskip

Let us consider a finite family of mutually orthogonal tripotents $e_1,\ldots, e_n$ in $E$, and an algebraic element of the form $\displaystyle b=\sum_{i=1}^{m} \lambda_i e_i,$ where $\lambda_i\in \mathbb{R}$ for every $i.$ Clearly, $\displaystyle T(\J bbb) = \sum_{i=1}^{m} \lambda_i^{3} T^{**} (\J {e_i}{e_i}{e_i}).$ The condition $e_i\perp e_j$ for every $i\neq j$ implies that
\begin{equation}\label{eq 4 theorem main} 2 \J {T(b)}bb =2 \sum_{i,j=1}^{m} \lambda_i^2 \lambda_j \J {e_i}{e_i}{T (e_j)}
\end{equation} $$= 2\sum_{i=1}^{m} \lambda_i^3 \J {e_i}{e_i}{T^{**} (e_i)} + 2\sum_{i,j=1, i\neq j}^{m} \lambda_i^2 \lambda_j \J {e_i}{e_i}{T (e_j)};$$
\begin{equation}\label{eq 5 theorem main} \J b{T(b)}b =  \J {\sum_{i=1}^{m} \lambda_i e_i} {T\left(\sum_{j=1}^{m} \lambda_j e_j\right)}{\sum_{k=1}^{m} \lambda_k e_k} = \hbox{ (by ($\ref{eq orthgonal tripotents in the outer variables}$)) }
\end{equation} $$= \sum_{i=1}^{m} \lambda_i^3 \J {e_i}{T (e_i)}{e_i} +2 \sum_{i,j=1, i\neq j}^{m} \lambda_i^2 \lambda_j \J {e_i}{T (e_i)}{e_j}.$$

Now, applying \eqref{eq local der on a single tripotent} and \eqref{eq tripotents orthogonal}, we deduce that $$T \J bbb = 2 \J {T(b)}bb + \J b{T(b)}b.$$ Since every element $a$ in a JBW$^*$-triple $W$ can be approximated in norm, by algebraic elements of the form $\displaystyle b=\sum_{i=1}^{m} \lambda_i e_i$, we conclude that \begin{equation}\label{eq local der is a derivation on a single element} T\J aaa = 2 \J {T(a)}aa + \J a{T(a)}a,
\end{equation} for every bounded local triple derivation $T$ on $W$ and for every $a\in W.$ The following theorem can be deduced now via a standard polarization argument.

\begin{theorem}\label{t Mackey local triple derivations JBW}\cite[Theorem 5.11]{Mack}
 Every continuous local triple derivation on a JBW$^*$-triple is a triple derivation.
\end{theorem}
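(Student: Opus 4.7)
The plan is to deduce the full triple Leibniz rule from the already-established one-variable identity
\[
 T\J aaa = 2 \J {T(a)}aa + \J a{T(a)}a \qquad (a\in W), \tag{$\star$}
\]
by a standard polarization. Equivalently, consider the auxiliary form
\[
 D(a,b,c):= T\J abc - \J {T(a)}bc - \J a{T(b)}c - \J ab{T(c)},
\]
which inherits from the triple product the property of being symmetric bilinear in the outer variables $(a,c)$ and conjugate linear in the middle variable $b$. Using the outer symmetry $\J xyz=\J zyx$, the identity $(\star)$ is precisely the statement $D(a,a,a)=0$ for every $a\in W$. The goal is to show $D\equiv 0$.

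First I would substitute $a\mapsto a+tc$ (with $t\in\CC$ arbitrary) into $D(a+tc,a+tc,a+tc)=0$ and expand, keeping careful track of factors $t$ coming from the outer slots and $\bar t$ coming from the middle slot. Grouping by degree in $t,\bar t$, the expansion reads
\[
 0 = \bigl(2t\, D(a,a,c)+\bar t\, D(a,c,a)\bigr)+\bigl(t^2 D(c,a,c)+2|t|^2 D(a,c,c)\bigr),
\]
where the cubic term $t^2\bar t\, D(c,c,c)$ vanishes by hypothesis. Specializing first to $t=s\in\RR$ and then to $t=is$ with $s\in\RR$ produces two pairs of linear relations in $s$ and $s^{2}$; comparing coefficients yields
\[
 2D(a,a,c)+D(a,c,a)=0 \quad\text{and}\quad 2D(a,a,c)-D(a,c,a)=0,
\]
whence $D(a,a,c)=0$ for all $a,c\in W$. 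A final polarization $a\mapsto a+b$ in this quadratic-in-$a$, linear-in-$c$ identity, together with the outer symmetry, gives $2D(a,b,c)=0$, so $D(a,b,c)=0$ identically. This is the triple Leibniz rule, proving $T$ is a triple derivation.

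The only delicate point is bookkeeping in the polarization, since $D$ is not a genuine polynomial in one complex variable but rather a sum of a holomorphic and an antiholomorphic part in $t$; this is exactly why two specializations ($t$ real and $t$ purely imaginary) are needed to separate the $t$-linear and $\bar t$-linear pieces. Once this separation is done, everything else is automatic from $(\star)$ and the symmetry $\J abc=\J cba$, so I do not expect any serious obstacle beyond keeping the sesquilinear expansion organized.
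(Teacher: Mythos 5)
Your proof is correct and follows essentially the same route as the paper: starting from the one-variable identity $(\star)$ (which the paper establishes in the discussion immediately preceding the theorem), substitute $a+tb$, separate the linear and conjugate-linear contributions by taking $t$ real and then purely imaginary, and finish with an outer polarization. One small correction in the last step: polarizing $D(a,a,c)=0$ in $a$ gives $D(a,b,c)+D(b,a,c)=0$, and outer symmetry does not collapse this to $2D(a,b,c)=0$, since it exchanges the two \emph{outer} slots rather than an outer slot with the middle one. Instead polarize $D(a,c,a)=0$ (or equivalently $D(c,a,c)=0$), both of which your coefficient comparison has already produced, to get $D(a,c,b)+D(b,c,a)=2D(a,c,b)=0$ by outer symmetry --- exactly the paper's final use of $4\{a,b,c\}=\{a+c,b,a+c\}-\{a-c,b,a-c\}$.
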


\begin{proof} Let $T: W\to W$ be a bounded local triple derivation on a JBW$^*$-triple.
By \eqref{eq local der is a derivation on a single element}, we have $T\J aaa = 2 \J {T(a)}aa + \J a{T(a)}a,$ for every $a\in W.$ Let $a,b$ be elements in $W$. Since $T\J {a\pm b}{a\pm b}{a\pm b} = 2 \J {T(a\pm b)}{a\pm b}{a\pm b} + \J {a\pm b}{T(a\pm b)}{a\pm b},$ it follows that
$$\pm 2 T\J aa b \pm T\J aba + 2 T \J bba + T\J bab$$ $$  = \pm 2 \J {T(a)}ab \pm 2 \J {T(b)}aa + 2 \J {T(b)}ab + 2 \J {T(b)}ba \pm 2 \J {T(a)}ba $$ $$+ 2 \J {T(a)}bb \pm 2 \J a{T(a)}b + \J b{T(a)}b + 2 \J b{T(b)}a \pm \J a{T(b)}a,$$ which implies
\begin{equation}\label{eq new nov 12}  2 T \J bba + T\J bab =  2 \J {T(b)}ab + 2 \J {T(b)}ba + 2 \J {T(a)}bb
\end{equation}
$$+ \J b{T(a)}b + 2 \J b{T(b)}a ,$$ for every $a,b\in W.$ If in \eqref{eq new nov 12}, we replace $b$ with $i b$ we get
$$2 T \J bba -T\J bab = - 2 \J {T(b)}ab + 2 \J {T(b)}ba + 2 \J {T(a)}bb$$ $$- \J b{T(a)}b + 2 \J b{T(b)}a ,$$ which added to \eqref{eq new nov 12} gives
$$T \{b,a,b\} =  2  \{T(b),a,b\} + \{ b,T(a),b\},$$ for every $a,b\in W.$ Having in mind that $$4 \{a,b,c\} = \{a+c,b,a+c\} - \{a-c,b,a-c\},$$ we deduce from the above identity that $$T \{b,a,c\} =  \{T(a),b,c\} + \{ a,T(b),c\} + \{a,b,T(c)\},$$ for every $a,b,c\in W.$
\end{proof}

The scarcity of tripotents in a general JB$^*$-triple makes intractable the application of the above arguments, appearing in Mackey's paper, in the wider setting of JB$^*$-triples. The answer to  Problem \ref{problem local triple derivations} in full generality will need some additional topological tools.\smallskip

Following standard notation, for each element $a$ in a JB$^*$-triple $E$ we denote $a^{[1]} =
a$ and $a^{[2 n +1]} := \J a{a^{[2n-1]}}a$ $(\forall n\in \mathbb{N})$. It can be easily deduced, via Jordan identity, that $\J{a^{[k]}}{a^{[l]}}{a^{[m]}}=a^{[k+l+m]}$. The symbol $E_a$ will denote the JB$^*$-subtriple of $E$ generated by the element $a$. The precise structure of $E_a$ is very well known, concretely, $E_a$ is JB$^*$-triple isomorphic (and hence isometric) to $C_0 (L)$ for some locally compact Hausdorff space $L\subseteq (0,\|a\|],$ such that $L\cup \{0\}$ is compact and $\|a\| \in L$. It is further known that there exists a triple isomorphism $\Psi$ from $E_a$ onto $C_{0}(L),$ satisfying $\Psi (a) (t) = t$ $(t\in L)$ (compare \cite[Lemma 1.14]{Ka83}). This implies that for each natural $n$, there exists (a unique) element $a^{[{1}/({2n-1})]}$ in $E_a$ satisfying $(a^{[{1}/({2n-1})]})^{[2n-1]} = a.$ When $a$ is a
norm one element, the sequence $(a^{[{1}/({2n-1})]})$ converges in the weak$^*$ topology of $E^{**}$ to a tripotent denoted by $r(a)$
and called the \emph{range tripotent} of $a$. The tripotent $r(a)$
is the smallest tripotent $e$ in $E^{**}$ satisfying that $a$ is
positive in the JBW$^*$-algebra $E^{**}_{2} (e)$. It is also known
that the sequence $(a^{[2n -1]})$ converges in the
weak$^*$ topology of $E^{**}$ to a tripotent (called the
\hyphenation{support}\emph{support} \emph{tripotent} of $a$)
$s(a)$ in $E^{**}$, which satisfies $ s(a) \leq a \leq r(a)$ in
$A^{**}_2 (r(a))$ (compare \cite[Lemma 3.3]{EdRu88}; the reader should be warned that in \cite{EdRu96}, $r(a)$ is called the support tripotent of $a$).\smallskip

We recall that a tripotent $u$ in the bidual of a JB$^*$-triple $E$ is said to be \emph{open} when $E^{**}_2 (u)\cap E$ is weak$^*$ dense in $E^{**}_2 (u)$. A tripotent $e$ in $E^{**}$ is said to be \emph{compact-$G_{\delta}$} (relative to $E$) if there exists a norm one element $a$ in $E$ such that $e$ coincides with $s(a)$, the support tripotent of $a$. A tripotent $e$ in $E^{**}$ is said to be \emph{compact} (relative to $E$) if there exists a decreasing net $(e_{\lambda})$ of tripotents in $E^{**}$ which are compact-$G_{\delta}$ with infimum $e$, or if $e$ is zero. The notions of open and compact tripotents in $E^{**}$ are introduced by Edwards and R{\"u}ttimann in \cite{EdRu96}. Closed and bounded tripotents in the bidual of a JB$^*$-triple are more recent notions developed in \cite{FerPe06} and \cite{FerPe07}. A tripotent $e$ in $E^{**}$ is said to be \emph{closed} relative to $E$ if $E^{**}_0 (e)\cap E$ is weak$^*$ dense in $E^{**}_0(e)$. If there exists a norm one element $a$ in $E$ such that $a= e + P_0 (e) (a)$, the tripotent $e$ is called \emph{bounded} (relative to $E$). The theory of compact tripotents is, in some sense, complete with Theorem 2.6 in \cite{FerPe06} (see also \cite[Theorem 3.2]{FerPe10b}), a result which asserts that a tripotent $e$ in $E^{**}$ is compact if, and only if, $e$ is closed and bounded.\smallskip

We shall also require some knowledge about the strong$^*$-topology of a JBW$^*$-triple. Let $W$ be a JBW$^*$-triple and let $\varphi$ a norm-one functional in $W_{*}$. By \cite[Proposition 1.2]{BarFri87}, the map $(x,y) \mapsto (.,.)_{\varphi}=\varphi \J xyz$ is a positive sequilinear form on $W$, where $z$ is any norm-one element of $W$ satisfying $\varphi (z) = 1$. Furthermore, the sequilinear form $(.,.)_{\varphi}$ does not depend on the choice of the point $z$, that is, if $\varphi(w)=1=\|w\|$ for any other $w\in W$, then $\varphi \J xyz = \varphi \J xyw$, for every $x,y\in W.$ Therefore, the mapping $x\mapsto \|x\|_{\varphi} = (x,x)_{\varphi}^{\frac12}$ is a pre-Hilbertian seminorm on $W$. The \emph{strong$^*$} topology of $W$, denoted by $S^{*} (W,W_{*})$, is the topology on ${W}$ generated by all the seminorms $ \|.\|_{\varphi},$ where $\varphi$ runs in the unit sphere of $W_{*}$ (cf. \cite[\S 3]{BarFri}). By  \cite[p.\ 258-259]{BarFri}, when a von Neumann algebra $M$ is regarded as a JBW$^*$-triple, its strong$^*$ topology coincides with its strong$^*$ topology in the von Neumann sense (i.e., the topology on $M$ generated by all the seminorms of the form $x\mapsto \varphi (x x^* + x^* x),$ where $\varphi$ runs in the set of normal positive functionals on $M$ \cite[1.8.7]{Sak}). We recall that the strong topology of $M$ is the topology generated by all the seminomrs of the form $x\mapsto \varphi (x x^* ),$ where $\varphi$ is a normal positive functionals on $M$.\smallskip

Among the many interesting properties of the strong$^*$-topology, we remark that the triple product of a JBW$^*$-triple is jointly strong$^*$-continuous on bounded sets (see \cite{Rodr1991} or \cite[Theorem 9]{PeRo}). It is also know that a linear functional on $W$ is weak$^*$-continuous if, and only if, it is strong$^*$-continuous, see \cite[Corollary 9]{PeRo}), thus, it follows from the bipolar theorem that for each convex $C\subseteq W$ we have
$$\overline{C}^{\sigma(W,W_*)} = \overline{C}^{S^*(W,W_*)}.$$ It is further known that a linear map between JBW$^*$-triples is strong$^*$-continuous if, and only if, it is weak$^*$-continuous (compare \cite[page 621]{PeRo}). Finally, we highlight that Bunce establishes in \cite{Bun01} that for a JBW$^*$-subtriple $F$ of a JBW$^*$-triple $W$, the strong$^*$-topology of $F$ coincides with the restriction to $F$ of the strong$^*$-topology of $W$, that is, $S^*(F,F_*) = S^* (W,W_*)|_{F}$.\smallskip

The following technical result can be directly derived from the properties of the strong$^*$-topology reviewed before, together with the expressions of the Peirce projections given in \eqref{eq Peirce projections}.

\begin{lemma}\label{l Peirce zero strong* limits}\cite[Lemma 1.5]{BurFerPe2013} Let $W$ be a JBW$^*$-triple. Suppose that $(e_\lambda)$ is a net (or a sequence) of tripotents in $W$ converging, in the strong$^*$-topology of $W$ to a tripotent $e$ in $W$. Let $(x_{\mu})$ be a net (or a sequence) in $W$, converging to some $x\in W$ in the strong$^*$-topology. Then, for each $i\in \{0,1,2\}$, the net (sequence) $P_i (e_\lambda) (x_\mu)$ tends to $P_i (e) (x)$. $\hfill\Box$
\end{lemma}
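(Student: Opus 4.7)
The approach is to reduce everything to a statement about the triple product and then invoke its joint strong$^*$-continuity on bounded sets (cf.\ Rodr\'{\i}guez-Palacios and Peralta--Rodr\'{\i}guez, as cited just before the lemma). Starting from the explicit formulas
\begin{equation*}
P_2(e)=Q(e)^2,\quad P_1(e)=2L(e,e)-2Q(e)^2,\quad P_0(e)=\mathrm{Id}-2L(e,e)+Q(e)^2,
\end{equation*}
recorded in the paper, it suffices by linearity to establish that $L(e_\lambda,e_\lambda)(x_\mu)\to L(e,e)(x)$ and $Q(e_\lambda)^2(x_\mu)\to Q(e)^2(x)$ in the strong$^*$-topology. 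The first expression is just the triple product $\{e_\lambda,e_\lambda,x_\mu\}$, while the second is the iterated product $\bigl\{e_\lambda,\{e_\lambda,x_\mu,e_\lambda\},e_\lambda\bigr\}$.

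The key ingredient is that all tripotents in $W$ have norm at most one, so the net $(e_\lambda)$ is automatically bounded. One may assume (without loss of generality, as is customary in the intended applications) that the net $(x_\mu)$ is bounded as well; indeed, the lemma is invoked in contexts where the $x_\mu$ belong to a fixed bounded subset such as the closed unit ball of $W$. Under this boundedness, the joint strong$^*$-continuity of the triple product yields
\begin{equation*}
\{e_\lambda,e_\lambda,x_\mu\}\ \xrightarrow{\ S^*\ }\ \{e,e,x\},
\end{equation*}
and, applying the same continuity twice to the iterated product, also $Q(e_\lambda)^2(x_\mu)\to Q(e)^2(x)$ in the strong$^*$-topology. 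Substituting these convergences into the Peirce formulas above gives $P_i(e_\lambda)(x_\mu)\to P_i(e)(x)$ for each $i\in\{0,1,2\}$, which is the conclusion.

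The main (and only) delicate point is precisely this boundedness issue, because the joint continuity of the triple product is only available on bounded subsets of $W$. For $(e_\lambda)$ this is automatic; for $(x_\mu)$ one either assumes boundedness, or notes that in every intended application the $x_\mu$ lie in a fixed closed ball, so that the joint continuity result applies directly and the routine verification above closes the argument.
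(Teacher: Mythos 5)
Your argument is correct and is exactly the derivation the paper intends: the survey states this lemma without proof, remarking only that it ``can be directly derived from the properties of the strong$^*$-topology reviewed before, together with the expressions of the Peirce projections,'' which is precisely your reduction, via the formulas for $P_2(e)=Q(e)^2$, $P_1(e)=2L(e,e)-2Q(e)^2$ and $P_0(e)=\mathrm{Id}-2L(e,e)+Q(e)^2$, to the joint strong$^*$-continuity of the triple product on bounded sets. Your flag about the boundedness of $(x_\mu)$ is the right caveat and is handled appropriately: for sequences it is automatic (a strong$^*$-convergent sequence is weak$^*$-convergent, hence norm-bounded by the uniform boundedness principle), and in every application in the paper the nets lie in a fixed closed ball.
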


The key result in the study of local triple derivations is the following result, which was originally established in \cite{BurFerPe2013}.

\begin{proposition}\label{p loal triple derivations on compact tripotents}\cite[Proposition 2.2]{BurFerPe2013} Let $T : E \to E$ be a bounded local triple derivation on a JB$^*$-triple. Suppose $e$ is a compact tripotent in $E^{**}.$ Then the following statements hold: \begin{enumerate}[$(a)$]
\item $P_0 (e) T^{**}(e) =0$;
\item If $a$ is a norm one element in $E$ whose support tripotent is $e$ (that is, $e$ is a compact-$G_{\delta}$ tripotent), then $Q (e) T(a) = Q (e) T^{**} (e);$
\item $P_2(e) T^{**}(e) = - Q(e) (T^{**}(e)).$
\end{enumerate}
\end{proposition}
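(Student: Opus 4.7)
The overall approach exploits the decomposition $a = e + b$ in $E^{**}$, where $b := P_0(e)(a) \in E^{**}_0(e)\cap E^{**}_a$: the equalities $P_1(e)(a)=0$ and $P_2(e)(a)=e$ are read off from the abelian model $E^{**}_a \cong C_0(\operatorname{Sp}(a))$, using $s(a)=e$ and $\|a\|=1$. The second standing ingredient is a mild strengthening of Lemma \ref{l 4 BurFerGarPe}: for $u,w\in E^{**}$ and $v\in E$ with $u,w\perp v$, one still has $\{u,T(v),w\}=0$. This extension follows by taking a local triple derivation $\delta_v:E\to E$ with $T(v)=\delta_v(v)$, passing to the weak$^*$-continuous triple derivation $\delta_v^{**}$ on $E^{**}$, and applying the derivation rule to $\{u,v,w\}=0$. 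Since $e$ is compact (hence closed), $E^{**}_0(e)\cap E$ is weak$^*$-dense in $E^{**}_0(e)$, so I pick a bounded net $(b_\lambda)\subseteq E\cap E^{**}_0(e)$ converging to $b$ in $\sigma(E^{**},E^{*})$.

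Part (b) then falls out directly: the extended lemma applied with $u=w=e$ and $v=b_\lambda$ gives $Q(e)T(b_\lambda)=0$ for every $\lambda$, whence $Q(e)T^{**}(b)=0$ by weak$^*$-continuity of $Q(e)\circ T^{**}$; since $T(a)=T^{**}(e)+T^{**}(b)$, this yields $Q(e)T^{**}(e)=Q(e)T(a)$. For (c) I pick a triple derivation $\delta_a:E\to E$ with $T(a)=\delta_a(a)$ and extend it to the weak$^*$-continuous triple derivation $\delta_a^{**}$ on the JBW$^*$-triple $E^{**}$. The tripotent identity \eqref{eq local der on a single tripotent 2} applied to $\delta_a^{**}$ at $e$ gives $P_2(e)\delta_a^{**}(e)=-Q(e)\delta_a^{**}(e)$ and $P_0(e)\delta_a^{**}(e)=0$. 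Peirce arithmetic then kills the $b$-contribution: applying $\delta_a^{**}$ to $Q(e)(b)=0$ and using $\delta_a^{**}(e)\in E^{**}_1(e)+E^{**}_2(e)$ together with the rules $\{E_2(e),E_0(e),E^{**}\}=0$ and $\{E_2(e),E_2(e),E_0(e)\}=0$ (the latter established by a short Jordan-identity computation) yields $Q(e)\delta_a^{**}(b)=0$ and hence $P_2(e)\delta_a^{**}(b)=0$. Combined with $P_2(e)T^{**}(b)=Q(e)^{2}T^{**}(b)=0$ from (b), this gives $P_2(e)T^{**}(e)=P_2(e)T(a)=P_2(e)\delta_a^{**}(e)=-Q(e)\delta_a^{**}(e)=-Q(e)T(a)=-Q(e)T^{**}(e)$.

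Part (a) is the subtle one and I expect it to be the main obstacle. A Peirce-decomposition bookkeeping, using (c), shows that $P_0(e)T^{**}(e)=0$ is equivalent to the single-tripotent derivation identity $T^{**}(e)=2\{T^{**}(e),e,e\}+\{e,T^{**}(e),e\}$. My plan is to establish this via a strong$^{*}$-topological limiting argument: since $e=s(a)$, the sequence $a^{[2n-1]}$ converges to $e$ in both the weak$^{*}$ and strong$^{*}$ topologies of $E^{**}$ (strong$^{*}$ convergence follows by dominated convergence applied to the pre-Hilbertian seminorms $\|\cdot\|_\varphi$ associated with normal states on $E^{**}_a$). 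Combining the strong$^{*}$-continuity of $T^{**}$ (equivalent to its weak$^{*}$-continuity for linear maps between JBW$^{*}$-triples) with the joint strong$^{*}$-continuity of the triple product on bounded sets, I can pass to the limit in identities of the form $T(c^{[3]})=2\{T(c),c,c\}+\{c,T(c),c\}$ valid at the level of $c\in E$. The substantive hurdle is exactly the proof of this single-element derivation identity on a general, non-JBW$^{*}$, JB$^{*}$-triple, where the algebraic-approximation route used in Mackey's Theorem \ref{t Mackey local triple derivations JBW} is unavailable. I plan to force it by applying the extended Lemma \ref{l 4 BurFerGarPe} with outer arguments $u,w$ ranging over tripotents of $E^{**}$ orthogonal to (the range tripotent of) $c$, showing that the remainder $R(c):=T(\{c,c,c\})-2\{T(c),c,c\}-\{c,T(c),c\}$ is annihilated by $Q(f)$ for a rich family of tripotents $f$ in $E^{**}$, and then invoking the norm-totality of algebraic elements in $E^{**}$ to conclude $R(c)=0$.
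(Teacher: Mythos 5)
Your treatment of parts $(b)$ and $(c)$ is essentially sound and amounts to a streamlined version of the paper's route: the decomposition $a=e+P_0(e)(a)$, an extension of Lemma \ref{l 4 BurFerGarPe} to outer variables in $E^{**}$ (which the paper obtains instead by approximating the outer variables with nets from $E$ and working with explicit sequences in the $C_0(L)$ model of $E_a$, all limits taken in the strong$^*$ topology), and identity \eqref{eq local der on a single tripotent 2} applied to the weak$^*$-continuous triple derivation $\delta_a^{**}$. Two small remarks: the Peirce rules you actually need to kill $\{\delta_a^{**}(e),b,e\}$ are $\{E_1(e),E_0(e),E_2(e)\}\subseteq E_3(e)=\{0\}$ together with $\{E_2(e),E_0(e),E^{**}\}=\{0\}$, not $\{E_2(e),E_2(e),E_0(e)\}=\{0\}$; and since $(a)$ and $(c)$ are asserted for arbitrary compact tripotents while your arguments use a norm-one $a$ with $s(a)=e$, you still owe the reduction from compact to compact-$G_\delta$, which the paper performs with a decreasing net of compact-$G_\delta$ tripotents and Lemma \ref{l Peirce zero strong* limits}.

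The genuine gap is part $(a)$. You reduce it (via $(c)$) to the identity $T^{**}(e)=2\{T^{**}(e),e,e\}+\{e,T^{**}(e),e\}$ and propose to get this as a strong$^*$-limit of $T\{c,c,c\}=2\{T(c),c,c\}+\{c,T(c),c\}$ for $c\in E$. But that single-element identity on a general (non-dual) JB$^*$-triple is exactly the analogue of \eqref{eq local der is a derivation on a single element}, i.e.\ the main content of Theorem \ref{thm local triple derivations are triple derivations}, and in the paper it is \emph{deduced from} Proposition \ref{p loal triple derivations on compact tripotents}; assuming it here is circular. Your fallback --- showing $Q(f)R(c)=0$ for tripotents $f$ in $E^{**}$ orthogonal to $r(c)$ --- cannot close the gap: every such $Q(f)$ annihilates all of $E^{**}_2(r(c))\oplus E^{**}_1(r(c))$ (by the Peirce rules $\{E_0,E_2,E^{**}\}=\{0\}$ and $\{E_0,E_1,E_0\}\subseteq E_{-1}=\{0\}$ relative to $r(c)$), so this family of operators is far from separating and the norm-totality of algebraic elements in $E^{**}$ does not repair this. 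The paper's proof of $(a)$ avoids the derivation identity altogether: for $e=s(a)$ it builds a sequence $(b_n)$ in $E_a$ with $b_n=e+P_0(e)(b_n)$ and $\{b_n,b_n,b_{n+1}\}=b_{n+1}$, so that every element of $E^{**}_0(s(b_n))\cap E$ is orthogonal to $b_{n+1}\in E$; Lemma \ref{l 4 BurFerGarPe} gives $\{c,T(b_{n+1}),d\}=0$ for such $c,d$, and two iterated strong$^*$-limits yield $\{P_0(e)(x),T^{**}(e),P_0(e)(y)\}=0$ for all $x,y\in E^{**}$, whence $P_0(e)T^{**}(e)=0$ by Peirce arithmetic and $\|\{v,v,v\}\|=\|v\|^3$. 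You need an argument of this kind (or some other non-circular one) to complete $(a)$.
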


\begin{proof}
$(a)$ Lemma \ref{l Peirce zero strong* limits} assures that it is enough to prove the statement for compact-$G_{\delta}$. Indeed, suppose $e\in E^{**}$ is a compact tripotent.  Then there exists a decreasing
net $(e_{\lambda})$ of compact-$G_{\delta}$ tripotents in  $E^{**}$ converging to $e$ in the strong$^*$-topology of $E^{**}$. If the statement is true for compact-$G_{\delta}$ tripotents we have $P_0 (e_{\lambda}) T^{**}(e_{\lambda}) =0$ ($\forall \lambda$), and hence,  Lemma \ref{l Peirce zero strong* limits} implies that $P_0 (e) T^{**}(e) =0$.\smallskip

We consequently assume that $e$ is a compact-$G_{\delta}$ tripotent, that is, there exists a norm one element $a$ in $E$ with $s(a) =e$. Let $E_a$ denote the JB$^*$-subtriple of $E$ generated by $a$. We have already mentioned that there exists a subset $L\subseteq (0,1]$ with $1\in \{0\}\cup L$ compact and a triple isomorphism $\Psi$ from $E_a$ onto $C_0(L)$ such that $\Psi (a) (t) =t,$ $\forall t\in L$ (compare \cite[Lemma 1.14]{Ka83}). In this setting, we consider a sequence of norm one elements $(b_n)$ in $E_a$ such that $b_n= e + P_0 (e)(b_n),$ $\J {b_{n}}{b_{n+1}}{b_{n}}= \J {b_{n}}{b_{n}}{b_{n+1}} = b_{n+1}$, $(b_n) \to e,$ in the strong$^*$-topology of $E^{**}$. Take, for example,

\begin{figure}[ht]
\begin{center}
\begin{tikzpicture}[=>latex,thick,domain=-1:1,xscale=3,yscale=1.5,scale=1]
\draw[<-]  (1.3,0) -- (-0.2,0) ;
\draw[<-]  (0,1.3) --(0,-0.2) ;
\coordinate [label=below:{\footnotesize$1$}] (uno) at (1,0);
\coordinate [label=left:{\footnotesize$1$}] (dos) at (0,1);
\draw[-]  (1,-0.04) --(1,0.1);
\draw[-]  (-0.04,1) --(0.011,1);
\coordinate [label=below:{\footnotesize $_{1-\frac{1}{n+1}}$}] (n) at  (0.8,0);
\draw[-]  (0.8,-0.04) --(0.8,0.1);
\coordinate [label=below:{\footnotesize $_{1-\frac{1}{n}}$}] (n2) at  (0.4,0);
\draw[-]  (0.4,-0.04) --(0.4,0.1);
\draw[-,dashed, gray, very thin]  (1,1) --(1,0);
\draw[-,dashed, gray, very thin]  (0.8,1) --(0.8,0);
\draw[-,dashed, gray, very thin]  (0,1) --(1,1);
\draw[-,ultra thick,blue]  (0.8,1) --(1,1) ;
\draw[-,ultra thick,blue]  (0.4,0) --(0.8,1) ;
\draw[-,ultra thick,blue]  (0,0) --(0.4,0) ;
\coordinate [label=right:{\small{\color{blue}$b_n(t)$}}] (f) at (0.3,1.1);
\coordinate [label=right:{\small{{$b_{n} (t):=\left\{%
\begin{array}{ll}
    0, & \hbox{if $0\leq t \leq 1-\frac1n$;} \\
    \hbox{affine}, & \hbox{if $1-\frac1{n}\leq t\leq 1-\frac1{n+1}$;} \\
    1, & \hbox{if $1-\frac1{n+1}\leq t\leq 1 $.} \\
\end{array}%
\right.$,}}}] (f) at (-2.5,0.5);
\end{tikzpicture}
\end{center}
\end{figure}

Fix a natural $n.$ Since, the support tripotent of $b_n,$ $s(b_n)$, is a compact tripotent in $E^{**}$, given $z,w\in E_0^{**} (s(b_n))$ we can find (bounded) nets $(c_\mu)$ and $(d_{\nu})$ in $E_0^{**} (s(b_n)) \cap E$ converging to $z$ and $w$ in the strong$^*$-topology of $E^{**},$ respectively. Since for each $\mu$ and $\nu$, $c_\mu$ and $d_{\nu}$ belong to $E_0^{**} (s(b_n))$, $b_{n+1}\in E_2^{**} (s(b_n))$ and $E_0^{**} (s(b_n)) \perp E_2^{**} (s(b_n))$, we clearly have, $c_\mu, d_{\nu} \perp b_{n+1}$ for every $\mu$ and $\nu$, and hence, by Lemma \ref{l 4 BurFerGarPe}, $$\J {c_\mu}{T(b_{n+1})}{d_\nu} =0 \ \ (\forall \mu, \nu).$$ Taking strong$^*$-limits in $\mu$ and $\nu$ we have $$\J {z}{T(b_{n+1})}{w} =0, \hbox{ for every } n\in \mathbb{N}, z,w\in E_0^{**} (s(b_n)),$$ equivalently, $$\J {P_0^{**} (s(b_n)) (x)}{T(b_{n+1})}{P_0^{**} (s(b_n))(y)} =0, \hbox{ for every } n\in \mathbb{N}, x,y\in E^{**}.$$

By the joint strong$^*$-continuity of the triple product of $E^{**}$, together with the strong$^*$-continuity of $T^{**}$ and Lemma \ref{l Peirce zero strong* limits}, we can take strong$^*$-limit in the above expression, to deduce that $\J {P_0^{**} (e)(x)}{T^{**}(e)}{P_0^{**} (e)(y)}=0,$ for every $x,y\in E^{**}$. It follows, for example, from Peirce arithmetic and the third axiom in the definition of JB$^*$-triples, that $P_0^{**} (e) T^{**}(e)=0$.\smallskip

$(b)$ Let $a$ be a norm one element in the hypothesis of statement $(b)$. We shall denote $a_0 = P_0 (e)(a)$. Again, we consider the JB$^*$-subtriple, $E_a,$ generated by $a$, and two sequences $(a_n)$ and $(b_n)$ in the closed unit ball of $E_a$ defined by
\begin{figure}[ht]
\begin{center}
\begin{tikzpicture}[=>latex,thick,domain=-1:1,xscale=3,yscale=1.5,scale=1]
\draw[<-]  (1.3,0) -- (-0.2,0) ;
\draw[<-]  (0,1.3) --(0,-0.2) ;
\coordinate [label=below:{\footnotesize$1$}] (uno) at (1,0);
\coordinate [label=left:{\footnotesize$1$}] (dos) at (0,1);
\draw[-]  (1,-0.04) --(1,0.1);
\draw[-]  (-0.04,1) --(0.011,1);
\coordinate [label=below:{\footnotesize $_{1-\frac{1}{n+1}}$}] (n) at  (0.8,0);
\draw[-]  (0.8,-0.04) --(0.8,0.1);
\coordinate [label=below:{\footnotesize $_{1-\frac{1}{n}}$}] (n2) at  (0.4,0);
\draw[-]  (0.4,-0.04) --(0.4,0.1);
\draw[-,dashed, gray, very thin]  (1,1) --(0,0);
\draw[-,dashed, gray, very thin]  (1,1) --(1,0);
\draw[-,dashed, gray, very thin]  (0.4,0.4) --(0.4,0);
\draw[-,dashed, gray, very thin]  (0,1) --(1,1);
\draw[-,ultra thick,blue]  (0.8,0) --(1,0) ;
\draw[-,ultra thick,blue]  (0.4,0.4) --(0.8,0) ;
\draw[-,ultra thick,blue]  (0,0) --(0.4,0.4) ;
\coordinate [label=right:{\small{\color{blue}$a_n(t)$}}] (f) at (0.2,0.5);
\coordinate [label=right:{\small{{$a_{n} (t):=\left\{%
\begin{array}{ll}
    t, & \hbox{if $t\in L\cap [0,1-\frac1n]$;} \\
   \hbox{affine}, & \hbox{if $t\in L\cap [1-\frac1{n},1-\frac1{n+1} ]$;} \\
    0, & \hbox{if $t\in L\cap [1-\frac1{n+1},1 ]$} \\
\end{array}%
\right. ,$}}}] (f) at (-2.9,0.5);
\end{tikzpicture}
\end{center}
\end{figure}
and
\begin{figure}[ht]
\begin{center}
\begin{tikzpicture}[=>latex,thick,domain=-1:1,xscale=3,yscale=1.5,scale=1]
\draw[<-]  (1.3,0) -- (-0.2,0) ;
\draw[<-]  (0,1.3) --(0,-0.2) ;
\coordinate [label=below:{\footnotesize$1$}] (uno) at (1,0);
\coordinate [label=left:{\footnotesize$1$}] (dos) at (0,1);
\draw[-]  (1,-0.04) --(1,0.1);
\draw[-]  (-0.04,1) --(0.011,1);
\coordinate [label=below:{\footnotesize $_{1-\frac{1}{n+1}}$}] (n) at  (0.8,0);
\draw[-]  (0.8,-0.04) --(0.8,0.1);
\draw[-,dashed, gray, very thin]  (1,1) --(1,0);
\draw[-,dashed, gray, very thin]  (0.8,1) --(0.8,0);
\draw[-,dashed, gray, very thin]  (0,1) --(1,1);
\draw[-,ultra thick,green]  (0.8,0) --(1,1) ;
\draw[-,ultra thick,green]  (0,0) --(0.8,0) ;
\coordinate [label=right:{\small{\color{green}$b_n(t)$}}] (f) at (0.3,0.3);
\coordinate [label=right:{\small{{$b_{n} (t):=\left\{%
\begin{array}{ll}
    0, & \hbox{if $t\in L\cap [0, 1-\frac1{n+1}]$;} \\
    \hbox{affine}, & \hbox{if $t\in L\cap [1-\frac1{n+1},1 ]$.} \\
    1, & \hbox{if $t=1$}\\
\end{array}%
\right.$,}}}] (f) at (-2.9,0.5);
\end{tikzpicture}
\end{center}
\end{figure}

Clearly, $a_n\perp b_n$ ($\forall n$), $(a_n)\to a_0$ and $(b_n) \to e$ in the strong$^*$-topology of $E^{**}.$ Lemma \ref{l 4 BurFerGarPe} assures that $$\J {b_n}{T(a_n)}{b_n} = 0 \ \ (\forall n\in \mathbb{N}).$$ Taking strong$^*$ limits in the above expression we have $\J e{T^{**} (a_0)}e =0$, and hence $\J e{T^{**} (a)}e = \J e{T^{**} (e)}e.$\smallskip

$(c)$ By the arguments given at the beginning of the proof of $(a)$, we may assume that $e$ is a compact-$G_{\delta}$ tripotent in $E^{**}$. Let $a$ be a norm one element in $E$ such that $s(a) = e $. Since $T$ is a local triple derivation, we can find a triple derivation $\delta_{a}: E\to E$ such that $T(a) = \delta_{a} (a).$ We recall that by the separate weak$^*$-continuity of the triple product of $E^{**}$ and the weak$^*$-density of $E$ in $E^{**}$, we can easily see that $\delta_a^{**} : E^{**}\to E^{**}$ is a triple derivation on $E^{**}$ (compare the proof of \cite[Proposition 2.1]{HoPeRu}). Since $\delta_a$ is triple derivation, the identity in $(b)$ also holds whenever we replace $T$ with $\delta_a.$ Therefore, $$P_2(e) T^{**} (e) =  P_2 (e) T(a) = P_2(e) \delta_a (a) = P_2 (e) \delta_a^{**} (e) $$
$$\hbox{(by $(\ref{eq local der on a single tripotent 2})$)} = - Q(e) \Big(\delta_a^{**} (e)\Big) = - Q(e) \Big(\delta_a (a)\Big)  $$ $$= - Q(e) \Big(T (a)\Big) = - Q(e) \Big(T^{**} (e) \Big).$$
\end{proof}

Let $T:E\to E$ be a bounded local triple derivation on a JB$^*$-triple, and let $a$ be a norm one element in $E$. We consider again the JB$^*$-subtriple $E_a$ generated by $a$, and we identify $E_a$, via a triple isomorphism $\Psi$, with $C_0(L)$, where $L\subseteq (0,1]$ with $1\in \{0\}\cup L$ compact and $\Psi (a) (t) =t$ ($\forall t\in L$). Clearly, the range tripotent of $a$ can be approximated, in the strong$^*$ topology of $E^{**}$, by a sequence $(e_n)$ of compact-$G_{\delta}$ tripotents in $E^{**}$, that is, $(e_n) \to r(a)$ in the strong$^*$-topology. Since, by the above Proposition \ref{p loal triple derivations on compact tripotents}, $$P_0 (e_n) T^{**}(e_n) =0,$$ and $$P_2(e_n) T^{**}(e_n) = - Q(e_n) (T^{**}(e_n)),$$ taking strong$^*$-limit in $n$ we deduce, by Lemma \ref{l Peirce zero strong* limits}, that \begin{equation}\label{eq range tripotents in bidual}P_0 (r(a)) T^{**}(r(a)) =0, \hbox{ and, } P_2(r(a)) T^{**}(r(a)) = - Q(r(a)) (T^{**}(r(a))),
\end{equation} that is, the statement of Proposition \ref{p loal triple derivations on compact tripotents} also holds for range tripotents. Therefore, if  $e$ is a compact or a range tripotent in $E^{**}$,  it follows from Proposition \ref{p loal triple derivations on compact tripotents} and ($\ref{eq range tripotents in bidual}$) that $$T^{**} \J eee = T^{**} (e) = P_2(e) T^{**} (e) + P_1 (e) T^{**} (e),$$
$$2 \J ee{T^{**} (e)} = 2 P_2(e) T^{**} (e) + P_1 (e) T^{**} (e),$$ and $$\J e{T^{**}(e)}e = Q(e) T^{**} (e) = -P_2 (e) T^{**} (e),$$ which assures that \begin{equation}
\label{eq T on compact and range tripotents is a triple derivation} T^{**} \J eee = 2 \J ee{T^{**} (e)} + \J e{T^{**}(e)}e.
\end{equation}

We have seen that, for each bounded local triple derivation $T$ on a JB$^*$-triple $E$, $T^{**}$ behaves like a triple derivation on range tripotents and on compact tripotents. In order to apply triple spectral resolutions in the JB$^*$-subtriple generated by a single element we need to the behavior of $T$ when we mix orthogonal and range tripotents. The answer is given by the next result.

\begin{lemma}\label{c bitranspose compact range tripotents} Let $T : E \to E$ be a continuous local triple derivation on a JB$^*$-triple. Suppose that $e_1$ and $e_2$ are two orthogonal compact tripotents in $E^{**}$, $r_1$ and $r_2$ are two orthogonal range tripotent in $E^{**}$ with $e_1\perp r_2$. Then the identity $$T^{**} \J {u}{u}{u} = 2 \J {u}{u}{T^{**} (u)}+ \J {u}{T^{**}(u)}{u},$$ holds for $u={e_1\pm e_2},$ ${r_1\pm r_2}$, and ${e_1\pm r_2}.$
\end{lemma}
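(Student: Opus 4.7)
The plan is as follows. For a tripotent $u$ in $E^{**}$, the identity $T^{**}\J{u}{u}{u} = 2\J{u}{u}{T^{**}(u)} + \J{u}{T^{**}(u)}{u}$ is equivalent to the pair of Peirce relations $P_0(u)T^{**}(u)=0$ and $P_2(u)T^{**}(u) = -Q(u)T^{**}(u)$ (this is the form in which Proposition \ref{p loal triple derivations on compact tripotents} and its range-tripotent extension \eqref{eq range tripotents in bidual} are stated). I therefore aim to reduce each of the six tripotents $u$ listed in the statement to the case of a single compact or range tripotent, so one of these two results applies directly.

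For the cases $u = e_1 \pm e_2$, the first step is to show that $u$ is itself a compact tripotent. Negation preserves compactness, since $L(-e,-e)=L(e,e)$ leaves the Peirce decomposition (and hence closedness) unchanged, while boundedness is preserved by replacing a norm-one representative $a$ with $-a$. Moreover, the orthogonal sum of two compact tripotents is compact: after reducing to the compact-$G_\delta$ case via strong*-approximation and Lemma \ref{l Peirce zero strong* limits}, the orthogonality $e_1 \perp e_2$ together with the joint Peirce inclusion $E^{**}_2(e_i) \subseteq E^{**}_0(e_j)$ ($i\neq j$) forces $a_1 \perp a_2$ for $e_i = s(a_i)$, and then $(a_1+a_2)^{[2n-1]} = a_1^{[2n-1]} + a_2^{[2n-1]}$ yields $s(a_1+a_2)=e_1+e_2$. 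Proposition \ref{p loal triple derivations on compact tripotents} then produces the identity. The cases $u = r_1 \pm r_2$ are handled in the same spirit: writing $r_i = r(a_i)$, orthogonality again forces $a_1 \perp a_2$, and the identity $(a_1\pm a_2)^{[1/(2n-1)]} = a_1^{[1/(2n-1)]}\pm a_2^{[1/(2n-1)]}$ yields $r_1\pm r_2 = r(a_1\pm a_2)$; then \eqref{eq range tripotents in bidual} applies.

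The main obstacle is the mixed case $u = e_1 \pm r_2$, since $u$ is neither compact nor a range tripotent in general. The plan here is a strong*-approximation argument. The proof of \eqref{eq range tripotents in bidual} supplies a sequence $(e_n)$ of compact-$G_\delta$ tripotents in $E^{**}_2(r_2)$ converging to $r_2$ in the strong*-topology of $E^{**}$. Since $E^{**}_2(r_2)\subseteq E^{**}_0(e_1)$, each $e_n$ is orthogonal to $e_1$, so $u_n := e_1 + e_n$ falls under the preceding case and is a compact tripotent; the identity $T^{**}(u_n) = 2\J{u_n}{u_n}{T^{**}(u_n)} + \J{u_n}{T^{**}(u_n)}{u_n}$ therefore already holds. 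Using that $T^{**}$ is strong*-continuous (it is weak*-continuous as the bitranspose of a bounded operator) and that the triple product of $E^{**}$ is jointly strong*-continuous on bounded sets, one passes to the strong*-limit $u_n \to u$ to obtain the desired identity for $u = e_1 + r_2$. The case $u = e_1 - r_2 = e_1 + (-r_2)$ is entirely analogous, using that $-r_2$ is the strong*-limit of the compact tripotents $-e_n$.
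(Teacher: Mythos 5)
Your overall route coincides with the paper's: reduce the cases $u=e_1\pm e_2$ and $u=r_1\pm r_2$ to the single-tripotent statements (Proposition \ref{p loal triple derivations on compact tripotents} and \eqref{eq range tripotents in bidual}) by showing that these sums are again compact, respectively range, tripotents, and settle the mixed case $u=e_1\pm r_2$ by approximating $r_2$ in the strong$^*$-topology by compact tripotents below $r_2$ (hence orthogonal to $e_1$), invoking the already-proved case for $e_1\pm c_n$, and passing to the limit using the joint strong$^*$-continuity of the triple product and the strong$^*$-continuity of $T^{**}$. Your treatment of $r_1\pm r_2$ and of the mixed case is correct and is essentially verbatim what the paper does.

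The gap is in your justification that $e_1+e_2$ is compact. You claim that for compact-$G_\delta$ tripotents $e_i=s(a_i)$ the orthogonality $e_1\perp e_2$ ``forces $a_1\perp a_2$.'' This is false: the element $a_i$ lies in $E^{**}_2(r(a_i))$, not in $E^{**}_2(s(a_i))$, so the Peirce inclusion you invoke does not apply to $a_i$. Concretely, in $E=C[0,1]$ take $a_1(t)=1-t$ and $a_2(t)=t$; then $s(a_1)$ and $s(a_2)$ are the mutually orthogonal projections in $E^{**}$ sitting at the points $0$ and $1$ respectively, yet $a_1a_2\neq 0$, so $a_1\not\perp a_2$. (For range tripotents the implication \emph{is} valid, precisely because $a_i\in E^{**}_2(r(a_i))$ and orthogonal tripotents have orthogonal Peirce-$2$ spaces; this is why your $r_1\pm r_2$ argument survives, in line with the paper's appeal to \cite[Lemma 1]{BurFerGarMarPe}.) Your reduction from general compact to compact-$G_\delta$ tripotents is also problematic, since the approximating decreasing nets for $e_1$ and for $e_2$ need not be mutually orthogonal term by term. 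The fact you need --- that the sum of two orthogonal compact tripotents is compact --- is true but genuinely nontrivial; the paper does not prove it but quotes it from \cite[Proposition 3.7]{FerPe10b}. Replacing your ad hoc argument by that citation repairs the proof, and with it the mixed case, which depends on the first one.
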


\begin{proof} It is established in \cite[Proposition 3.7]{FerPe10b} that the sum of two orthogonal compact tripotents is a compact tripotent. Therefore, $e_1\pm e_2$ is a compact tripotent in $E^{**}$, and hence, the identity for $u= e_1\pm e_2$ follows from \eqref{eq T on compact and range tripotents is a triple derivation}. The sum of two orthogonal range tripotents is again a range tripotent (compare, for example, \cite[Lemma 1]{BurFerGarMarPe}). So, the identity for $u= r_1\pm r_2$ also is a consequence of \eqref{eq T on compact and range tripotents is a triple derivation}.\smallskip

We consider now the case $u=e_1\pm r_2$. Since $e_1\perp r_2$ and $r_2$ is the range projection of a norm-one element $a$ in $E$, we can find a sequence of compact tripotents $(c_n)$ in $E^{**}$ (actually in the bidual of $E_a$) such that $c_n\leq r_2$, and hence $c_n\perp e_1$ for every $n$, and $(c_n)\to r_2$ in the strong$^*$-topology of $E^{**}.$ The first identity proved in this Lemma shows that $$T^{**} \J {e_1\pm c_{n}}{e_1\pm c_{n}}{e_1\pm c_{n}} = 2 \J {e_1\pm c_{n}}{e_1\pm c_{n}}{T^{**} (e_1\pm c_{n})}$$ $$+ \J {e_1\pm c_{n}}{T^{**}(e_1\pm c_{n})}{e_1\pm c_{n}},$$ for every $n$. Finally, if we take strong$^*$-limit in the above expression we get the desired equality.
\end{proof}

The main result for bounded local triple derivations follows now as a direct consequence of the above partial results.

\begin{theorem}\label{thm local triple derivations are triple derivations}\cite[Theorem 2.4]{BurFerPe2013} Every bounded local triple derivation on a JB$^*$-triple is a triple derivation.
\end{theorem}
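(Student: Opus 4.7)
The plan is to first establish the cube identity
\begin{equation*}
T^{**}\{a,a,a\} = 2\{T^{**}(a),a,a\} + \{a,T^{**}(a),a\} \qquad (\forall a\in E)
\end{equation*}
and then to apply the polarization argument appearing in the proof of Theorem \ref{t Mackey local triple derivations JBW} to upgrade this to the full triple Leibniz rule for $T$.

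The first step is to verify the cube identity for every element of $E^{**}$ of the form $b=\sum_{i=1}^{k} \lambda_i e_i$, where $\lambda_1,\ldots,\lambda_k\in \mathbb{R}$ and $e_1,\ldots,e_k$ are pairwise orthogonal compact tripotents of $E^{**}$. Since $e_i\perp e_j$ for $i\neq j$ forces $\{e_i,e_j,x\}=0=\{x,e_i,e_j\}$, trilinear expansion yields $\{b,b,b\}=\sum_{i=1}^{k} \lambda_i^3 e_i$. When the right hand side of the cube identity is expanded in the same way, Lemma \ref{l 4 BurFerGarPe} annihilates every term indexed by three mutually distinct indices (its middle argument is then orthogonal to both outer ones), and the identity reduces to verifying, for all $i\neq j$, the diagonal relations
\begin{equation*}
T^{**}(e_i) = 2\{T^{**}(e_i),e_i,e_i\} + \{e_i, T^{**}(e_i), e_i\},
\end{equation*}
which are already provided by equation \eqref{eq T on compact and range tripotents is a triple derivation}, together with the pairwise cross identity
\begin{equation*}
\{T^{**}(e_i),e_j,e_j\} + \{e_i, T^{**}(e_j), e_j\}=0.
\end{equation*}
The latter is extracted from Lemma \ref{c bitranspose compact range tripotents} applied to $u=e_i+e_j$: expanding both sides of that lemma and subtracting the diagonal cube identities at $e_i$ and at $e_j$ leaves exactly the displayed cross relation.

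To deal with an arbitrary $a\in E$, I would work inside the abelian JB$^*$-subtriple $E_a\cong C_0(L)$ supplied by \cite{Ka83}, with $a$ corresponding to $\mathrm{id}_L$. Approximate $a$ in norm by step-function elements $b_n=\sum_i \lambda_i^{(n)} \chi_{A_i^{(n)}}$, where the $A_i^{(n)}$ are pairwise disjoint compact subsets of $L\cap [\varepsilon_n, 1]$. Each $\chi_{A_i^{(n)}}$ is a compact tripotent of $E^{**}$ (it is the support tripotent of a suitable norm-one element of $E_a$), and the tripotents appearing in a single $b_n$ are mutually orthogonal; hence each $b_n$ is algebraic in the sense of the previous step and the cube identity holds for it. Since $(b_n)\to a$ in norm, and therefore in the strong$^*$-topology of $E^{**}$, the joint strong$^*$-continuity of the triple product on bounded sets, together with the weak$^*$- and hence strong$^*$-continuity of the bounded operator $T^{**}$, lets me pass the cube identity to the limit. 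Polarizing as in the proof of Theorem \ref{t Mackey local triple derivations JBW} then yields the desired triple Leibniz rule on $E$. I expect the hardest piece to be the bookkeeping in the first step: one has to see, through multi-index expansion, that the only contributions not killed by Lemma \ref{l 4 BurFerGarPe} are precisely the diagonal and pairwise terms handled by \eqref{eq T on compact and range tripotents is a triple derivation} and Lemma \ref{c bitranspose compact range tripotents}.
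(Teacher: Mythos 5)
Your overall architecture coincides with the paper's: prove the cube identity for real linear combinations of mutually orthogonal tripotents in $E^{**}$, pass to arbitrary $a\in E$ by norm approximation inside $E_a\cong C_0(L)$, and polarize as in Theorem \ref{t Mackey local triple derivations JBW}. However, there are two gaps, one of which is serious.

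The serious one is the approximation step. You approximate $a$ by $\sum_i\lambda_i\chi_{A_i}$ with the $A_i$ \emph{pairwise disjoint compact} subsets of $L\cap[\varepsilon,1]$. For the sup-norm error to be below $\varepsilon$ these sets must cover $L\cap[\varepsilon,1]$; if that set is connected (take $E=C_0((0,1])$ and $a=\mathrm{id}$, so $L=(0,1]$), finitely many disjoint closed subsets covering it force all but one to be empty, and a single compact tripotent cannot approximate $\mathrm{id}$ to within better than roughly $1/3$. So your scheme, restricted to compact tripotents, simply does not converge. This is precisely why the paper works with mixed orthogonal families of \emph{range and} compact tripotents (in $C_0(L)$: characteristic functions of open subsets together with those of compact subsets, e.g.\ the partition of $[\varepsilon,1]$ into singletons $\{t_k\}$ and open intervals $(t_{k-1},t_k)$), why \eqref{eq range tripotents in bidual} extends Proposition \ref{p loal triple derivations on compact tripotents} to range tripotents, and why Lemma \ref{c bitranspose compact range tripotents} must treat the three mixed cases $e_1\pm e_2$, $r_1\pm r_2$ and $e_1\pm r_2$. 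Without range tripotents the density claim your proof rests on is false.

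The second, more easily repaired, gap is your use of Lemma \ref{l 4 BurFerGarPe} to kill the terms with three distinct indices. That lemma concerns $T$ and elements of $E$; the tripotents $e_i$ live in $E^{**}$, and $T^{**}$ is not known to be a local triple derivation on $E^{**}$ (that is essentially what one is trying to prove). The vanishing $\J{e_i}{T^{**}(e_j)}{e_k}=0$ for $i,k\neq j$ must instead be deduced, as in \eqref{eq 1 theorem main}, from $P_0(e_j)T^{**}(e_j)=0$ (Proposition \ref{p loal triple derivations on compact tripotents}$(a)$ and \eqref{eq range tripotents in bidual}) together with Peirce arithmetic; the same remark applies to the term $\J{e_j}{T^{**}(e_i)}{e_j}$ that must be discarded when you extract the cross identity from the $e_i\pm e_j$ expansions.
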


\begin{proof} Let $T: E \to E$ be a bounded local triple derivation on a JB$^*$-triple $E$. Suppose that $e_1 , \ldots, e_m$ is a family of mutually orthogonal range or compact tripotents in $E^{**}$. Let us pick $i,j,k\in \{1,\ldots, m\}$ with $i,k \neq j.$ By Proposition \ref{p loal triple derivations on compact tripotents} and $(\ref{eq range tripotents in bidual})$ we know that $P_0 (e_j) T^{**} (e_j)=0.$ By assumptions, $e_i,e_k\in E^{**}_0 (e_j)$, which proves, via Peirce arithmetic, that \begin{equation}\label{eq 1 theorem main} \J {e_i}{T^{**}(e_j)}{e_k} = 0.
\end{equation}

Now, fix $i\neq j$ in $\{1,\ldots, m\}$. Since $e_i$ and $e_j$ are compact or range tripotents in $E^{**}$, Lemma \ref{c bitranspose compact range tripotents} implies that \begin{equation}\label{eq 1b theorem main} T^{**} \J {e_i }{e_i}{e_i} = 2 \J {e_i }{e_i }{T^{**} (e_i)} + \J {e_i }{T^{**}(e_i)}{e_i};
\end{equation} and $$ T^{**} \J {e_i \pm e_j}{e_i \pm e_j}{e_i \pm e_j} = 2 \J {e_i \pm e_j}{e_i \pm e_j}{T^{**} (e_i \pm e_j)}$$ $$ + \J {e_i \pm e_j}{T^{**}(e_i \pm e_j)}{e_i \pm e_j}.$$ Combining the last two identities we get $$\pm 2 \J {e_i}{e_i}{T^{**} (e_j)} + 2 \J {e_j}{e_j}{T^{**} (e_i)} \pm  \J {e_i }{T^{**}(e_j)}{e_i )} + \J {e_j}{T^{**}(e_i)}{ e_j} $$ $$\pm 2 \J {e_i}{T^{**}(e_i)}{ e_j} + 2 \J {e_i}{T^{**}( e_j)}{e_j} =0, $$ and consequently, $$+ 4 \J {e_j}{e_j}{T^{**} (e_i)} + 2 \J {e_j}{T^{**}(e_i)}{ e_j}+ 4 \J {e_i}{T^{**}( e_j)}{e_j} =0.$$ Applying ($\ref{eq 1 theorem main}$) we obtain \begin{equation}\label{eq 2 theorem main} \J {e_j}{e_j}{T^{**} (e_i)} + \J {e_i}{T^{**}( e_j)}{e_j} =0.
\end{equation}

Consider now an element $\displaystyle b=\sum_{i=1}^{m} \lambda_i e_i,$ where $e_1 , \ldots, e_m$ are as above and $\lambda_i\in \mathbb{R}$ for every $i$. The discussion before Theorem \ref{t Mackey local triple derivations JBW} combined with the identities established in $(\ref{eq 1b theorem main})$ and $(\ref{eq 2 theorem main})$ allow us to deduce  that $$T^{**} \J bbb = 2 \J {T^{**}(b)}bb + \J b{T^{**}(b)}b.$$ Since every element $a\in E$ can be approximated in norm, by elements of the form $\displaystyle b=\sum_{i=1}^{m} \lambda_i e_i$, where $e_1 , \ldots, e_m$ are as above and $\lambda_i\in \mathbb{R}$, we conclude that $$ T\J aaa = 2 \J {T(a)}aa + \J a{T(a)}a,$$ for every $a\in E.$  The arguments and polarization identities given at the end of the proof of Theorem \ref{t Mackey local triple derivations JBW} can be repeated now to show that $T$ is a triple derivation.
\end{proof}

\subsection{Real linear local triple derivations}

It seems natural to ask whether the conclusions in Theorems \ref{t Mackey local triple derivations JBW} and \ref{thm local triple derivations are triple derivations} would remain true if the hypotheses were weakened to continuous real linear local triple derivations. The arguments leading to Theorem \ref{thm local triple derivations are triple derivations} are valid to show that for every continuous real linear local triple derivation $T$ on a JB$^*$-triple $E$ satisfies that \begin{equation}\label{eq new real local triple derivation on a single element} T\J aaa = 2 \J {T(a)}aa + \J a{T(a)}a,
 \end{equation}for every $a\in E.$ However, the polarization arguments given at the end of the proof of Theorem \ref{t Mackey local triple derivations JBW} heavily depend on the complex linearity of the mapping $T$. So, it is not clear whether a real linear mapping $T$ on a JB$^*$-triple $E$ satisfying \eqref{eq new real local triple derivation on a single element} is a triple derivation. This is actually a problem to be studied in the setting of real JB$^*$-triples.\smallskip

We recall that a \emph{real JB$^*$-triple} is a norm-closed real subtriple of a (complex) JB$^*$-triple (cf. \cite{IsKaRo95}). Every real JB$^*$-triple $E$ can be also regarded as a \emph{real form} of a complex JB$^*$-triple, that is, there exist a (complex) JB$^*$-triple $E_{c}$ and a conjugate linear isometry $\tau: E_c\rightarrow E_c$ of period 2 such that $E=\{b\in E_c\::\:\tau(b)=b\}$. We can actually identify $E_{c}$ with the
complexification of $E$. Real JB$^*$-triples were introduced by Isidro, Kaup and Rodríguez Palacios in 1995 \cite{IsKaRo95}. The class of real JB$^*$-triples includes all real and complex C$^*$-algebras, all JB- and JB$^*$-algebras, and all JB$^*$-triples when they are regarded as real Banach spaces. \smallskip

A real or complex JBW$^*$-triple is a JB$^*$-triple which is also a dual Banach space. The second dual of a real or complex JB$^*$-triple is a JBW$^*$-triple (see \cite{Di86b}, \cite{IsKaRo95}). Every real or complex JBW$^*$-triple admits a unique (isometric) predual and its product is separately weak$^*$-continuous (compare \cite{BarTi} and \cite{MarPe}).\smallskip

Real forms of Cartan factors are called \emph{real Cartan factors}. The classification of real Cartan factors is due to Kaup \cite[Corollary 4.4]{Ka97} and Loos \cite[pages 11.5-11.7]{Loos77}, and the classification can be resumed as follows: Let $X$ and $Y$ be two real Hilbert spaces, let $P$ and $Q$ be two Hilbert spaces over the quaternion field
$\mathbb{H}$, and finally, let $H$ be a complex Hilbert.

\begin{enumerate}
\begin{multicols}{2}

\item $I^{\RR} := \mathcal{L}(X,Y)$\vspace{0.45cm}

\item $I^{\HH} := \mathcal{L}(P,Q)$\vspace{0.45cm}

\item $II^{\CC}:=\{z\in
\mathcal{L}(H):z^*=z\}$\vspace{0.45cm}

\item $II^{\RR}:=\{x\in
\mathcal{L}(X):x^*=-x\}$\vspace{0.45cm}

\item $II^{\HH}:= \{w\in
\mathcal{L}(P):w^*=w\}$\vspace{0.45cm}

\item $III^{\RR} :=\{x\in \mathcal{L}(X): x^* =
x\}$\vspace{0.45cm}

\item $III^{\HH}\!:=\!\{w\in\!\mathcal{L}(P)\!:\! w^*\!
=\!-w\}$\vspace{0.45cm}
\end{multicols}

\item $IV^{r,s}:=E,$ where  $E=X_1 \oplus^{\ell_{_1}}
X_2$ and $X_1$,$X_2$ are closed linear subspaces, of dimensions
$r$ and $s$, of a real Hilbert space, $X$, of dimension greater or
equal to three, so that $X_2={X_1}^{\perp}$, with triple product
$$\J x y z = \left< x / y \right> z + \left< z / y \right> x -
\left< x / \bar z \right> \bar y,$$ where $\left< . / . \right>$
is the inner product in $X$ and the involution $x\to \bar x$ on
$E$ is defined by $\bar x = ( x_1,-x_2)$ for every $x=(x_1,x_2)$.
This factor is known as a \emph{real spin factor}.

\begin{multicols}{2}

\item $V^{\mathbb{O}_{0}}:=M_{1,2}(\mathbb{O}_{0})$\vspace{0.45cm}
\item $V^{\mathbb{O}}:= M_{1,2} (\mathbb{O})$\vspace{0.45cm} \item
$VI^{\mathbb{O}_{0}}:=H_3 (\mathbb{O}_{0})$\vspace{0.45cm} \item
$VI^{\mathbb{O}}:=H_3 (\mathbb{O})$\vspace{0.45cm}
\end{multicols}

\noindent where $\mathbb{O}_{0}$ is the real split Cayley algebra
over the field of the real numbers and $\mathbb{O}$ is the real
division Cayley algebra (known also as the algebra of real
division octonions). The real Cartan factors $(ix)-(xii)$ are
called \emph{exceptional real Cartan factors}.\end{enumerate}

By a \emph{generalized real Cartan factor} we shall mean a real Cartan factor or a complex Cartan factor regarded as a real JB$^*$-triple.\smallskip

We have already commented that the proof of Theorem \ref{thm local triple derivations are triple derivations} can be literary adapted to show that every continuous local derivation $T$ on a real JB$^*$-triple $E$ satisfies the identity in \eqref{eq new real local triple derivation on a single element}. Therefore if we consider the symmetrized triple product $$ \left< a,b,c\right>:=\frac13 \left(\J abc + \J cab + \J bca\right),$$ which is trilinear and symmetric, a real polarization formula of the form gives that $T$ is a triple derivation of the symmetrized Jordan triple product $ \left<.,.,.\right>.$

\begin{proposition}\label{p real local triple derivations are symmetrized triple derivations}\cite[Corollary 2.5]{BurFerPe2013} Every continuous local triple derivation on a real JB$^*$-triple is a triple derivation for the symmetrized triple product $ \left<a,b,c\right>:=1/3 \left(\J abc + \J cab + \J bca\right)$.
\end{proposition}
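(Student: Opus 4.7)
My plan is to reduce the statement to a polarization argument for symmetric trilinear forms. As noted in the discussion preceding the proposition, the arguments leading to Theorem \ref{thm local triple derivations are triple derivations} carry over to a continuous real-linear local triple derivation $T : E\to E$ on a real JB$^*$-triple to give
\begin{equation}\label{eq plan cubic}
T\{a,a,a\} = 2\{T(a),a,a\} + \{a,T(a),a\},\qquad a\in E.
\end{equation}
The first step is to recast \eqref{eq plan cubic} in terms of the symmetrized triple product $\langle \cdot,\cdot,\cdot\rangle$. Because $\{\cdot,\cdot,\cdot\}$ is symmetric in the outer variables, $\{T(a),a,a\}=\{a,a,T(a)\}$, and hence
$$3\langle T(a),a,a\rangle = \{T(a),a,a\}+\{a,T(a),a\}+\{a,a,T(a)\} = 2\{T(a),a,a\}+\{a,T(a),a\}.$$
Combined with $\langle a,a,a\rangle = \{a,a,a\}$, identity \eqref{eq plan cubic} becomes
\begin{equation}\label{eq plan cubic symm}
T\langle a,a,a\rangle = 3\langle T(a),a,a\rangle,\qquad a\in E.
\end{equation}

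Next, I would introduce the map
$$G(a,b,c) := T\langle a,b,c\rangle - \langle T(a),b,c\rangle - \langle a,T(b),c\rangle - \langle a,b,T(c)\rangle,$$
which is an $\mathbb{R}$-trilinear map on $E\times E\times E$ with values in $E$. Since the symmetrized product $\langle\cdot,\cdot,\cdot\rangle$ is fully symmetric in its three arguments, so is $G$. The identity \eqref{eq plan cubic symm} rewrites exactly as $G(a,a,a)=0$ for every $a\in E$.

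The final step is the standard real polarization formula for symmetric trilinear maps: any symmetric $\mathbb{R}$-trilinear map $G$ satisfies
$$6\, G(a,b,c) = G(a{+}b{+}c,a{+}b{+}c,a{+}b{+}c) - \sum_{\{x,y\}\subset\{a,b,c\}} G(x{+}y,x{+}y,x{+}y) + G(a,a,a)+G(b,b,b)+G(c,c,c),$$
so the vanishing of the diagonal $G(a,a,a)\equiv 0$ forces $G\equiv 0$. This is precisely the assertion that $T$ is a triple derivation for $\langle \cdot,\cdot,\cdot\rangle$. The only point requiring care is the first step, i.e.\ checking that the proof of Theorem \ref{thm local triple derivations are triple derivations} really does yield \eqref{eq plan cubic} in the real setting; this amounts to verifying that the reductions to compact and range tripotents in Proposition \ref{p loal triple derivations on compact tripotents}, Lemma \ref{c bitranspose compact range tripotents} and \eqref{eq range tripotents in bidual} use only the real-linear local triple derivation hypothesis (they do, since complex linearity is never invoked until the polarization step at the end of the proof of Theorem \ref{t Mackey local triple derivations JBW}). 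I do not foresee any other obstacle, since the real polarization of a symmetric cubic form is unobstructed in characteristic zero.
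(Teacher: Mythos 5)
Your proposal is correct and follows essentially the same route as the paper: the paper likewise starts from the cubic identity $T\J aaa = 2 \J {T(a)}aa + \J a{T(a)}a$ (valid for real-linear local triple derivations by the arguments of Theorem \ref{thm local triple derivations are triple derivations}) and then invokes a real polarization formula for the symmetric trilinear symmetrized product. You have merely made explicit the polarization identity and the reformulation $T\left<a,a,a\right> = 3\left<T(a),a,a\right>$, which the paper leaves implicit.
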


Even the reciprocal of Proposition \ref{p real local triple derivations are symmetrized triple derivations} is, at first look, a non trivial problem.

\begin{problem}\label{problem real linear derivations for the syymetrized product are local triple derivations on real JB*-triples} Is every triple derivation for the symmetrized triple product on a real JB$^*$-triple a local triple derivation?
\end{problem}

The rank, $r(E)$, of a real or complex JB$^*$-triple $E$, is the minimal cardinal number $r$
satisfying $card(S)\leq r$ whenever $S$ is an orthogonal subset of
$E$, i.e. $0\notin S$ and $x\perp y$ for every $x\neq y$ in $S$.\smallskip

Let $T: E\to E$ be a triple derivation for the symmetrized triple product on a rank one real JB$^*$-triple. It is known that every non-zero element $x$ in $E$ satisfies that $u:=\frac{x}{\|x\|}$ is a minimal tripotent (i.e. $E_1 (u) :=\{ x\in E : Q(u) (x) =x  \} = \RR u$). It is easily verified, via Peirce arithmetic and the identity $T(u) = 2 \{T(u),u,u\}+ \{u,T(u),u\}$, that the (inner) triple derivation $$\delta = \frac{1}{2 \|x\|} \delta(T(x)+3P_1(u)T(x),u),$$ 
satisfies $$2 \delta(x) = \J {T(x)}uu + 3 \J { P_1 (u)T(x)}uu - \J u{T(x)}u - 3 \J u{P_1 (u)T(x)}u$$ $$= \|x\|\left(  \J {T(u)}uu + 3 \J { P_1 (u)T(u)}uu - \J u{T(u)}u \right)$$ $$= 2 \left( P_2 (u)T(x) + P_1 (u)T(x)\right) = 2T(x).$$

\begin{proposition}\label{p local derivations in rank-one cartan factors}
Let $E$ be a real JB$^*$-triple of rank one. Every (real linear) triple derivation of the symmetrized Jordan triple
product, $T : E \to E$, is a local triple derivation.$\hfill\Box$
\end{proposition}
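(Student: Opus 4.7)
The plan is, given any nonzero $x\in E$, to produce an explicit inner triple derivation $\delta_x$ of the form $\delta_x=\frac{2}{\|x\|}\,\delta(T(x),u)$, where $u=\frac{x}{\|x\|}$ and $\delta(a,b):=L(a,b)-L(b,a)$, and to verify directly that $\delta_x(x)=T(x)$. This mirrors the inner-derivation construction already used above, but reduced to its minimal ingredients.

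First I would unfold the rank-one hypothesis. Since $u=\frac{x}{\|x\|}$ is a nonzero tripotent and $E$ has rank one, no nonzero element of $E$ can be orthogonal to $u$; consequently $E_0(u)=\{0\}$, and the same rank-one argument forces $u$ to be minimal, so $E_2(u)=\RR u$. Hence
\[ E=\RR u\oplus E_1(u). \]
Second, I would exploit that $T$ is a derivation of the symmetrized triple product $\langle\cdot,\cdot,\cdot\rangle$. Since $\langle u,u,u\rangle=u$ and this product is totally symmetric (whereas $\{\cdot,\cdot,\cdot\}$ is symmetric only in its outer entries), the derivation identity reduces to
\[ T(u)=2\J{T(u)}{u}{u}+\J{u}{T(u)}{u}. \]
Decomposing $T(u)=\alpha u+y$ with $\alpha\in\RR$ and $y\in E_1(u)$, Peirce arithmetic yields $\J{T(u)}{u}{u}=\alpha u+\tfrac12 y$ and $\J{u}{T(u)}{u}=\alpha u$, using $\J{E_2(u)}{E_1(u)}{E_2(u)}\subseteq E_3(u)=\{0\}$. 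Substituting forces $\alpha=0$, so $T(u)\in E_1(u)$, and by homogeneity $T(x)=\|x\|T(u)\in E_1(u)$ as well.

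Finally, with $\delta_x:=\frac{2}{\|x\|}\,\delta(T(x),u)$---an inner triple derivation on any JB$^*$-triple---one computes
\[ \delta_x(x)=\frac{2}{\|x\|}\bigl(\J{T(x)}{u}{x}-\J{u}{T(x)}{x}\bigr)=2\bigl(\J{T(x)}{u}{u}-\J{u}{T(x)}{u}\bigr), \]
since $x=\|x\|u$. As $T(x)\in E_1(u)$, the first bracket equals $\tfrac12 T(x)$ (because $L(u,u)$ acts as $\tfrac12$ on $E_1(u)$) while the second bracket vanishes by the same Peirce rule used above. Therefore $\delta_x(x)=T(x)$, proving that $T$ is a local triple derivation.

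The step I expect to be most delicate is the structural input in Step 1: verifying that for \emph{every} real JB$^*$-triple of rank one and every nonzero $x$, the normalized tripotent $u=x/\|x\|$ is genuinely minimal with $E_0(u)=\{0\}$, so that the simple Peirce decomposition $E=\RR u\oplus E_1(u)$ is available. Once this is in place, the Peirce computation showing $P_2(u)T(u)=0$ and the verification $\delta_x(x)=T(x)$ are essentially formal.
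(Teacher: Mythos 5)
Your argument has a genuine gap in Step~1: the identification $E_2(u)=\RR u$ does not follow from the rank-one hypothesis. In a \emph{real} JB$^*$-triple, minimality of the tripotent $u=x/\|x\|$ only means that the $+1$-eigenspace of $Q(u)$ on $E_2(u)$ equals $\RR u$; the $-1$-eigenspace (the ``skew part'' of $E_2(u)$ with respect to the involution $z\mapsto \J uzu$) may well be nonzero. The basic counterexample is exactly the one the paper needs this proposition for: a rank-one complex Cartan factor of type $I^{\mathbb{C}}$ (e.g.\ $H=\mathbb{C}^2$ as in Example \ref{example local not derivation}) regarded as a real JB$^*$-triple, where $E_2(u)=\mathbb{C}u=\RR u\oplus\RR (iu)$ and $Q(u)(iu)=-iu$. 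Your Peirce computation, done correctly, shows only that $a_2:=P_2(u)T(u)$ satisfies $Q(u)a_2=-a_2$ (this is identity \eqref{eq local der on a single tripotent 2}), \emph{not} that $a_2=0$; and indeed for the map $T$ of Example \ref{example local not derivation} one has $P_2(u)T(u)=\frac{i}{2}u\neq 0$ for $u=(\tfrac{1}{\sqrt2},\tfrac{i}{\sqrt2})$. Since your conclusion ``$T(x)\in E_1(u)$'' fails, your candidate $\delta_x=\frac{2}{\|x\|}\delta(T(x),u)$ does not work: writing $T(x)=a_2+a_1$ with $a_1=P_1(u)T(x)$, one gets
\begin{equation*}
\delta_x(x)=2\bigl(\J{T(x)}uu-\J u{T(x)}u\bigr)=2\bigl(a_2+\tfrac12 a_1+a_2\bigr)=T(x)+3P_2(u)T(x),
\end{equation*}
which differs from $T(x)$ precisely by the term you discarded.

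The paper's proof repairs this by reweighting the Peirce components: it takes the inner derivation $\delta=\frac{1}{2\|x\|}\,\delta\bigl(T(x)+3P_1(u)T(x),\,u\bigr)$, and the skewness relation $Q(u)a_2=-a_2$ is then exactly what makes the $-\J u{\cdot}u$ term contribute $+a_2$ rather than $0$, yielding $\delta(x)=P_2(u)T(x)+P_1(u)T(x)=T(x)$ (using $P_0(u)T(x)=0$, which does follow from rank one as you argue). So your overall strategy --- produce an explicit inner derivation matching $T$ at $x$ --- is the right one and is the paper's, but the specific inner derivation must account for the nonzero skew Peirce-2 component; as written, your proof would only cover the types $I^{\RR}$ and rank-one real spin factors, where that component happens to vanish.
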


The main goal in the setting of real JB$^*$-triples is the following:

\begin{problem}\label{problem local triple derivations on real JB*-triples}\cite[Problem 2.6]{BurFerPe2013} Is every bounded local triple derivation on a real JB$^*$-triple a triple derivation?
\end{problem}

We shall revisit here the answer to the above problem provided by Fernández Polo, Molino and the third author of this note in \cite{FerMolPe}. The first goal is to study this problem in the case of generalized Cartan factors.\smallskip

\begin{proposition}\label{p Cartan factors of rank bigger than one}\cite[Proposition 2.6]{FerMolPe} Let $C$ be a generalized real Cartan factor of rank $>1$ and let $T: C \to C$ be a linear map. The following are equivalent:\begin{enumerate}[$(a)$]
\item $T$ is a triple derivation;
\item $T$ is a local triple derivation;
\item $T$ is a triple derivation of the symmetrized triple product.
\end{enumerate}
\end{proposition}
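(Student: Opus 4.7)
The plan is to prove the three equivalences by establishing the chain $(a)\Rightarrow(b)\Rightarrow(c)\Rightarrow(a)$. The implication $(a)\Rightarrow(b)$ is immediate: every triple derivation is its own local triple derivation at each point. The implication $(b)\Rightarrow(c)$ is the content of Proposition \ref{p real local triple derivations are symmetrized triple derivations}, which holds for every real JB$^*$-triple and requires no restriction on the rank. The substantive implication is therefore $(c)\Rightarrow(a)$, and it is here that the hypothesis $r(C)>1$ becomes essential.

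For $(c)\Rightarrow(a)$, the natural starting point is that any real-linear derivation $T$ of the symmetrized product $\langle\cdot,\cdot,\cdot\rangle$ automatically satisfies
\begin{equation*}
T\J aaa = 2 \J {T(a)}aa + \J a{T(a)}a \qquad (a\in C),
\end{equation*}
since $\J aaa = \langle a,a,a\rangle$ and the original triple product is symmetric in its outer entries. The issue is that the polarization of this diagonal identity, unlike the complex polarization used at the end of the proof of Theorem \ref{t Mackey local triple derivations JBW}, is not by itself sufficient to recover the fully polarized Leibniz rule $T\J xyz = \J{T(x)}yz + \J x{T(y)}z + \J xy{T(z)}$. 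The remedy is to exploit the abundance of tripotents in a rank-$>1$ Cartan factor: for every pair of orthogonal minimal tripotents $u,v\in C$ one applies the diagonal identity to the family $\lambda u+\mu v$ with $\lambda,\mu\in\mathbb{R}$, expands both sides via the Peirce decomposition with respect to $u$, $v$ and $u\pm v$, and compares coefficients. This produces precisely the ``cross-term'' identities of type \eqref{eq 2 theorem main} that are missing in the rank-one situation, and after passing to algebraic elements of the form $\sum_j\lambda_j e_j$ with the $e_j$ mutually orthogonal minimal tripotents the full derivation identity follows on a norm-dense subset, hence on $C$.

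Since the strategy must be implemented inside each generalized real Cartan factor, I would handle the cases uniformly whenever possible. For the classical types $I^{\mathbb K}$, $II^{\mathbb K}$, $III^{\mathbb K}$ ($\mathbb K=\mathbb R,\mathbb C,\mathbb H$), the argument can be phrased in terms of matrix units that serve as orthogonal minimal tripotents, so the Peirce/polarization step reduces to direct matrix computation. The real spin factors $IV^{r,s}$ require separate attention because the triple product involves the distinguished conjugation $x\mapsto\bar x$; here I would parametrize $C$ by an orthonormal basis adapted to the conjugation and verify the identity on basis vectors using the explicit formula for the spin triple product. The exceptional factors $V^{\mathbb O},V^{\mathbb O_0},VI^{\mathbb O},VI^{\mathbb O_0}$ would be treated last, via the Jordan-algebraic identities of the hermitian $3\times 3$ octonion matrices and their $1\times 2$ counterparts.

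The main obstacle I anticipate is precisely this case-by-case verification of the passage from the diagonal identity to the polarized identity in the spin and exceptional factors: the conjugation in the spin factor couples the symmetric and antisymmetric parts of the action of $T$ in a nontrivial way, and the exceptional factors lack an associative ambient structure, so standard manipulations based on matrix multiplication are unavailable. That rank one is genuinely excluded is consistent with Proposition \ref{p local derivations in rank-one cartan factors}, which shows that in rank one every symmetrized triple derivation is a local triple derivation but can fail to be a triple derivation; hence the present proposition cannot hold without some rank restriction.
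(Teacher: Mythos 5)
Your handling of $(a)\Rightarrow(b)$ and $(b)\Rightarrow(c)$ matches the paper, but your plan for the substantive implication $(c)\Rightarrow(a)$ has a genuine gap. The mechanism you propose --- apply the diagonal identity $T\J aaa = 2\J{T(a)}aa + \J a{T(a)}a$ to real combinations $\lambda u+\mu v$ of orthogonal tripotents, expand via Peirce decomposition, and compare coefficients --- can only ever recover consequences of the symmetrized Leibniz rule, because the diagonal identity together with \emph{real} polarization is exactly equivalent to statement $(c)$, which is your hypothesis. The cross-term identities of type \eqref{eq 2 theorem main} are \emph{not} among those consequences: in the proof of Theorem \ref{thm local triple derivations are triple derivations} they are obtained only after first establishing \eqref{eq 1 theorem main}, i.e.\ $\J{e_i}{T(e_j)}{e_k}=0$ for orthogonal tripotents, and that step uses the local-triple-derivation hypothesis (via $P_0(e)T^{**}(e)=0$), which under $(c)$ alone you do not have. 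Example \ref{example local not derivation} makes the obstruction concrete: the map there satisfies the diagonal identity everywhere, hence is a derivation of the symmetrized product, yet fails the full Leibniz rule on the non-orthogonal pair $(1,0)$, $(i,0)$ --- a pair that real combinations of orthogonal tripotents never detect. So your claim that "the full derivation identity follows on a norm-dense subset" is unsupported, and the case-by-case matrix/spin/octonion computations you defer to the end are precisely where the missing idea would have to live.

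The paper's proof of $(c)\Rightarrow(a)$ avoids this entirely by a different route: it exponentiates, observing that $\{\exp(tT)\}_{t\in\mathbb{R}}$ is a one-parameter group of automorphisms of the symmetrized product, hence of surjective isometries of $C$ by \cite[Theorem 4.8]{IsKaRo95}; since $C$ has rank $>1$, \cite[Corollary 2.15]{FerMarPe} forces each $\exp(tT)$ to be an automorphism of the \emph{original} triple product, and differentiating at $t=0$ gives the Leibniz rule. The rank hypothesis enters only through that isometry theorem, which is the structural input your sketch lacks. If you want to salvage a direct computational proof, you would need to identify, for each type of rank $>1$ factor, an identity genuinely beyond the symmetrized Leibniz rule that separates the conjugate-linear middle slot from the outer slots --- and your outline does not yet do this.
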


\begin{proof} The implication $(b)\Rightarrow (c)$ follows from Proposition \ref{p real local triple derivations are symmetrized triple derivations}, while $(a)\Rightarrow (b)$ is clear. To prove $(c)\Rightarrow (a)$, let $T: C\to C$ be a triple derivation of the symmetrized triple product $\left<. \ ,.\ ,.\right>$. In this case $\{\exp (t T) : C \to C\}_{t\in \mathbb{R}}$ is a one-parameter group of automorphisms of the symmetrized triple product. By \cite[Theorem 4.8 ]{IsKaRo95}, $\exp (t T)$ is a surjective isometry for every real $t$. Since $C$ is of rank $>1$, Corollary 2.15 in \cite{FerMarPe} proves that $\exp(t T)$ is a triple automorphism of the original triple product and hence $$\exp(t T) \J xyz = \J {\exp(t T) (x)}{\exp(t T) (y)}{\exp(t T) (z)},$$ for every $x,y,z\in C$ and $t\in \mathbb{R}.$ Finally, the identity $$\frac{\partial}{\partial t}{\mid_{_{t=0}}} \left(\exp(t T) \J xyz \right)=\frac{\partial}{\partial t}{\mid_{_{t=0}}} \left( \J {\exp(t T) (x)}{\exp(t T) (y)}{\exp(t T) (z)}\right),$$ gives $T\J xyz = \J {T(x)}yz + \J x{T(y)}z + \J xy{T(z)}.$
\end{proof}

Combining Propositions \ref{p local derivations in rank-one cartan factors} and \ref{p Cartan factors of rank bigger than one} we easily deduce that local triple derivations and triple derivations of the symmetrized triple product on a generalized real Cartan factor define the same objects, this is a positive answer to Problem \ref{problem real linear derivations for the syymetrized product are local triple derivations on real JB*-triples} when $E$ is a generalized real Cartan factor.\smallskip

Local triple derivations on rank one generalized Cartan factor are the next objective. The next two results and the counterexample following them are taken from \cite{FerMolPe}.

\begin{lemma}\label{l realtypeI} Let $E$ be a rank one generalized real Cartan factor of type $I^{\mathbb{R}}$, and let  $T: E \rightarrow E$ be a real linear mapping. The following statements are equivalent:
   \begin{enumerate}[$(a)$] \item $T$ is a local triple derivation;
\item $T$ is a triple derivation for the symmetrized triple product;
\item $T$ is a bounded skew-symmetric operator {\rm(}i.e. $T^*=-T${\rm)};
\item $T$ is a triple derivation.
\end{enumerate}
\end{lemma}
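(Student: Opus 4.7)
The plan is to exploit the fact that a rank-one generalized real Cartan factor of type $I^{\mathbb{R}}=\mathcal{L}(X,Y)$, having $\min(\dim X,\dim Y)=1$, is isometrically isomorphic as a real JB$^*$-triple to a real Hilbert space $H$ equipped with the triple product $\J xyz=\tfrac12(\langle x,y\rangle z+\langle z,y\rangle x)$. In this model each of the four conditions of the lemma will reduce to the single algebraic identity $T^*=-T$, so the entire argument becomes a bookkeeping exercise on inner products.

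I would first establish $(c)\Leftrightarrow(d)$ by direct substitution. Writing out $T\J xyz=\J{Tx}yz+\J x{Ty}z+\J xy{Tz}$ and cancelling the two matching terms $\tfrac12\langle y,z\rangle Tx$ and $\tfrac12\langle x,y\rangle Tz$, the identity collapses to
\[
(\langle Tx,y\rangle+\langle x,Ty\rangle)\,z+(\langle z,Ty\rangle+\langle Tz,y\rangle)\,x=0,\qquad \forall\,x,y,z\in H.
\]
Setting $z=x$ with $x\neq 0$ forces $\langle Tx,y\rangle+\langle x,Ty\rangle=0$, i.e.\ $T^*=-T$; the converse is immediate from the symmetry of the real inner product. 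Any everywhere-defined skew linear operator on $H$ is automatically bounded by the Hellinger-Toeplitz theorem, yielding $(c)\Leftrightarrow(d)$. Since every triple derivation is tautologically a local triple derivation, $(d)\Rightarrow(a)$ is free. For $(a)\Rightarrow(c)$ I would feed the previous equivalence into the local structure: for each $x\in H$ the triple derivation $\delta_x$ with $T(x)=\delta_x(x)$ is skew, hence $\langle T(x),x\rangle=\langle\delta_x(x),x\rangle=0$ for every $x$, and polarising $x\mapsto x+y$ yields $\langle Tx,y\rangle+\langle Ty,x\rangle=0$, so $T^*=-T$ and $T$ is again bounded by Hellinger-Toeplitz.

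To close the loop I would dispose of (b). The implication $(d)\Rightarrow(b)$ is automatic, since $\langle x,y,z\rangle=\tfrac13(\J xyz+\J zxy+\J yzx)$ is a symmetric average of the original triple product. For $(b)\Rightarrow(c)$ an analogous computation reduces the symmetrised derivation identity to
\[
(\langle Tx,y\rangle+\langle x,Ty\rangle)\,z+(\langle Ty,z\rangle+\langle y,Tz\rangle)\,x+(\langle Tx,z\rangle+\langle x,Tz\rangle)\,y=0,
\]
and specialising $z=x$ with $y$ linearly independent of $x$ once again forces $T^*=-T$. The only genuine obstacle is the one-dimensional edge case $H\cong\mathbb{R}$, in which the non-degeneracy arguments above fail; there, however, a short direct check shows that each of (a)-(d) collapses to $T=0$ (every skew-symmetric operator on $\mathbb{R}$ is zero, and $T(x^3)=3x^2T(x)$ forces $T=0$ when $T$ is a triple derivation), so the equivalences hold trivially.
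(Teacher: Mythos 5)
Your proof is correct, and it closes the equivalences by a slightly different cycle than the paper. The paper proves $(a)\Rightarrow(b)$ by invoking the general Proposition asserting that every continuous local triple derivation on a real JB$^*$-triple is a derivation of the symmetrized product (which rests on the full compact-tripotent machinery behind Theorem \ref{thm local triple derivations are triple derivations}), quotes $(c)\Leftrightarrow(d)$ from \cite[Lemma 3, Section 3.3]{HoMarPeRu}, and then derives $(b)\Rightarrow(c)$ from the single diagonal identity $T\{x,x,x\}=2\{Tx,x,x\}+\{x,Tx,x\}$, which immediately yields $\langle T(x)/x\rangle=-\langle x/T(x)\rangle$. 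You instead prove $(c)\Leftrightarrow(d)$ by hand, and, more significantly, you obtain $(a)\Rightarrow(c)$ directly: each $\delta_x$ is skew by the already established $(d)\Rightarrow(c)$, so $\langle T(x)/x\rangle=\langle\delta_x(x)/x\rangle=0$ for all $x$, and polarization gives $T^*=-T$. This shortcut makes the rank-one type $I^{\mathbb{R}}$ case entirely self-contained and elementary, bypassing the general structure theory; what it costs is only that you must separately verify $(d)\Rightarrow(b)$ and $(b)\Rightarrow(c)$ to bring statement $(b)$ into the loop, and your $(b)\Rightarrow(c)$ computation (specializing $z=x$ with $y$ independent of $x$, plus the trivial one-dimensional case) is a little heavier than the paper's diagonal argument $x=y=z$, which works uniformly in all dimensions. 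Both routes reduce everything to the identity $T^*=-T$, so the underlying computation is the same; your version is preferable if one wants the lemma independent of the earlier sections, the paper's if one is willing to reuse the general results already proved.
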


\begin{proof} We recall that $E$ is a real Hilbert space and the triple product of $E$ is given by  $$\{x,y,z\}=1/2 (\langle x/y\rangle z+\langle z/y \rangle x),$$ where $\langle \cdot/\cdot \rangle$ denotes the inner product on $E$.\smallskip

The implication $(a)\Rightarrow (b)$ is proved in \cite[Corollary 2.5]{BurFerPe2013}. The equivalence $(c)\Leftrightarrow (d)$ was established in \cite[Lemma 3, Section 3.3]{HoMarPeRu}, while $(d)\Rightarrow (a)$ is obvious.\smallskip

We shall prove $(b)\Rightarrow (c)$. Let $T$ be a derivation for the symmetrized triple product $\left<\cdot,\cdot,\cdot \right>$. For each $x \in  E$, we have that  $T\{x,x,x\}=2\{Tx,x,x\}+\{x,Tx,x\}$ and hence $$\Vert x \Vert ^2 T(x) =\langle T(x)/x \rangle x+\Vert x \Vert ^2 T(x)+ \langle x/T(x)\rangle x,$$ which gives, $$\langle T(x)/x \rangle=-\langle x/T(x)\rangle, \hspace{0.6cm} \forall x \in E,$$ and hence $T^* = -T.$
\end{proof}

Local triple derivations can be also described on rank-one real spin factors.

\begin{lemma}\label{l caseofspinfactor} Let $E$ be a real spin factor of rank one and let $T: E \rightarrow E$ be a {\rm(}real{\rm)} linear mapping. The following statements are equivalent:
\begin{enumerate}[$(a)$] \item $T$ is a local triple derivation;
\item $T$ is a triple derivation for the symmetrized triple product;
\item $T$ is a bounded skew-symmetric operator {\rm(}$T^*=-T${\rm)};
\item $T$ is a triple derivation.
\end{enumerate}
\end{lemma}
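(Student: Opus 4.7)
The plan is to mirror the structure used in the preceding Lemma \ref{l realtypeI}, exploiting the fact that a rank-one real spin factor $E$ is a real Hilbert space $X$ whose triple product reduces to
\[
\J xyz = \langle x/y\rangle z + \langle z/y\rangle x - \langle x/z\rangle y,
\]
since in a rank-one real spin factor either $r=0$ or $s=0$, so the conjugation is $\pm\mathrm{id}$ and the ``$\bar{\,\cdot\,}$'' term collapses. The implications $(a)\Rightarrow(b)$ and $(d)\Rightarrow(a)$ are, respectively, Proposition \ref{p real local triple derivations are symmetrized triple derivations} and the definitions; so the real work is $(b)\Rightarrow(c)$ and $(c)\Leftrightarrow(d)$.

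For $(b)\Rightarrow(c)$, I would start from the pointwise identity
\[
T\J xxx = 2\J {Tx}xx + \J x{Tx}x \qquad (x\in E),
\]
which (as in \eqref{eq new real local triple derivation on a single element}) is automatic for any real linear triple derivation of the symmetrized product. Using the displayed formula for the triple product, a direct computation gives $\J xxx = \|x\|^2 x$, $2\J{Tx}xx=2\|x\|^2 Tx$ and $\J x{Tx}x = 2\langle x/Tx\rangle x-\|x\|^2 Tx$, so the identity above collapses to $\langle x/Tx\rangle = 0$ for every $x\in E$. Polarization then yields $T^*=-T$ as an algebraic identity. Boundedness is then automatic by the Hellinger–Toeplitz theorem, because an everywhere-defined skew-symmetric linear operator on a real Hilbert space is necessarily continuous.

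For $(c)\Leftrightarrow(d)$, one checks by hand that skew-symmetry of $T$ makes the full triple Leibniz rule
\[
T\J xyz = \J {Tx}yz + \J x{Ty}z + \J xy{Tz}
\]
hold: expanding both sides using the explicit triple product and grouping the terms proportional to $x$, $y$ and $z$, the three ``mixed'' scalar coefficients are precisely the expressions $\langle Tx/y\rangle+\langle x/Ty\rangle$, $\langle Ty/z\rangle+\langle y/Tz\rangle$ and $\langle Tx/z\rangle+\langle x/Tz\rangle$, each of which vanishes under $T^*=-T$. The converse $(d)\Rightarrow(c)$ is obtained by specializing the Leibniz rule to $y=z=x$ exactly as in the proof of $(b)\Rightarrow(c)$ above.

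I expect the only delicate point to be the algebraic reduction in $(b)\Rightarrow(c)$: one must keep careful track of the symmetrization in the definition of triple derivation and verify that, in this rank-one case, no information about the conjugation $\bar{\,\cdot\,}$ is lost — that is, that the skew-symmetry condition alone is strong enough to recover the full Jordan triple Leibniz identity. The structural reduction of the spin triple product to the formula above (coming from the rank-one classification of $IV^{r,s}$) is what makes this possible and is the step I would want to state cleanly at the outset.
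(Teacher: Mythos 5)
Your proof is correct and follows essentially the same route as the paper: the decisive implication $(b)\Rightarrow(c)$ is exactly the paper's computation, namely expanding $T\{x,x,x\}=2\{T(x),x,x\}+\{x,T(x),x\}$ with the spin triple product and watching it collapse to $\langle x/T(x)\rangle=0$, while $(a)\Rightarrow(b)$ and $(d)\Rightarrow(a)$ are disposed of identically. The only differences are minor refinements: you verify $(c)\Leftrightarrow(d)$ by direct expansion of the Leibniz rule where the paper simply cites \cite[Section 3.2]{HoMarPeRu}, and you justify the boundedness asserted in $(c)$ via the Hellinger--Toeplitz argument, a point the paper leaves implicit.
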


\begin{proof}
We recall that $E$ is a real Hilbert space with inner product $\langle\cdot/\cdot \rangle$, whose triple product is defined by $\{x,y,z\}=\langle x/y \rangle z+\langle z/y \rangle x-\langle x/z\rangle y$ (compare \cite[Theorem 4.1 and Proposition  5.4]{Ka97}).\smallskip

The arguments given in the proof of Lemma \ref{l realtypeI} remain valid to prove the implications $(a)\Rightarrow (b)$, $(d)\Rightarrow (a)$, while $(c)\Leftrightarrow (d)$ was essentially obtained in \cite[Section 3.2]{HoMarPeRu}.\smallskip

$(b)\Rightarrow (c)$ If $T$ is a triple derivation for the symmetrized triple product $\left<\cdot,\cdot,\cdot \right>$, then $$T\{x,x,x \}=2 \{T(x),x,x \}+\{x,T(x),x\},$$ $$\Vert x \Vert ^2 T(x)=2\langle T(x)/x \rangle x+2 \Vert x \Vert ^2 T(x) -2 \langle T(x)/x \rangle x$$ $$ +2\langle x/T(x)\rangle x-\Vert x \Vert ^2 T(x),$$ and hence $ \langle x/T(x)\rangle =0,$ for all $x \in E$, which concludes the proof.\smallskip
\end{proof}

Unfortunately, the previous two lemmas do not cover all possible rank one generalized Cartan factors, there exists additional examples of of rank one real Cartan factors having an essentially ``complex'' or ``quaternionic'' nature, for which there exist local triple derivations which are not triple derivation.

\begin{example}\label{example local not derivation}\cite[Example 2.4]{FerMolPe} Let $H= \mathbb{C}^2$ be the 2-dimensional complex Hilbert space equipped with its natural inner product $\langle(\lambda_1,\lambda_2) | (\mu_1,\mu_2)\rangle = \lambda_1 \overline{\mu_1} + \lambda_2 \overline{\mu_2}$. We equip $H$ with its structure of (rank-one) complex Cartan factor of type $I^{\mathbb{C}}$ with product $$2 \{ \lambda, \mu, \nu \} = \langle\lambda| \mu \rangle \nu  + \langle \nu | \mu \rangle \lambda.$$

Let $T: H = \mathbb{R}^{4} \to H = \mathbb{R}^{4}$ be the real linear mapping given by $T(\lambda_1,\lambda_2)= \left( \Re\hbox{e} (\lambda_2), - \Re\hbox{e} (\lambda_1) \right).$ Clearly, $T$ is not $\mathbb{C}$-linear. It is not hard to check that $\Re\hbox{e} \langle T(\lambda_1,\lambda_2)| (\lambda_1,\lambda_2) \rangle=0,$ and thus,
                       $$2\{T(\lambda_1,\lambda_2), (\lambda_1,\lambda_2), (\lambda_1,\lambda_2) \} + \{(\lambda_1,\lambda_2), T(\lambda_1,\lambda_2), (\lambda_1,\lambda_2) \} $$ $$ =\langle T(\lambda_1,\lambda_2)| (\lambda_1,\lambda_2)\rangle (\lambda_1,\lambda_2) + (|\lambda_1|^2 +|\lambda_2|^2) T(\lambda_1,\lambda_2)$$ $$ + \langle (\lambda_1,\lambda_2)| T(\lambda_1,\lambda_2) \rangle (\lambda_1,\lambda_2) $$
$$ =  (|\lambda_1|^2 +|\lambda_2|^2) T(\lambda_1,\lambda_2) + 2 \Re\hbox{e} \langle T(\lambda_1,\lambda_2)| (\lambda_1,\lambda_2)\rangle (\lambda_1,\lambda_2)$$
$$ = (|\lambda_1|^2 +|\lambda_2|^2) T(\lambda_1,\lambda_2) = T\{(\lambda_1,\lambda_2), (\lambda_1,\lambda_2), (\lambda_1,\lambda_2) \},$$ which shows that $T\J xxx = 2 \J {T(x)}xx + \J x{T(x)}x$, for every $x\in H$. A priori, this is not enough to guarantee that $T$ is a local derivation. However, Proposition \ref{p local derivations in rank-one cartan factors} assures that $T$ is a local triple derivation.\smallskip

On the other hand, the identities  $$T\{(1,0),(i,0),(1,0) \}=(0,0)$$ and $$ 2\{T(1,0),(i,0),(1,0)\} + \{(1,0),T(i,0),(1,0) \} = (0,i),$$ tell that $T$ is not a triple derivation.
\end{example}

It is shown in \cite[Proposition 2.5]{FerMolPe} that every real linear triple derivation on a complex JB$^*$-triple must be $\mathbb{C}$-linear

\begin{proposition}\label{p derivations are complex linear}
Let $E$ be a complex JB$^*$-triple. Every real linear triple derivation $\delta : E \to E$ is complex linear.
\end{proposition}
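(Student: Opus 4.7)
The plan is to introduce the auxiliary map $\tilde\delta : E \to E$ defined by $\tilde\delta(x) := -i\,\delta(ix)$, and to show that the difference $\eta := \tilde\delta - \delta$ vanishes identically. Since $\eta \equiv 0$ is equivalent to $\delta(ix) = i\delta(x)$ for every $x\in E$, this will upgrade the $\mathbb{R}$-linearity of $\delta$ to full $\mathbb{C}$-linearity.

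First I would record the abstract properties of $\eta$. The map $\tilde\delta$ is manifestly $\mathbb{R}$-linear (composition of $\mathbb{R}$-linear maps), so $\eta$ is too. More precisely, $\eta$ is conjugate $\mathbb{C}$-linear: the short computation
$$\eta(ia) = -i\delta(-a) - \delta(ia) = i\delta(a) - \delta(ia) = -i\eta(a),$$
combined with $\mathbb{R}$-linearity, gives $\eta(\lambda x) = \bar\lambda\,\eta(x)$ for every $\lambda \in \mathbb{C}$.

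The core step is to exploit the three ways of absorbing a factor $i$ into $\{a,b,c\}$, namely $i\{a,b,c\} = \{ia,b,c\} = \{a,-ib,c\} = \{a,b,ic\}$, which come from the $\mathbb{C}$-linearity of the outer slots and the conjugate-$\mathbb{C}$-linearity of the middle slot. Applying the Leibniz rule of $\delta$ to each rewriting of $\delta(i\{a,b,c\})$ (and using the $\mathbb{R}$-linearity of $\delta$ when a $-i$ appears inside the middle slot, so that $\delta(-ib) = -\delta(ib)$) and then multiplying through by $-i$, I obtain three formulas for $\tilde\delta\{a,b,c\}$:
$$\tilde\delta\{a,b,c\} = \{\tilde\delta(a),b,c\} + \{a,\delta(b),c\} + \{a,b,\delta(c)\},$$
$$\tilde\delta\{a,b,c\} = \{\delta(a),b,c\} + \{a,\tilde\delta(b),c\} + \{a,b,\delta(c)\},$$
$$\tilde\delta\{a,b,c\} = \{\delta(a),b,c\} + \{a,\delta(b),c\} + \{a,b,\tilde\delta(c)\}.$$
Subtracting in pairs and recalling $\eta = \tilde\delta - \delta$ yields the chain of identities
$$\{\eta(a),b,c\} = \{a,\eta(b),c\} = \{a,b,\eta(c)\}\qquad\text{for all }a,b,c\in E.$$

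Finally, I would use the conjugate-linearity of $\eta$ to force $\eta \equiv 0$. Replacing $a$ by $\lambda a$ (with $\lambda\in\mathbb{C}$) in $\{\eta(a),b,c\} = \{a,\eta(b),c\}$ gives, on the one hand $\bar\lambda\{\eta(a),b,c\}$ (from outer-$\mathbb{C}$-linearity and $\eta(\lambda a)=\bar\lambda\eta(a)$), and on the other hand $\lambda\{a,\eta(b),c\} = \lambda\{\eta(a),b,c\}$. Hence
$$(\bar\lambda - \lambda)\,\{\eta(a),b,c\} = 0\qquad (\lambda\in\mathbb{C}),$$
and choosing $\lambda = i$ forces $\{\eta(a),b,c\} = 0$ for all $a,b,c\in E$. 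Specialising $b = c = \eta(a)$ and invoking axiom $(c)$ of the JB$^*$-triple definition gives $\|\eta(a)\|^3 = \|\{\eta(a),\eta(a),\eta(a)\}\| = 0$, so $\eta \equiv 0$ and therefore $\delta(ix) = i\delta(x)$, as required.

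The main bookkeeping obstacle is the careful derivation of the three expressions for $\tilde\delta\{a,b,c\}$: one has to keep straight which slot absorbs which $i$, relying on outer $\mathbb{C}$-linearity or middle conjugate-linearity as appropriate, and invoking only the hypothesized $\mathbb{R}$-linearity of $\delta$ (rather than any complex linearity that we do not yet have). Once these three identities are in place, the remainder of the argument is short, purely algebraic, and uses no continuity or spectral information about $\delta$.
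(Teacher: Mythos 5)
Your proof is correct, but it follows a genuinely different route from the one in the paper. The paper's argument is analytic: it first observes that every inner triple derivation $\sum_j \bigl(L(a_j,b_j)-L(b_j,a_j)\bigr)$ is automatically $\CC$-linear (because each $L(a,b)$ is), and then invokes the Inner Approximation Property of JB$^*$-triples — the density of inner triple derivations in all triple derivations for the strong operator topology of $B(E)$ — to approximate $\delta$ at the two points $x$ and $ix$ simultaneously and conclude $\|i\delta(x)-\delta(ix)\|<\varepsilon$ for every $\varepsilon>0$. Your argument is instead purely algebraic: setting $\tilde\delta(x)=-i\,\delta(ix)$ and $\eta=\tilde\delta-\delta$, you exploit the three ways of absorbing the scalar $i$ into the slots of the triple product (complex linearity in the outer variables, conjugate linearity in the middle one) to derive $\{\eta(a),b,c\}=\{a,\eta(b),c\}=\{a,b,\eta(c)\}$, and then the conjugate linearity of $\eta$ together with the axiom $\|\{a,a,a\}\|=\|a\|^{3}$ forces $\eta\equiv 0$. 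I have checked the three identities for $\tilde\delta\{a,b,c\}$ and the final scalar-substitution step; they are all sound, and in particular $i\{a,\delta(ib),c\}=\{a,-i\delta(ib),c\}=\{a,\tilde\delta(b),c\}$ is handled correctly via the conjugate linearity of the middle slot. What your approach buys is self-containedness: it uses nothing beyond the defining axioms of a JB$^*$-triple and the $\RR$-linear Leibniz rule, with no continuity, spectral theory, or density of inner derivations. What the paper's approach buys is brevity (given the Inner Approximation Property as a black box) and the transparency of the structural fact that the obstruction to complex linearity vanishes on inner derivations.
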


\begin{proof} Having in mind that, for each $a,b$ in $E$, $L(a,b): E \to E$ is
$\CC-$linear, every (real linear) inner derivation on $E$ is $\CC-$linear.\smallskip

Suppose now that $\delta: E\to E$ is a real linear derivation.  Since every real JB$^*$-triple $E$ satisfies the Inner Approximation Property defined in \cite[Theorem 4.6]{BarFri} and \cite[Theorem 5]{HoMarPeRu}) (that is, the space of all inner triple derivations on $E$ is dense in the space of all triple derivations on $E$, with respect to the strong operator topology of $B(E)$), given $\varepsilon > 0$ and $x\in E$, there exists a inner derivation $\displaystyle \widehat{\delta} =  \sum_{j=1}^{n} \delta(a_{j},b_{j}) = \sum_{j=1}^{n} L(a_{j},b_{j})- L(b_j,a_j) $ such that $\Vert \delta(x)-\widehat{\delta}(x) \Vert< \frac{\varepsilon}{2} $ and $\Vert \delta(ix)-\widehat{\delta}(ix) \Vert<
   \frac{\varepsilon}{2} $. Therefore $$\Vert i\delta(x)-\delta(ix) \Vert \leq \Vert i\delta(x)-i\widehat{\delta}(x) \Vert + \Vert \widehat{\delta}(ix)-\delta(ix) \Vert < \varepsilon.$$ The arbitrariness of $\varepsilon$ and $x$ guarantee the desired statement.
\end{proof}

We recall that a subspace $I$ of a real JB$^*$-triple $E$ is a
\emph{triple ideal} 
if $\{E,E,I\}+\{E,I,E\} \subseteq I$. 
It is known that a subtriple $I$ of $E$ is a triple ideal if and only if $\J EEI \subseteq I$ or $\J EIE \subseteq I$ or $\J EII\subseteq I$ (compare \cite{Bun86}). It is very easy to see that every local triple derivation $T$ on $E$ satisfies that for each norm closed ideal $I\subseteq E$, we have  $T(I)\subseteq I$.\smallskip

We can establish sufficient conditions on a real JB$^*$-triple $E$ to assure that every continuous local triple derivation on $E$ is a derivation.

\begin{theorem}\label{t sufficient conds on general real JB*-triples}\cite[Theorem 3.4]{FerMolPe} Let $E$ be a real JB$^*$-triple whose second dual contains no rank-one generalized real Cartan factors of types $I^{\mathbb{C}}$, $I^{\mathbb{H}}$ and $V^{\mathbb{O}}:= M_{1,2} (\mathbb{O})$. Then every continuous local triple derivation on $E$ is a triple derivation.
\end{theorem}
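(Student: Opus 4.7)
The plan is to reduce the theorem to generalized real Cartan factors via an atomic-decomposition argument in the bidual, where the classification of rank-one factors can be applied. First, I would invoke Proposition~\ref{p real local triple derivations are symmetrized triple derivations} to replace the hypothesis that $T$ is a continuous local triple derivation with the weaker but more tractable condition that $T$ is a triple derivation for the symmetrized triple product $\langle a,b,c\rangle = \tfrac13(\{a,b,c\}+\{b,c,a\}+\{c,a,b\})$. The proof then reduces to showing that, under the stated hypothesis on $E^{**}$, every continuous real-linear triple derivation of the symmetrized product on $E$ is already a triple derivation for the original product.

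Next I would pass to the bitranspose $T^{**}\colon W\to W$, with $W=E^{**}$. Because the triple product of the real JBW$^*$-triple $W$ is separately weak$^*$-continuous, so is the symmetrized product, and hence $T^{**}$ is a weak$^*$-continuous real-linear triple derivation of the symmetrized product on $W$. Using the atomic decomposition $W = W_{a}\oplus^{\infty} W_{s}$ with $W_{a}=\bigoplus_{\alpha}^{\infty} C_{\alpha}$ an $\ell_\infty$-sum of generalized real Cartan factors, I would show that each weak$^*$-closed triple ideal $C_{\alpha}$ is invariant under $T^{**}$; this follows from a short computation with the symmetrized Leibniz rule applied to the orthogonal decomposition $W = C_{\alpha}\oplus^{\infty} C_{\alpha}^{\perp}$.

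Having localized the problem, the restriction $T^{**}|_{C_{\alpha}}$ is a continuous real-linear triple derivation of the symmetrized product on a single generalized real Cartan factor, and the classification of rank-one real Cartan factors takes over. If $C_\alpha$ has rank at least $2$, Proposition~\ref{p Cartan factors of rank bigger than one} yields that $T^{**}|_{C_{\alpha}}$ is a triple derivation. If $C_{\alpha}$ has rank one, the hypothesis on $E^{**}$ rules out precisely the three types $I^{\mathbb{C}}$, $I^{\mathbb{H}}$, $V^{\mathbb{O}}$ for which the implication fails (compare Example~\ref{example local not derivation}); hence $C_{\alpha}$ is of type $I^{\mathbb{R}}$ or a real spin factor, and Lemmas~\ref{l realtypeI} and~\ref{l caseofspinfactor} finish the job.

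It remains to descend back to $E$. Summing over $\alpha$, $T^{**}|_{W_{a}}$ satisfies the full triple Leibniz rule. To test the identity $T\{x,y,z\}=\{Tx,y,z\}+\{x,Ty,z\}+\{x,y,Tz\}$ for $x,y,z\in E$, I would apply arbitrary atomic functionals from the predual of $W$; since atomic functionals separate the points of $E$ (the canonical map $E\hookrightarrow W_{a}$ obtained by composing $E\hookrightarrow E^{**}$ with the atomic projection is an injective triple monomorphism), each such evaluation collapses to an instance of the triple Leibniz rule already established on some $C_{\alpha}$. I expect the main technical obstacle to be the ideal-invariance step, namely checking that a symmetrized triple derivation on a real JBW$^*$-triple genuinely respects the decomposition into Cartan summands so that one obtains well-defined restrictions; once this is in place, the classification of rank-one factors and the separation argument do the rest.
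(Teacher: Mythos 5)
Your proposal follows the paper's proof essentially step for step: the reduction to a symmetrized triple derivation via Proposition~\ref{p real local triple derivations are symmetrized triple derivations}, the passage to the weak$^*$-continuous bitranspose, the atomic decomposition of $E^{**}$ into an $\ell_\infty$-sum of generalized real Cartan factors plus a non-atomic part, the invariance of each summand, the case split by rank using Proposition~\ref{p Cartan factors of rank bigger than one} together with Lemmas~\ref{l realtypeI} and~\ref{l caseofspinfactor} (with the hypothesis excluding exactly the bad rank-one types), and the descent to $E$ through the isometric triple embedding $\pi\circ j : E\to A$. The only cosmetic difference is that you derive the invariance of each Cartan summand from the symmetrized Leibniz rule, whereas the paper invokes the prior remark that local triple derivations leave norm-closed triple ideals invariant; both mechanisms work.
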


\begin{proof}
Let $T : E \to E$ be a continuous local triple derivation on $E$. Proposition \ref{p real local triple derivations are symmetrized triple derivations} implies that $T$ is a triple derivation of the symmetrized triple product. The separate weak$^*$-continuity of the triple product of $E^{**}$ together with the weak$^*$-continuity of $T^{**}$, and the weak$^*$-density of $E$ inn $E^{**}$ can be applied to show that $T^{**} : E^{**} \to E^{**}$ is a triple derivation of the symmetrized triple product.\smallskip

The atomic decomposition established in \cite[Theorem 3.6]{PeSta} assures that $E^{**}$ decomposes as an orthogonal sum
$$E = A \oplus^{\infty} N,$$ where $A$ and $N$ are weak$^*$-closed triple ideals of $E^{**}$, $A$ being the weak$^*$-closed real linear
span of all minimal tripotents in $E^{**}$, $N$ containing no \hyphenation{mi-nimal} minimal tripotents and $A \perp N$.
It is also proved in \cite[Theorem 3.6]{PeSta} that $A$ is an orthogonal sum of weak$^*$-closed triple ideals which are generalized real Cartan factors. That is,  there exists a family of mutually orthogonal, weak$^*$-closed triple ideals $\{C_i: i\in \Lambda\}\cup \{N\}$ of $E^{**}$ such that $A= \bigoplus_{i}^{\ell_{\infty}} C_i$ and $$E^{**} =\left(\bigoplus_{i}^{\ell_{\infty}} C_i\right) \bigoplus^{\ell_{\infty}} N.$$ The comments before this theorem assure that $T^{**} (N) \subseteq N$ and $T^{**} (C_i) \subseteq C_i$, for every $i\in \Lambda$.\smallskip

Let us remark that every real JB$^*$-triple of rank one is precisely one of the following: a rank-one type $I^{\mathbb{R}}$, $I^{\mathbb{C}}$, $I^{\mathbb{H}}$, a rank-one real spin factor $IV^{n,0}$, and $V^{\mathbb{O}}:= M_{1,2} (\mathbb{O})$ (cf. \cite[Proposition 5.4]{Ka97}). So, by hypothesis, each $C_i$ is a generalized real Cartan factor of rank $>2$ or a rank-one generalized real Cartan factor of type $I^{\mathbb{R}}$ (i.e. $B(H,\mathbb{R})$, for a real Hilbert space $H$), or a real spin factor of rank one. Now, Lemmas \ref{l realtypeI} and \ref{l caseofspinfactor} and Proposition \ref{p Cartan factors of rank bigger than one} imply that $T^{**}|_{C_i} : C_i \to C_i$ is a triple derivation for every $i$, and hence $T^{**}|_{\bigoplus_{i}^{\ell_{\infty}} C_i} : \bigoplus_{i}^{\ell_{\infty}} C_i \to  \bigoplus_{i}^{\ell_{\infty}} C_i$ is a triple derivation too.\smallskip

We further known that if $j : E \hookrightarrow E^{**}$ denotes the canonical embedding, and $\pi: E^{**} \to A$ the canonical projection of $E^{**}$ onto $A$, then the mapping $\pi \circ j : E \to A$ is an isometric triple embedding (cf. \cite[Proposition 3.1]{FerMarPe}). Since $T^{**}|_{A} : A \to A$ is a triple derivation, and $\pi \circ j  T = T^{**} \pi \circ j $, we have $$\Phi T (\J xyz) =  T^{**}|_{A} \J {\Phi(x)}{\Phi(y)}{\Phi(z)} $$ $$= \J {T^{**}|_{A}\Phi(x)}{\Phi(y)}{\Phi(z)} + \J {\Phi(x)}{T^{**}|_{A}\Phi(y)}{\Phi(z)} $$
$$+ \J {\Phi(x)}{\Phi(y)}{T^{**}|_{A}\Phi(z)} = \Phi \left(\J {T(x)}{y}{z} +  \J {x}{T(y)}{z} + \J {x}{y}{T(z)}\right),$$ which proves that $T$ is a triple derivation.
\end{proof}

\section{Some comments on automatic continuity}\label{sec:automatic cont}

There are some interesting results guaranteeing the continuity of derivations and local derivations. Sakai, solving a conjecture posed by Kaplansky, proves in 1960 that any derivation of a C$^*$-algebra is automatically continuous \cite{Sak60}. Ringrose establishes in \cite{Ringrose72} that every derivation from a C$^*$-algebra $A$ into a Banach $A$-bimodule is continuous. A remarkable extension of the above result is due to Johnson, who authored the following theorem:

\begin{theorem}\label{t automatic continuity of local derivations}\cite[Theorem 7.5]{John01} Let $A$ be a C$^*$-algebra and $X$ a Banach $A$-bimodule. If $T$ is a local derivation, not assumed a priori to be continuous, from $A$ into $ X$, then $T$ is continuous.$\hfill\Box$
\end{theorem}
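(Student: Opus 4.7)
The plan is to combine the classical automatic-continuity framework built around the separating space of $T$ with the structural identity of Lemma \ref{l Kad sublemma 5}.

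First, introduce the separating space
$$\mathcal{S}(T):=\{x\in X : \exists\, (a_n)\subset A,\ a_n\to 0,\ T(a_n)\to x\},$$
a norm-closed subspace of $X$. A standard application of the closed graph theorem shows that $T$ is continuous if and only if $\mathcal{S}(T)=\{0\}$. Next, via the unitization trick used at the end of the proof of Theorem \ref{thm Johnson local 2001} --- extending each local derivation $D_a$ to $A_1=A\oplus\mathbb{C}1$ by $D_a(1):=0$, so that $\widehat{T}:A_1\to X$, $\widehat{T}(a+\lambda 1):=T(a)$, becomes a local derivation into a \emph{unital} Banach $A_1$-bimodule --- one reduces at once to the case in which $A$ and $X$ are unital.

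The next step is to transfer the identity of Lemma \ref{l Kad sublemma 5} from $T$ to $\mathcal{S}(T)$. Crucially, Lemma \ref{l Kad sublemma 5} is available \emph{without} any a priori continuity hypothesis on $T$: each derivation $D_a$ supplied by the local derivation condition is automatically continuous by Ringrose's theorem on derivations of C$^*$-algebras into Banach bimodules, and hence $aT(b)c=0$ whenever $ab=bc=0$. Consequently, if $x=\lim T(b_n)\in\mathcal{S}(T)$ with $b_n\to 0$, and $a,c\in A$ satisfy $ab_n=b_nc=0$ for all sufficiently large $n$, then continuity of the module operations forces $axc=0$. The strategy is to leverage this to obtain $axc=0$ for \emph{every} $a,c\in A$, which, combined with the unit of $A$, yields $x=1\cdot x\cdot 1=0$.

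The principal obstacle lies precisely in this last step: the orthogonality conditions $ab_n=b_nc=0$ are dictated by the null sequence $(b_n)$ witnessing $x\in\mathcal{S}(T)$, whereas $a$ and $c$ should be freely at our disposal. To dismantle this asymmetry I would introduce the left and right continuity ideals of $T$,
$$\mathcal{I}_\ell:=\{a\in A : y\mapsto T(ay)\text{ is continuous}\},\qquad \mathcal{I}_r:=\{c\in A: y\mapsto T(yc)\text{ is continuous}\},$$
prove by a Sinclair-style main boundedness argument that they are closed two-sided ideals of $A$, and then exploit the abundance of orthogonal corners produced by spectral projections of self-adjoint elements (in the spirit of the cube-root decomposition used in the proof of Theorem \ref{t multipliers 1}) together with Lemma \ref{l Kad sublemma 5} to force $\mathcal{I}_\ell=\mathcal{I}_r=A$. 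This yields $\mathcal{S}(T)=\{0\}$, i.e.\ the continuity of $T$; once $T$ is known to be continuous, Theorem \ref{thm Johnson local 2001} upgrades it to an honest derivation and closes the argument.
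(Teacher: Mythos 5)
The survey states this theorem without proof, citing Johnson, so there is no in-paper argument to compare against; your proposal must therefore stand on its own, and as written it does not. The set-up is fine: the separating space $\mathcal{S}(T)$, the reduction to the unital case by the same unitization trick used for Theorem \ref{thm Johnson local 2001}, and the observation that Lemma \ref{l Kad sublemma 5} is purely algebraic and so needs no continuity hypothesis on $T$ (indeed you do not even need Ringrose's theorem for that: the computation $aT(b)c=-D_b(a)bc=0$ never evaluates a limit). But everything after the sentence beginning ``The principal obstacle'' is a declaration of intent rather than a proof, and the two claims on which it rests are exactly the ones that do not come for free.

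First, it is not clear that $\mathcal{I}_\ell$ and $\mathcal{I}_r$ are closed two-sided ideals. As defined, $\mathcal{I}_\ell$ is a linear subspace and a right ideal (since $y\mapsto T(a(by))$ is the composition of two continuous maps), but the left ideal property and norm-closedness are the points where, for genuine derivations, one invokes the Leibniz identity $T(ay)=T(a)y+aT(y)$ --- an identity your $T$ is not yet known to satisfy, since establishing it is the content of Theorem \ref{thm Johnson local 2001} and requires continuity first. Second, even granting the ideal structure, the step that forces $\mathcal{I}_\ell=A$ is entirely missing. Appealing to ``spectral projections'' clashes with the scarcity of projections in a general C$^*$-algebra (a point the survey itself emphasizes after Theorem \ref{thm Kadison real rank zero}), and the cube-root/polar-decomposition device from the proof of Theorem \ref{t multipliers 1} lives in $A^{**}$ and is only usable because $T$ there is assumed bounded, so that $T^{**}$ exists --- precisely what you are trying to prove. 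Johnson's actual proof of \cite[Theorem 7.5]{John01} has to descend to singly generated commutative subalgebras $C_0(\Omega)$ with $\Omega\subseteq\mathbb{R}$, run a Bade--Curtis-type analysis producing a finite singularity set, and exploit that the diagonal of $\mathbb{R}^2$ is a set of synthesis for $C_0(\mathbb{R})\widehat{\otimes}C_0(\mathbb{R})$, before a main boundedness theorem assembles the global statement. None of that technical core is present or replaced in your sketch, so the argument has a genuine gap at its centre.
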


Combining Theorem \ref{thm Johnson local 2001} with the above theorem we get:

\begin{theorem}\cite{John01} Every local derivation of a C$^*$-algebra $A$ into a Banach $A$-bimodule is a derivation.$\hfill\Box$
\end{theorem}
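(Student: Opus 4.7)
The plan is to deduce this statement immediately by chaining together the two preceding Johnson results that the excerpt has just displayed. Specifically, I have at my disposal Theorem \ref{thm Johnson local 2001}, which tells me that \emph{continuous} local derivations of a C$^*$-algebra into a Banach bimodule are derivations, and Theorem \ref{t automatic continuity of local derivations}, which guarantees that any local derivation of a C$^*$-algebra into a Banach bimodule is automatically continuous even when continuity is not assumed in advance. The combination of these two results is essentially tautological.

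Concretely, let $A$ be a C$^*$-algebra, $X$ a Banach $A$-bimodule, and $T: A \to X$ a local derivation. First, I would invoke Theorem \ref{t automatic continuity of local derivations} to upgrade $T$ to a continuous local derivation. Once continuity is in hand, I would then invoke Theorem \ref{thm Johnson local 2001} directly to conclude that $T$ is a derivation. The argument is essentially a one-line concatenation.

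There is no genuine obstacle here, since the heavy lifting was already carried out in the two quoted theorems. The only point that deserves a brief comment, if any, is that the definition of a local derivation used in Theorem \ref{t automatic continuity of local derivations} is exactly the one in our statement (linearity plus the existence of a pointwise local approximating derivation), so the automatic continuity result applies verbatim without any compatibility adjustment. Accordingly, the proof reduces to the single sentence: by Theorem \ref{t automatic continuity of local derivations} the map $T$ is continuous, and then by Theorem \ref{thm Johnson local 2001} it is a derivation.
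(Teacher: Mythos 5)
Your proposal is correct and is exactly the paper's own argument: the paper introduces this theorem with the phrase ``Combining Theorem \ref{thm Johnson local 2001} with the above theorem we get,'' which is precisely the concatenation of automatic continuity (Theorem \ref{t automatic continuity of local derivations}) with the continuous case (Theorem \ref{thm Johnson local 2001}) that you describe. Nothing further is needed.
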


In the setting of JB$^*$-triples, Barton and Friedman prove in \cite{BarFri} that every triple derivation on a JB$^*$-triple is automatically continuous. Their proof actually shows that  every derivation on a JB$^*$-triple $E$ is dissipative and hence continuous. We recall that a linear mapping $T$ on a Banach space $X$ is called \emph{dissipative} if for each $x\in X$ and each functional $\phi\in X^*$ with $\|x\|= \|\phi\| = \phi (x) =1$ we have $\Re\hbox{e} \phi (T(x)) \leq 0$. It is known that $T$ is continuous whenever it is dissipative (compare \cite[Proposition 3.1.15]{BraRo}). Suppose $T: E \to E$ is a local triple derivation on a (real or complex) JB$^*$-triple, and let us pick an element $x$ in $E$ and a functional $\phi$ in $E^*$ with $\|x\|= \|\phi\| = \phi (x) =1$. By assumptions, there exists a triple derivation $\delta_x : E \to E$ satisfying $T(x) = \delta_x (x).$ Since $\delta_x$ is dissipative, we have $\Re\hbox{e} \phi T(x) = \Re\hbox{e} \phi \delta_x (x) \leq 0$, which asserts that $T$ is dissipative.

\begin{theorem}\label{t automatic continuity}\cite[Theorem 2.8]{BurFerPe2013} Every local triple derivation on a (real or complex) JB$^*$-triple is continuous.$\hfill\Box$
\end{theorem}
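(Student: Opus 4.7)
The plan is to show that every local triple derivation $T:E\to E$ on a (real or complex) JB$^*$-triple is automatically dissipative, and then invoke the classical fact that a dissipative linear map on a Banach space is bounded (see, e.g., \cite[Proposition 3.1.15]{BraRo}). This reduces continuity of $T$ to a pointwise verification that exploits the local hypothesis.

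First I would recall the notion: a linear map $S:X\to X$ is \emph{dissipative} if $\mathrm{Re}\,\phi(S(x))\leq 0$ for every $x\in X$ and every $\phi\in X^{*}$ with $\|x\|=\|\phi\|=\phi(x)=1$. The key input is that every (a priori possibly unbounded) triple derivation $\delta$ on $E$ is dissipative; this is the content of the Barton--Friedman argument \cite{BarFri}, which in fact is how they deduce the automatic continuity of triple derivations on JB$^{*}$-triples. I would quote this dissipativity result as a black box.

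Next, for the local triple derivation $T:E\to E$, I would fix an arbitrary pair $(x,\phi)\in E\times E^{*}$ with $\|x\|=\|\phi\|=\phi(x)=1$. By the defining property of a local triple derivation, there exists a triple derivation $\delta_{x}:E\to E$ (depending on $x$) with $T(x)=\delta_{x}(x)$. Applying the Barton--Friedman dissipativity of $\delta_{x}$ to the same pair $(x,\phi)$ gives
\[
\mathrm{Re}\,\phi(T(x))\;=\;\mathrm{Re}\,\phi(\delta_{x}(x))\;\leq\;0.
\]
Since $(x,\phi)$ was arbitrary, $T$ itself is dissipative, and therefore continuous.

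I do not expect any serious obstacle: the whole argument is essentially a one-line reduction once one accepts the dissipativity of honest triple derivations. The only delicate point is to make sure the real and complex cases are both covered, but since Barton--Friedman's dissipativity works in both settings (and the definition of dissipative map makes sense over $\mathbb{R}$ or $\mathbb{C}$ uniformly), no additional case analysis is required.
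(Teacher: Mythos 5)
Your argument is correct and is essentially the same as the one the paper gives immediately before the theorem: both use the Barton--Friedman observation that every triple derivation is dissipative, apply the local hypothesis at each pair $(x,\phi)$ with $\|x\|=\|\phi\|=\phi(x)=1$ to transfer dissipativity to $T$, and then conclude continuity from the general fact that dissipative operators are bounded. No gaps.
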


As a consequence of Theorems \ref{thm local triple derivations are triple derivations}, \ref{t sufficient conds on general real JB*-triples} and \ref{t automatic continuity} we get:

\begin{theorem}\label{t local triple derivations on real or complex JB-triples are derivations} Every local triple derivation on a JB$^*$-triple is a triple derivation. Every local triple derivation on a real JB$^*$-triple whose second dual contains no rank-one generalized real Cartan factors of types $I^{\mathbb{C}}$, $I^{\mathbb{H}}$ and $V^{\mathbb{O}}:= M_{1,2} (\mathbb{O})$ is a triple derivation. $\hfill\Box$
\end{theorem}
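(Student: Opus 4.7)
The plan is to observe that this final theorem is essentially a synthesis of three results already established in the excerpt: the automatic continuity result (Theorem \ref{t automatic continuity}) together with the two structural results (Theorems \ref{thm local triple derivations are triple derivations} and \ref{t sufficient conds on general real JB*-triples}) which each assume boundedness as hypothesis. So the proof is a short bootstrapping argument rather than a new technical development.

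First, I would fix a (real or complex) JB$^*$-triple $E$ and a local triple derivation $T:E\to E$, without any a priori assumption of continuity. The first move is to invoke Theorem \ref{t automatic continuity}, which tells us that $T$ is automatically continuous. This is the key reduction: once $T$ is known to be bounded, both of the strong structural theorems become applicable.

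Next, I would split into the two cases asserted by the statement. In the complex case, I directly apply Theorem \ref{thm local triple derivations are triple derivations} to the bounded local triple derivation $T$ to conclude that $T$ is a triple derivation. In the real case, under the additional hypothesis that $E^{**}$ contains no rank-one generalized real Cartan factor of type $I^{\mathbb{C}}$, $I^{\mathbb{H}}$ or $V^{\mathbb{O}}$, I apply Theorem \ref{t sufficient conds on general real JB*-triples} to the same bounded local triple derivation to reach the same conclusion.

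Since each step is a direct citation of a theorem already proved in the paper, there is no genuine obstacle here; the only mild subtlety worth flagging is that Theorem \ref{t automatic continuity} must be quoted in its real-and-complex form (which the excerpt explicitly provides), so that the subsequent appeal to Theorem \ref{t sufficient conds on general real JB*-triples} is legitimate. The proof therefore reduces to one sentence per case after the continuity step, and no further computations are required.
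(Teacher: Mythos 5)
Your proposal is correct and is exactly the paper's own argument: the paper states this theorem as an immediate consequence of Theorems \ref{t automatic continuity}, \ref{thm local triple derivations are triple derivations} and \ref{t sufficient conds on general real JB*-triples}, offering no further proof. Your observation that the automatic continuity result must be taken in its real-and-complex form is the right point to flag, and the paper indeed provides it in that generality.
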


\section{2-local derivations on von Neumann algebras}\label{s:2-local der}

In this section  we study  2-local derivations on von Neumann algebras. In 1997, \v{S}emrl \cite{Semrl97} introduced the concepts of $2$-local derivations and $2$-local automorphisms.

Let ${A}$ be an algebra. A (a non-necessarily linear nor continuous) mapping $\Delta:{A}\rightarrow {A}$  is called a \textit{2-local derivation} if for every $x, y\in {A},$ there is a derivation $D_{x, y}: {A}\rightarrow {A},$ depending on $x$ and $y,$ such that  $\Delta(x)=D_{x, y}(x)$  and $\Delta(y)=D_{x, y}(y).$

\v{S}emrl describes $2$-local derivations on the algebra $B(H)$ of all bounded linear operators on an infinite-dimensional separable Hilbert space $H.$

\begin{theorem} \cite{Semrl97}.
Let $H$ be an infinite-dimensional separable Hilbert space, and let $B(H)$ be the algebra of all linear bounded operators on $H.$
Then every 2-local derivation $T : B(H) \rightarrow B(H)$ (no linearity or continuity of $\theta$ is assumed) is a derivation.
\end{theorem}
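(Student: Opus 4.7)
The plan is to reduce the $2$-local condition to the local derivation framework already treated earlier in this paper. By Corollary \ref{c Kadison compact} together with Theorem \ref{t automatic continuity of local derivations}, every local derivation of $B(H)$ into itself is automatically continuous and a derivation, so it suffices to establish that $T$ is linear. Homogeneity $T(\lambda x)=\lambda T(x)$ and $T(0)=0$ are immediate by applying the $2$-local hypothesis to the pairs $(x,\lambda x)$ and $(0,0)$, so the essential task is additivity of $T$.

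Invoking Sakai's theorem that every derivation of $B(H)$ is inner, the $2$-local hypothesis rewrites as: for every pair $(x,y)$ there exists $a_{x,y}\in B(H)$ with $T(x)=[a_{x,y},x]$ and $T(y)=[a_{x,y},y]$; the programme is to produce a single universal $a\in B(H)$ with $T=\mathrm{ad}_a$. Exploiting the separability of $H$, fix an orthonormal basis $\{e_n\}_{n\in\mathbb N}$ and a bounded self-adjoint operator $b=\sum_n\lambda_n\, e_n\otimes e_n^{*}$ with distinct eigenvalues $\lambda_n\to 0$. Writing $T(b)=[a_0,b]$ and replacing $T$ by $T-\mathrm{ad}_{a_0}$ (still a $2$-local derivation), we may assume $T(b)=0$. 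Applying the $2$-local property to each pair $(x,b)$ then yields $c_x$ in the diagonal masa $\mathcal D=\{b\}'$ with $T(x)=[c_x,x]$. Restricted to matrix units $e_{mn}:=e_m\otimes e_n^{*}$ this forces $T(e_{mn})=\alpha_{mn}e_{mn}$ for scalars $\alpha_{mn}$ (with $\alpha_{nn}=0$). Testing $2$-locality on carefully chosen pairs such as $(e_{mn},e_{np})$ and $(e_{mn}+e_{np},e_{mp})$, together with the explicit shape of $[a,e_{mn}]$ for a general $a=(a_{ij})$, leads to the cocycle identity $\alpha_{mn}+\alpha_{np}=\alpha_{mp}$ and a uniform bound on the $\alpha_{mn}$. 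These conclusions combine to exhibit a bounded diagonal operator $d\in\mathcal D$ with $T(e_{mn})=[d,e_{mn}]$ for every matrix unit; subtracting $\mathrm{ad}_d$ permits the further assumption that $T$ vanishes on each matrix unit.

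The concluding step is to propagate this vanishing to all of $B(H)$: for arbitrary $x$ and any matrix unit $e_{mn}$, applying $2$-locality to $(x,e_{mn})$ yields $a_{x,e_{mn}}\in\{e_{mn}\}'$ with $T(x)=[a_{x,e_{mn}},x]$; varying $(m,n)$ and comparing the resulting implementers (all lying in the coset $c_x+\{x\}'$, where $c_x$ is the diagonal element provided above) pins down $T(x)=0$, so the original $T$ equals $\mathrm{ad}_{a_0+d}$, a derivation. The main obstacle I anticipate is precisely the cocycle step: the implementers $c_x$ are defined only modulo $\{x\}'\cap\mathcal D$, so coordinating their diagonal entries into a single bounded $d$ requires a delicate multi-parameter $2$-local calculation on prescribed sums of matrix units, together with a uniform norm control on the scalars $\alpha_{mn}$---this is exactly the technical core of \v{S}emrl's original proof in \cite{Semrl97}.
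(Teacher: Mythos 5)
Your outline reconstructs the skeleton of \v{S}emrl's original matrix-unit argument, but the two load-bearing steps are asserted rather than proved, and the second fails as stated. \emph{First}, the uniform bound on the scalars $\alpha_{mn}$: the cocycle identity only yields $\alpha_{mn}=\alpha_{1n}-\alpha_{1m}$, so the candidate implementer $d=\mathrm{diag}(-\alpha_{1n})$ is a bounded operator only if $\sup_n|\alpha_{1n}|<\infty$. This cannot be extracted from $2$-locality on finitely many matrix units (each $e_{1n}$ only gives $\|T(e_{1n})\|=|\alpha_{1n}|$, and $T$ is not assumed bounded); one has to test $T$ against infinite combinations such as $\sum_n t_n e_{1n}$ for \emph{every} $(t_n)\in\ell^2$, deduce that $(t_n\alpha_{1n})\in\ell^2$ for each such $t$, and invoke the closed graph theorem. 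You yourself flag this as the ``technical core'' and defer it to \cite{Semrl97}; as written it is a gap, not a proof. (The derivation of the cocycle identity is also incomplete: the pair $(e_{mn},e_{np})$ alone does not relate $\alpha_{mn}+\alpha_{np}$ to $\alpha_{mp}$; you first need the pairs $(e_{mn}+e_{np},e_{mn})$ and $(e_{mn}+e_{np},e_{np})$ to identify $T(e_{mn}+e_{np})$ before the pair $(e_{mn}+e_{np},e_{mp})$ can be used.) \emph{Second}, the concluding propagation step does not work for general $x$: from the pair $(x,e_{mn})$ you get an implementer lying in $\{e_{mn}\}'$ and in the coset $c_x+\{x\}'$, and comparing these over all $(m,n)$ forces $c_x$ to be a scalar only when $\{x\}'=\mathbb{C}\mathbf{1}$. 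For $x$ with nontrivial commutant (a projection, say) the coset is too large and nothing is pinned down, and since $T$ is not yet known to be continuous you cannot pass to a dense set of irreducible elements. \v{S}emrl closes this by directly computing every entry $\langle T(x)e_n,e_m\rangle$ against test elements built from $u=\sum_n 2^{-n}e_{nn}$ and $v=\sum_{n\ge 2}e_{n-1,n}$, which is exactly the part your sketch omits.

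Note also that this survey does not prove the statement along these lines at all: it records \v{S}emrl's two-generator idea only as a remark and obtains the result as a special case of Theorem \ref{semi-finite}, whose proof uses the canonical trace on the semi-finite algebra $B(H)$ to show that $b=\Delta(u+v)-\Delta(u)-\Delta(v)$ satisfies $\tau(bw)=0$ for every trace-class $w$, hence $b=0$; additivity plus the identity $\Delta(x^2)=\Delta(x)x+x\Delta(x)$ and Bre\v{s}ar's theorem on Jordan derivations then finish the argument. That route avoids matrix units, cocycle identities and all boundedness issues, and works for arbitrary (non-separable) $H$ and indeed for all semi-finite von Neumann algebras; if you want a complete proof, that is the one to internalize.
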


In \cite[Remark]{Semrl97},  \v{S}emrl states that the conclusion of the above theorem also holds when $H$ is finite-dimensional. In such a case, however, he was only able to get a long proof involving tedious computations, and so, he decided not include these results. In \cite{KimKim04} Kim and Kim gave a short proof of the fact that every 2-local derivation on a finite-dimensional complex matrix algebra is a derivation.

\begin{theorem}\cite{KimKim04} Let $M_n$ be the $n \times  n$-matrix algebra over $\mathbb{C},$
and let $T : M_n \rightarrow  M_n$ be a 2-local derivation. Then $T$ is a derivation.
\end{theorem}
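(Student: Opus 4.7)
The plan is to exploit two structural facts about $M_n$: every derivation of $M_n$ is inner, so the 2-local condition reads as follows: for each pair $x, y$ there exists $b_{x,y} \in M_n$ with $T(x) = [x, b_{x,y}]$ and $T(y) = [y, b_{x,y}]$; and subtracting an inner derivation from a 2-local derivation again yields a 2-local derivation (the derivation $D_{x,y} - \mathrm{ad}_a$ witnesses the same pair for $T - \mathrm{ad}_a$). I would also begin by noting the automatic homogeneity of $T$: applying the pair $(\lambda x, x)$ gives $T(\lambda x) = D_{\lambda x, x}(\lambda x) = \lambda\, D_{\lambda x, x}(x) = \lambda\, T(x)$.

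\textbf{Step 1 (Normalization).} I would fix $h = \mathrm{diag}(1, 2, \ldots, n)$, pick $a$ with $T(h) = [h, a]$, and pass to $\widetilde T := T - \mathrm{ad}_a$ so as to assume $\widetilde T(h) = 0$. For any $y$, the pair $(h, y)$ then supplies some $b$ with $[h, b] = 0$; since $h$ has simple spectrum, $b$ must be diagonal, and hence $\widetilde T(y) = [y, b]$. In particular $\widetilde T(e_{ii}) = 0$, $\widetilde T(e_{ij}) = (b_{jj} - b_{ii})\, e_{ij} =: \lambda_{ij}\, e_{ij}$ for $i \neq j$, and $\widetilde T(x)_{ii} = [x, b]_{ii} = 0$ for every $x$ since $b$ is diagonal.

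\textbf{Step 2 (Closed formula for $\widetilde T$).} For arbitrary $x \in M_n$ and $i \neq j$, the 2-local property applied to the pair $(e_{ij}, x)$ furnishes $c$ with $[e_{ij}, c] = \lambda_{ij}\, e_{ij}$, which forces $c_{kj} = 0$ for $k \neq j$, $c_{ik} = 0$ for $k \neq i$, and $c_{jj} - c_{ii} = \lambda_{ij}$. Under these constraints a direct expansion collapses to $([x, c])_{ij} = (c_{jj} - c_{ii})\, x_{ij} = \lambda_{ij}\, x_{ij}$, so $\widetilde T(x)_{ij} = \lambda_{ij}\, x_{ij}$. Combined with the vanishing diagonal from Step 1, this produces
$$\widetilde T(x) = \sum_{i \neq j} \lambda_{ij}\, x_{ij}\, e_{ij} \qquad (x \in M_n),$$
which in particular shows that $\widetilde T$ is linear.

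\textbf{Step 3 (Cocycle and conclusion).} To write $\widetilde T = \mathrm{ad}_D$ for a diagonal $D$, the scalars must satisfy $\lambda_{ij} + \lambda_{jk} = \lambda_{ik}$ for distinct $i, j, k$. This is where I expect the main obstacle: one has to extract a relation between three $\lambda$'s from a single 2-local witness. I would apply the 2-local property to the pair $(e_{ij} + e_{jk},\, e_{ik})$ and invoke the linearity of $\widetilde T$ from Step 2; the resulting $c'$ must satisfy simultaneously $[e_{ij} + e_{jk}, c'] = \lambda_{ij}\, e_{ij} + \lambda_{jk}\, e_{jk}$ and $[e_{ik}, c'] = \lambda_{ik}\, e_{ik}$. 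Comparing the $(i,j)$, $(j,k)$ and $(i,k)$ entries yields $c'_{jj} - c'_{ii} = \lambda_{ij}$, $c'_{kk} - c'_{jj} = \lambda_{jk}$ and $c'_{kk} - c'_{ii} = \lambda_{ik}$, whose consistency is exactly the cocycle relation. Setting $d_1 = 0$, $d_j = \lambda_{1j}$ and $D = \mathrm{diag}(d_1, \ldots, d_n)$ then gives $\widetilde T = \mathrm{ad}_D$, whence $T = \mathrm{ad}_{a + D}$ is a derivation.
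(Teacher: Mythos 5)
The paper does not actually prove this theorem: it only cites Kim--Kim and sketches their method, which rests on choosing the two generators $u=\mathrm{diag}(2^{-1},\dots,2^{-n})$ and $v=\sum_k e_{k,k+1}$ of $M_n$ and showing that the 2-local derivation must agree everywhere with the single inner derivation witnessing the pair $(u,v)$. Your route through matrix units and a cocycle identity is therefore a different strategy; your homogeneity remark, Step 1, and the cocycle computation in Step 3 are all correct. The problem is Step 2, on which Step 3's use of linearity also depends.

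The constraints you extract from $[e_{ij},c]=\lambda_{ij}e_{ij}$ are not the right ones. Since $[e_{ij},c]_{kl}=\delta_{ki}c_{jl}-c_{ki}\delta_{lj}$, the condition forces the off-diagonal entries of \emph{row $j$} and of \emph{column $i$} of $c$ to vanish (together with $c_{jj}-c_{ii}=\lambda_{ij}$); it says nothing about row $i$ or column $j$ of $c$, which are exactly the entries appearing in $[x,c]_{ij}=\sum_k x_{ik}c_{kj}-\sum_k c_{ik}x_{kj}$. In particular the entry $c_{ij}$ is invisible to $\mathrm{ad}(e_{ij})$ (indeed $[e_{ij},e_{ij}]=0$), so a perfectly admissible witness for the pair $(e_{ij},x)$ is $c=b+\mu e_{ij}$ with $b$ diagonal, $b_{jj}-b_{ii}=\lambda_{ij}$ and $\mu\neq0$, and then
$$[x,c]_{ij}=\lambda_{ij}x_{ij}+\mu\,(x_{ii}-x_{jj}),$$
with further stray terms when $n\geq3$. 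Thus the single pair $(e_{ij},x)$ does not pin down $\widetilde T(x)_{ij}$, and the closed formula of Step 2 (hence the additivity needed in Step 3) is not established. The formula is of course true once the theorem is known, but proving it is precisely the crux; this is why Kim and Kim bring in the nilpotent shift $v$, whose commutant is small enough to rigidify the witnesses. To repair your argument you would need to combine $(e_{ij},x)$ with pairs involving such an element --- the pair $(h,x)$ from Step 1 only shows that $\widetilde T(x)_{ij}$ is \emph{some} multiple of $x_{ij}$, with a multiplier a priori depending on $x$.
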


The methods of the proofs of the aforementioned results from~\cite{KimKim04} and \cite{Semrl97} are essentially based on the fact that, for a separable or finite dimensional Hilbert space $H,$ the algebra $B(H)$ can be generated by two elements. For example \v{S}emrl \cite{Semrl97} makes use of the following two operators:
$$
u=\sum\limits_{n=1}^\infty \frac{1}{2^n}e_{n,n},\,
v=\sum\limits_{n=2}^\infty e_{n-1,n},
$$
where $\{e_{m,n}\}_{m,n=1}^\infty$ is a set of matrix units in
$B(H).$ Kim and Kim consider, in \cite{KimKim04}, the
following two matrices:
\begin{center}
\[u=\left( \begin{array}{cccc}
\frac{\textstyle 1}{\textstyle 2}& 0 &\ldots & 0\\
0 & \frac{\textstyle 1}{\textstyle 2^2} &\ldots & 0\\
\vdots& \vdots &\ddots & \vdots\\
0 & 0 &\ldots & \frac{\textstyle 1}{\textstyle 2^n}
\end{array} \right),
\,\, v=\left( \begin{array}{ccccc}
0 & 1 & 0 & \ldots & 0 \\
0 & 0 &  1 & \ldots & 0\\
\vdots& \vdots& \vdots &\vdots & \vdots\\
0 & 0 &\ldots & 0 & 1\\
0 & 0 &\ldots & 0 &0
\end{array} \right).\]
\end{center}
Further,  for  $u,\, v$  as above one can choose a derivation $D_{u, v}:B(H)\rightarrow B(H)$
such that $\Delta(u)=D_{u, v}(u)$  and $\Delta(v)=D_{u, v}(v)$ and show that $\Delta(x)=D_{u,v}(x)$ for all $x\in B(H).$ This means that any 2-local derivation on $B(H)$ for a separable or finite dimensional Hilbert space $H,$ is completely determined by its values on the elements $u$ and $v.$\smallskip

Some years later, Zhang and Li \cite{Zhang} extend the previously mentioned result of Kim and Kim for arbitrary symmetric digraph matrix algebras and constructed  an example of a $2$-local derivation which is not a derivation on the algebra of all upper triangular complex $2\times 2$-matrices.

\begin{example} {\rm(}see \cite{Zhang}{\rm)}.
Let us consider the algebra of all upper-triangular complex $2\times 2$-matrices
$$ {A}=\left\{x=\left(%
\begin{array}{cc}
  \lambda_{11} & \lambda_{12} \\
  0 & \lambda_{22} \\
\end{array}%
\right): \lambda_{ij}\in \mathbb{C}\right\}.
$$ Define an operator $\Delta$ on ${A}$ by
$$
\Delta(x) = \left\{
\begin{array}{ll}
0, & \textrm{if } \lambda_{11}\neq \lambda_{22}\textrm{,}\\
\ & \ \\
 \left(%
\begin{array}{cc}
  0 & 2\lambda_{12} \\
  0 & 0 \\
\end{array}%
\right), & \textrm{if } \lambda_{11}= \lambda_{22}.
\end{array} \right.
$$
Then $\Delta$ is a $2$-local derivation, which is not a
derivation.
\end{example}

As it was mentioned above the proofs of papers \cite{KimKim04} and \cite{Semrl97} are essentially based on the fact that, in the case of $H$ being separable or finite dimensional, the algebra $B(H)$ is generated by two elements. Since the algebra $B(H)$ is not generated by two elements (like in the case of a non-separable Hilbert space), one cannot directly apply the methods of the above papers. In \cite{AyuKuday2012} the first and second authors of this note suggested a new technique, which allows to generalize the above mentioned results of \cite{KimKim04} and \cite{Semrl97} to arbitrary Hilbert spaces.  Namely, the new method can be applied to prove that every $2$-local derivation on $B(H)$, with $H$ an arbitrary Hilbert space (no separability is assumed), is a derivation. A similar result for $2$-local derivations on finite von Neumann algebras was obtained by Nurjanov, Alauatdinov and the first two authors of this note in \cite{AKNA}. In \cite{AA2} the authors extend all the above results and give a short proof of this result for arbitrary semi-finite von Neumann algebras. Finally a solution of this problem for general von Neumann algebras was recently obtained in  \cite{AyuKuday2014}. We shall revisit here the proof, given in  \cite{AyuKuday2014}, of the fact that every 2-local derivation on an arbitrary von Neumann algebra  is a derivation.\smallskip

Let $M$ be a von Neumann algebra on a complex Hilbert space $H.$ We recall that any derivation $D$ on $M$ is an inner derivation, that is, there exists an element $a\in M$ such that $$D(x)=[a, x]= ax-xa,$$ for all $x\in M$ (cf.  \cite{Sak}). Therefore, for a von Neumann algebra $M$ the
above definition of $2$-local derivation is equivalent to the following one: A map $\Delta : M\rightarrow M$ is a $2$-local derivation, if for any two elements $x$, $y\in M$ there exists an element $a_{x,y}\in M$ such that
$$\Delta(x)=[a_{x,y}, x],\hbox{ and }\, \Delta (y)=[a_{x,y}, y].$$
If  $\Delta :M\rightarrow M$ is  a $2$-local derivation, then it easily follows from the definition that $\Delta$ is homogenous. At
the same time, \begin{equation}\label{joor}  \Delta(x^2)=\Delta(x)x+x\Delta(x),
\end{equation} for each $x\in M.$\smallskip

In \cite{Bre}, Bre\v{s}ar proves that any Jordan derivation (i.e. a linear map satisfying the above equation \eqref{joor}) on a semi-prime algebra is a derivation. Since every von Neumann algebra is semi-prime, to prove that a $2$-local derivation $\Delta :M\rightarrow M$ is a derivation, it suffices to prove that $\Delta:M\rightarrow M$ is additive.\smallskip

The following theorem is the main result of the section.

\begin{theorem}\label{main}
Let $M$ be an arbitrary von Neumann algebra. Then any $2$-local
derivation $\Delta: M\rightarrow M$ is a derivation.
\end{theorem}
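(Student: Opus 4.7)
The strategy splits into two tasks. First, because $\Delta$ is automatically homogeneous and satisfies the Jordan identity \eqref{joor}, Bre\v{s}ar's theorem that every Jordan derivation of a semi-prime algebra is a derivation reduces the whole problem to proving additivity: $\Delta(x+y) = \Delta(x) + \Delta(y)$ for every $x, y \in M$.

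Second, I would handle additivity after reducing to the type components of $M$ via central decomposition. The key observation is that $\Delta$ is compatible with multiplication by central projections: given a central projection $z \in Z(M)$ and $x \in M$, picking $a \in M$ with $\Delta(x) = [a, x]$ and $\Delta(xz) = [a, xz]$ yields, using $[a,z] = 0$, the identity $\Delta(xz) = [a, x] z = \Delta(x) z$. Hence $\Delta$ respects the central decomposition $M = M_{\mathrm{fin}} \oplus M_{\mathrm{p.i.}}$ into its finite and properly infinite summands, and it suffices to treat each separately. On the finite summand I would invoke the already established cases (Kim--Kim \cite{KimKim04} for $M_n$, and \cite{AKNA, AA2} for general finite von Neumann algebras), obtaining additivity on $M_{\mathrm{fin}}$.

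The genuinely new content lies in the properly infinite case. Here I would use the structural fact that any properly infinite $M$ is isomorphic to $M \bar{\otimes} B(\ell^2(\mathbb{N}))$, producing a system of matrix units $\{e_{ij}\}_{i,j \in \mathbb{N}} \subset M$ with $\sum_i e_{ii} = 1$ in the strong-operator topology. I would then adapt the \v{S}emrl--Ayupov--Kudaybergenov technique of \cite{Semrl97, AyuKuday2012}: fix distinguished elements $u = \sum_n 2^{-n} e_{nn}$ and $v = \sum_n e_{n-1, n}$, choose $a \in M$ implementing $\Delta(u) = [a, u]$ and $\Delta(v) = [a, v]$, and prove that $\Delta(x) = [a, x]$ for every $x \in M$. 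For the last step, for each $x$ I would apply the $2$-local property to the pairs $(u, x)$ and $(v, x)$: this produces $b, c \in M$ with $\Delta(x) = [b,x] = [c,x]$, $[b - a, u] = 0$ and $[c - a, v] = 0$; the specific choice of $u$ (distinct eigenvalues on the diagonal) and $v$ (shift) forces $b - a$ to be diagonal in $\{e_{ii}\}$ and $c - a$ to commute with the shift, so that their combined constraints reduce any further freedom to $Z(M)$, from which $[a, x]$ is uniquely determined and equals $\Delta(x)$.

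The principal obstacle is that, in the properly infinite case with a non-trivial center, the relative commutant of $\{u, v\}$ reduces only to $Z(M)$ rather than to scalars, so the central ambiguity has to be absorbed carefully at every step. A secondary technical difficulty is that no continuity of $\Delta$ is assumed a priori, so the strong-operator identity $\sum_i e_{ii} = 1$ cannot be invoked directly on $\Delta$; instead one must realize each finite truncation through the $2$-local property and transfer the equality $\Delta(x) = [a,x]$ first on compressions $p_n x p_n$ with $p_n = \sum_{i \leq n} e_{ii}$ and then extend to $x$ by a fresh application of the $2$-local hypothesis to the pair $(p_n x p_n, x)$, avoiding any appeal to a topology on $\Delta$.
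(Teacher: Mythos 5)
Your reduction to additivity via Bre\v{s}ar's theorem and your observation that $\Delta$ commutes with multiplication by central projections are both correct and agree with the paper. The genuine gap is in the properly infinite case. If $M$ is properly infinite you do get a countable system of matrix units $\{e_{ij}\}$ with $\sum_i e_{ii}=\mathbf{1}$, coming from an isomorphism $M\cong N\,\bar{\otimes}\,B(\ell^2(\mathbb{N}))$; but then the von Neumann algebra generated by your $u=\sum_n 2^{-n}e_{nn}$ and $v=\sum_n e_{n-1,n}$ is only $\mathbf{1}_N\otimes B(\ell^2)$, whose relative commutant in $M$ is $N\otimes \mathbf{1}$, not $Z(M)$. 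Concretely: $[b-a,u]=0$ forces $b-a$ to be block diagonal with entries in $N$, and $[c-a,v]=0$ forces $c-a$ to be a (generalized) Toeplitz element with entries in $N$; even ignoring that $b$ and $c$ are \emph{different} elements (so their constraints cannot simply be intersected), an arbitrary element of $N\otimes\mathbf{1}$ satisfies both constraints simultaneously, and $[b-a,x]$ need not vanish for $x\in M$. This is exactly the obstruction the paper flags: the \v{S}emrl--Kim--Kim method hinges on $B(H)$ being generated by two elements, which fails already for $B(H)$ with $H$ non-separable and, a fortiori, for a general properly infinite $M$ with nontrivial relative commutant. Your plan to ``absorb the central ambiguity'' does not address this, because the ambiguity lives in $N$, not in $Z(M)$.

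The paper takes an entirely different route where your argument breaks down. It splits $M$ into a semi-finite and a purely infinite (type $III$) summand. On the semi-finite part it uses a faithful normal semi-finite trace $\tau$ to show $\tau\bigl((\Delta(u+v)-\Delta(u)-\Delta(v))w\bigr)=0$ for all $w$ in the trace ideal, whence additivity. On the type $III$ part it first proves that $\Delta$ is implemented by a single element on each singly generated abelian subalgebra $\mathcal{W}^*(g)$, deduces that $\Delta|_{\mathcal{P}(M)}$ is completely additive, and then invokes the Dorofeev--Sherstnev boundedness theorem together with the Bunce--Wright--Mackey--Gleason extension theorem to conclude that $f\circ\Delta$ is linear on $M_{sa}$ for every vector functional $f$, giving additivity on $M_{sa}$; Jordan-derivation and symmetry arguments then finish. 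If you want to salvage your outline, you would have to replace the two-generator step by some argument of this measure-theoretic type (or restrict to the case where $M$ is generated by two elements, which is far from the general statement).
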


The proof will follow from a series of partial results.\smallskip

In the hypothesis of the above Theorem \ref{main}. Let $e$ be a central projection in $M$. Since $D(e)=0$\label{ref comments on central projections} for every derivation $D$ on $M$, it is clear that $\Delta(e)=0$ for any $2$-local derivation $\Delta$ on $M$. Take $x\in M$ and let $D$ be a derivation on $M$ such that $\Delta(ex)=D(ex), \Delta(x)=D(x)$.
Then we have $\Delta(ex)=D(ex)=D(e)x+eD(x)=e\Delta(x)$. This means that, for each central projection $e\in M$, every $2$-local derivation $\Delta$ maps $eM$ into $eM$. Thus, if necessary, we may consider the restriction of $\Delta$ onto $eM$. Since an arbitrary von Neumann algebra can be decomposed along a central projection into the direct sum of a semi-finite and a purely infinite (type $III$) von Neumann algebra, we may consider these cases independently.

\subsection{Semi-finite von Neumann algebras}

In this subsection we present a short proof of a result in \cite{AA2} on the description of 2-local derivations for arbitrary semi-finite von
Neumann algebras.\smallskip

Let $M$ be a semi-finite von Neumann algebra and let $\tau$ be a faithful normal semi-finite trace on $M.$ Denote by $M_\tau$ the definition ideal of $\tau,$ i.e. the set of all elements $x \in M$ such that $\tau(|x|) < +\infty.$ Then $M_\tau$ is an $^*$-algebra and, moreover $M_\tau$ is a two sided ideal of $M.$\smallskip

It is clear that any derivation $D$ on $M$ maps the ideal $M_\tau$ into itself. Indeed, since $D$ is inner, i.e. $D(x) = [a, x]$
$(x\in M)$ for an appropriate $a \in  M,$ we have that $D(x) = ax - xa \in M_\tau$ for all $x \in  M_\tau.$ Therefore any $2$-local derivation on $M$ also maps $M_\tau$ into itself.

\begin{theorem}\label{semi-finite}\cite[Theorem 2.1]{AA2}.
Let $M$ be a semi-finite von Neumann algebra, and let
$\Delta:M\rightarrow M$ be a $2$-local derivation. Then $\Delta$
is a derivation.
\end{theorem}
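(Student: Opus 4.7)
The plan proceeds in three stages. First, observe that $2$-locality together with homogeneity immediately yields the Jordan identity $\Delta(x^2)=\Delta(x)x+x\Delta(x)$ displayed in \eqref{joor}, so by Bre\v{s}ar's theorem for Jordan derivations on semi-prime algebras, it suffices to prove that $\Delta$ is \emph{additive}. The entire argument therefore reduces to establishing additivity of $\Delta$.

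Second, I would normalize $\Delta$ against a maximal abelian subalgebra (MASA) generator. Fix a self-adjoint element $h\in M$ generating a maximal abelian $^*$-subalgebra $\mathcal{A}=\{h\}''$ of $M$; such an element exists in any von Neumann algebra, and by semi-finiteness one may even arrange $h\in M_\tau$ by spectral truncation. Since every derivation on $M$ is inner, $2$-locality yields $a_0\in M$ with $\Delta(h)=[a_0,h]$. Replace $\Delta$ by $\widetilde\Delta:=\Delta-\mathrm{ad}(a_0)$; it is again a $2$-local derivation and satisfies $\widetilde\Delta(h)=0$. For each $y\in M$, $2$-locality produces $b_y\in M$ with $[b_y,h]=\widetilde\Delta(h)=0$ and $\widetilde\Delta(y)=[b_y,y]$. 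The first identity forces $b_y\in\{h\}'\cap M=\mathcal{A}$, giving the pointwise parametrization $\widetilde\Delta(y)=[b_y,y]$ with $b_y\in\mathcal{A}$.

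Third, I would upgrade this pointwise parametrization to additivity. For any $y_1,y_2\in M$, expand $\widetilde\Delta((y_1+y_2)^2)$ in two ways: once via the Jordan identity and once via bilinear expansion together with the Jordan identities for each $y_i^2$. Comparing and using $b_{y_i},b_{y_1+y_2}\in\mathcal{A}$ produces an equation of the form $[b_{y_1+y_2}-b_{y_1}-b_{y_2},\,y_1+y_2]=0$ (up to symmetrized commutator terms one controls using the MASA structure). Since this must hold across enough elements $y_1,y_2\in M$ (here the trace ideal $M_\tau$ supplies an abundant test set, because $\Delta(M_\tau)\subseteq M_\tau$), the difference $b_{y_1+y_2}-b_{y_1}-b_{y_2}$ is forced into $Z(M)\cap\mathcal{A}$, which is inert for inner derivations. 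Hence $\widetilde\Delta(y_1+y_2)=\widetilde\Delta(y_1)+\widetilde\Delta(y_2)$, and consequently $\Delta=\widetilde\Delta+\mathrm{ad}(a_0)$ is additive. Bre\v{s}ar's theorem then promotes $\Delta$ to a derivation.

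The main obstacle I expect is precisely this third step, namely converting the \emph{pointwise} assignment $y\mapsto b_y$ into a coherent additive one, given that $2$-locality only guarantees the existence of $b_y$ for each $y$ separately and without any a priori compatibility. This is where semi-finiteness is essential: the trace $\tau$ gives a canonical normalization of $b_y$ modulo the centre, the invariance $\Delta(M_\tau)\subseteq M_\tau$ lets one first establish additivity on the weak$^*$-dense ideal $M_\tau$ where spectral approximations are abundant, and only afterwards extend to all of $M$ by exploiting that every derivation on $M$ is inner with a bounded implementing element.
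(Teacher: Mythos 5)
Your opening reduction is exactly the paper's: $2$-locality gives homogeneity and the Jordan identity \eqref{joor}, so by Bre\v{s}ar's theorem it suffices to prove additivity. But the route you take to additivity has a genuine gap, concentrated in your third stage. To ``expand $\widetilde\Delta((y_1+y_2)^2)$ bilinearly'' you must write $\widetilde\Delta(y_1^2+y_1y_2+y_2y_1+y_2^2)$ as a sum of four terms, which is precisely the additivity you are trying to establish; the identity \eqref{joor} only controls $\widetilde\Delta$ on a single square, and $2$-locality only ties $\widetilde\Delta$ to a common derivation at \emph{two} points at a time. Even if some comparison identity of the form $[b_{y_1+y_2}-b_{y_1}-b_{y_2},\,y_1+y_2]=0$ were obtained, it would only say that this particular difference commutes with the particular element $y_1+y_2$; since each $b_y$ changes as you vary the test elements, nothing ``forces'' the difference into the centre. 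The trace, which you invoke only as a ``canonical normalization of $b_y$ modulo the centre,'' never actually enters a computation, so semi-finiteness is not genuinely used. Your second stage is also shaky: a self-adjoint $h$ with $\{h\}''$ maximal abelian in $M$ is not available in general (and the whole point of this theorem, as the survey stresses, is to escape the separable, singly-generated setting of \v{S}emrl and Kim--Kim), although this is the lesser of the two problems.

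The paper's proof uses the trace in an essential and quite different way. For $x\in M$ and $y\in M_\tau$ pick $a_{x,y}$ implementing $\Delta$ at both points; then $\Delta(x)y+x\Delta(y)=[a_{x,y},xy]$, and since $xy\in M_\tau$ the tracial property gives $\tau([a_{x,y},xy])=0$, i.e.\ the ``integration by parts'' identity $\tau(\Delta(x)y)=-\tau(x\Delta(y))$. The right-hand side is additive in $x$ because $\Delta(y)$ is a fixed element, so $\tau\bigl((\Delta(u+v)-\Delta(u)-\Delta(v))w\bigr)=0$ for all $w\in M_\tau$; testing against $w=e_\alpha b^{\ast}$ for projections $e_\alpha\uparrow\mathbf{1}$ in $M_\tau$ and using normality and faithfulness of $\tau$ gives $b=\Delta(u+v)-\Delta(u)-\Delta(v)=0$. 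If you want to repair your argument, you need to replace your stages two and three by a duality mechanism of this kind; the pointwise parametrization $y\mapsto b_y$ by itself carries no compatibility across different $y$'s and cannot be upgraded to additivity.
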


\begin{proof} Let $\Delta : M \rightarrow  M$ be a $2$-local derivation and let $\tau$  be a faithful
normal semi-finite trace on $M.$ For each $x \in M$ and $y \in
M_\tau$ there exists an element  $a_{x,y}$ in $M$ such that
$$
\Delta(x) = [a_{x,y},x],\, \Delta(y) = [a_{x,y}, y].
$$
Then
$$
\Delta(x)y + x\Delta(y)= [a_{x,y}, x]y + x [a_{x,y}, y]=[a_{x,y},
xy],
$$
i.e.
$$
[a_{x,y}, xy] = \Delta(x)y + x\Delta(y).
$$
Since $M_\tau$ is an ideal and $y \in M_\tau,$  the elements $a_{x,y}xy, xy, xya_{x,y}$ and $\Delta(y)$ also belong to $M_\tau$, and hence, we have
$$
\tau(a_{x,y}xy) = \tau(a_{x,y}(xy)) = \tau((xy)a_{x,y}) =
\tau(xya_{x,y}).
$$
Therefore,
$$
0 = \tau(a_{x,y}xy - xya_{x,y}) = \tau([a_{x,y}, xy]) =
\tau(\Delta(x)y + x\Delta(y)),
$$
i.e.
$$
\tau(\Delta(x)y) = - \tau(x\Delta(y)).
$$
For arbitrary $u, v \in M$ and $w \in M_\tau$  set $x = u + v, y =
w.$ Then $\Delta(w) \in M_\tau$ and
\begin{eqnarray*}
\tau(\Delta(u + v)w)   & = &  -\tau((u +
v)\Delta(w))=-\tau(u\Delta(w))- \tau(v\Delta(w))= \\
& = & \tau(\Delta(u)w) + \tau(\Delta(v)w) =\tau((\Delta(u) +
\Delta(v))w),
\end{eqnarray*}
and so
$$
\tau((\Delta(u + v)-\Delta(u) - \Delta(v))w) = 0
$$
for all $u, v \in M$ and $w \in M_\tau.$ Denote $b = \Delta(u + v)
- \Delta(u) -\Delta(v).$ Then
\begin{equation}\label{trr}
\tau(bw)=0
\end{equation}
for every $w\in M_\tau.$ Now take a monotone increasing net $\{e_\alpha\}$ of projections in $M_\tau$ such that $e_\alpha\uparrow \mathbf{1}$ in $M.$ Since  $\{e_\alpha b^\ast\}\subset M_\tau,$ it follows that from \eqref{trr} that $\tau(be_\alpha b^\ast)=0,$ for all $\alpha.$ At the same time $be_\alpha b^\ast\uparrow bb^\ast$ in $M.$ Since the trace $\tau$ is normal we have $ \tau(be_\alpha b^\ast)\uparrow \tau(bb^\ast), $ i.e.
$\tau(bb^\ast)=0.$ The trace $\tau$ is faithful, thus, this implies that $bb^\ast=0,$ i.e. $b = 0.$ Therefore $\Delta(u + v) =
\Delta(u) + \Delta(v)$ for all $u, v \in M,$ i.e. $\Delta$ is an additive map on $M.$ As it was mentioned above, this proves that $\Delta$ is a derivation on $M.$
\end{proof}

\subsection{Purely infinite von Neumann algebras} In this subsection we present\hyphenation{pre-sent} the proof of Theorem \ref{main} for the case in which $M$  is a purely infinite von Neumann algebra. This result will complete the proof for arbitrary von Neumann algebras. Our proof is essentially based on the Bunce-Wright-Mackey-Gleason theorem for signed measures on projections of a von Neumann algebra established in \cite{BuWri94} (see \cite{AyuKuday2014}).\smallskip

The proof will require several lemmata. For a self-adjoint subset $S\subseteq M$ denote by $S'$ is the
commutant of $S,$ i.e. $$ S'=\{y\in B(H): xy=yx, \forall\, x\in S\}. $$

The first step of our proof is to show that any 2-local derivation on an arbitrary von Neumann algebra is additive on an abelian von Neumann
subalgebra generated by a self-adjoint element.

\begin{lemma}\label{masas}\cite[Lemma 2.2]{AyuKuday2014} Let $g\in M$ be a self-adjoint element and let $\mathcal{W}^\ast(g)=\{g\}''$ be the abelian von Neumann subalgebra
generated by the element $g.$ Then there exists an element $a\in M$ such that
\[ \Delta(x)=ax-xa,\] for all $x\in \mathcal{W}^\ast(g).$  In particular, $\Delta$ is
additive on $\mathcal{W}^\ast(g).$
\end{lemma}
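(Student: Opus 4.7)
The plan is to produce a single element $a \in M$ that simultaneously implements $\Delta$ on every element of $\mathcal{W}^{\ast}(g)$, and then observe that additivity on $\mathcal{W}^{\ast}(g)$ is immediate. Since every derivation on the von Neumann algebra $M$ is inner (Sakai's theorem), the definition of a $2$-local derivation can be rephrased as: for each pair $x,y \in M$ there is some $a_{x,y} \in M$ with $\Delta(x) = [a_{x,y},x]$ and $\Delta(y) = [a_{x,y},y]$.

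First I would fix, once and for all, an element $a := a_{g,g} \in M$ obtained by applying $2$-locality to the pair $(g,g)$, so that $\Delta(g) = [a,g]$. The claim is that this same $a$ satisfies $\Delta(x) = [a,x]$ for every $x \in \mathcal{W}^{\ast}(g)$. To verify it, pick an arbitrary $x \in \mathcal{W}^{\ast}(g) = \{g\}''$ and apply $2$-locality to the pair $(g,x)$: there exists $a_{g,x} \in M$ such that
\[
[a_{g,x},g] = \Delta(g) = [a,g] \quad \text{and} \quad [a_{g,x},x] = \Delta(x).
\]
The first equality says $a_{g,x}-a$ commutes with $g$, i.e.\ $a_{g,x}-a \in \{g\}'$. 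Since $x \in \{g\}'' = (\{g\}')'$, the element $x$ commutes with every element of $\{g\}'$, and in particular $[a_{g,x}-a,x] = 0$. Therefore
\[
\Delta(x) = [a_{g,x},x] = [a,x] + [a_{g,x}-a,x] = [a,x].
\]

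This gives the first conclusion with the choice $a = a_{g,g}$. The second conclusion is then automatic: the inner derivation $x \mapsto [a,x]$ is linear on $M$, so in particular it is additive on the subalgebra $\mathcal{W}^{\ast}(g)$, and hence $\Delta$ agrees with an additive map on $\mathcal{W}^{\ast}(g)$.

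There is essentially no obstacle here beyond spotting the right application of $2$-locality; the whole argument rests on the one-line observation that the difference of two elements implementing $\Delta(g)$ as a commutator with $g$ must lie in $\{g\}'$, which by the double-commutant structure is annihilated by every $x \in \{g\}''$. This is what allows us to replace the a priori $x$-dependent element $a_{g,x}$ by the fixed element $a$.
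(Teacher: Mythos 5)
Your proof is correct and is essentially the same as the paper's: both fix an element $a$ implementing $\Delta(g)$ as a commutator, then use $2$-locality on the pair $(g,x)$ to see that the difference of the two implementing elements lies in $\{g\}'$ and hence commutes with every $x\in\{g\}''=\mathcal{W}^{\ast}(g)$. The only cosmetic difference is that the paper phrases the commutation step as $b-a\in\{g\}'=\{g\}'''=\mathcal{W}^{\ast}(g)'$ rather than via $x\in(\{g\}')'$; this is the same observation.
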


\begin{proof}  By the definition there exists an element $a\in M$
(depending on $g$) such that
$$ \Delta(g)=ag-ga. $$
Let us show that $\Delta(x)=[a, x]$ for all $x\in \mathcal{W}^\ast(g).$ Let $x\in \mathcal{W}^\ast(g)$ be an
arbitrary element. By hypothesis, there exists an element $b\in M$ such that
$$ \Delta(g)=[b, g],\,  \Delta(x)=[b, x]. $$
Since
$$[a, g]=\Delta(g)=[b, g], $$
we get
$$ (b-a)g=g(b-a). $$
Thus,
$$ b-a\in \{g\}'=\{g\}'''=\mathcal{W}^\ast(g)',$$
i.e. $b-a$ commutes with any element from $\mathcal{W}^\ast(g).$ Therefore
$$ \Delta(x)=[b, x]=[b-a, x]+[a, x]=[a, x] ,$$ for all $x\in \mathcal{W}^\ast(g),$ and the proof is complete.
\end{proof}

Given a von Neumann algebra $M$, we shall denote by  $\mathcal{P}(M)$ the lattice of all projections in $M.$ Let $X$ be a Banach space. A mapping $\mu: \mathcal{P} (M)\to X$ is said to be \emph{finitely additive} when
$$\mu \left(\sum\limits_{i=1}^n p_i\right) = \sum\limits_{i=1}^{n} \mu (p_i), $$
for every family $p_1,\ldots, p_n$ of mutually orthogonal projections in $M.$ If the set $$ \left\{ \|\mu (p)\|: p \in \mathcal{P} (M) \right\} $$ is bounded, we shall say that $\mu: \mathcal{P} (M)\to X$ is  \emph{bounded}.\smallskip

The Bunce-Wright-Mackey-Gleason theorem (\cite{BuWri92, BuWri94}) states that if $M$ has no summand of type $I_2$, then every bounded finitely additive mapping $\mu: \mathcal{P} (M)\to X$ extends to a bounded linear operator from $M$ to $X$.\smallskip

We recall that every family $(p_j)$ of mutually orthogonal projections in a von Neumann algebra $M$ is summable with respect to the weak$^*$-topology of $M$, and the sum $\displaystyle p= \hbox{w$^*$-}\sum_{j} p_j$ is another projection in $M$ (cf. \cite[Page 30]{Sak}). It is further known that  $(p_j)$ is summable with respect to the strong$^*$-topology of $M$ with the same limit, i.e., $\displaystyle p= \hbox{w$^*$-}\sum_{j} p_j = \hbox{strong$^*$-}\sum_{j} p_j.$ We shall simply write $\displaystyle p= \sum_{j} p_j$.\smallskip

Suppose that in the above paragraphs, $X$ is another von Neumann algebra $W$. Let $\tau$ denote the weak$^*$-, the strong, or the strong$^*$-topology of $W$. Then a mapping $\mu: \mathcal{P} (M)\to W$ is said to be \emph{$\tau$-completely additive} (respectively, \emph{$\tau$-countably
or $\tau$-sequentially additive}) when
\begin{equation}\label{eq completely additive}
\mu\left(\sum\limits_{i\in I} e_i\right) =\hbox{$\tau$-}\sum\limits_{i\in I}\mu(e_i)
\end{equation} for every family (respectively, sequence)  $\{e_i\}_{i\in I}$ of mutually orthogonal
projections in $M.$ In the lexicon of \cite{Shers2008} and \cite{Doro}, a completely
additive mapping $\mu : \mathcal{P} (M)\to \mathbb{C}$ is called a \emph{charge}.
The  Dorofeev--Sherstnev theorem (\cite[Theorem 29.5]{Shers2008} or \cite[Theorem 2]{Doro})  states that any charge on a  von Neumann algebra with no summands of type $I_n$ is bounded, and hence it extends to a bounded functional on $M$.\smallskip

The following result is the main step in the  proof of the automatic additivity of 2-local derivations on von Neumann algebras, and it plays a crucial role in the proof of Theorem~\ref{main}.\smallskip

\begin{lemma}\label{comad}\cite[Lemma 2.3]{AyuKuday2014} Let $M$ be a von Neumann algebra on a complex Hilbert space $H$, and let $\Delta: M\to M$ be a 2-local derivation. Then the restriction $\Delta|_{\mathcal{P}(M)} : \mathcal{P}(M)\to M $ is strong-completely additive, i.e.
\begin{equation}\label{ca}
\Delta\left(\sum\limits_{i\in I}e_i\right) =\hbox{strong-}
\sum\limits_{i\in I}\Delta(e_i),
\end{equation} for every family $\{e_i\}_{i\in I}$ of mutually orthogonal projections in $M$.
\end{lemma}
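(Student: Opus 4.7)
The plan is to build up strong-complete additivity in three stages, each using Lemma~\ref{masas} to realize $\Delta$ as a commutator $[a,\,\cdot\,]$ on a suitable singly generated abelian subalgebra.

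First, for finite additivity I would take mutually orthogonal projections $e_1,\ldots,e_n$ with sum $p$ and consider the self-adjoint element $g=\sum_{j=1}^n j\cdot e_j$; since $g$ has simple spectrum with spectral projections $e_1,\ldots,e_n,\mathbf{1}-p$, Lemma~\ref{masas} yields $a\in M$ with $\Delta(x)=[a,x]$ on $\mathcal{W}^*(g)$, which is additive as a commutator. For a countable orthogonal sequence $\{e_n\}_{n\in\mathbb{N}}$ with strong-sum $e$, I would instead take distinct positive scalars $\lambda_n\to 0$ and form $g=\sum_n\lambda_n e_n$, which converges in norm and has $\{e_n,e\}\subseteq\mathcal{W}^*(g)$ by Borel functional calculus. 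Lemma~\ref{masas} again realizes $\Delta$ as $[a,\,\cdot\,]$ on $\mathcal{W}^*(g)$, and the strong-continuity of left and right multiplication by $a$ propagates the strong-sum $e=\sum_n e_n$ to $\Delta(e)=[a,e]=\mbox{strong-}\sum_n[a,e_n]=\mbox{strong-}\sum_n\Delta(e_n)$.

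For an arbitrary family $\{e_i\}_{i\in I}$ with strong-sum $e$, I would work vectorwise at each $\xi\in H$. The set $J_\xi:=\{i\in I : e_i\xi\neq 0\}$ is countable, and setting $p_\xi:=\sum_{i\in J_\xi}e_i$ and $q_\xi:=e-p_\xi=\sum_{i\notin J_\xi}e_i$ one has $q_\xi\xi=0$. Finite additivity splits $\Delta(e)\xi=\Delta(p_\xi)\xi+\Delta(q_\xi)\xi$, and the countable case applied to $J_\xi$ handles the first summand as $\sum_{i\in J_\xi}\Delta(e_i)\xi$ in norm. For the tail, the Jordan-type identity~\eqref{joor} applied to $q_\xi$ and to each $e_i$ with $i\notin J_\xi$, combined with the vanishing on $\xi$, forces $\Delta(q_\xi)\xi\in q_\xi H$ and $\Delta(e_i)\xi\in e_iH$. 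From the finite-additivity decomposition $\Delta(q_\xi)=\Delta(e_i)+\Delta(q_\xi-e_i)$ together with $(q_\xi-e_i)\xi=0$ and $e_i(q_\xi-e_i)=0$, I would extract the key identity $e_i\Delta(e_i)\xi=e_i\Delta(q_\xi)\xi$ for every $i\notin J_\xi$. Since the vectors $\{e_i\Delta(q_\xi)\xi\}_{i\notin J_\xi}$ lie in mutually orthogonal subspaces and sum in norm to $q_\xi\Delta(q_\xi)\xi=\Delta(q_\xi)\xi$, one recovers $\sum_{i\notin J_\xi}\Delta(e_i)\xi=\Delta(q_\xi)\xi$, hence $\Delta(e)\xi=\sum_{i\in I}\Delta(e_i)\xi$ in norm, which is exactly strong convergence at $\xi$.

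The main difficulty I expect lies precisely in the uncountable step: the identity $e_i\Delta(e_i)\xi=e_i\Delta(q_\xi)\xi$ is the only bridge between finite/countable additivity and the orthogonal $\ell^2$-type decomposition that absorbs the uncountably many indices $i\notin J_\xi$ for which $e_i\xi=0$. Deriving it requires simultaneously deploying~\eqref{joor} and finite additivity and then noting that the automatic $\ell^2$-summability of $\{e_i\Delta(q_\xi)\xi\}$ is what salvages unconditional norm convergence; the naive extension of the countable argument via a single element $g$ fails here because an uncountable orthogonal sum of nonzero scalar multiples of projections cannot define an element of $M$.
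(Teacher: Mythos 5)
Your proof is correct, and while your countable stage coincides with the paper's (which takes $\lambda_n=1/n$ and invokes Lemma~\ref{masas} in exactly the same way), your treatment of an arbitrary family is genuinely different in its key step. The paper also argues vectorwise at a fixed $\xi\in H$, but it partitions $I$ by the set $I_0=\{i:\Delta(e_i)\xi\neq 0\}$: it must first prove $I_0$ is countable (by contradiction, using the countable case to rule out infinitely many indices with $\|\Delta(e_i)\xi\|\geq 1/k$), and it then shows that the tail projection $e=\sum_{i\in I\setminus I_0}e_i$ satisfies $\Delta(e)\xi=0$ by returning to the $2$-local hypothesis itself, choosing for each $i$ an implementing element $a_i$ for the pair $(e,e_i)$, extracting $a_ie_i\xi=e_ia_i\xi$ from $\Delta(e_i)\xi=0$, summing the resulting identities $\Delta(e)e_i\xi=0$, and finishing with $\Delta(e)=\Delta(e)e+e\Delta(e)$. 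You instead partition by the automatically countable set $J_\xi=\{i:e_i\xi\neq 0\}$, which spares you the countability argument entirely, and you handle the tail using only consequences already established (finite additivity, homogeneity plus \eqref{joor} applied to projections annihilating $\xi$, and orthogonality) via the bridge $\Delta(e_i)\xi=e_i\Delta(e_i)\xi=e_i\Delta(q_\xi)\xi$; your tail terms need not vanish individually, but they are pairwise orthogonal and square-summable, which gives the required unconditional convergence. Both routes are sound and comparable in length; yours has the mild advantage of not re-invoking $2$-locality in the uncountable step. One small correction to your closing remark: a strong sum $g=\sum_{i\in I}\lambda_ie_i$ with bounded scalars \emph{does} define an element of $M$ even for uncountable $I$, and if the $\lambda_i$ are distinct and nonzero then each $e_i$ is the spectral projection of $g$ at $\{\lambda_i\}$ and hence lies in $\mathcal{W}^*(g)$, so the single-element argument in fact extends verbatim to arbitrary families; this does not affect the validity of what you actually prove, but the obstruction you cite is not the real one.
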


\begin{proof} We may assume that $M$ is a von Neumann subalgebra of some $B(H)$. Let us recall that the strong topology of the von Neumann algebra $B(H)$ is the topology determined by the semi-norms $a\mapsto \|a\|_{\xi} :=\|a(\xi)\|,$ where $\xi$ runs in $H$ (cf. \cite[\S 1.15]{Sak}). By \cite[Proposition 1.24.5]{Sak}, every normal functional $\varphi\in M_*$, can be extended to a normal functional $\widetilde{\varphi}$ in $B(H)_*$ with $\|\varphi\|= \|\widetilde{\varphi}\|$. Consequently, the strong topology of $M$ coincides with the restriction to $M$ of the strong-topology of $B(H)$ (see also  \cite{Bun01}).\smallskip

We shall consider the following two cases:\smallskip

\emph{Case 1.} We assume that $\{e_n\}_{n\in \mathbb{N}}$ is a sequence of mutually orthogonal projections in $M.$ Put $g=\sum\limits_{n\in I}\frac{\textstyle 1}{\textstyle n}e_n\in M.$ By Lemma~\ref{masas} there exists an element $a\in M$ such that $\Delta(x)=ax-xa$ for all $x\in \mathcal{W}^\ast(g).$ Since $e_n\in
\mathcal{W}^\ast(g),$ for all $n\in \mathbb{N},$ by the joint strong$^*$-continuity of the product of $M$, we obtain that $$\Delta\left(\sum\limits_{n\in \mathbb{N}}e_n\right)=\left[a, \sum\limits_{n\in \mathbb{N}}e_n\right]=\hbox{strong$^*$-}\sum\limits_{n\in \mathbb{N}}[a, e_n]=\hbox{strong$^*$-}\sum\limits_{n\in \mathbb{N}}  \Delta(e_n),$$
i.e.  $\Delta$ is a strong$^*$--sequentially additive map.\smallskip

\emph{Case 2.} Let $\xi\in H$ be a fixed point and let $\{e_i\}_{i\in I}$ be an arbitrary family of
orthogonal projections in $M.$ For each natural $n\in \mathbb{N}$ we set
$$ I_n=\left\{i\in I: \|\Delta(e_i)(\xi)\|_H\geq 1/n\right\}. $$
Suppose that there exists $k\in \mathbb{N}$ such that $I_k$ is infinite.
If necessary, passing to subset we can assume that
$I_k$ is countable. Then the
series
$$
\sum\limits_{i\in I_k} \Delta(e_i)(\xi)
$$
does not converge in $H.$ On the other hand, since $I_k$ is a countable set, by \emph{Case 1}, we have
$$
\Delta\left(\sum\limits_{i\in
I_k}e_i\right)(\xi)=\sum\limits_{i\in I_k}  \Delta(e_i)(\xi),
$$ which is impossible. Therefore, $I_k$ is a finite set for all $k\in \mathbb{N}.$ So, the set
$$
I_0=\left\{i\in I: \Delta(e_i)(\xi)\neq 0\right\}=\bigcup\limits_{n\in \mathbb{N}}I_n
$$
is a countable set.\smallskip

Let us consider the projection $e=\sum\limits_{i\in I\setminus I_0} e_i.$ We claim that $\Delta(e)(\xi)=0.$ Indeed, for every $i\in I\setminus I_0$ take an
element $a_i\in M$ such that
$$ \Delta(e)=a_i e-ea_i,\, \Delta(e_i)=a_i e_i-e_ia_i. $$
Since $\Delta(e_i)(\xi)=0$ we get $a_i(e_i(\xi))=e_i(a_i(\xi))$ for all $i\in I\setminus I_0.$\smallskip

We further have:
$$ (\Delta(e)e_i)(\xi)=(a_ie-ea_i)(e_i(\xi))=a_i(e(e_i(\xi)))-e(a_i(e_i(\xi)))=$$
$$ =a_i(e_i(\xi))-e(e_i(a_i(\xi)))=a_i(e_i(\xi))-e_i(a_i(\xi))=0, $$
i.e.
$$ \Delta(e)e_i(\xi)=0,$$ for all $i\in I\setminus I_0.$ Thus,
$$ \Delta(e)e(\xi)=\Delta(e)\sum\limits_{i\in I\setminus I_0}e_i(\xi)= \sum\limits_{i\in I\setminus I_0}\Delta(e)e_i(\xi)=0, $$
i.e. $$ \Delta(e)e(\xi)=0.$$ We similarly show
$$ e\Delta(e)(\xi)=0. $$

Now, since
$$ \Delta(e)=[a_i, e]=[a_i,e]e+e[a_i,e]=\Delta(e)e+e\Delta(e) $$
we obtain that
$$ \Delta(e)(\xi)=0$$ as we claimed.\smallskip

Finally,
$$ \Delta\left(\sum\limits_{i\in I}e_i\right)(\xi)= \Delta\left(e+\sum\limits_{i\in I_0}e_i\right)(\xi) =\hbox{[\emph{Case 1}]}=\Delta(e)(\xi)+\Delta\left(\sum\limits_{i\in I_0}e_i\right)(\xi)$$
$$= \Delta\left(\sum\limits_{i\in I_0}e_i\right)(\xi)=\hbox{[\emph{Case 1}]}=\sum\limits_{i\in I_0}\Delta\left(e_i\right)(\xi)=\left(
\sum\limits_{i\in I}\Delta\left(e_i\right)\right)(\xi).
$$

\end{proof}

In the following two lemmata we suppose that  $M$ is an \textit{infinite} von Neumann algebra, and $\Delta: M\to M$ is a 2-local derivation.

\begin{lemma}\label{addi}\cite[Lemma 2.4]{AyuKuday2014}
The restriction $\Delta|_{M_{sa}} : M_{sa} \to M$ to the self-adjoint part of $M$ is additive.
\end{lemma}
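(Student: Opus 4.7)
The plan is to produce a bounded linear operator $\widetilde{\Delta}:M\to M$ from $\Delta$'s behavior on projections using the Bunce--Wright--Mackey--Gleason theorem, and then check that $\widetilde{\Delta}$ agrees with $\Delta$ on self-adjoint elements, from which additivity on $M_{sa}$ is immediate.

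First I would record the consequences of the previous lemmas. By Lemma~\ref{comad}, the restriction $\Delta|_{\mathcal{P}(M)}$ is strong-completely additive on orthogonal families, and in particular finitely additive. Next I would establish that $\Delta|_{\mathcal{P}(M)}$ is bounded by a Dorofeev--Sherstnev plus uniform boundedness argument: for every normal $\varphi\in M_*$, the scalar function $\mu_{\varphi}:\mathcal{P}(M)\to \mathbb{C}$, $p\mapsto \varphi(\Delta(p))$ is a charge on $\mathcal{P}(M)$ (the normality of $\varphi$, together with strong-complete additivity of $\Delta|_{\mathcal{P}(M)}$, yields the required $\sigma$-additivity in the scalar sense). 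Since $M$ is infinite it has no type $I_n$ summand for any finite $n$, so the Dorofeev--Sherstnev theorem applies and each $\mu_{\varphi}$ is bounded. Viewing $\{\Delta(p):p\in\mathcal{P}(M)\}$ inside $M\hookrightarrow (M_*)^*$, the Banach--Steinhaus theorem then gives $\sup_{p\in\mathcal{P}(M)}\|\Delta(p)\|<\infty$.

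With $\Delta|_{\mathcal{P}(M)}$ bounded and finitely additive, and with $M$ free of any type $I_2$ summand, the Bunce--Wright--Mackey--Gleason theorem produces a bounded linear map $\widetilde{\Delta}:M\to M$ satisfying $\widetilde{\Delta}(p)=\Delta(p)$ for every $p\in\mathcal{P}(M)$. To compare $\Delta$ and $\widetilde{\Delta}$ on self-adjoint elements, fix $x\in M_{sa}$ and consider the abelian von Neumann subalgebra $\mathcal{W}^*(x)$. By Lemma~\ref{masas} there exists $a\in M$ such that $\Delta(y)=ay-ya$ for all $y\in\mathcal{W}^*(x)$; hence $\Delta$ is linear and norm-continuous on $\mathcal{W}^*(x)$, while $\widetilde{\Delta}$ is linear and norm-continuous on all of $M$. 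The two maps coincide on the projections of $\mathcal{W}^*(x)$, hence on their real-linear span, which is norm-dense in $\mathcal{W}^*(x)_{sa}$ by the spectral theorem. Therefore $\Delta(x)=\widetilde{\Delta}(x)$ for every $x\in M_{sa}$, and for $x,y\in M_{sa}$ one concludes
\[
\Delta(x+y)=\widetilde{\Delta}(x+y)=\widetilde{\Delta}(x)+\widetilde{\Delta}(y)=\Delta(x)+\Delta(y).
\]

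The main obstacle is the boundedness step: without it, Bunce--Wright--Mackey--Gleason cannot be invoked and there is no candidate linear comparison map. Establishing it is where the infiniteness hypothesis is really used, via Dorofeev--Sherstnev; the delicate point is verifying that $\varphi\circ\Delta$ is genuinely $\sigma$-additive on orthogonal sequences of projections (not merely finitely additive), which follows from strong continuity of normal functionals on suitably bounded strongly-convergent sums in conjunction with Lemma~\ref{comad}.

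\end{document}
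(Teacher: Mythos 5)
Your proof is correct and follows the same strategy as the paper's: Lemma~\ref{comad} to get complete additivity on projections, Dorofeev--Sherstnev to get boundedness of the induced charges, Bunce--Wright--Mackey--Gleason to produce a linear comparison object, and Lemma~\ref{masas} to identify that object with $\Delta$ on each singly generated abelian subalgebra. The one genuine difference is that you invoke the \emph{vector-valued} Bunce--Wright--Mackey--Gleason theorem to build a single bounded linear operator $\widetilde{\Delta}:M\to M$, which forces you to first prove $\sup_{p\in\mathcal{P}(M)}\|\Delta(p)\|<\infty$ via a Banach--Steinhaus argument; the paper instead works scalar-by-scalar with the vector functionals $f_{\xi,\eta}=\langle\,\cdot\,\xi,\eta\rangle$, applying the scalar Gleason-type theorem to each $f_{\xi,\eta}\circ\Delta|_{\mathcal{P}(M)}$ separately, and so never needs uniform boundedness of $\{\Delta(p)\}$. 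Your route buys a cleaner global object at the cost of an extra step; one small caution in that step is that for a general normal $\varphi\in M_*$ the passage from strong-complete additivity of $\Delta|_{\mathcal{P}(M)}$ to $\sigma$-additivity of $\varphi\circ\Delta|_{\mathcal{P}(M)}$ is not immediate when the partial sums $\sum_{i\in F}\Delta(e_i)$ are not known to be bounded (normal functionals are only strongly continuous on bounded sets); this is harmless because it suffices to run the Dorofeev--Sherstnev plus uniform boundedness argument over the vector functionals $f_{\xi,\eta}$, which are weak-operator continuous, exactly as the paper does. Your identification of $\widetilde{\Delta}$ with $\Delta$ on $M_{sa}$ via norm-density of the span of projections in $\mathcal{W}^*(x)_{sa}$ is equivalent to the paper's appeal to uniqueness of the Gleason extension.
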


\begin{proof} We regard $M$ as a von Neumann subalgebra of some $B(H)$. Given $\xi,$ and  $\eta$ in $H$ we define a linear functional
$f_{\xi,\eta} = \xi\otimes \eta$ on $M$ given by
$$ f_{\xi,\eta}(x) = \xi\otimes\eta (x) =\langle x(\xi),\eta\rangle,\, x\in M, $$
where $\langle\cdot,\cdot\rangle$ is the inner product on $H.$\smallskip

Lemma~\ref{comad} implies that  the restriction $f_{\xi,\eta}\circ \Delta|_{P(M)}$ of the superposition
$$f_{\xi,\eta}\circ \Delta(x) = f_{\xi,\eta}(\Delta(x)), x\in M, $$
onto the lattice $\mathcal{P}(M)$ is a charge. Taking into account that  $M$ is infinite, we
deduce from \cite[Theorem~30.08]{Shers2008} that the charge $f_{\xi,\eta}\circ \Delta$ is bounded.\smallskip

We shall show now that $\Delta|_{M_{sa}}$ is additive. Since  $f_{\xi,\eta}\circ \Delta|_{P(M)}$ is a bounded additive measure on $\mathcal{P}(M),$ the Bunce-Wright-Mackey-Gleason theorem (\cite[Theorem B]{BuWri92}) implies the the existence of a unique bounded linear functional $\widetilde{f}_{\xi,\eta}$ on $M$ such that $$\widetilde{f}_{\xi,\eta}|_{P(M)}=f_{\xi,\eta}\circ \Delta|_{P(M)}.$$

Let us show that $\widetilde{f}_{\xi,\eta}|_{M_{sa}}=f_{\xi,\eta}\circ \Delta|_{M_{sa}}.$ Take an arbitrary element $x\in M_{sa}.$  By Lemma~\ref{masas} there exists an element $a\in M$ such that $\Delta(y)=ay-ya$ for all $y\in \mathcal{W}^\ast(x).$ In particular, $\Delta$ is linear on $\mathcal{W}^\ast(x),$ and therefore $f_{\xi,\eta}\circ \Delta|_{\mathcal{W}^\ast(x)}$ is a bounded linear functional which is an extension of the signed measure $f_{\xi,\eta}\circ
\Delta|_{P(\mathcal{W}^\ast(x))}.$ By the uniqueness of the extension we have $\widetilde{f}_{\xi,\eta}(x)=f_{\xi,\eta}\circ\Delta(x).$ So $f_{\xi,\eta}\circ\Delta|_{M_{sa}}$ is a bounded linear functional for all $\xi,\eta\in H.$ This means, in particular, that
$$ f_{\xi,\eta}(\Delta(x+y)) = f_{\xi,\eta}(\Delta(x)) + f_{\xi,\eta}(\Delta(y)) = f_{\xi,\eta}(\Delta(x) + \Delta(y)), $$
i.e. $f_{\xi,\eta}(\Delta(x+y) - \Delta(x)-\Delta(y)) = 0$ for all $\xi,\eta\in H,$ which proves $$\Delta(x+y) - \Delta(x)-\Delta(y)
= 0,$$ for all $x,y \in M_{sa}$, i.e.  $\Delta|_{M_{sa}}$ is additive. The proof is complete.
\end{proof}

\begin{lemma}\label{jor} There exists an element  $a\in M$  such that $\Delta(x)=\hbox{adj}_a(x) = ax-xa$ for all $x\in M_{sa}.$
\end{lemma}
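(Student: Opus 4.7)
The plan is to upgrade $\Delta|_{M_{sa}}$ from a real-linear Jordan-type map on the self-adjoint part into a complex-linear Jordan derivation on all of $M$, and then invoke Bre\v{s}ar's theorem (quoted right before this lemma) together with Sakai's theorem to conclude that this extension is inner, which will give the element $a$.

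First I would collect what we already have about $D := \Delta|_{M_{sa}}$. By Lemma~\ref{addi} it is additive, and since any 2-local derivation is homogeneous, $D$ is in fact $\mathbb{R}$-linear. Applying the Jordan identity \eqref{joor} to $x+y\in M_{sa}$ (with $x,y\in M_{sa}$) and then using additivity of $D$ on both sides of the resulting expression would yield the polarized identity
\[
D(xy+yx)=D(x)y+xD(y)+D(y)x+yD(x),\qquad x,y\in M_{sa}.
\]

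Second, I would define $\widetilde{D}:M\to M$ by $\widetilde{D}(x+iy):=D(x)+iD(y)$ for $x,y\in M_{sa}$ (using the unique Cartesian decomposition $z=\tfrac{1}{2}(z+z^{*})+i\cdot\tfrac{1}{2i}(z-z^{*})$). A short check using $\mathbb{R}$-linearity of $D$ shows that $\widetilde{D}$ is complex-linear. To verify that $\widetilde{D}$ is a Jordan derivation on $M$, I would write $z=x+iy$, expand $z^{2}=x^{2}-y^{2}+i(xy+yx)$, compute $\widetilde{D}(z^{2})$ by invoking \eqref{joor} for $x$ and $y$ together with the polarized identity, and then compute $\widetilde{D}(z)z+z\widetilde{D}(z)$ by direct multiplication; the real and imaginary parts coincide term-by-term.

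Finally, $M$ is a von Neumann algebra and hence semi-prime, so Bre\v{s}ar's theorem (cited in the paragraph just above the lemma) forces $\widetilde{D}$ to be a genuine derivation. By Sakai's theorem every derivation of a von Neumann algebra is inner, so $\widetilde{D}(z)=az-za$ for some $a\in M$ and all $z\in M$. Restricting to $M_{sa}$ and using $\widetilde{D}|_{M_{sa}}=D=\Delta|_{M_{sa}}$ delivers $\Delta(x)=ax-xa$ for every $x\in M_{sa}$. The only place where a genuine argument (as opposed to bookkeeping) is required is deriving the polarized Jordan identity on $M_{sa}$, since we only have additivity of $\Delta$ on self-adjoints, not on all of $M$; everything else is purely algebraic verification.
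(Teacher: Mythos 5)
Your proposal is correct and follows essentially the same route as the paper: the paper's proof also extends $\Delta|_{M_{sa}}$ to $\widetilde{\Delta}(x_1+ix_2)=\Delta(x_1)+i\Delta(x_2)$, uses homogeneity, Lemma~\ref{addi} and identity~\eqref{joor} to see that $\widetilde{\Delta}$ is a Jordan derivation, and then applies Bre\v{s}ar's theorem on semi-prime algebras followed by Sakai's innerness theorem. The only difference is one of exposition: you spell out the polarization and the term-by-term verification that the paper leaves implicit.
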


\begin{proof}
Consider the extension $\widetilde{\Delta}$ of $\Delta|_{M_{sa}}$ on $M$ defined by:
$$ \widetilde{\Delta}(x_1+ix_2)=\Delta(x_1)+i\Delta(x_2),\, x_1, x_2\in M_{sa}. $$
Taking into account the homogeneity of $\Delta,$ Lemma~\ref{addi} and the equality~\eqref{joor}, we obtain that $\widetilde{\Delta}$ is a Jordan derivation on $M$. As we mentioned above, by\cite[Theorem 1]{Bre}, any Jordan derivation on a semi-prime algebra is a derivation. Since $M$ is semi-prime $\widetilde{\Delta}$ is a derivation on $M$. Therefore there exists an element $a\in M$ such that $\widetilde{\Delta}(x)=ax-xa,$ for all $x\in M.$ In particular, $\Delta(x)=\hbox{adj}_a(x) = ax-xa$, for all $x\in M_{sa}.$
\end{proof}

The final step in our proof is to show that if two $2$-local derivations coincide on $M_{sa}$ then they are equal on the whole von Neumann algebra $M$. The reader is referred to \cite[Lemma 2.12]{AyuKuday2014} for the proof of the next lemma.

\begin{lemma}\label{jorr}\cite[Lemma 2.12]{AyuKuday2014}. Let $\Delta: M\to M$ be a 2-local derivation on a type $III$ von Neumann algebra. If $\Delta|_{M_{sa}}\equiv 0$ then  $\Delta= 0.$ $\hfill\Box$
\end{lemma}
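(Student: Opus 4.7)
The plan is to combine Lemma~\ref{masas}, which forces $\Delta$ to annihilate every von Neumann subalgebra $W^{*}(g)$ singly generated by a self-adjoint element, with the structural peculiarities of a type~$III$ algebra (the unitary group is norm-connected, and partial isometries extend to unitaries) in order to propagate this vanishing to non-self-adjoint elements via the 2-local hypothesis.

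First I would observe that $\Delta$ vanishes on every singly generated abelian subalgebra $W^{*}(g)$ with $g\in M_{sa}$. Indeed, Lemma~\ref{masas} produces $a\in M$ with $\Delta(z)=[a,z]$ for all $z\in W^{*}(g)$; applied at $z=g$, the hypothesis $\Delta(g)=0$ forces $a\in\{g\}'_{M}$, so $a$ commutes with the entire abelian algebra $W^{*}(g)=\{g\}''$ and hence $\Delta\equiv 0$ on $W^{*}(g)$. In particular, $\Delta(e^{ih})=0$ for every $h\in M_{sa}$; because the unitary group of a type~$III$ von Neumann algebra is norm-connected and every unitary $u$ therefore admits the representation $u=e^{ih}$ for some $h\in M_{sa}$, this yields $\Delta(u)=0$ for every unitary $u\in M$.

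Next, given arbitrary $x\in M$, I would invoke the polar decomposition together with proper infiniteness of $M$ to write $x=u|x|$ with $u\in M$ unitary and $|x|\geq 0$; both $\Delta(u)=0$ by the previous paragraph and $\Delta(|x|)=0$ since $|x|\in M_{sa}$. Applying the 2-local hypothesis to $(x,u)$ furnishes $a\in\{u\}'_{M}$ with
\[
\Delta(x) = [a,x] = [a,u]|x| + u[a,|x|] = u[a,|x|];
\]
similarly $(x,|x|)$ produces $b\in\{|x|\}'_{M}$ with $\Delta(x)=[b,u]|x|$.

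The final and genuinely hardest step is to combine these constraints, together with further iterations of the 2-local hypothesis against self-adjoint test elements built from $u$ and $|x|$ -- such as $u+u^{*}$, $i(u-u^{*})$, and appropriate $\mathbb{R}$-linear combinations with $|x|$, each of which is annihilated by $\Delta$ -- to force any implementer of $\Delta(x)$ to lie in the relative commutant $\{x\}'_{M}$, whence $\Delta(x)=0$. Non-factor and non-separable type~$III$ algebras reduce to the countably decomposable factor case via the central decomposition, which $\Delta$ respects by the remarks on central projections opening the proof of Theorem~\ref{main}. The main obstacle is precisely this concluding step: the 2-local hypothesis only yields pairwise commutation, so extracting a single implementer in $\{x\}'_{M}$ requires carefully iterated 2-local applications against test elements exploiting the type~$III$ abundance of self-adjoint and unitary generators.
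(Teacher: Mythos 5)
Your first step is sound: Lemma \ref{masas} produces a single $a$ implementing $\Delta$ on $\mathcal{W}^*(g)$, the hypothesis applied at $z=g$ forces $a\in\{g\}'=\mathcal{W}^*(g)'$, hence $\Delta$ vanishes on all of $\mathcal{W}^*(g)$; and since in \emph{any} von Neumann algebra (type $III$ plays no role here) every unitary is of the form $e^{ih}$ with $h=h^*$, you correctly get $\Delta(u)=0$ for every unitary $u$. After that the argument breaks down in two places. First, the polar decomposition claim is false in precisely this setting: a type $III$ algebra is properly infinite, so it contains a non-unitary isometry $s$ (with $s^*s=\mathbf{1}$, $ss^*\neq\mathbf{1}$); then $|s|=\mathbf{1}$, and $s=u|s|$ with $u$ unitary would force $s=u$, a contradiction. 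Unitary polar decomposition is a feature of \emph{finite} von Neumann algebras, i.e.\ of the case opposite to type $III$. Second, and more seriously, the concluding step --- the one you yourself flag as ``the main obstacle'' --- is never carried out. The constraints $\Delta(x)=u[a,|x|]$ with $a\in\{u\}'$ and $\Delta(x)=[b,u]|x|$ with $b\in\{|x|\}'$ involve \emph{different} implementers, and no mechanism is offered for identifying them or intersecting the commutants; testing against further self-adjoint elements such as $u+u^*$ only produces yet more implementers, each constrained only pairwise. As written this is a plan, not a proof. (The appeal to central decomposition to reduce to countably decomposable factors is also unjustified: direct integral theory needs separability, and the central-projection reduction used earlier in the section only splits off direct summands.)

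For comparison: the survey itself does not prove this lemma but refers the reader to \cite[Lemma 2.12]{AyuKuday2014}. The closest argument actually written out in the paper is Lemma \ref{l:vanish on self-adjoint}, the ternary analogue, which proceeds quite differently: write $x=x_1+ix_2$, normalize so that $y=\mathbf{1}+x_2$ is positive and invertible, apply the $2$-local hypothesis to the single pair $(x,y)$, and use $T(y)=0$ to kill the Jordan part and force the implementer to commute with $x_2$; this gives $T(x)=[a_{x,y},x_1]$ with $a_{x,y}$ skew-hermitian, whence $T(x)^*=T(x)$, and replacing $x$ by $ix$ and using homogeneity gives $T(x)^*=-T(x)$, so $T(x)=0$. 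That scheme does not transfer verbatim to ordinary $2$-local derivations because the implementing element of an inner derivation need not be skew-hermitian --- which is exactly why the type $III$ case requires the separate argument of the cited reference rather than the route you propose.
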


\begin{proof}[\textit{Proof of Theorem~\ref{main} for a type $III$ von Neumann algebra.}]
By Lemma~\ref{jor} there exists an element $a\in M$ such that $\Delta(x)=[a, x]$ for all
$x\in M_{sa}.$ Consider the $2$-local derivation $\Delta-\mbox{adj}_a.$ Since $(\Delta-\mbox{adj}_a)|_{M_{sa}}\equiv 0,$ Lemma~\ref{jorr}
implies that $\Delta=\mbox{adj}_a.$
\end{proof}

We culminate this subsection with a result on 2-local derivations for a certain subclass of C$^\ast$-algebras (see \cite{KimKim05}).\smallskip

Recall that an approximately finite, or $AF$  C$^*$-algebra is a unital C$^*$-algebra $\mathcal{A}$ which is an inductive limit of an increasing sequence of finite-dimensional C$^*$-algebras $\mathcal{A}_n,$ $n \geq 1,$  with unital embeddings $j_n : \mathcal{A}_n \to  A_{n+1}.$  An equivalent definition is to say that $\mathcal{A}$ is an $AF$ C$^*$-algebra if it has an ascending sequence of finite-dimensional C$^*$-subalgebras whose closed union is $A.$

\begin{theorem} \cite{KimKim05}.
Let $\mathcal{A}$ be an $AF$ C$^*$-algebra and let $\phi :\mathcal{A}\to \mathcal{A}$ be a continuous 2-local derivation.
Then $\phi$ is a derivation.$\hfill\Box$
\end{theorem}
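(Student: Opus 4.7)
The plan is to observe that every 2-local derivation is automatically a local derivation in the sense of Section~\ref{sec:local der}, and then invoke Johnson's theorem. To see the first point, fix any $x\in\mathcal{A}$ and apply the 2-local hypothesis to the pair $(x,x)$: we obtain a derivation $D_{x,x}\colon\mathcal{A}\to\mathcal{A}$ with $\phi(x)=D_{x,x}(x)$, which is precisely the defining condition of a local derivation. Thus the continuous 2-local derivation $\phi$ is a continuous local derivation from $\mathcal{A}$ into $\mathcal{A}$, viewed as a Banach $\mathcal{A}$-bimodule over itself.

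Now Theorem~\ref{thm Johnson local 2001} applies with $X=\mathcal{A}$ and immediately yields that $\phi$ is a derivation. Two remarks are in order. First, the AF hypothesis is never used in this route, so the argument actually delivers the much stronger conclusion that every continuous 2-local derivation on an \emph{arbitrary} C$^*$-algebra is a derivation. Second, by Theorem~\ref{t automatic continuity of local derivations}, the continuity assumption on $\phi$ is likewise redundant, so one in fact obtains that every 2-local derivation, continuous or not, on any C$^*$-algebra is a derivation.

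The only real obstacle is philosophical: the cited paper~\cite{KimKim05} presumably antedates the systematic use of Johnson's theorem in the 2-local setting and proves the statement by an AF-specific route. A self-contained proof along such lines would proceed as follows: reduce to each finite-dimensional subalgebra $\mathcal{A}_n$ in the AF filtration $\mathcal{A}=\overline{\bigcup_n\mathcal{A}_n}$; note that derivations $\mathcal{A}_n\to\mathcal{A}$ are automatically inner, because $\mathcal{A}_n$ is semisimple and finite-dimensional, so the 2-local hypothesis supplies, for every pair $(x,y)\in\mathcal{A}_n\times\mathcal{A}_n$, an implementing element $a_{x,y}\in\mathcal{A}$ with $\phi(x)=[a_{x,y},x]$ and $\phi(y)=[a_{x,y},y]$; apply the Kim--Kim two-generator technique on each matrix summand of $\mathcal{A}_n$ to show that $\phi|_{\mathcal{A}_n}$ coincides with a single inner derivation $\mathrm{adj}_{a_n}$; and pass to the norm limit via continuity together with the density of $\bigcup_n\mathcal{A}_n$. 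The delicate step in this alternative route is the last passage of the two-generator argument, where one must match the various $a_{x,y}$ into a single universal $a_n$ valid on all of $\mathcal{A}_n$, and this is precisely where the matricial structure and the arguments of~\cite{KimKim04} become essential.
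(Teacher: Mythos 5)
Your first route breaks down at the very first step. A \emph{local derivation}, as defined in Section \ref{sec:local der} (and as required by Johnson's Theorem \ref{thm Johnson local 2001}), is a \emph{linear} mapping $T$ such that for each $a$ there is a derivation $D_a$ with $T(a)=D_a(a)$. A 2-local derivation, by the definition used throughout this survey, is \emph{not assumed to be linear} (nor additive); only homogeneity follows easily from the definition. Applying the 2-local hypothesis to the pair $(x,x)$ does give a derivation agreeing with $\phi$ at $x$, but this does not make $\phi$ a local derivation unless you have already established that $\phi$ is additive --- and proving additivity is precisely the entire difficulty of the 2-local theory. This is why the proofs of Theorem \ref{main} and Theorem \ref{mainthm} spend almost all of their effort on additivity (via traces in the semi-finite case and the Bunce--Wright--Mackey--Gleason and Dorofeev--Sherstnev theorems in the purely infinite case) before invoking Bre\v{s}ar's result on Jordan derivations. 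Your claimed corollaries are a further warning sign: if the reduction worked, it would trivially settle the Open Problem stated at the end of Section \ref{s:2-local der} (whether every 2-local derivation on a C$^*$-algebra is a derivation), which the authors explicitly leave open. The continuity hypothesis in the statement cannot rescue the argument, since a continuous map need not be additive; and the appeal to Theorem \ref{t automatic continuity of local derivations} to drop continuity again presupposes linearity.

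Your alternative AF-specific sketch is much closer to a viable argument (and to what \cite{KimKim05} actually does): restrict to the finite-dimensional subalgebras $\mathcal{A}_n$, use the two-generator technique of \cite{KimKim04} to produce a single implementing element on each $\mathcal{A}_n$ --- which in particular yields additivity there --- and then use continuity and density of $\bigcup_n \mathcal{A}_n$ to pass to the limit. You correctly identify that the delicate point is merging the various $a_{x,y}$ into one element valid on all of $\mathcal{A}_n$, but as written this remains a sketch rather than a proof; note also that the passage to the limit is exactly where the continuity hypothesis (absent in the von Neumann algebra results of this section) becomes essential, so it is not ``redundant'' in this theorem.
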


\subsection{2-local triple derivations on von Neumann algebras}

In this subsection we consider 2-local triple derivations on  von
Neumann algebras.\smallskip

As in the previous section, on a C$^*$-algebra $A$, we shall consider a ternary product of the form
$$ \{a,b,c\} = \frac1 2 (a b^\ast c + c b^\ast a).$$ By $M_b$ we shall denote the Jordan multiplication mapping by the element
$b,$ that is $M_b (x)= b\circ x = \frac12 (b x+ x b).$\smallskip

Let $\delta: A \to A$ be a triple derivation on a unital C$^*$-algebra. By \cite[Lemmas 1 and 2]{HoMarPeRu}, $\delta(\mathbf{1})^\ast =-\delta (\mathbf{1}),$
$M_{\delta(\mathbf{1})} = \delta (\frac12 \delta (\mathbf{1}),\mathbf{1})$ is an inner triple derivation on $A$, and the difference $D = \delta - \delta (\frac12 \delta (\mathbf{1}),\mathbf{1})$ is a Jordan $^\ast$-derivation on $A,$ more concretely,\label{ref to jordan *der}
$$ D (x\circ y ) = D(x) \circ y + x \circ D(y), \hbox{ and } D(x^\ast) =D(x)^\ast, $$
for every $x,y\in A.$ By \cite[Corollary 2.2]{BarFri}, $\delta$ (and hence $D$) is a continuous operator. We have already mentioned in
previous sections that every bounded Jordan derivation from a C$^\ast$-algebra $A$ to a Banach $A$-bimodule is an associative derivation (cf. \cite{John96}). Therefore, $D$ is an associative $^\ast$-derivation in the usual sense. When $A=M$ is a von Neumann algebra, $D$ is an inner derivation, that is, there exists $a\in A$ satisfying $D(x) = \hbox{adj}_{a} (x)= [a, x],$ for every $x\in A$ (cf. \cite[Theorem 4.1.6]{Sak}). Since $D$ is a $^*$-derivation, we can assume that $a$ is skew-hermitian. So, for every triple derivation $\delta$ on a von Neumann algebra $M,$ there exist skew-hermitian elements $a,b\in M$ satisfying
\begin{equation}\label{liejor}
\delta (x) = [a,x] + b\circ x,
\end{equation} for every $x\in M.$\smallskip

A mapping $T:M\rightarrow M$ is called a \textit{2-local triple derivation} if for every $x, y\in M,$ there is a triple derivation $D_{x, y}:M\rightarrow M,$ depending on $x$ and $y,$ such that $T(x)=D_{x, y}(x)$ and $T(y)=D_{x,y}(y).$\smallskip

We can state now the main result of this subsection (see \cite{KOPR2014}).

\begin{theorem}\label{mainthm}\cite[Theorem 2.14]{KOPR2014}
Let $M$ be  an arbitrary von Neumann algebra and let   $T: M\to M$
be a  2-local triple derivation. Then $T$ is a triple derivation.
\end{theorem}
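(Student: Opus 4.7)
The plan is to reduce the 2-local triple derivation $T$ to a 2-local associative inner derivation on $M_{sa}$ by exploiting the structural decomposition \eqref{liejor}, then apply Theorem~\ref{main}, and finally extend the resulting identity to all of $M$.

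As a first step, I would reduce to the case $T(\mathbf{1}) = 0$. By Lemma~\ref{l triple derivations on unital C*-algebras are generalized derivations}, $T(\mathbf{1})$ is skew-hermitian, and the Jordan multiplier $M_{T(\mathbf{1})}(x) = T(\mathbf{1}) \circ x$ coincides with the inner triple derivation $\delta\bigl(\tfrac{1}{2}T(\mathbf{1}), \mathbf{1}\bigr)$. Hence $\widetilde{T} := T - M_{T(\mathbf{1})}$ remains a 2-local triple derivation, now satisfying $\widetilde{T}(\mathbf{1}) = 0$. Applying the 2-local definition to the pair $(x, \mathbf{1})$, the associated triple derivation $\delta = [a, \cdot] + b \circ \cdot$ of the form~\eqref{liejor} must satisfy $b = \delta(\mathbf{1}) = \widetilde{T}(\mathbf{1}) = 0$, so that
\begin{equation*}
\widetilde{T}(x) = [a_x, x] \qquad (x \in M)
\end{equation*}
for some skew-hermitian $a_x$ depending on $x$. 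In particular, $\widetilde{T}$ preserves $M_{sa}$ and the skew-hermitian part of $M$.

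Next I would show that $\widetilde{T}|_{M_{sa}}$ is a 2-local inner derivation with skew-hermitian generators. Given $x, y \in M_{sa}$, the 2-local property yields $\delta = [a, \cdot] + b \circ \cdot$ with $a, b$ skew-hermitian satisfying $\widetilde{T}(x) = [a, x] + b \circ x$ and $\widetilde{T}(y) = [a, y] + b \circ y$. Since $\widetilde{T}(x), \widetilde{T}(y) \in M_{sa}$ while $b \circ x$ and $b \circ y$ are skew-hermitian, the Jordan contributions vanish, leaving $\widetilde{T}(x) = [a, x]$ and $\widetilde{T}(y) = [a, y]$. The proof of Theorem~\ref{main} --- in particular the chain of Lemmas~\ref{masas}, \ref{comad}, \ref{addi}, \ref{jor}, which rest on the behaviour of the map on abelian subalgebras $\mathcal{W}^\ast(g)$, on strong complete additivity on the projection lattice, and on the Bunce--Wright--Mackey--Gleason theorem together with Bre\v{s}ar's Jordan-derivation theorem applied to the complex-linear extension --- transfers to $\widetilde{T}|_{M_{sa}}$ and produces a skew-hermitian $c \in M$ with $\widetilde{T}(x) = [c, x]$ for every $x \in M_{sa}$.

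It remains to extend this identity to the whole of $M$. Setting $T_0 := \widetilde{T} - \hbox{adj}_c$, the map $T_0$ is a 2-local triple derivation vanishing on $M_{sa}$ and, by complex homogeneity, on $iM_{sa}$. The hardest part of the proof, and the main obstacle, is to propagate this vanishing to a general $x = x_1 + i x_2 \in M$, because 2-locality does not immediately imply the additivity $T_0(x_1 + i x_2) = T_0(x_1) + i T_0(x_2)$. I would address it by invoking again the analogue of Lemma~\ref{masas} for $T_0$: for each $g \in M_{sa}$, taking the pair $(g, x)$ with $x \in \mathcal{W}^\ast(g)$ produces a triple derivation $[a', \cdot] + b' \circ \cdot$ with $a' \in \{g\}'$ and $b' \circ g = 0$, and the already-established vanishing of $T_0$ on $\mathcal{W}^\ast(g) \cap M_{sa}$ forces $b' \circ x = 0$ on $\mathcal{W}^\ast(g)$; therefore $T_0 \equiv 0$ on $\mathcal{W}^\ast(g)$ for every self-adjoint $g$, and in particular on every normal element of $M$. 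A further application of the Bunce--Wright--Mackey--Gleason theorem to the superpositions $\varphi \circ T_0$ with $\varphi$ in the predual of $M$, combined with the representation $T_0(x) = [a_x', x]$ valid on all of $M$ and a polar-decomposition argument, then extends $T_0 \equiv 0$ from normal elements to arbitrary $x \in M$. Once this is accomplished, $T = \hbox{adj}_c + M_{T(\mathbf{1})}$ is manifestly a triple derivation.
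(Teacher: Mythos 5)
Your reduction to $\widetilde T(\mathbf 1)=0$ by subtracting the inner triple derivation $\delta\bigl(\tfrac12 T(\mathbf 1),\mathbf 1\bigr)$, the observation that on self-adjoint pairs the Jordan summand $b\circ x$ is forced to vanish because it is skew-hermitian while $\widetilde T(x)$ is hermitian (this is exactly Lemmas \ref{l sead} and \ref{l sea}), and the transfer of the additivity machinery of Theorem \ref{main} to $\widetilde T|_{M_{sa}}$ (Proposition \ref{additgen} and Lemma \ref{jort}, which the paper carries out by splitting into the finite case, handled by the trace argument of Theorem \ref{semi-finite}, and the case with no type $I_n$ summands, handled by Bunce--Wright--Mackey--Gleason together with Dorofeev--Sherstnev) all follow the paper's route. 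You are glossing over the fact that the ternary versions of these lemmas need Lemma \ref{l product} to kill the Jordan part on singly generated abelian subalgebras, but that is bookkeeping the paper does and your outline is compatible with it.

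The genuine gap is in the final step, which you yourself flag as the main obstacle. Once $T_0=\widetilde T-\hbox{adj}_c$ vanishes on $M_{sa}$ (and even on all normal elements), neither of your proposed tools reaches a general $x\in M$. The Bunce--Wright--Mackey--Gleason theorem extends a bounded finitely additive map on $\mathcal{P}(M)$ to a \emph{linear} map; since $T_0$ already vanishes on every projection, each extension $\varphi\circ T_0$ produces is identically zero and carries no information about the values of the non-linear map $T_0$ at a non-normal element. A polar decomposition $x=u|x|$ is equally powerless, because $T_0$ is not known to be additive or multiplicative, so its (vanishing) values at $u$ and $|x|$ do not determine $T_0(x)$. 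The paper closes this step in Lemma \ref{l:vanish on self-adjoint} with a concrete pairing trick: writing $x=x_1+ix_2$ and scaling so that $\|x_2\|<1$, one pairs $x$ with the positive invertible element $y=\mathbf 1+x_2$; from $T_0(y)=0$ and its adjoint one obtains $[a_{x,y},y]=0$ and $b_{x,y}\circ y=0$, Lemma \ref{l product} (full support of $y$) forces $b_{x,y}=0$, hence $[a_{x,y},x_2]=0$ and $T_0(x)=[a_{x,y},x_1]$ is self-adjoint; applying the same argument to $ix$ and using homogeneity shows $T_0(x)$ is also skew-adjoint, whence $T_0(x)=0$. Your proof needs this device (or an equivalent one); as written, the passage from normal elements to arbitrary elements does not go through. (A secondary quibble: a normal element of a non-separably acting $M$ need not lie in $\mathcal{W}^*(g)$ for a single self-adjoint $g$, so even the intermediate claim about normal elements requires a small adjustment.)
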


The proof will be obtained from several partial results.\smallskip

Let $T : M\rightarrow M$ be a $2$-local triple derivation. By \eqref{liejor} for any two elements $x$, $y\in M$ there exist skew-hermitian elements $a_{x,y}, b_{x,y}\in M$ such that $$ T(x)=[a_{x,y}, x]+b_{x,y}\circ x,\hbox{ and, } T (y)=[a_{x,y}, y]+b_{x,y}\circ y. $$
The first step of our proof  shows  that if $T(\mathbf{1})=0$ and $x, y$ both are
hermitian, the ''Jordan part'' in \eqref{liejor} can be chosen to be zero.\smallskip

\begin{lemma}\label{l sead}\cite[Lemma 2.2]{KOPR2014} Let $T: A\to A$ be a 2-local triple derivation on a unital C$^\ast$-algebra satisfying $T(\mathbf{1})=0.$ Then $T(x)=T(x)^\ast$ for all $x\in A_{sa}.$
\end{lemma}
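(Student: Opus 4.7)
The strategy is to exploit $2$-locality at the pair $(x,\mathbf{1})$ to reduce the question to the structure of a single triple derivation that annihilates the unit, and then to appeal to the decomposition of triple derivations on a unital C$^{*}$-algebra recalled just before the lemma.

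Fix $x\in A_{sa}$. By the definition of a $2$-local triple derivation applied to the pair $(x,\mathbf{1})$, I would produce a (genuine) triple derivation $\delta = \delta_{x,\mathbf{1}}\colon A\to A$ such that $\delta(x)=T(x)$ and $\delta(\mathbf{1})=T(\mathbf{1})=0$. This is the only place where the $2$-local hypothesis is used, and it is the entire reason the hypothesis $T(\mathbf{1})=0$ matters: it forces the selected derivation to kill the identity.

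Next I would invoke the decomposition recalled in the paper (attributed to \cite{HoMarPeRu}): every triple derivation $\delta$ on a unital C$^{*}$-algebra splits as $\delta = D + \delta\bigl(\tfrac12\delta(\mathbf{1}),\mathbf{1}\bigr)$, where the second summand equals the Jordan multiplication $M_{\delta(\mathbf{1})}$ and $D$ is a Jordan $^{*}$-derivation. Since $\delta(\mathbf{1})=0$, the Jordan multiplication part vanishes identically, and hence $\delta = D$ is itself a Jordan $^{*}$-derivation. In particular $\delta$ is symmetric: $\delta(y^{*})=\delta(y)^{*}$ for every $y\in A$.

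Finally, applying symmetry to the self-adjoint element $x$ gives
\[
T(x)\;=\;\delta(x)\;=\;\delta(x^{*})\;=\;\delta(x)^{*}\;=\;T(x)^{*},
\]
which is the desired conclusion. There is essentially no obstacle here beyond correctly quoting the structure theorem for triple derivations on a unital C$^{*}$-algebra; the only subtle point is remembering that the derivation produced by the $2$-local hypothesis depends on $x$, so the identity $T(x)=T(x)^{*}$ is proved element-wise rather than by showing $T$ agrees globally with a single Jordan $^{*}$-derivation.
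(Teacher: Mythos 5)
Your proof is correct. It is essentially the same strategy as the paper's: apply $2$-locality at the pair $(x,\mathbf{1})$, use $T(\mathbf{1})=0$ to kill the ``Jordan part'' of the selected triple derivation, and read off symmetry. The only difference is the level at which you work. The paper uses the concrete representation $\delta(y)=[a,y]+b\circ y$ with $a,b$ skew-hermitian (equation \eqref{liejor}), which relies on Sakai's theorem that the Jordan $^*$-derivation part is \emph{inner}; it deduces $b_{x,\mathbf{1}}=0$ from $T(\mathbf{1})=0$ and then verifies $[a_{x,\mathbf{1}},x]^*=[a_{x,\mathbf{1}},x]$ by a one-line computation. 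You instead stay with the abstract decomposition $\delta=D+\delta\bigl(\tfrac12\delta(\mathbf{1}),\mathbf{1}\bigr)$ from \cite{HoMarPeRu} and invoke the symmetry $D(y^*)=D(y)^*$ of the Jordan $^*$-derivation $D$ directly. This buys you a small amount of extra generality: your argument works verbatim for any unital C$^*$-algebra, matching the hypothesis as stated in the lemma, whereas the paper's written proof tacitly assumes the von Neumann setting (where it is later applied). Your closing remark that the derivation depends on $x$, so the identity is proved elementwise, is exactly the right caveat.
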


\begin{proof} Let $a_{x,1}, b_{x,1}$ be skew hermitian elements in $M$ such that $$T(x)=[a_{x,1}, x]+b_{x,1}\circ x,\hbox{ and } 0= T (1)=[a_{x,1}, 1]+b_{x,1}\circ 1 = b_{x,1}. $$ Therefore, $T(x)^* = [a_{x,1}, x]^* = [x^*, a_{x,1}^*] = - [x,a_{x,1}] = [a_{x,1}, x]=  T(x)$.
\end{proof}

\begin{lemma}\label{l sea}\cite[Lemma 2.3]{KOPR2014}  Let $T: M\to M$ be a 2-local triple derivation on a von Neumann algebra satisfying
$T(\mathbf{1})=0.$ Then for every $x, y\in M_{sa}$ there exists a skew-hermitian element $a_{x,y}\in M$ such that
$$T(x)=[a_{x,y}, x],\hbox{ and  } T(y)=[a_{x,y}, y].$$
\end{lemma}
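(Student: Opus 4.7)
The argument rests on combining the skew-Jordan decomposition of triple derivations recorded in \eqref{liejor} with the symmetry property just established in Lemma~\ref{l sead}. The goal is to show that, under the hypothesis $T(\mathbf 1)=0$, the Jordan-multiplication summand $b\circ(\cdot)$ in \eqref{liejor} is forced to vanish on the self-adjoint arguments $x$ and $y$, leaving only the inner (commutator) part.

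First, I would invoke the 2-local hypothesis at the pair $(x,y)$: there exists a triple derivation $\delta_{x,y}$ on $M$ with $\delta_{x,y}(x)=T(x)$ and $\delta_{x,y}(y)=T(y)$. Applying \eqref{liejor} to $\delta_{x,y}$, choose skew-hermitian elements $a_{x,y},b_{x,y}\in M$ such that
$$\delta_{x,y}(z)=[a_{x,y},z]+b_{x,y}\circ z\qquad (z\in M).$$
Evaluating at $z=x$ and $z=y$ gives
$$T(x)=[a_{x,y},x]+b_{x,y}\circ x, \qquad T(y)=[a_{x,y},y]+b_{x,y}\circ y.$$

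Next I would exploit the involutive parities of the two summands. A direct computation shows that, for a skew-hermitian element $a$ and a self-adjoint element $h$, the commutator $[a,h]$ is self-adjoint, while the Jordan product $a\circ h$ is skew-hermitian. Consequently, the two displays above express $T(x)$ and $T(y)$ as the sum of a self-adjoint and a skew-hermitian element. On the other hand, Lemma~\ref{l sead} guarantees that $T(x),T(y)\in M_{sa}$. By uniqueness of the Cartesian (self-adjoint/skew-hermitian) decomposition of an element of $M$, we conclude that $b_{x,y}\circ x=0=b_{x,y}\circ y$, whence
$$T(x)=[a_{x,y},x], \qquad T(y)=[a_{x,y},y],$$
with the same skew-hermitian element $a_{x,y}$, as desired.

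There is no serious obstacle: the whole argument is a parity check applied to the structural description \eqref{liejor}. The only point that genuinely requires verification is that the representation \eqref{liejor} of a triple derivation on a von Neumann algebra can be arranged so that \emph{both} $a$ and $b$ are skew-hermitian; this, however, is precisely what the discussion culminating in \eqref{liejor} establishes (the ``Jordan $^*$-derivation'' piece $D$ is inner with a skew-hermitian implementer, and $b$ is a scalar multiple of $\delta(\mathbf 1)$, which is skew-hermitian by \cite[Lemmas 1 and 2]{HoMarPeRu}).
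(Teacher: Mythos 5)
Your proposal is correct and takes essentially the same route as the paper: both apply the representation \eqref{liejor} to the triple derivation witnessing 2-locality at the pair $(x,y)$, obtaining skew-hermitian $a_{x,y}, b_{x,y}$, and then use $T(x)=T(x)^*$ from Lemma~\ref{l sead} to force $b_{x,y}\circ x = b_{x,y}\circ y = 0$. The paper's explicit adjoint computation is precisely your parity observation (commutator self-adjoint, Jordan product skew-hermitian) combined with uniqueness of the Cartesian decomposition.
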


\begin{proof}
For every $x, y\in M_{sa}$ we can find skew-hermitian elements
$a_{x,y}, b_{x,y}\in M$ such that
\begin{center}
$T(x)=[a_{x,y}, x]+b_{x,y}\circ x,$  and
$T(y)=[a_{x,y},y]+b_{x,y}\circ y.$
\end{center}

Taking into account that $T(x)=T(x)^\ast$ (see Lemma \ref{l sead}),  we obtain
\begin{eqnarray*}
[a_{x,y},x]+b_{x,y}\circ x & = & T(x)=T(x)^\ast=[a_{x,y},
x]^\ast+(b_{x,y}\circ x)^\ast =\\
& = & [x, a_{x,y}^\ast]+x\circ b_{x,y}^\ast = [x, -a_{x,y}]-x\circ
b_{x,y}=\\
&=& [a_{x,y},x]-b_{x,y}\circ x,
\end{eqnarray*} i.e.
$b_{x,y}\circ x=0,$ and similarly $b_{x,y}\circ y=0.$ Therefore
$T(x)=[a_{x,y},x],$ $T(y)=[a_{x,y},y],$ and the proof is complete.
\end{proof}

The following  observation  plays a useful role in our study.\smallskip

Let $M$ be a von Neumann algebra. If $x \in M_{sa}$, we denote by $s(x)$ the range projection of $x^*=x$ -- that is, the projection
onto $(\ker (x))^\perp = \overline{\ran (x)}$. We say that $x$ \emph{has full support} if $s(x) = \mathbf{1}$ (equivalently, $\ker (x) = \{0\}$).

\begin{lemma}\label{l product}\cite[Lemma 2.5]{KOPR2014} Let $M$ be a von Neumann algebra. Suppose $u \in M_+$ has full support, $c \in M$ is self-adjoint,
and $\sigma(c^2 u) \cap (0,\infty) = \emptyset,$ where $\sigma(a)$ is the spectrum of the element $x.$ Then $c = 0.$ Consequently, if
$u$ and $c$ are as above, and $uc + cu = 0$ {\rm(}or $c^2 u = - cuc \leq 0${\rm)}, then $c = 0.$
\end{lemma}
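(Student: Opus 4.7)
The plan is to reduce the problem to the self-adjoint positive element $cuc=(u^{1/2}c)^{*}(u^{1/2}c)$, whose spectral behavior is much easier to control than that of the (generally non-self-adjoint) element $c^{2}u$. The bridge between the two is the classical Banach-algebra fact that $\sigma(xy)\cup\{0\}=\sigma(yx)\cup\{0\}$, applied with $x=c$ and $y=cu$.

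First I would observe that $cuc\in M_{+}$, so $\sigma(cuc)\subseteq[0,\infty)$, and that the spectral identity above gives
\[
\sigma(c^{2}u)\cup\{0\}=\sigma(c\cdot cu)\cup\{0\}=\sigma(cu\cdot c)\cup\{0\}=\sigma(cuc)\cup\{0\}\subseteq[0,\infty).
\]
Combining this with the hypothesis $\sigma(c^{2}u)\cap(0,\infty)=\emptyset$ forces $\sigma(c^{2}u)\subseteq\{0\}$, and therefore also $\sigma(cuc)\subseteq\{0\}$. Since $cuc$ is self-adjoint, its spectral radius coincides with its norm, so $cuc=0$.

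From $cuc=(u^{1/2}c)^{*}(u^{1/2}c)=0$ the C$^{*}$-identity yields $u^{1/2}c=0$, and multiplying on the left by $u^{1/2}$ gives $uc=0$. Here I would invoke the full-support hypothesis: $s(u)=\mathbf{1}$ is equivalent to $\ker u=\{0\}$ when $u$ is viewed as an operator on the Hilbert space on which $M$ acts, so $uc=0$ forces $c\xi\in\ker u=\{0\}$ for every vector $\xi$, i.e. $c=0$. For the \emph{consequently} clause, the assumption $uc+cu=0$ multiplied on the left by $c$ gives $c^{2}u=-cuc$, which is the negative of a positive operator, hence self-adjoint with $\sigma(c^{2}u)\subseteq(-\infty,0]$; this meets the hypothesis of the first part and delivers $c=0$.

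No step is a genuine obstacle; the only point where care is needed is the final implication, where one must remember that \emph{full support} is a statement about the Hilbert-space kernel of $u$ (not merely about invertibility in $M$), so that $uc=0$ legitimately entails $c=0$.
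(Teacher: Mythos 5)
Your proof is correct and follows essentially the same route as the paper: both pass from $\sigma(c^2u)$ to $\sigma(cuc)$ via the identity $\sigma(xy)\cup\{0\}=\sigma(yx)\cup\{0\}$, use positivity of $cuc=(u^{1/2}c)^*(u^{1/2}c)$ to force $cuc=0$, and then deduce $u^{1/2}c=0$ and invoke full support (the paper phrases the last step as $s(c)\leq \mathbf{1}-s(u)=0$, you as $\ker u=\{0\}$, which the paper itself notes are equivalent). Your handling of the spectrum is in fact slightly more careful than the paper's, which asserts $\sigma(c^2u)\cup\{0\}\subseteq(-\infty,0]$ directly from the hypothesis.
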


\begin{proof} For the fist statement of the lemma, suppose $\sigma(c^2 u) \cap (0,\infty) = \emptyset$. Note that
$$(-\infty,0]\supseteq \sigma(c^2 u) \cup \{0\} = \sigma (c c u) \cup \{0\} \supseteq \sigma(c u c). $$
However, $cuc$ is positive, hence $\sigma(cuc) \subset [0,
\|cuc\|]$, with $\displaystyle \max_{\lambda \in \sigma(cuc)} = \|cuc\|.$ Thus,
$c u^{1/2} u^{1/2} c = cuc = 0,$ which means that $c u^{1/2} =
u^{1/2} c=0$ and hence $s(c) \leq \mathbf{1}-s(u^{1/2}) = \mathbf{1}-s(u)=0,$ which leads to $c = 0.$\smallskip

To prove the second part, we observe that $c^2 u = - cuc \leq 0$ implies  $\sigma(c^2 u) \subset (-\infty,0].$
\end{proof}

The following results are ternary versions of Lemmas \ref{masas} and \ref{addi}, respectively.

\begin{lemma}\label{l linearity on single generated von Neumann subalgebras}
Let $T: M \to M$ be a {\rm(}not necessarily linear nor continuous{\rm)} $2$-local
triple derivation on a von Neumann algebra. Let $z\in M$ be a self-adjoint element and let
$\mathcal{W}^\ast(z)=\{z\}''$ be the abelian von Neumann subalgebra of $M$
generated by the element $z$ and the unit element. Then there exist skew-hermitian
elements $a_z, b_z\in M$, depending on $z$, such that
\begin{equation*}\label{spat}
T(x)=[a_z,x] + b_z\circ x = a_z x-x a_z + \frac12 (b_z x+x b_z)
\end{equation*}
for all $x\in \mathcal{W}^\ast(z).$  In particular, $T$ is linear on $\mathcal{W}^\ast(z).$
\end{lemma}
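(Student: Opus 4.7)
The plan is to mimic the proof of Lemma~\ref{masas}, but accounting for the Jordan part $b\circ x$ that appears in the triple derivation decomposition~\eqref{liejor}. I will first identify $b_z$ explicitly and reduce to a $2$-local triple derivation annihilating $\mathbf{1}$; then run the Lemma~\ref{masas} argument on $\mathcal{W}^\ast(z)_{sa}$ via Lemma~\ref{l sea}; and finally upgrade to all of $\mathcal{W}^\ast(z)$ by replacing $z$ with a positive invertible translate and invoking Lemma~\ref{l product}.

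First I apply the $2$-local property to the pair $(z,\mathbf{1})$: this produces skew-hermitian elements $a_z, b$ with $T(z)=[a_z,z]+b\circ z$ and $T(\mathbf{1})=[a_z,\mathbf{1}]+b\circ\mathbf{1}=b$. In particular $b_z:=T(\mathbf{1})$ is skew-hermitian and $b=b_z$ is forced. Since $M_{b_z}=\delta\bigl(\tfrac12 b_z,\mathbf{1}\bigr)$ is an inner triple derivation, $S:=T-M_{b_z}$ is again a $2$-local triple derivation and now $S(\mathbf{1})=0$. It therefore suffices to prove $S(x)=[a_z,x]$ for every $x\in\mathcal{W}^\ast(z)$.

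For the self-adjoint part this follows from the Lemma~\ref{masas} argument: given $y\in\mathcal{W}^\ast(z)_{sa}$, Lemma~\ref{l sea} applied to $S$ and the pair $(z,y)$ supplies a skew-hermitian $\alpha$ with $S(z)=[\alpha,z]$ and $S(y)=[\alpha,y]$. Since also $S(z)=[a_z,z]$, we have $[\alpha-a_z,z]=0$, so $\alpha-a_z\in\{z\}'=\mathcal{W}^\ast(z)'$ and hence $[\alpha,y]=[a_z,y]$, i.e.\ $S(y)=[a_z,y]$. To extend this to arbitrary $x\in\mathcal{W}^\ast(z)$, I fix $\lambda>\|z\|$ and set $z':=z+\lambda\mathbf{1}\in\mathcal{W}^\ast(z)_{sa}$, so $z'$ is positive with full support in $M$ and, by the previous step, $S(z')=[a_z,z']$. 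The $2$-local property for the pair $(z',x)$ gives skew-hermitian $\xi,\eta$ with $S(z')=[\xi,z']+\eta\circ z'$ and $S(x)=[\xi,x]+\eta\circ x$; comparing the two expressions for $S(z')$ yields $[\xi-a_z,z']+\eta\circ z'=0$. The key algebraic observation is that with $z'$ hermitian and $\xi-a_z,\eta$ skew-hermitian, $[\xi-a_z,z']$ is hermitian while $\eta\circ z'$ is skew-hermitian, so both summands must vanish separately. From $[\xi-a_z,z']=0$ I get $\xi-a_z\in\{z'\}'=\{z\}'=\mathcal{W}^\ast(z)'$, whence $[\xi,x]=[a_z,x]$. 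From $\eta\circ z'=0$, setting $c:=i\eta$ (self-adjoint) and $u:=z'$ (positive, full support), the relation $uc+cu=0$ together with Lemma~\ref{l product} forces $\eta=0$. Therefore $S(x)=[a_z,x]$, and so $T(x)=[a_z,x]+b_z\circ x$ for every $x\in\mathcal{W}^\ast(z)$.

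The main obstacle is exactly the last extension step: from $\eta\circ z=0$ alone one cannot conclude $\eta\circ x=0$ for non-self-adjoint $x\in\mathcal{W}^\ast(z)$ (the quickest toy witness is $z=\mathrm{diag}(1,-1)\in M_2(\mathbb{C})$, where $\eta\circ z=0$ only forces $\eta$ to be off-diagonal, so $\eta\circ\mathbf{1}=\eta$ can still be nonzero). Translating $z$ by a scalar larger than $\|z\|$ makes it positive and invertible, removing this ``symmetric spectrum'' obstruction and allowing the positivity argument in Lemma~\ref{l product} to force $\eta$ itself to vanish. The linearity of $T$ on $\mathcal{W}^\ast(z)$ is then automatic, because the map $x\mapsto[a_z,x]+b_z\circ x$ is $\mathbb{C}$-linear.
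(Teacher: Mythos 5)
Your proof is correct and its core coincides with the paper's: apply the $2$-local property to a pair consisting of a positive invertible element of $\mathcal{W}^\ast(z)$ and an arbitrary $x\in\mathcal{W}^\ast(z)$, split the resulting identity into its hermitian and skew-hermitian parts, kill the Jordan coefficient with Lemma \ref{l product}, and identify the commutator part through $\{z\}'=\mathcal{W}^\ast(z)'$. The only difference is organizational: the paper replaces $z$ by $\mathbf{1}+\frac{z}{2\|z\|}$ at the outset and subtracts the whole map $x\mapsto[a_z,x]+b_z\circ x$, so that the normalized map vanishes at a positive invertible generator and the argument runs in one pass, making your preliminary detour through $T(\mathbf{1})$, Lemma \ref{l sea} and the self-adjoint case unnecessary.
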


\begin{proof} We can assume that $z\neq 0$. Note that the abelian von Neumann subalgebras
generated by $\mathbf 1$ and $z$ and by $\mathbf 1$  and $\mathbf 1+\frac{\textstyle z}{\textstyle  2\|z\|}$
coincide. So, replacing $z$ with  $\textbf{1}+\frac{\textstyle z}{\textstyle  2\|z\|}$
we can assume that $z$ is an invertible positive element.\smallskip

By definition, there exist skew-hermitian elements $a_{z}, b_{z}\in M$
(depending on $z$) such that $$ T(z) = [a_{z},z]+ b_{z} \circ z.$$

Define a mapping $T_0 : M \to M$ given by $T_0 (x) = T (x)- ([a_{z},z]+ b_{z} \circ z),$
$\ x\in M.$ Clearly, $T_0$ is a 2-local triple derivation on $M$.
We shall show that $T_0\equiv 0$ on $\mathcal{W}^\ast(z)$. Let $x\in \mathcal{W}^\ast(z)$ be an
arbitrary element. By assumptions, there exist skew-hermitian
elements $c_{z,x}, d_{z,x}\in M$ such that
$$
T_0(z)=[c_{z,x}, z]+ d_{z,x}\circ z,\hbox{ and, }  T_0(x)=[c_{z,x}, x]+d_{z,x}\circ x.
$$
Since
$$
0=T_0(z)=[c_{z,x}, z]+d_{z,x}\circ z,
$$
we get
$$
[c_{z,x}, z]+d_{z,x}\circ z=0.
$$
Taking into account that  $z$ is a hermitian element we can easily see that
$c_{z,x} z=z c_{z,x}$ and $d_{z,x} z =- z d_{z,x}.$

Since $z$ has a full support, and $d_{z,x}^2 z =- d_{z,x} z d_{z,x}$, Lemma \ref{l product} implies that  $d_{z,x}=0.$
Further
$$
c_{z,x}\in \{z\}'=\{z\}'''=\mathcal{W}^\ast(z)',
$$
i.e. $c_{z,x}$ commutes with any element in $\mathcal{W}^\ast(z).$
Therefore
$$
T_0(x) = [c_{z,x}, x]+d_{z,x}\circ x=0
$$
for all $x\in \mathcal{W}^\ast(z).$ The proof is complete.
\end{proof}

\begin{proposition}\label{additgen}\cite[Proposition 2.13]{KOPR2014} Let $T: M\to M$ be a  2-local triple derivation on an arbitrary von Neumann algebra. Then the restriction $T|_{M_{sa}}$ is additive.
\end{proposition}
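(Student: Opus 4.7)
The approach is to mimic the strategy developed for associative $2$-local derivations (Theorems \ref{semi-finite} and \ref{main}) after an initial reduction that kills the ``Jordan part'' of the triple derivation. Since every triple derivation on a unital C$^*$-algebra sends $\mathbf{1}$ to a skew-hermitian element, $c:=T(\mathbf{1})$ is skew-hermitian, and $\delta_0(x):=c\circ x$ is itself a triple derivation of the form \eqref{liejor}. Hence $\widetilde{T}:=T-\delta_0$ is a $2$-local triple derivation with $\widetilde{T}(\mathbf{1})=0$, and it suffices to prove $\widetilde{T}|_{M_{sa}}$ is additive. Lemmas \ref{l sead} and \ref{l sea} then apply: $\widetilde{T}$ maps $M_{sa}$ into itself, and for every $x,y\in M_{sa}$ there is a \emph{single} skew-hermitian $a_{x,y}\in M$ with $\widetilde{T}(x)=[a_{x,y},x]$ and $\widetilde{T}(y)=[a_{x,y},y]$. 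In particular, $\widetilde{T}|_{M_{sa}}$ behaves exactly like a $2$-local inner associative derivation restricted to the self-adjoint part. I would then decompose $M$ along a central projection into its semifinite and purely infinite parts; since every triple derivation $D$ satisfies $D(ex)=eD(x)$ for each central projection $e$ (immediate from $[a,ex]=e[a,x]$ and $b\circ(ex)=e(b\circ x)$), the $2$-local map $\widetilde{T}$ respects this decomposition and each summand can be treated independently.

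On the semifinite summand, with a faithful normal semifinite trace $\tau$, the argument of Theorem \ref{semi-finite} carries over almost verbatim. For $x_1,x_2\in M_{sa}$ and $y\in M_\tau\cap M_{sa}$, applying Lemma \ref{l sea} to each of the pairs $(x_1+x_2,y)$, $(x_1,y)$, $(x_2,y)$ yields the identity $\widetilde{T}(x)y+x\widetilde{T}(y)=[a_{x,y},xy]$ (for each such $x$), whose trace vanishes because $xy\in M_\tau$ and $M_\tau$ is an ideal. Combining these identities gives $\tau(by)=0$ for $b:=\widetilde{T}(x_1+x_2)-\widetilde{T}(x_1)-\widetilde{T}(x_2)\in M_{sa}$ and every $y\in M_\tau\cap M_{sa}$; choosing an increasing net $(e_\alpha)$ of projections from $M_\tau$ with $e_\alpha\uparrow\mathbf{1}$ and taking $y=e_\alpha b e_\alpha$ produces $\tau((be_\alpha)^*(be_\alpha))=0$, so faithfulness forces $be_\alpha=0$ and, passing to the strong limit, $b=0$.

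On the purely infinite summand I would follow the script of Lemmas \ref{comad} and \ref{addi}; crucially, no type $I_n$ or $I_2$ summand appears, so both the Dorofeev--Sherstnev theorem and the Bunce--Wright--Mackey--Gleason theorem apply cleanly. Lemma \ref{l linearity on single generated von Neumann subalgebras} gives, for any countable orthogonal family of projections $(e_n)$, that $\widetilde{T}$ coincides with an inner triple derivation on the abelian von Neumann subalgebra generated by $g:=\sum n^{-1}e_n$, and is therefore strong$^*$-continuous on bounded subsets; the countability reduction via $I_n=\{i\in I:\|\widetilde{T}(e_i)\xi\|\geq 1/n\}$ then yields strong-complete additivity of $\widetilde{T}|_{\mathcal{P}(M)}$ exactly as in the proof of Lemma \ref{comad}. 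Consequently, for every $\xi,\eta\in H$, the functional $f_{\xi,\eta}\circ\widetilde{T}|_{\mathcal{P}(M)}$ is a charge, hence bounded by Dorofeev--Sherstnev, and extends to a unique bounded linear functional $\widetilde{f}_{\xi,\eta}$ on $M$. The main obstacle, and the key verification step, is to check that $\widetilde{f}_{\xi,\eta}(x)=f_{\xi,\eta}(\widetilde{T}(x))$ for every $x\in M_{sa}$: Lemma \ref{l linearity on single generated von Neumann subalgebras} makes $f_{\xi,\eta}\circ\widetilde{T}|_{\mathcal{W}^*(x)}$ a bounded linear functional on the abelian subalgebra $\mathcal{W}^*(x)$ extending the charge $f_{\xi,\eta}\circ\widetilde{T}|_{\mathcal{P}(\mathcal{W}^*(x))}$, so uniqueness of the BWMG extension restricted to $\mathcal{W}^*(x)$ forces equality at $x$. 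Varying $\xi,\eta$ then gives additivity of $\widetilde{T}$, and hence of $T$, on $M_{sa}$.
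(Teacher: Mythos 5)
Your proposal is correct, and it follows the same two-pronged strategy as the paper (a trace argument on one central summand, the Bunce--Wright--Mackey--Gleason and Dorofeev--Sherstnev machinery on the other), but with a genuinely different central decomposition. You split $M$ into its semifinite and purely infinite (type $III$) parts, whereas the paper splits it into its finite part and a part with no type $I_n$-factor summands ($1<n<\infty$), i.e.\ essentially the properly infinite part. Your version pushes \emph{more} of the algebra (all of type $II_\infty$ and infinite type $I$, in addition to the finite part) into the elementary trace argument of Theorem \ref{semi-finite}, so that the measure-theoretic case is needed only on the type $III$ summand; the paper instead reserves the trace argument for the finite part and lets Propositions \ref{p AyupovKudaybergenov sigma complete additivity ternary}--\ref{p addit no type I_n} absorb everything properly infinite. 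Both decompositions are legitimate, and both parts of your argument are available: the normalization $\widetilde{T}=T-\delta\left(\tfrac12 T(\mathbf{1}),\mathbf{1}\right)$ (your $\delta_0=M_{T(\mathbf 1)}$ is exactly this inner triple derivation) together with Lemmas \ref{l sead} and \ref{l sea} reduces everything on $M_{sa}$ to the purely commutator situation, after which the associative proofs of Theorem \ref{semi-finite} and Lemmas \ref{comad}--\ref{addi} really do transfer verbatim; the paper performs this same normalization, but only locally inside the proof of Proposition \ref{p AyupovKudaybergenov sigma complete additivity ternary}, which is why it needs the slightly heavier seminorm bookkeeping there.

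One small computational slip, easily repaired: in the semifinite step, taking $w=e_\alpha b e_\alpha$ in $\tau(bw)=0$ does not give $\tau\bigl((be_\alpha)^*(be_\alpha)\bigr)=0$. What you actually get, using traciality and $e_\alpha^2=e_\alpha$, is
\begin{equation*}
0=\tau(b\, e_\alpha b e_\alpha)=\tau\bigl((e_\alpha b e_\alpha)^2\bigr)=\tau\bigl((e_\alpha b e_\alpha)^*(e_\alpha b e_\alpha)\bigr),
\end{equation*}
since $b=b^*$. Faithfulness then forces $e_\alpha b e_\alpha=0$ (not $be_\alpha=0$) for every $\alpha$, and $e_\alpha b e_\alpha\to b$ strongly still yields $b=0$. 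With that correction the argument is complete.
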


It should be noted that the proof of this result is divided into two cases: finite and properly infinite von Neumann algebras. In the case of a finite von Neumann algebras we follow the same argument in the proof of Theorem~\ref{semi-finite}, and we use a faithful normal semi-finite trace. For properly infinite von Neumann algebras we use the following ternary version of Lemma~\ref{comad}. As in the case of  associative derivations,  the proof is essentially based on the
Bunce-Wright-Mackey-Gleason theorem for bounded measures on projections of von Neumann algebras (see \cite{KOPR2014}).

\begin{proposition}\label{p AyupovKudaybergenov sigma complete additivity ternary}\cite[Proposition 2.7]{KOPR2014}
Let $T: M\to M$ be a  2-local triple derivation on a von Neumann
algebra. Then the following statements
hold:\begin{enumerate}[$(a)$]
\item The restriction $T|_{P(M)}$ is sequentially strong$^*$-additive, and consequently sequentially weak$^*$-additive;
\item $T|_{P(M)}$ is weak$^*$-completely additive, i.e.,
\[
T\left(\hbox{weak$^*$-}\sum\limits_{i\in I} p_i\right)
=\hbox{weak$^*$-} \sum\limits_{i\in I}T(p_i)
\]
 for every family $(p_i)_{i\in I}$ of mutually orthogonal
projections in $M.$
\end{enumerate}
\end{proposition}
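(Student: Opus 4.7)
The plan is to adapt the strategy of Lemma \ref{comad} to the present ternary setting, replacing the use of Lemma \ref{masas} by its triple analogue, Lemma \ref{l linearity on single generated von Neumann subalgebras}, which represents the 2-local triple derivation $T$ on any abelian von Neumann subalgebra $\mathcal{W}^{*}(z)$ of $M$ generated by a self-adjoint $z$ as $x\mapsto [a,x]+b\circ x$ for suitable skew-hermitian $a,b\in M$.

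For part $(a)$, given a sequence $(e_n)$ of mutually orthogonal projections, I would consider the norm-convergent positive element $\displaystyle z=\sum_{n\geq 1}2^{-n}e_n$ and observe that each $e_n$ and the range projection $\displaystyle e=\sum_n e_n$ lie in $\mathcal{W}^{*}(z)$ as spectral projections. Lemma \ref{l linearity on single generated von Neumann subalgebras} produces skew-hermitian $a,b\in M$ with
\begin{equation*}
T(x)=[a,x]+b\circ x,\qquad x\in \mathcal{W}^{*}(z).
\end{equation*}
Joint strong$^{*}$-continuity of the product of $M$ on bounded sets makes both $[a,\cdot]$ and $b\circ(\cdot)$ strong$^{*}$-continuous on the uniformly bounded partial-sum net, yielding $T(e)=[a,e]+b\circ e=\hbox{strong}^{*}\hbox{-}\sum_n\bigl([a,e_n]+b\circ e_n\bigr)=\hbox{strong}^{*}\hbox{-}\sum_n T(e_n)$, which subsumes the weak$^{*}$-sequential statement.

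For part $(b)$, let $(p_i)_{i\in I}$ be a mutually orthogonal family with weak$^{*}$-sum $p$, and fix $\varphi\in M_{*}$. To reduce to the countable setting, for each $n\in\mathbb{N}$ put $I_n=\{i\in I:|\varphi(T(p_i))|\geq 1/n\}$; if some $I_n$ were infinite, selecting a countably infinite subfamily would, via part $(a)$ and the weak$^{*}$-continuity of $\varphi$, produce a convergent scalar series $\sum_k\varphi(T(p_{i_k}))$ whose terms are bounded below by $1/n$, a contradiction. Hence $I_0=\bigcup_n I_n$ is countable and $\varphi(T(p_i))=0$ for $i\notin I_0$. Setting $\displaystyle q=\sum_{i\in I\setminus I_0}p_i$ and noting that $\{q\}\cup\{p_i\}_{i\in I_0}$ is a countable orthogonal family, a second application of part $(a)$ gives
\begin{equation*}
\varphi(T(p))=\varphi(T(q))+\sum_{i\in I_0}\varphi(T(p_i))=\varphi(T(q))+\sum_{i\in I}\varphi(T(p_i)),
\end{equation*}
so the conclusion reduces to verifying $\varphi(T(q))=0$.

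The main obstacle is precisely this last vanishing, which is more delicate than its associative counterpart. In Lemma \ref{comad} the analogous step relied on the identity $\Delta(e)=\Delta(e)e+e\Delta(e)$ combined with the pointwise relations $\Delta(e)e_i\xi=0$ derived from $\Delta(e_i)\xi=0$. Here the corresponding identity, obtained by applying the triple Leibnitz rule at the projection $q$ for a triple derivation witnessing $T(q)$, reads $T(q)=T(q)q+qT(q)+qT(q)^{*}q$, whose extra Jordan summand $qT(q)^{*}q$ does not appear in the commutator case. For each $i\notin I_0$, the 2-local property applied to the pair $(q,p_i)$ supplies skew-hermitian $a_i,b_i\in M$ with $T(q)=[a_i,q]+b_i\circ q$ and $T(p_i)=[a_i,p_i]+b_i\circ p_i$; from the first expression one immediately reads off the Peirce-zero vanishing $(\mathbf{1}-q)T(q)(\mathbf{1}-q)=0$. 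I would then exploit $p_i\leq q$ and the relation $\varphi(T(p_i))=0$ to control the off-diagonal and Jordan cross-terms of $T(q)$ in the Barton-Friedman pre-Hilbertian seminorm $\|\cdot\|_\varphi$ attached to the JBW$^{*}$-triple structure on $M$, and then sum the resulting local vanishings against the strong$^{*}$-convergent expansion $q=\sum_{i\notin I_0}p_i$. The skew-hermiticity of the $b_i$ is what allows the Jordan contributions to cancel, after which one concludes $\varphi(T(q))=0$ and hence weak$^{*}$-complete additivity.
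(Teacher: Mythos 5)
Your part $(a)$ is essentially the paper's argument (the choice $z=\sum_n 2^{-n}e_n$ versus $\sum_n \frac1n e_n$ is immaterial), but part $(b)$ has a genuine gap at exactly the step you flag as ``the main obstacle''. The problem originates earlier than you think: you define the exceptional sets by $I_n=\{i: |\varphi(T(p_i))|\geq 1/n\}$, so that outside $I_0$ you only know the single scalar condition $\varphi(T(p_i))=0$. The paper instead defines $I_n=\{i: \|T(p_i)\|_\varphi\geq 1/n\}$ using the pre-Hilbertian seminorm $\|z\|_\varphi^2=\frac12\varphi(zz^*+z^*z)$ of a \emph{positive} normal $\varphi$; for $i\notin I_0$ this yields $\|T(p_i)\|_\varphi=0$, i.e.\ $T(p_i)\perp q_\varphi$ where $q_\varphi$ is the support projection of $\varphi$ --- a two-sided operator relation $T(p_i)q_\varphi=q_\varphi T(p_i)=0$, not merely the vanishing of one scalar. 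It is this orthogonality that makes the corner computations $(T(p)p_i)q_\varphi=0=q_\varphi(p_iT(p))$ possible and, after summing over $i\notin I_0$ and invoking the identity $T(p)=pT(p)+T(p)p+pT(p)p$, gives $\varphi(T(p))=0$. With only $\varphi(T(p_i))=0$ there is no comparable leverage, and the finiteness-of-$I_n$ argument works just as well for the seminorm (by part $(a)$ the series $\sum_k T(p_{i_k})$ converges in the strong$^*$, hence $\|\cdot\|_\varphi$, topology, so its terms cannot stay $\|\cdot\|_\varphi$-bounded below), so nothing is gained by weakening the definition.

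The second missing ingredient is the normalization $T(\mathbf{1})=0$. Your sketch keeps the representations $T(q)=[a_i,q]+b_i\circ q$ and hopes the Jordan summands ``cancel by skew-hermiticity of $b_i$''; but $b_i\circ q=\frac12(b_iq+qb_i)$ does not vanish or cancel for a general skew-hermitian $b_i$, and no mechanism for the claimed cancellation is given. The paper first replaces $T$ by $\widehat{T}=T-\delta(\frac12 T(\mathbf{1}),\mathbf{1})$, so that $\widehat{T}(\mathbf{1})=0$ and $\widehat{T}$ is symmetric on $M_{sa}$ (Lemma \ref{l sead}); Lemma \ref{l sea} then supplies, for each pair of self-adjoint elements, a \emph{pure commutator} representation $T(p)=[a_i,p]$, $T(p_i)=[a_i,p_i]$ with a common skew-hermitian $a_i$, eliminating the Jordan part entirely before the summation argument of Lemma \ref{comad} is run. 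Without both the seminorm-based definition of $I_0$ and this normalization, the concluding vanishing $\varphi(T(q))=0$ is not established.
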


\begin{proof}$(a)$ Let $(p_n)_{n\in \mathbb{N}}$ be a sequence of mutually
orthogonal projections in $M.$ Let us consider the element $z=\sum\limits_{n\in
I}\frac{\textstyle 1}{\textstyle n} p_n.$ By Lemma \ref{l linearity on single generated von Neumann subalgebras}
there exist skew-hermitian elements $a_{z},b_{z}\in M$ such that $T(x)=[a_{z},x] + b_{z}\circ x$ for
all $x\in \mathcal{W}^\ast(z).$ Since $\sum\limits_{n=1}^{\infty} p_n, p_m\in
\mathcal{W}^\ast(z),$ for all $m\in \mathbb{N},$ and the product of $M$ is jointly strong$^*$-continuous, we obtain that
$$
T \left(\sum\limits_{n=1}^{\infty} p_n\right)=\left[a_{z}, \sum\limits_{n=1}^{\infty} p_n\right] +
b_{z}\circ \left(\sum\limits_{n=1}^{\infty} p_n \right)$$ $$=\sum\limits_{n=1}^{\infty}[a_{z},
p_n] + \sum\limits_{n=1}^{\infty} b_{z}\circ p_n=\sum\limits_{n=1}^{\infty}  T(p_n),
$$ i.e. $T|_{\mathcal{P}(M)}$ is a countably or sequentially strong$^*$-additive mapping.\medskip

$(b)$ Let $\varphi$ be a positive normal functional in $M_*$, and let $\|.\|_{\varphi}$
 denote the prehilbertian seminorm given by $\|z\|_{\varphi}^2 = \frac12 \varphi (z z^* + z^* z)$
 ($z\in M$). Let $\{p_i\}_{i\in I}$ be an arbitrary family of mutually orthogonal projections in $M.$
 For every $n\in \mathbb{N}$ define $$I_n=\{i\in I: \left\|T (p_i)\right\|_{\varphi} \geq 1/n\}.$$

We claim, that $I_n$ is a finite set for every natural $n$. Otherwise, passing to a subset if necessary,
we can assume that there exists a natural $k$ such that
$I_k$ is infinite and countable. In this case the series
$\sum\limits_{i\in I_k} T(p_i) $ does not converge with respect to the semi-norm  $\|.\|_{\varphi}$.
On the other hand, since $I_k$ is a countable set, by $(a)$, we have
$$T \left(\sum\limits_{i\in I_k} p_i\right) = \hbox{strong$^*$-}\sum\limits_{i\in I_k} T(p_i), $$
which is impossible.  This proves the claim.\smallskip

We have shown that the set
$$ I_0=\left\{i\in I: \left\|T (p_i)\right\|_{\varphi} \neq 0\right\}=\bigcup\limits_{n\in \mathbb{N}}I_n $$
is a countable set, and $\left\|T (p_i)\right\|_{\varphi}= 0$, for every $i\in I\backslash I_0$.\smallskip

Set $p=\sum\limits_{i\in I\setminus I_0} p_i\in M.$ We shall show that $\varphi (T(p)) =0.$ Let $q$
 denote the support projection of $\varphi $ in $M$.
Having in mind that $\left\|T (p_i)\right\|^2_{\varphi}= 0$, for every $i\in I\backslash I_0$,
we deduce that $T(p_i) \perp q$ for every $i\in I\backslash I_0$.\smallskip

Replacing $T$ with $\widehat{T}=T - \delta (\frac12 T ({\mathbf 1}),{\mathbf 1})$  we can assume that $T({\mathbf 1}) = 0$
(cf. comments in page \pageref{ref to jordan *der}) and $T(x) = T(x)^*$, for every $x\in M_{sa}$ (cf. Lemma \ref{l sead}).
By Lemma \ref{l sea}, for every $i\in I\setminus I_0$ there exists a skew-hermitian element $a_i=a_{p,p_i}\in M$ such that
$$ T (p)=a_i p- p a_i,\hbox{ and, } T(p_i)=a_i p_i-p_i a_i. $$
Since $T(p_i) \perp q$ we get $(a_i p_i-p_i a_i) q = q (a_i p_i-p_i a_i) =0$, for all $i\in I\setminus I_0.$ Thus, since $p a_i p_i q= p_i a_i q$,
$$ (T(p) p_i)  q = (a_i p - p a_i) p_i q = a_i p_i q - p a_i p_i q  $$
$$= a_i p_i q - p_i a_i  q = (a_i p_i-p_i a_i) q =0,$$ and similarly
$$ q (p_i T(p)) = 0,$$ for every $i\in I\setminus I_0$. Consequently,
\begin{equation}\label{eq qpT(p)=0} (T(p) p) q=  T(p) \left(\sum\limits_{i\in I\setminus I_0}p_i \right) q = 0 =
q \left(\sum\limits_{i\in I\setminus I_0}p_i \right) T(p)=q (p T(p)).
\end{equation}
 Therefore, $$T(p) = \delta_{p,{\mathbf 1}} (p) = \delta_{p,{\mathbf 1}} \{p,p,p\}= 2 \{ \delta_{p,{\mathbf 1}}(p),p,p\} +
 \{p,\delta_{p,{\mathbf 1}} (p), p\}  $$ $$= 2 \{T(p),p,p\} +  \{p,T (p), p\} = p T(p) + T(p) p + p T(p)^* p $$
 $$=  p T(p) + T(p) p + p T(p) p,$$ which implies that $$\varphi (T(p)) = \varphi (p T(p) + T(p) p + p T(p) p) $$
 $$= \varphi (qp T(p) q) +\varphi(q T(p) p q) + \varphi (q p T(p) p q)= \hbox{(by \eqref{eq qpT(p)=0})}= 0.$$\smallskip

Finally, by $(a)$ we have
$$T\left(\sum\limits_{i\in I_0} p_i\right)= \|.\|_{\varphi}\hbox{-}\sum\limits_{i\in I_0} T\left(p_i\right).$$
Two more applications of $(a)$ give:
$$ \varphi \left(T\left(\sum\limits_{i\in I} p_i\right)\right)  = \varphi \left(
T\left(p +\sum\limits_{i\in I_0} p_i\right) \right)=
 \varphi \left(T(p) + T\left(\sum\limits_{i\in I_0} p_i\right)\right)$$
$$ = \varphi \left(T(p)\right) + \varphi \left( T\left(\sum\limits_{i\in I_0} p_i\right)\right) = \sum\limits_{i\in I_0} \varphi \left( T\left( p_i\right)\right).$$

By the Cauchy-Schwarz inequality, $0\leq \left|\varphi T(p_i)\right|^2 \leq \left\|T (p_i)\right\|^2_{\varphi}= 0$, for every $i\in I\backslash I_0$,
and hence $\displaystyle\sum\limits_{i\in I_0} \varphi \left( T\left( p_i\right)\right) = \sum\limits_{i\in I} \varphi \left( T\left( p_i\right)\right).$
The arbitrariness of $\varphi$ shows
that $T\left(\hbox{weak$^*$-}\sum\limits_{i\in I} p_i\right) =\hbox{weak$^*$-}
\sum\limits_{i\in I}T(p_i)$.
\end{proof}

$T: M\to M$ be a  2-local triple derivation on a von Neumann algebra, and let $\phi$ be a normal functional in the predual of $M.$ Proposition \ref{p AyupovKudaybergenov sigma complete additivity ternary}
implies that the mapping $\phi\circ T|_{\mathcal{P}(M)}: \mathcal{P}(M) \to \mathbb{C}$ is
a completely additive mapping or a charge on $M$. Under the additional hypothesis of $M$ being a continuous von Neumann algebra or,  more generally, a von Neumann algebra with no Type I$_n$-factors ($1< n< \infty$) direct summands (i.e. without direct summand isomorphic to a matrix algebra $M_n(\mathbb{C})$, $1<n<\infty$), the Dorofeev--Sherstnev theorem (\cite[Theorem 29.5]{Shers2008} or \cite[Theorem 2]{Doro}) guarantees that $\phi\circ T|_{\mathcal{P}(M)}$ is a bounded charge, that is,
the set $\left\{ |\phi\circ T (p)| : p \in \mathcal{P}(M) \right\}$ is bounded, and so an application of the uniform boundedness principle gives:

\begin{corollary}\label{c 2 local triple derivations are bounded charges}\cite[Corollary 2.8]{KOPR2014} Let $M$ be a von Neumann algebra with no Type I$_n$-factor direct summands {\rm(}$1< n< \infty${\rm)} and let $T: M\to M$ be a {\rm(}not necessarily linear nor continuous{\rm)} 2-local triple derivation. Then the restriction $T|_{\mathcal{P}(M)}$ is a bounded weak$^*$-completely additive mapping.$\hfill\Box$
\end{corollary}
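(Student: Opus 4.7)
The plan is to compose three ingredients already present in the excerpt. First, fix an arbitrary normal functional $\phi\in M_*$. By Proposition \ref{p AyupovKudaybergenov sigma complete additivity ternary}$(b)$, the restriction $T|_{\mathcal{P}(M)}$ is weak$^*$-completely additive, and composing with the (weak$^*$-continuous) functional $\phi$ shows that $\mu_\phi:=\phi\circ T|_{\mathcal{P}(M)}:\mathcal{P}(M)\to \mathbb{C}$ is completely additive, i.e., a charge on $M$ in the sense of \cite{Shers2008,Doro}.

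Next, I would invoke the Dorofeev--Sherstnev theorem (\cite[Theorem 29.5]{Shers2008} or \cite[Theorem 2]{Doro}): under the standing hypothesis that $M$ has no type $I_n$-factor direct summands with $1<n<\infty$, every charge on $\mathcal{P}(M)$ is automatically bounded. Hence $\sup\{|\phi(T(p))|:p\in\mathcal{P}(M)\}<\infty$ for every $\phi\in M_*$, which is exactly the statement that the family $\{T(p)\}_{p\in\mathcal{P}(M)}\subset M=(M_*)^*$ is pointwise bounded on the Banach space $M_*$.

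Finally, the uniform boundedness principle upgrades pointwise boundedness to norm boundedness, giving $\sup_{p\in\mathcal{P}(M)}\|T(p)\|<\infty$. Combined with the weak$^*$-complete additivity furnished by Proposition \ref{p AyupovKudaybergenov sigma complete additivity ternary}$(b)$, this delivers both halves of the conclusion.

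There is in effect no new obstacle to overcome: the only substantive ingredient is the Dorofeev--Sherstnev theorem, and the hypothesis excluding type $I_n$-factor summands ($1<n<\infty$) is precisely what allows its invocation, ruling out the Gleason-type pathologies that obstruct automatic boundedness of charges on small matrix factors. The additivity half of the conclusion requires no further argument, being handed to us intact by the previous proposition.
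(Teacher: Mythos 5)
Your proposal is correct and follows essentially the same route as the paper: compose $T|_{\mathcal{P}(M)}$ with a normal functional $\phi\in M_*$ to obtain a charge via Proposition \ref{p AyupovKudaybergenov sigma complete additivity ternary}$(b)$, invoke the Dorofeev--Sherstnev theorem under the no-type-$I_n$-summand hypothesis to get boundedness of each $\phi\circ T|_{\mathcal{P}(M)}$, and conclude norm boundedness by the uniform boundedness principle. Nothing is missing.
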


When $M$ is a von Neumann algebra with no Type I$_n$-factor direct summands {\rm(}$1< n< \infty${\rm)}, and $T: M\to M$ is a {\rm(}not necessarily linear nor continuous{\rm)} 2-local triple derivation. We can combine Corollary \ref{c 2 local triple derivations are bounded charges} above with the Bunce-Wright-Mackey-Gleason theorem \cite{BuWri92, BuWri94}, and argument similar to that employed in the proof of Lemma \ref{addi} shows:

\begin{proposition}\label{p addit no type I_n}\cite[Proposition 2.9]{KOPR2014} Let $T: M\to M$ be a {\rm(}not necessarily linear nor continuous{\rm)}
2-local triple derivation on a von Neumann algebra with no Type I$_n$-factor direct
summands {\rm(}$1< n< \infty${\rm)}. Then the restriction
$T|_{M_{sa}}$ is additive.$\hfill\Box$
\end{proposition}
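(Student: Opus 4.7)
The plan is to mimic the proof of Lemma~\ref{addi} for ordinary $2$-local derivations, but with triple derivations and the decomposition \eqref{liejor} replacing ordinary inner derivations. The strategy: for each normal functional $\phi$ in the predual $M_*$, extend $\phi\circ T|_{\mathcal{P}(M)}$ to a bounded linear functional on $M$ via the Bunce--Wright--Mackey--Gleason theorem, then identify this extension with $\phi\circ T$ on $M_{sa}$ by exploiting the fact that $T$ acts linearly on each singly generated abelian subalgebra. Additivity will then pass from functionals to $T$ itself because $M_*$ separates the points of $M$.

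First I would fix an arbitrary $\phi\in M_*$ and apply Corollary~\ref{c 2 local triple derivations are bounded charges} to conclude that $\phi\circ T|_{\mathcal{P}(M)}$ is a bounded completely additive measure on $\mathcal{P}(M)$. Since the hypothesis excluding type $I_n$-factor summands ($1<n<\infty$) covers, in particular, the absence of any type $I_2$ direct summand, the Bunce--Wright--Mackey--Gleason extension theorem (\cite{BuWri92,BuWri94}) will produce a unique bounded linear functional $\widetilde{\phi}:M\to\mathbb{C}$ with $\widetilde{\phi}|_{\mathcal{P}(M)}=\phi\circ T|_{\mathcal{P}(M)}$.

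Next I would verify that $\widetilde{\phi}(x)=\phi(T(x))$ for each $x\in M_{sa}$. Given such an $x$, Lemma~\ref{l linearity on single generated von Neumann subalgebras} supplies skew-hermitian $a_x,b_x\in M$ with $T(y)=[a_x,y]+b_x\circ y$ for every $y\in \mathcal{W}^*(x)$; in particular $\phi\circ T|_{\mathcal{W}^*(x)}$ is a bounded linear functional that coincides with $\widetilde{\phi}|_{\mathcal{W}^*(x)}$ on $\mathcal{P}(\mathcal{W}^*(x))$. Since $\mathcal{W}^*(x)$ is abelian, every self-adjoint element of it is a norm limit of real linear combinations of mutually orthogonal projections, so the two continuous linear maps must agree everywhere on $\mathcal{W}^*(x)$, yielding $\widetilde{\phi}(x)=\phi(T(x))$. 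The desired additivity follows at once: for $x,y\in M_{sa}$,
$$\phi(T(x+y))=\widetilde{\phi}(x+y)=\widetilde{\phi}(x)+\widetilde{\phi}(y)=\phi(T(x)+T(y)),$$
for every $\phi\in M_*$, and separation of points then gives $T(x+y)=T(x)+T(y)$.

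The main obstacle I foresee lies in the identification $\widetilde{\phi}(x)=\phi(T(x))$: one needs the a priori nonlinear map $\phi\circ T|_{M_{sa}}$ to become effectively linear after revealing that $T$ acts as a genuine triple derivation on each singly generated abelian subalgebra. The delicate point is that the ``Jordan perturbation'' $b_x\circ y$ and the skew-hermitian element $a_x$ both depend on $x$, so one cannot simply globalize them; instead one must exploit the uniqueness of bounded linear extensions of measures on projections within each commutative subalgebra, which is precisely the mechanism that allows the local linearity of $T$ on each $\mathcal{W}^*(x)$ to assemble into global additivity on $M_{sa}$.
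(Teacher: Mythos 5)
Your argument is essentially the paper's own: the survey obtains this proposition precisely by combining Corollary \ref{c 2 local triple derivations are bounded charges} with the Bunce--Wright--Mackey--Gleason theorem and repeating the argument of Lemma \ref{addi} --- extend $\phi\circ T|_{\mathcal{P}(M)}$ to a bounded linear functional, identify that extension with $\phi\circ T$ on each singly generated abelian subalgebra via Lemma \ref{l linearity on single generated von Neumann subalgebras} together with the norm-density of the linear span of projections, and then separate points with $M_*$ (your use of general normal functionals $\phi\in M_*$ in place of the vector functionals $f_{\xi,\eta}$ is immaterial). The only imprecise step is your parenthetical claim that excluding type $I_n$-\emph{factor} direct summands already rules out every type $I_2$ direct summand --- an algebra such as $M_2(L^\infty[0,1])$ has no factor direct summand yet is of type $I_2$, so the Bunce--Wright--Mackey--Gleason theorem would not literally apply --- but the survey reads the hypothesis in exactly the same way when invoking Dorofeev--Sherstnev in Corollary \ref{c 2 local triple derivations are bounded charges}, so this is an ambiguity inherited from the statement rather than a defect of your argument.
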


The comments made on page \pageref{ref comments on central projections} show that that for every 2-local derivation $\Delta$ on a von Neumann algebra $M$, $\Delta(ex)=e\Delta(x)$ for every central projection $e\in M$ and for every $x\in M.$ In the case of 2-local triple derivations we can prove:

\begin{lemma}\label{l:preserves central projections}\cite[Lemma 2.11]{KOPR2014}
If $T$ is a  $2$-local triple derivation on a von Neumann algebra
$M,$ and $p$ is a central projection in $M,$ then $T(Mp) \subset
Mp.$ In particular, $T(px) = pT(x)$ for every $x\in M$.
\end{lemma}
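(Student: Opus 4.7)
My plan is to exploit the explicit description \eqref{liejor} of triple derivations on a von Neumann algebra together with the 2-local hypothesis applied to a cleverly chosen pair. The whole argument should reduce to a one-line computation using centrality.

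First, I would fix an arbitrary $x\in M$ and invoke the 2-local property for the pair $(x,px)$: there exists a triple derivation $\delta=\delta_{x,px}:M\to M$ such that $\delta(x)=T(x)$ and $\delta(px)=T(px)$. By the structural formula \eqref{liejor}, there are skew-hermitian elements $a,b\in M$ with
$$\delta(y)=[a,y]+b\circ y\qquad(y\in M).$$
Second, using the centrality of $p$ (which gives $ap=pa$ and $bp=pb$), I would compute
\begin{align*}
[a,px] &= a(px)-(px)a = p(ax)-p(xa) = p[a,x],\\
b\circ(px) &= \tfrac{1}{2}\bigl(b(px)+(px)b\bigr) = \tfrac{1}{2}p(bx+xb) = p(b\circ x).
\end{align*}
Adding these yields
$$T(px)=\delta(px)=p[a,x]+p(b\circ x)=p\delta(x)=pT(x),$$
which is the ``in particular'' assertion.

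Finally, the inclusion $T(Mp)\subseteq Mp$ drops out: any $y\in Mp$ satisfies $y=py$ by centrality, so $T(y)=T(py)=pT(y)$, which forces $T(y)\in Mp$. I do not anticipate any real obstacle here; the only substantive ingredient is the decomposition \eqref{liejor} of triple derivations into a Lie part and a Jordan part, and that formula has already been established earlier in the paper via Sakai's theorem on inner derivations together with Johnson's Jordan-to-associative result. Everything else is just the fact that both the commutator and the Jordan product with a fixed element ``pull out'' a central projection.
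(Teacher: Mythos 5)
Your proof is correct, and for the identity $T(px)=pT(x)$ it is essentially the paper's own argument: the paper also applies the $2$-local hypothesis to the pair $(x,xp)$ and uses the decomposition \eqref{liejor} into a commutator part and a Jordan part, both of which commute past a central projection. The only organizational difference is in how the containment $T(Mp)\subseteq Mp$ is obtained: you deduce it as an immediate corollary of the identity (writing $y=py$ for $y\in Mp$, so $T(y)=pT(y)\in Mp$), whereas the paper proves it first by a separate, more elementary argument — for $y\in Mp$ one has $y=\{y,p,p\}$, and applying the triple Leibniz rule for $\delta_{y,p}$ shows each of the three resulting terms lies in the ideal $Mp$. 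The paper's route for the containment uses only the axiom of a triple derivation, while yours routes everything through \eqref{liejor} (hence through Sakai's innerness theorem and Johnson's Jordan-to-associative result); since \eqref{liejor} is needed for the second claim anyway, your streamlining loses nothing and is perfectly acceptable.
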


\begin{proof}
Consider $x \in Mp$, then $x = pxp = \{x,p,p\}$. $T$ coincides
with a triple derivation $\delta_{x,p}$ on the set $\{x,p\}$, hence
$$T(x)=\delta_{x,p} (x) = \delta_{x,p} \{ x,p, p\} = \{ \delta_{x,p} (x), p,p \} + \{x,
\delta_{x,p} (p), p \} + \{x,p,\delta_{x,p} (p)\}$$ lies in $Mp .$\smallskip

For the final statement, fix $x\in M,$ and consider skew-hermitian elements $a_{x,xp},$ $b_{x,xp}\in M$ satisfying
$$T(x) = [a_{x,xp},x]+ b_{x,xp}\circ x,\hbox{ and } T(xp) = [a_{x,xp},xp]+ b_{x,xp}\circ (xp).$$ The assumption $p$ being central implies that $pT(x) = T(px).\qedhere$
\end{proof}

Lemma \ref{l:preserves central projections} assures that the proof of Proposition \ref{additgen} will follow from Proposition \ref{p addit no type I_n} and from the next result taken from \cite{KOPR2014}.

\begin{proposition}\label{p addit finite}\cite[Proposition 2.12]{KOPR2014} Let $T: M\to M$ be a {\rm(}not necessarily linear nor continuous{\rm)} 2-local triple derivation
on a finite von Neumann algebra.   Then the restriction $T|_{M_{sa}}$ is additive.
\end{proposition}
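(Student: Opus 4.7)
The plan is to mimic the argument of Theorem~\ref{semi-finite} as closely as possible, exploiting the fact that on a finite von Neumann algebra $M$ there is a faithful normal finite trace $\tau$ with $M_\tau = M$. First, I would reduce to the case $T(\mathbf{1}) = 0$. Since $T$ is a $2$-local triple derivation, $T(\mathbf{1}) = \delta_{\mathbf{1}}(\mathbf{1})$ for some triple derivation $\delta_{\mathbf{1}}$, so $T(\mathbf{1})$ is skew-hermitian; a direct computation (as in the argument preceding Lemma~\ref{l sead}) shows that the inner triple derivation $\delta\bigl(\tfrac12 T(\mathbf{1}),\mathbf{1}\bigr)$ sends $\mathbf{1}$ to $T(\mathbf{1})$. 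Hence $\widetilde T := T - \delta\bigl(\tfrac12 T(\mathbf{1}),\mathbf{1}\bigr)$ is a $2$-local triple derivation with $\widetilde T(\mathbf{1})=0$, and since the subtracted map is linear, it suffices to prove additivity of $\widetilde T|_{M_{sa}}$.

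Next, I would invoke Lemmas~\ref{l sead} and~\ref{l sea}. Lemma~\ref{l sead} guarantees $\widetilde T(M_{sa}) \subseteq M_{sa}$, while Lemma~\ref{l sea} produces, for each pair $x,y \in M_{sa}$, a skew-hermitian element $a_{x,y} \in M$ with
\[
\widetilde T(x)=[a_{x,y},x] \qquad \text{and} \qquad \widetilde T(y)=[a_{x,y},y].
\]
On such a pair $\widetilde T$ thus acts like an inner associative derivation, so that
\[
\widetilde T(x)\, y + x\, \widetilde T(y) = [a_{x,y},\, xy].
\]
Applying $\tau$ and using that the trace annihilates commutators, I obtain the key identity
\[
\tau\bigl(\widetilde T(x)\, y\bigr) = -\tau\bigl(x\, \widetilde T(y)\bigr) \qquad (x,y \in M_{sa}).
\]

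With this identity in hand, additivity follows as in the semi-finite case. For $u,v,w \in M_{sa}$,
\[
\tau\bigl(\widetilde T(u+v)\, w\bigr) = -\tau\bigl((u+v)\, \widetilde T(w)\bigr) = \tau\bigl(\widetilde T(u)\, w\bigr) + \tau\bigl(\widetilde T(v)\, w\bigr),
\]
so $b := \widetilde T(u+v) - \widetilde T(u) - \widetilde T(v)$ lies in $M_{sa}$ and satisfies $\tau(b w) = 0$ for every $w \in M_{sa}$. Taking $w = b$ gives $\tau(b^2) = 0$, and the faithfulness of $\tau$ forces $b = 0$, yielding additivity of $\widetilde T|_{M_{sa}}$, and therefore of $T|_{M_{sa}}$. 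The only nontrivial point is the initial reduction to $T(\mathbf{1})=0$: without it, the generic $2$-local representation $T(x)=[a_{x,y},x] + b_{x,y}\circ x$ carries a Jordan summand whose trace $\tau(b_{x,y}\circ x \cdot y)$ is not automatically zero, which would obstruct the trace computation. Once Lemma~\ref{l sea} has eliminated that Jordan part on self-adjoint pairs, the entire triple-derivation structure collapses onto the associative-derivation argument used in Theorem~\ref{semi-finite}.
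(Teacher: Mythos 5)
Your overall strategy is precisely the one the paper indicates for this proposition: the survey defers the detailed proof to \cite{KOPR2014}, but it states explicitly that the finite case follows the argument of Theorem~\ref{semi-finite} using a faithful normal semi-finite trace. Your main steps are sound: the reduction to $T(\mathbf{1})=0$ via subtracting the inner triple derivation $\delta\bigl(\tfrac12 T(\mathbf{1}),\mathbf{1}\bigr)$, the use of Lemmas~\ref{l sead} and~\ref{l sea} to put $\widetilde{T}$ in pure commutator form on self-adjoint pairs, the identity $\widetilde{T}(x)y+x\widetilde{T}(y)=[a_{x,y},xy]$, and the trace computation that forces $b=\widetilde{T}(u+v)-\widetilde{T}(u)-\widetilde{T}(v)$ to vanish. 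You also correctly identify why the reduction to $T(\mathbf{1})=0$ is indispensable: it is what kills the Jordan summand $b_{x,y}\circ x$ that would otherwise obstruct the trace argument.

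There is, however, one genuine gap: your premise that a finite von Neumann algebra carries a faithful normal \emph{finite} trace $\tau$ with $M_\tau=M$ is false in general. Such a trace exists exactly when $M$ is, in addition, $\sigma$-finite (countably decomposable); for instance $\ell^\infty(I)$ with $I$ uncountable is a finite abelian von Neumann algebra admitting no faithful normal finite trace. As written, your argument therefore proves the proposition only for $\sigma$-finite finite algebras. The repair is immediate and uses only tools already present in the paper: take a faithful normal \emph{semi-finite} trace $\tau$ and run the computation for $w\in M_\tau\cap M_{sa}$ only, noting that $\widetilde{T}(w)=[a,w]\in M_\tau$ because $M_\tau$ is a two-sided ideal, so that all traces involved are defined and $\tau\bigl([a_{x,w},xw]\bigr)=0$. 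This yields $\tau(bw)=0$ for every $w\in M_\tau$, and then, exactly as at the end of the proof of Theorem~\ref{semi-finite}, an increasing net of projections $e_\alpha\uparrow\mathbf{1}$ in $M_\tau$ together with normality and faithfulness of $\tau$ gives $\tau(be_\alpha b)=0$ for all $\alpha$ and hence $b=0$. Alternatively, decompose $M$ along central projections into $\sigma$-finite summands and invoke Lemma~\ref{l:preserves central projections} to reduce to the case you treated.
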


The reader is referred to \cite{KOPR2014} for a detailed proof of the above proposition.\smallskip

The following two results are ternary versions of Lemmas~\ref{jor} and \ref{jorr}.

\begin{lemma}\label{jort}\cite[Lemma 2.15]{KOPR2014} Let $T: M\to M$ be a
2-local triple derivation on a  von Neumann algebra with
$T(\mathbf{1})=0.$ Then there exists a skew-hermitian element
$a\in M$ such that $T(x)=[a, x],$ for all $x\in M_{sa}.$
\end{lemma}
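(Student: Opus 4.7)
The plan is to reduce the problem to an ordinary associative derivation and then invoke Sakai's theorem. Lemma~\ref{l sead} shows $T(M_{sa})\subseteq M_{sa}$, and Proposition~\ref{additgen} (together with the evident real-homogeneity of any $2$-local triple derivation) shows that $T|_{M_{sa}}$ is real-linear. I would therefore define the complex-linear extension $\widetilde{T}\colon M\to M$ by $\widetilde{T}(x_1+ix_2):=T(x_1)+iT(x_2)$ for $x_1,x_2\in M_{sa}$, and aim to show that $\widetilde{T}$ is an associative derivation, coinciding with $T$ on $M_{sa}$.

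The crucial step is to establish the Jordan identity $\widetilde{T}(x^2)=\widetilde{T}(x)x+x\widetilde{T}(x)$ for every $x\in M$. First I would verify it for $x\in M_{sa}$ by applying Lemma~\ref{l sea} to the pair $(x,x^2)$ (both self-adjoint): this yields a skew-hermitian $a\in M$ with $T(x)=[a,x]$ and $T(x^2)=[a,x^2]$, and since $[a,x^2]=[a,x]x+x[a,x]$, we obtain $T(x^2)=T(x)x+xT(x)$. For general $x=x_1+ix_2$ I would expand
\[
x^2=(x_1^2-x_2^2)+i(x_1x_2+x_2x_1),
\]
rewrite $x_1x_2+x_2x_1=(x_1+x_2)^2-x_1^2-x_2^2$, apply the self-adjoint case term by term, and use the additivity of $T|_{M_{sa}}$ to collect terms. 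A routine bilinear expansion then matches the result with $\widetilde{T}(x)x+x\widetilde{T}(x)$.

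Once $\widetilde{T}$ is a Jordan derivation on the semi-prime algebra $M$, Bre\v{s}ar's theorem \cite[Theorem~1]{Bre} implies $\widetilde{T}$ is an associative derivation, and Sakai's theorem then produces $a\in M$ with $\widetilde{T}(x)=[a,x]$ for every $x\in M$. Self-adjointness of $T(x)$ for $x\in M_{sa}$ forces $[a+a^{*},x]=0$ for all $x\in M_{sa}$, hence for all $x\in M$, so $a+a^{*}$ lies in the center of $M$; subtracting half of this central element from $a$ yields a skew-hermitian element (still implementing the same inner derivation) with $T(x)=[a,x]$ for every $x\in M_{sa}$.

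The step I expect to be the main obstacle is the Jordan identity for self-adjoint elements, which depends essentially on the hypothesis $T(\mathbf{1})=0$: it is precisely this that allows Lemma~\ref{l sea}, applied to the pair $(x,x^2)$, to eliminate the ``Jordan multiplier'' term $b\circ x$ from the decomposition \eqref{liejor} of a general triple derivation. Without this hypothesis one would only obtain $T(x^2)=[a,x^2]+b\circ x^2$ with a troublesome extra summand, and the passage to a true (associative) Jordan identity would break down.
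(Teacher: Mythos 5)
Your proposal is correct and follows essentially the same route as the paper's proof: Lemma~\ref{l sea} applied to the pair $(x,x^2)$ gives the Jordan identity on $M_{sa}$, the complex-linear extension is then a Jordan derivation by additivity and polarization, and Bre\v{s}ar's theorem plus innerness of derivations on von Neumann algebras yields the implementing element, adjusted to be skew-hermitian. The only difference is that you spell out the polarization to non-self-adjoint elements and the centrality argument for $a+a^*$, which the paper leaves implicit.
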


\begin{proof}
Let $x\in M_{sa}.$ By Lemma~\ref{l sea} there exist a
skew-hermitian element $a_{x,x^2}\in M$ such that
$$
T(x)=[a_{x,x^2},x],\, T(x^2)=[a_{x,x^2}, x^2].
$$
Thus
$$
T (x^2)=[a_{x,x^2},x^2]=[a_{x,x^2},x]x+x[a_{x,x^2},x]=T(x)x+xT(x),
$$
i.e.
\begin{equation}\label{jord}
T (x^2)=T(x)x+xT(x),
\end{equation}
 for every $x\in M_{sa}$.\smallskip

By Proposition~\ref{additgen} and Lemma \ref{l sead}, $T|_{M_{sa}} : M_{sa}
\to M_{sa}$ is a real linear mapping. Now, we consider the linear extension
$\hat{T}$ of $T|_{M_{sa}}$ to $M$ defined by
$$ \hat{T}(x_1+ix_2)=T(x_1)+i T(x_2),\, x_1, x_2\in M_{sa}.$$

Taking into account the homogeneity of $T,$ Proposition~\ref{additgen}
and the identity~\eqref{jord} we obtain that $\hat{T}$ is a
Jordan derivation on $M.$  By \cite[Theorem 1]{Bre} any Jordan
derivation on a semi-prime algebra is a derivation. Since $M$ is
von Neumann algebra, $\hat{T}$ is a derivation on $M$ (see also \cite{Sinclair70} and \cite{John96}).
Therefore there exists an element $a\in M$ such that $\hat{T}(x)=[a,x]$
for all $x\in M.$ In particular, $T(x)=[a, x]$ for all $x\in
M_{sa}.$ Since $T(M_{sa}) \subseteq M_{sa}$, we can assume that
$a^\ast=-a$, which completes the proof. \end{proof}

\begin{lemma}\label{l:vanish on self-adjoint} Let $T: M\to M$ be a {\rm(}not necessarily linear nor continuous{\rm)}
2-local triple derivation on a  von Neumann algebra. If
$T|_{M_{sa}}\equiv 0,$ then $T\equiv 0.$
\end{lemma}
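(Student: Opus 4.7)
The plan is to show, for each $x\in M$, that $T(x)=0$. Writing $x=x_1+ix_2$ with $x_1,x_2\in M_{sa}$, the strategy is to pair $x$ (via the $2$-local property) with two carefully chosen self-adjoint test elements, extract from each a concrete representation of $T(x)$, and then combine the two representations to show that $T(x)$ is simultaneously self-adjoint and skew-hermitian, hence zero.

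First I would record a preliminary structural observation. Given any $y\in M_{sa}$, the $2$-local property together with the representation \eqref{liejor} produces skew-hermitian $a=a_{x,y},\,b=b_{x,y}\in M$ with
\[
T(x)=[a,x]+b\circ x,\qquad [a,y]+b\circ y=T(y)=0.
\]
Since $a^*=-a$, $b^*=-b$, and $y^*=y$, a direct adjoint computation shows that $[a,y]$ is self-adjoint while $b\circ y$ is skew-hermitian; consequently both terms in the relation $[a,y]+b\circ y=0$ vanish separately, giving $[a,y]=0$ and $b\circ y=0$.

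Next I would specialize to $y=x_1+t\mathbf{1}$ for some real $t>\|x_1\|$, so that $y$ is positive with full support. The identity $b\circ y=0$ rewrites as $(ib)y+y(ib)=0$ with $ib$ self-adjoint, and Lemma~\ref{l product} then forces $ib=0$, hence $b=0$. This reduces the representation to $T(x)=[a,x]$ for some skew-hermitian $a$ commuting with $x_1$, and therefore
\[
T(x)=[a,x_1+ix_2]=i[a,x_2].
\]
Since $a^*=-a$ and $x_2^*=x_2$, the commutator $[a,x_2]$ is self-adjoint, so $T(x)$ is skew-hermitian.

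Repeating the same construction with the test element $y=x_2+s\mathbf{1}$ for some $s>\|x_2\|$ yields a skew-hermitian $a'$ commuting with $x_2$ and satisfying $T(x)=[a',x]=[a',x_1]$, which is self-adjoint. Comparing the two expressions, $T(x)$ is both self-adjoint and skew-hermitian, hence $T(x)=0$, as required. The delicate step is the preliminary observation: using the adjoint to split $[a,y]+b\circ y=0$ into its self-adjoint and skew-hermitian parts and then invoking Lemma~\ref{l product} to eliminate the Jordan-product summand. Once this is in place, the choice of the two shifted test elements $x_1+t\mathbf{1}$ and $x_2+s\mathbf{1}$ and the bookkeeping with the decomposition $x=x_1+ix_2$ do the rest.
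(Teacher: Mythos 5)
Your proof is correct and follows essentially the same route as the paper: pair $x$ with a positive invertible self-adjoint test element, split the resulting relation $[a,y]+b\circ y=0$ into its self-adjoint and skew-hermitian parts to kill the Jordan summand via Lemma~\ref{l product}, and read off a commutator representation of $T(x)$. The only difference is the endgame: the paper uses a single test element $\mathbf{1}+x_2$ (after normalizing so that $\|x_2\|<1$) to show $T(x)^*=T(x)$ and then applies this to $ix$ via homogeneity to get $T(x)^*=-T(x)$, whereas you run the pairing twice, with $x_1+t\mathbf{1}$ and $x_2+s\mathbf{1}$, obtaining the two symmetry conditions directly without invoking homogeneity at that final stage.
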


\begin{proof}
Let $x\in M$ be an arbitrary element and let $x=x_1+ix_2,$ where
$x_1, x_2\in M_{sa}.$ Since $T$ is homogeneous, if necessary,
passing to the element $(1+\|x_2\|)^{-1} x,$ we can suppose that
$\|x_2\|<1.$ In this case the element $y=\mathbf{1}+x_2$ is
positive and invertible. Take skew-hermitian elements $a_{x,y},
b_{x,y}\in M$ such that
$$
T(x)=[a_{x,y},x]+b_{x,y}\circ x,
$$
$$
T(y)=[a_{x,y},y]+b_{x,y}\circ y.
$$
Since $T(y)=0,$ we get $[a_{x,y},y]+b_{x,y}\circ y=0.$ Passing to
the adjoint, we obtain $[a_{x,y},y]-b_{x,y}\circ y=0.$ By adding
and subtracting these two equalities  we obtain that $[a_{x,y},
y]=0$ and $ib_{x,y} \circ y=0.$ Taking into account that
$ib_{x,y}$ is hermitian, $y$ is positive and invertible,
Lemma~\ref{l product} implies that $b_{x,y}=0.$\smallskip

We further note that
$$
0=[a_{x,y},y]=[a_{x,y}, \mathbf{1}+x_2]=[a_{x,y}, x_2],
$$
i.e.
$$
[a_{x,y}, x_2]=0.
$$
Now,
$$
T(x)=[a_{x,y},x]+b_{x,y}\circ x=[a_{x,y}, x_1+ix_2]=[a_{x,y},
x_1],
$$
i.e.
$$
T(x)=[a_{x,y}, x_1].
$$
Therefore,
$$
T(x)^\ast=[a_{x,y}, x_1]^\ast=[x_1, a_{x,y}^\ast]=[x_1,
-a_{x,y}]=[a_{x,y}, x_1]=T(x).
$$
So
\begin{equation}\label{iii} T(x)^\ast=T(x).
\end{equation}

Now replacing $x$ by $ix $ on~\eqref{iii} we obtain from the
homogeneity of $T$ that
\begin{equation}\label{iv}T(x)^\ast=-T(x).
\end{equation}
Combining \eqref{iii} and \eqref{iv} we obtain that $T(x)=0,$
which finishes the proof. \end{proof}

\begin{proof}[\textit{Proof of Theorem~\ref{mainthm}}] Let us
define $\widehat{T}=T - \delta \left(\frac12 T ({\mathbf
1}),{\mathbf 1}\right).$ Then $\widehat{T}$ is a 2-local triple
derivation on $M$ with $\widehat{T}({\mathbf 1}) = 0$  and
$\widehat{T}(x) = \widehat{T}(x)^\ast,$ for every $x\in M_{sa}$
(cf. Lemma \ref{l sead}). By Lemma~\ref{jort} there exists an
element $a\in M$ such that $\widehat{T}(x)=[a,x]$ for all $x\in
M_{sa}.$ Consider the $2$-local triple derivation
$\widehat{T}-\mbox{adj}_a.$ Since
$(\widehat{T}-\mbox{adj}_a)|_{M_{sa}}\equiv 0,$
Lemma~\ref{l:vanish on self-adjoint} implies that
$\widehat{T}=\mbox{adj}_a,$ and hence $T =\mbox{adj}_a + \delta
\left(\frac12 T ({\mathbf 1}),{\mathbf 1}\right),$ that finishes the proof.
\end{proof}

\begin{openproblem}
Is every 2-local derivation on a C$^*$-algebra a derivation?
\end{openproblem}

\begin{openproblem}
Is every 2-local triple derivation on a JB$^*$-triple a triple derivation?
\end{openproblem}

\subsection{$2$-local derivation on  Arens algebras} \label{sec:2}
In  this subsection we present some results on 2-local derivations on Arens algebras (see \cite{AKNA}).\smallskip

Let $M$ be an arbitrary semi-finite von Neumann algebra with a faithful normal semi-finite trace $\tau.$
A linear subspace  $\mathcal{D}$ in  $H$ is said to be \emph{affiliated} with  $M$ (denoted as  $\mathcal{D}\eta M$), if $u(\mathcal{D})\subset \mathcal{D}$ for every unitary  $u$ from
the commutant $$M'=\{y\in B(H):xy=yx, \,\forall x\in M\}$$ of the von Neumann algebra $M.$\smallskip

A linear operator  $x: \mathcal{D}(x)\rightarrow H,$ where the domain, $\mathcal{D}(x),$ of $x$ is a linear subspace of $H,$ is said to be \textit{affiliated} with $M$ (denoted as $x\eta M$) if $\mathcal{D}(x)\eta M$ and $u(x(\xi))=x(u(\xi))$ for all  $\xi\in \mathcal{D}(x)$  and for every unitary  $u\in M'.$\smallskip

A linear subspace $\mathcal{D}$ in $H$ is said to be \textit{strongly dense} in $H$ with respect to the von Neumann algebra  $M,$ if \begin{enumerate}[$1)$]\item $\mathcal{D}$ is affiliated with  $M$;
\item  There exists a sequence of projections $\{p_n\}_{n=1}^{\infty}$ in $\mathcal{P}(M)$ such that $p_n\uparrow\textbf{1},$ $p_n(H)\subset \mathcal{D}$ and $p^{\perp}_n=\textbf{1}-p_n$ is finite in $M$ for all $n\in\mathbf{N}$.
\end{enumerate}

We recall that a closed linear operator $x$ is said to be $\tau$\textit{-measurable} with respect to the von Neumann algebra $M,$ if  $x\eta M$ and $\mathcal{D}(x)$ is $\tau$-dense in $H,$ i.e. $\mathcal{D}(x)\eta M$ and given $\varepsilon>0$ there exists a projection   $p\in M$ such that $p(H)\subset\mathcal{D}(x)$  and $\tau(p^{\perp})<\varepsilon.$ Denote by  $S(M,\tau)$ the set of all   $\tau$-measurable
operators with respect to  $M.$\smallskip

Given $p\geq1$ put $L^{p}(M, \tau)=\{x\in S(M, \tau):\tau(|x|^{p})<\infty\}.$ It is known \cite{Yea} that
$L^{p}(M, \tau)$ is a Banach space with respect to the norm $$\|x\|_p=(\tau(|x|^{p}))^{1/p},\quad x\in L^{p}(M, \tau).$$ If we consider the intersection
$$L^{\omega}(M, \tau)=\bigcap\limits_{p\geq1}L^{p}(M, \tau),$$
it is established in \cite{Abd} that  $L^{\omega}(M, \tau)$ is a locally convex complete metrizable $\ast$-algebra with respect to the  topology $t$ generated by the family formed by all norms $\{\|\cdot\|_p\}$ with ${p\geq1}.$ The algebra $L^{\omega}(M, \tau)$ is called a (non commutative) \textit{Arens algebra}. Note that $L^{\omega}(M, \tau)$ is  $\ast$-subalgebra in $S(M,\tau)$ and if $\tau$ is a finite trace then $M\subset L^\omega(M,\tau).$\smallskip

The spaces
$$L^{\omega}_2(M, \tau)=\bigcap\limits_{p\geq 2}L^p(M, \tau)$$
and
$$ M+ L^{\omega}_2(M, \tau)=\{x+y: x\in M, y\in L^{\omega}_2(M, \tau)\}, $$ also play an interesting role in the theory of (2-local) derivations. For example, $L^{\omega}_2(M, \tau)$  and $M+ L^{\omega}_2(M, \tau)$ are a $\ast$-algebras and $L^{\omega}(M, \tau)$ is an ideal in $M+L^{\omega}_2(M, \tau)$ (see \cite{Alb3}). We observe that if $\tau(\textbf{1})<\infty$ then $M+ L^{\omega}_2(M, \tau)=L^{\omega}_2(M, \tau)=L^{\omega}(M, \tau).$ Theorem 3.7 in \cite{Alb3} proves that if $M$ is a  von Neumann algebra with a faithful normal semi-finite trace $\tau$, then any derivation $D$ on $L^{\omega}(M, \tau)$ is spatial, it is further known that any derivation is implemented by an element in $M+L^{\omega}_2(M, \tau),$ i.e.,
\begin{equation}
\label{SP} D(x)=ax-xa, \quad x\in L^{\omega}(M, \tau),
\end{equation} for some $a\in M+L^{\omega}_2(M, \tau).$\smallskip

The arguments in the proof of Theorem~\ref{semi-finite} can be easily modified to the case of 2-local derivation on Arens
algebras to prove the next result:

\begin{theorem}\label{TL}\cite[Theorem 2.4]{AKNA}
Let $M$ be a   von Neumann algebra  with a faithful normal semi-finite trace $\tau.$ Then any 2-local derivation $\Delta$ on
the algebra $L^{\omega}(M,\tau)$ is a derivation and has the form given in (\ref{SP}).
\end{theorem}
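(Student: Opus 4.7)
The plan is to mimic the proof of Theorem~\ref{semi-finite} line by line, replacing the pair $(M_{\tau},M)$ (definition ideal inside the enveloping von Neumann algebra) by the pair $(L^{\omega}(M,\tau),\,M+L^{\omega}_{2}(M,\tau))$. The role played in Theorem~\ref{semi-finite} by the fact that every derivation on $M$ is inner in $M$ is now played by \cite[Theorem 3.7]{Alb3}, which asserts that every derivation on $L^{\omega}(M,\tau)$ is spatial and is implemented by some element of $M+L^{\omega}_{2}(M,\tau)$.

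First I would combine the 2-local hypothesis with \cite[Theorem 3.7]{Alb3} to produce, for each pair $x,y\in L^{\omega}(M,\tau)$, an element $a_{x,y}\in M+L^{\omega}_{2}(M,\tau)$ with $\Delta(x)=[a_{x,y},x]$ and $\Delta(y)=[a_{x,y},y]$. I would then consider $[a_{x,y},xy]=\Delta(x)y+x\Delta(y)$. Since $L^{\omega}(M,\tau)$ is a two-sided ideal in $M+L^{\omega}_{2}(M,\tau)$, both $a_{x,y}(xy)$ and $(xy)a_{x,y}$ lie in $L^{\omega}(M,\tau)\subseteq L^{1}(M,\tau)$; using the tracial property separately for the $M$-summand of $a_{x,y}$ (via $\tau(mz)=\tau(zm)$ for $m\in M$, $z\in L^{1}(M,\tau)$) and for its $L^{\omega}_{2}(M,\tau)$-summand (via the H\"older inclusion $L^{2}\cdot L^{2}\subseteq L^{1}$ together with the cyclicity of $\tau$ on it), I would deduce $\tau([a_{x,y},xy])=0$, and hence $\tau(\Delta(x)y)=-\tau(x\Delta(y))$ for every $x,y\in L^{\omega}(M,\tau)$.

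With this identity in hand, the additivity argument is a direct transcription of Theorem~\ref{semi-finite}: substituting $x=u+v$ and $y=w$ ($u,v,w\in L^{\omega}(M,\tau)$) and using the homogeneity of $\Delta$ yields $\tau((\Delta(u+v)-\Delta(u)-\Delta(v))w)=0$ for every $w\in L^{\omega}(M,\tau)$. Taking $w=b^{*}$, where $b:=\Delta(u+v)-\Delta(u)-\Delta(v)\in L^{\omega}(M,\tau)$, gives $\|b\|_{2}^{2}=\tau(bb^{*})=0$, so $b=0$. Thus $\Delta$ is additive, and, being homogeneous, linear. Identity \eqref{joor} together with Bre\v{s}ar's theorem that every Jordan derivation on a semi-prime algebra is a derivation (\cite{Bre}) shows that $\Delta$ is a derivation; one final application of \cite[Theorem 3.7]{Alb3} produces $a\in M+L^{\omega}_{2}(M,\tau)$ realising the spatial form \eqref{SP}.

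The only genuinely delicate point I anticipate is the technical check in the second paragraph, namely that the trace is cyclic on the exact class of products $a\,z$ and $z\,a$ that arise, with $a\in M+L^{\omega}_{2}(M,\tau)$ and $z\in L^{\omega}(M,\tau)$. Once this is settled via the splitting $a=a_{0}+a_{1}$ with $a_{0}\in M$ and $a_{1}\in L^{\omega}_{2}(M,\tau)$, the remainder of the argument is a straightforward transcription of the semi-finite von Neumann case; there is no obstruction analogous to the type $III$ considerations of Theorem \ref{main}, because here \cite[Theorem 3.7]{Alb3} already guarantees inner-ness of every derivation on $L^{\omega}(M,\tau)$.
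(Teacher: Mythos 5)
Your proposal is correct and follows exactly the route the paper intends: the survey does not write out a proof of Theorem~\ref{TL} but explicitly states that the argument of Theorem~\ref{semi-finite} transfers after replacing inner-ness of derivations on $M$ by the spatial implementation from \cite[Theorem 3.7]{Alb3} and the ideal $M_\tau$ by $L^{\omega}(M,\tau)$ inside $M+L^{\omega}_2(M,\tau)$, which is precisely your transcription, including the correct handling of the tracial identity via the splitting $a=a_0+a_1$ and H\"older. Your endgame is even slightly cleaner than in Theorem~\ref{semi-finite}: since $b=\Delta(u+v)-\Delta(u)-\Delta(v)$ already lies in the $^*$-algebra $L^{\omega}(M,\tau)\subseteq L^{2}(M,\tau)$, one may take $w=b^*$ directly and avoid the approximating net of projections needed in the von Neumann case.
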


The following corollary is a consequence of the above theorem.

\begin{corollary}\label{C2}\cite[Corollary 2.5]{AKNA}
Let $M$ be a commutative von Neumann algebra with a faithful normal semi-finite trace $\tau.$ Then any 2-local derivation $\Delta$ on the algebra  $L^{\omega}(M,\tau)$ is identically zero.
\end{corollary}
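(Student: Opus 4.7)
The proof reduces almost immediately to Theorem \ref{TL} together with a structural observation about the ambient algebra of measurable operators. The plan is as follows.

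First, I would invoke Theorem \ref{TL}: since $\Delta$ is a $2$-local derivation on $L^{\omega}(M,\tau)$, it is in fact a derivation, and moreover it is spatial in the sense of (\ref{SP}), i.e.\ there exists $a\in M+L^{\omega}_2(M,\tau)$ with
\[
\Delta(x)=ax-xa,\qquad x\in L^{\omega}(M,\tau).
\]
This is precisely the content of Theorem 3.7 of \cite{Alb3} reinterpreted for $2$-local derivations via Theorem \ref{TL}, and it is available without any commutativity hypothesis on $M$.

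Next, I would use the commutativity of $M$ to collapse the commutator $ax-xa$. Both $a$ and $x$ lie in the $*$-algebra $S(M,\tau)$ of $\tau$-measurable operators affiliated with $M$ (since $M+L^{\omega}_2(M,\tau)\subseteq S(M,\tau)$ and $L^{\omega}(M,\tau)\subseteq S(M,\tau)$). When $M$ is abelian, every operator affiliated with $M$ commutes with $M'\supseteq M$, and a standard fact in noncommutative integration theory then yields that $S(M,\tau)$ is itself a commutative $*$-algebra. Consequently $ax=xa$, so $\Delta(x)=0$ for every $x\in L^{\omega}(M,\tau)$.

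There is essentially no obstacle here beyond recalling that $S(M,\tau)$ is commutative whenever $M$ is: everything substantive has already been done in the proof of Theorem \ref{TL}. The only point worth emphasizing, should a reader worry about it, is that $a$ need not lie in $L^{\omega}(M,\tau)$ itself — it sits in the larger algebra $M+L^{\omega}_2(M,\tau)$ — but this causes no difficulty because the commutativity of $S(M,\tau)$ still applies to the pair $(a,x)$.
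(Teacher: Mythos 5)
Your proposal is correct and follows exactly the route the paper intends: the paper presents Corollary \ref{C2} as an immediate consequence of Theorem \ref{TL}, namely that $\Delta(x)=ax-xa$ for some $a\in M+L^{\omega}_2(M,\tau)$, and the commutativity of the $*$-algebra $S(M,\tau)$ of $\tau$-measurable operators over an abelian $M$ forces the commutator to vanish. Your remark that $a$ need only lie in $M+L^{\omega}_2(M,\tau)\subseteq S(M,\tau)$ rather than in $L^{\omega}(M,\tau)$ is the right point to flag, and it is handled correctly.
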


\end{document}